\providecommand\@dotsep{5}
\renewcommand{\listoftodos}[1][\@todonotes@todolistname]{%
  \@starttoc{tdo}{#1}}
\newtheorem{Lem}{Lemma}[section]
\newtheorem{Prop}[Lem]{Proposition}
\newtheorem*{Def}{Definition}
\theoremstyle{plain}
\newtheorem{Thm}[Lem]{Theorem}
\newtheorem{Cor}[Lem]{Corollary}
\theoremstyle{definition}
\declaretheorem[numbered=no,name=Example,qed={\lower-0.3ex\hbox{$\triangleleft$}}]{Ex}
\newtheorem*{Rem}{Remark}
\newcommand{\Hom}{\text{\textnormal{Hom}}}
\newcommand{\End}{\text{\textnormal{End}}}
\newcommand{\Aut}{\text{\textnormal{Aut}}}
\newcommand{\Mor}{\text{\textnormal{Mor}}}
\newcommand{\git}{/\!\!/}
\newcommand{\Tr}{\mathbb{T}\text{\textnormal{r}}}
\newcommand{\tr}{\text{\textnormal{tr}}}
\mathchardef\mhyphen="2D
\newcommand{\Obj}{\text{\textnormal{Obj}}}
\newcommand{\pt}{\text{\textnormal{pt}}}
\newcommand{\Ind}{\text{\textnormal{Ind}}}
\newcommand{\HInd}{\text{\textnormal{HInd}}}
\newcommand{\RInd}{\text{\textnormal{RInd}}}
\newcommand{\Res}{\text{\textnormal{Res}}}
\newcommand{\Span}{\text{\textnormal{Span}}}
\newcommand{\refl}{\text{\textnormal{ref}}}
\newcommand{\ev}{\text{\textnormal{ev}}}
\newcommand{\op}{\text{\textnormal{op}}}
\newcommand{\co}{\text{\textnormal{co}}}
\newcommand{\Ad}{\text{\textnormal{Ad}}}
\def\itemNum$#1${\item $\displaystyle#1$
   \hfill\refstepcounter{equation}(\theequation)}
\def\pser#1{[\![#1]\!]} 
\tikzset{
    partial ellipse/.style args={#1:#2:#3}{
        insert path={+ (#1:#3) arc (#1:#2:#3)}
    }
}
\newcommand{\xRrightarrow}[2][]{\ext@arrow 0359\Rrightarrowfill@{#1}{#2}}
\newcommand{\Rrightarrowfill@}{\arrowfill@\equiv\equiv\Rrightarrow}
\begin{document}

\title[Real $2$-representation theory]{Real representation theory of finite categorical groups}

\author[M.\,B. Young]{Matthew B. Young}
\address{Max Planck Institute for Mathematics\\
Vivatsgasse 7\\
53111 Bonn, Germany}
\email{myoung@mpim-bonn.mpg.de \\ young.matthew.b@gmail.com}

\date{\today}

\keywords{Categorical groups. Bicategories. Real representation theory.}
\subjclass[2010]{Primary: 20J99; Secondary 18D05}

\begin{abstract}
We introduce and develop a categorification of the theory of Real representations of finite groups. In particular, we generalize the categorical character theory of Ganter--Kapranov and Bartlett to the Real setting. Given a Real representation of a group $\mathsf{G}$, or more generally a finite categorical group, on a linear category, we associate a number, the modified secondary trace, to each graded commuting pair $(g, \omega) \in \mathsf{G} \times \hat{\mathsf{G}}$, where $\hat{\mathsf{G}}$ is the background Real structure on $\mathsf{G}$. This collection of numbers defines the Real $2$-character of the Real representation. We also define various forms of induction for Real representations of finite categorical groups and compute the result at the level of Real $2$-characters. We interpret results in Real categorical character theory in terms of geometric structures, namely gerbes, vector bundles and functions on iterated unoriented loop groupoids. This perspective naturally leads to connections with the representation theory of unoriented versions of the twisted Drinfeld double of $\mathsf{G}$ and with discrete torsion in $M$-theory with orientifold. We speculate on an interpretation of our results as a generalized  Hopkins--Kuhn--Ravenel-type character theory in Real equivariant homotopy theory.
\end{abstract}

\maketitle


\setcounter{footnote}{0}

\section*{Introduction}
\addtocontents{toc}{\protect\setcounter{tocdepth}{1}}

Let $\mathsf{G}$ be a finite group. The complex representation theory of $\mathsf{G}$ is a classical and well-understood subject. In this paper we are interested in two variations of this theory. The first variation, also classical and well-understood, is the real representation theory of $\mathsf{G}$. More generally, following Atiyah--Segal \cite{atiyah1969} and Karoubi \cite{karoubi1970}, after fixing a short exact sequence of finite groups
\[
1 \rightarrow \mathsf{G} \rightarrow \hat{\mathsf{G}} \xrightarrow[]{\pi} \mathbb{Z}_2 \rightarrow 1
\]
we can consider the Real representation theory of $\mathsf{G}$. By a Real representation we mean a complex vector space together with an action of $\hat{\mathsf{G}}$ in which elements of $\mathsf{G}$ act complex linearly and elements of $\hat{\mathsf{G}} \backslash \mathsf{G}$ act complex antilinearly. The second, more recently studied variation is the $2$-representation theory of $\mathsf{G}$, in which $\mathsf{G}$ acts by autoequivalences of an object of a bicategory. More generally, the group $\mathsf{G}$ itself can be categorified, for example by twisting by a cohomology class $\alpha \in H^3(B \mathsf{G}, \mathbb{C}^{\times})$, leading to the representation theory of finite categorical (or weak $2$-) groups. The representation theory of categorical groups has been studied by many authors; most relevant to the present paper are the works of Elgueta \cite{elgueta2007}, Ganter--Kapranov \cite{ganter2008} and Bartlett \cite{bartlett2011}. The goal of this paper is to introduce and develop the Real representation theory of finite categorical groups, the theory obtained by considering both of the above variations simultaneously.

Apart from its intrinsic importance, the representation theory of categorical (and higher) groups has been studied because of its connections to other areas of mathematics and physics. For example, the works of Ganter--Kapranov \cite{ganter2008} and Bartlett \cite{bartlett2011} were motivated by connections to equivariant homotopy theory and oriented topological field theory, respectively. In both of these examples, the connections are strengthened by considering the categorified character theory of $2$-representations. Related appearances of higher categorical traces in geometry and representation theory can be found in the works of Ben-Zvi--Nadler \cite{benzvi2013} and Hoyois--Scherotzke--Sibilla \cite{hoyois2017}, for example. Analogously, we expect Real $2$-representations, and the resulting categorified Real character theory which we develop in this paper, to be related to certain $\mathbb{Z}_2$-equivariant refinements of the connections appearing in $2$-representation theory. In the above two examples, we have in mind Real equivariant homotopy theory and unoriented topological field theory.

In the remainder of this introduction we will explain our main results. For simplicity, we restrict attention to Real $2$-representations of finite groups on categories, leaving the general case to the body of the paper. A Real $2$-representation of $\mathsf{G}$ on a category $\mathcal{C}$ is the data of autoequivalences
\[
\rho(g): \mathcal{C} \rightarrow \mathcal{C}, \qquad g \in \mathsf{G}
\]
and antiautoequivalences
\[
\rho(\omega) : \mathcal{C}^{\op} \rightarrow \mathcal{C}, \qquad \omega \in \hat{\mathsf{G}} \backslash \mathsf{G}
\]
together with coherently associative natural isomorphisms encoding their compositions. Here $\mathcal{C}^{\op}$ is the category opposite to $\mathcal{C}$. Retaining only the information attached to $\mathsf{G}$ recovers the notion of a $2$-representation of $\mathsf{G}$ on $\mathcal{C}$. The above definition, which categorifies the Grothendieck--Witt approach to Real representation theory, admits a variation in which $\mathcal{C}$ is assumed to be complex linear, elements of $\mathsf{G}$ act by complex linear autoequivalences and elements of $\hat{\mathsf{G}} \backslash \mathsf{G}$ act by complex antilinear autoequivalences. This variation categorifies the standard approach to Real representation theory. In terms of the general theory, it is a matter of preference which categorification one uses, as all results of the paper hold in either approach.

Associated to an ordinary $2$-representation $\rho$ of $\mathsf{G}$ on $\mathcal{C}$ is a collection of sets of natural transformations,
\[
\Tr_{\rho}(g) = 2\Hom_{\mathsf{Cat}}(1_{\mathcal{C}}, \rho(g)), \qquad g \in \mathsf{G}.
\]
Ganter--Kapranov \cite{ganter2008} and Bartlett \cite{bartlett2011} categorified the conjugation invariance of the character of a representation by constructing a system of compatible bijections
\[
\beta_{g, h} : \Tr_{\rho}(g) \xrightarrow[]{\sim} \Tr_{\rho}(h g h^{-1}), \qquad g, h \in \mathsf{G}.
\]
The collection $\{\Tr_{\rho}(g)\}_{g \in \mathsf{G}}$, together with the bijections $\{\beta_{g, h}\}_{(g,h) \in \mathsf{G}^2}$, is called the categorical character of $\rho$. Suppose now that $\rho$ is a Real $2$-representation of $\mathsf{G}$. In this setting, we prove that the categorical character is enhanced to a Real categorical character, by which we mean a compatible system of bijections
\[
\beta_{g, \omega} : \Tr_{\rho}(g) \xrightarrow[]{\sim} \Tr_{\rho}(\omega g^{\pi(\omega)} \omega^{-1}), \qquad g \in \mathsf{G}, \, \omega \in \hat{\mathsf{G}}.
\]
Call a pair $(g,\omega) \in \mathsf{G} \times \hat{\mathsf{G}}$ graded commuting if the equality
\[
\omega g^{\pi(\omega)} = g \omega
\]
holds. When the Real $2$-representation is linear, Real conjugation invariance allows us to associate to each graded commuting pair $(g, \omega)$ an element of the ground field,
\[
\chi_{\rho}(g,\omega) = \tr_{\Tr_{\rho}(g)} (\beta_{g, \omega} ).
\]
We define the Real $2$-character of $\rho$ to be this collection of joint traces. The Real categorical character and Real $2$-character therefore contain strictly more information than the categorical character and $2$-character of the underlying $2$-representation. This is in contrast to the fact that, one category level down, the character of a Real representation is an ordinary character subject to additional constraints.

In geometric terms, the Real categorical character defines a flat vector bundle over the unoriented loop groupoid of $B \mathsf{G}$; see Theorem \ref{thm:catCharLoopGrpd}. Here $B \mathsf{G}$ is regarded as a double cover $B \mathsf{G} \rightarrow B \hat{\mathsf{G}}$ of groupoids. The unoriented loop groupoid of $B \mathsf{G}$ is then the quotient of the loop groupoid of $B \mathsf{G}$ by the simultaneous action of deck transformations and loop reflection. It is worth emphasizing that the Real categorical character is an ordinary, as opposed to Real, vector bundle; the Real information is encoded entirely in the base of the vector bundle. This allows us to apply techniques from the ordinary representation theory of groupoids to study Real $2$-representations. From this geometric point of view, the Real $2$-character of $\rho$ is simply the holonomy of the Real categorical character. Using this observation, we prove in Theorem \ref{thm:conjInv2Char} that Real $2$-characters are invariant under Real conjugation by $\hat{\mathsf{G}}$:
\[
\chi(g, \omega) = \chi(\sigma g^{\pi(\sigma)} \sigma^{-1}, \sigma \omega \sigma^{-1}),  \qquad \sigma \in \hat{\mathsf{G}}.
\]
Such functions on graded commuting pairs, which we call Real $2$-class functions on $\mathsf{G}$, and their twisted generalizations first appeared in \cite{mbyoung2018a}, where they were realized as characters of unoriented versions of twisted Drinfeld doubles of $\mathsf{G}$. Corollary \ref{cor:thickDDAction} explains the precise relationship between Real $2$-representations and unoriented twisted Drinfeld doubles.

We also study induction of Real $2$-representations, of which there are two forms. The first categorifies the Realification (or hyperbolic) functor from complex representation theory to Real representation theory; the second categorifies induction internal to Real representation theory. In both cases we compute the result of $2$-induction at the level of Real categorical characters and $2$-characters; see Theorems \ref{thm:Real2RealIndFunct}, \ref{thm:RInd2Char} and \ref{thm:RealIndFunct}, \ref{thm:HInd2Char}. For example, given a subgroup $\hat{\mathsf{H}} \leq \hat{\mathsf{G}}$ compatible with the structure maps to $\mathbb{Z}_2$ and a Real $2$-representation $\rho$ of $\mathsf{H}$, the Real $2$-character of the induced Real $2$-representation is
\[
\chi_{\RInd_{\hat{\mathsf{H}}}^{\hat{\mathsf{G}}}(\rho)}(g, \omega) = \frac{1}{2 \vert \mathsf{H} \vert} \sum_{\substack{\sigma \in \hat{\mathsf{G}} \\ \sigma(g, \omega) \sigma^{-1} \in \hat{\mathsf{H}}^2}} \chi_{\rho}(\sigma g^{\pi(\sigma)} \sigma^{-1}, \sigma \omega \sigma^{-1}).
\]
Our results on $2$-induction are formulated so as to allow for immediate conjectural generalizations to Real representations of higher groups.

As mentioned above, in the body of the paper we work with finite categorical groups. In order to systematically deal with the cohomological twisting data used to define the categorical group we make use of the twisted loop transgression map introduced in \cite{mbyoung2018a}. Twisted loop transgression realizes a sort of dimensional reduction from Real $2$-representation theory to the twisted representation theory of unoriented loop groupoids and facilitates the geometric interpretation of our results.

As mentioned above, one motivation for Ganter and Kapranov \cite{ganter2008} to develop their $2$-character theory was to relate $2$-representation theory to higher chromatic phenomena in equivariant homotopy theory. To explain this, denote by $\mathbf{B} \mathsf{G}$ a classifying space of $\mathsf{G}$. Hopkins, Kuhn and Ravenel \cite{hopkins2000} showed that the Borel equivariant Morava $E$-theory group $E^{\bullet}_n(\mathbf{B} \mathsf{G})$, $n \geq 1$, at a prime $p$ admits a generalized character theoretic description. In this context, generalized characters are conjugation invariant functions on the set of commuting $n$-tuples in $\mathsf{G}$; the values of these functions and the $p$\textsuperscript{th} order condition on the commuting elements will be ignored in this introduction. When $n=1$ this recovers a $p$-completed version of the character theoretic description of $K^{\bullet}(\mathbf{B} \mathsf{G})$ given by Atiyah and Segal \cite{atiyah1969}. When $n=2$ this gives a generalized character theoretic description of the Borel equivariant elliptic cohomology group $E^{\bullet}_2(\mathbf{B} \mathsf{G})$. Ganter and Kapranov showed that the $2$-character theory of $\mathsf{G}$ also produces such generalized characters, although with values in the ground field and without the $p$\textsuperscript{th} order condition. To strengthen the analogy between $2$-representation theory and equivariant $E_2$-theory, Ganter and Kapranov showed that $2$-induction of $2$-representations is given at the level of $2$-characters by the same formula as transfer for Hopkins--Kuhn--Ravenel characters. This analogy persists in the twisted case. Indeed, twisted elliptic characters appear in both Devoto's approach to the twisted elliptic cohomology of $\mathbf{B} \mathsf{G}$ \cite{devoto1996} and, by combining the works of Ganter--Usher \cite{ganter2016} and Willerton \cite{willerton2008}, in the $2$-character theory of finite categorical groups. Moreover, we expect that $2$-induction at the level of $2$-characters of finite categorical groups, as computed in Corollary \ref{cor:ind2Char}, also describes transfer in equivariant elliptic cohomology. The analogy between $E^{\bullet}_3(\mathbf{B} \mathsf{G})$ and $3$-representations of $\mathsf{G}$ was established by Wang \cite{wang2015}.

In view of the above analogy, it is natural to expect that Real $2$-representation theory can be used to shed light on Real versions of Morava $E$-theory at the prime $p=2$. More precisely, we conjecture that there exists a real oriented generalized cohomology theory $R_n^{\bullet}$ such that $R_n^{\bullet}(\mathbf{B} \mathsf{G})$, with $\mathbf{B} \mathsf{G}$ viewed as a double cover $\mathbf{B} \mathsf{G} \rightarrow \mathbf{B} \hat{\mathsf{G}}$, can be described in terms of Real $n$-class functions of $\mathsf{G}$ which satisfy a $p$\textsuperscript{th} order condition. At height one, the group $R^{\bullet}_1(\mathbf{B} \mathsf{G})$ should reduce to Atiyah's Real $K$-theory $KR^{\bullet}
(\mathbf{B} \mathsf{G})$ localized at $p=2$ (see \cite{atiyah1966}), which, by the work of Atiyah--Segal \cite{atiyah1969}, is known to admit a character theoretic description. At height two, the group $R_2^{\bullet}(\mathbf{B} \mathsf{G})$, which we envisage as a Real equivariant elliptic cohomology theory, should be described as a space of Real $2$-class functions on $\mathsf{G}$. Moreover, at the level of Real $2$-class functions, our results on hyperbolic and Real $2$-induction should agree with the transfer maps from equivariant $E_2^{\bullet}$-theory to equivariant $R_2^{\bullet}$-theory and internal to equivariant $R_2^{\bullet}$-theory, respectively. We expect that $R_n^{\bullet}$ is closely related to the real oriented theory $E\mathbb{R}^{\bullet}_n$ constructed by Hu--Kriz \cite{hu2001}, but we do not establish a direct link in this paper.

Finally, we describe some applications of Real $2$-representation theory to mathematical physics, which we plan to return to in future work. Let $\mathcal{G}$ be the categorical group determined by a finite group $\mathsf{G}$ and a cocycle $\alpha \in Z^3(B \mathsf{G}, \mathbb{C}^{\times})$. A Real structure $\hat{\mathsf{G}}$ on $\mathsf{G}$ and a lift of $\alpha$ to $\hat{\alpha} \in Z^3(B \hat{\mathsf{G}}, \mathbb{C}^{\times}_{\pi})$ determine a Real structure on $\mathcal{G}$. On the other hand, the twisted cocycle group $Z^3(B \hat{\mathsf{G}}, \mathbb{C}^{\times}_{\pi})$ is known to be related to unoriented topological gauge theory. For example, a pair $(\hat{\mathsf{G}}, \hat{\alpha})$ is expected to define an unoriented lift of fully extended three dimensional $\alpha$-twisted oriented Dijkgraaf--Witten theory. The bicategory of Real $2$-representations of $\mathcal{G}$ on Kapranov--Voevodsky $2$-vector spaces can be obtained from the value of this theory on a point by taking homotopy fixed points for the action of $\pi_0(\mathsf{O}(3)) \simeq \mathbb{Z}_2$. In a related direction, the twisted $\hat{\mathsf{G}}$-equivariance of Real $2$-characters of $\mathcal{G}$ recovers Sharpe's expressions \cite{sharpe2011} for discrete torsion phase factors in $M$-theory with orientifolds. See \cite{mbyoung2018a} for a related perspective. We also mention that the relevance of twisted Real elliptic cohomology, and  $E\mathbb{R}_n^{\bullet}$-theory more generally, to string and $M$-theory with orientifolds has been conjectured by H. Sati. Real structures on categorical groups also appear in the theory of symmetry protected topological phases with symmetry categorical groups involving time reversal symmetry and in unoriented field theory with higher gauge field symmetries. See the papers of Kapustin--Thorngren \cite{kapustin2013} and Sharpe \cite{sharpe2015} for the appearance of categorical groups in the corresponding oriented settings. Finally, the prominence of twisted loop transgression in this paper is particularly natural from the point of view of field theory, where it becomes an instance of the `quantization via cohomological push-pull' procedure. Examples of this procedure in oriented settings can be found in the works of Freed \cite{freed1994} and Freed--Hopkins--Teleman \cite{freed2010b}.

A brief overview of the paper is as follows. Section \ref{sec:backgroundMat} collects background material. Section \ref{sec:RealRepFinGroup} contains relevant results from the twisted Real representation theory of finite groups in a form which is convenient for categorification. In Section \ref{sec:Real2Reps}, after defining Real structures on finite categorical groups, we introduce the notion of a Real representation of a finite categorical group. In Section \ref{sec:Real2Rep} we develop the basics of the Real categorical character theory of Real $2$-representations. Section \ref{sec:RealProjReps} then considers the general case of (linear) Real representations of finite categorical groups. We interpret the corresponding character theory in terms of vector bundles over gerbes on the unoriented loop groupoid of $B \mathsf{G}$. Section \ref{sec:indRealRep}, which serves as preparation for the following section, contains basic, but perhaps not widely available, material about Real and hyperbolic induction of twisted Real representations of finite groups. In Section \ref{sec:indReal2Rep} we introduce two forms of $2$-induction of Real $2$-representations. We compute the effect of $2$-induction at the level of Real categorical and $2$-characters. In Section \ref{sec:RealHKRTransfer} we describe our conjectural applications to Real equivariant homotopy theory.

\subsection*{Acknowledgements}
The author would like to thank Qingyuan Jiang, Hisham Sati and Nat Stapleton for discussions related to the content of this paper. Parts of this work were completed while the author was visiting National Tsing-Hua University. The author would like to thank Nan-Kuo Ho and Siye Wu for the invitation. The author was supported by a Direct Grant from the Chinese University of Hong Kong (Project No. CUHK4053289).

\section{Background material}
\label{sec:backgroundMat}
\addtocontents{toc}{\protect\setcounter{tocdepth}{2}}

\subsection{Bicategories}

We establish our notation for bicategories. For a detailed introduction to bicategories the reader is referred to \cite{benabou1967}.

A bicategory $\mathcal{V}$ consists of the following data:
\begin{enumerate}[label=(\roman*)]
\item A class $\Obj(\mathcal{V})$ of objects.
\item For each pair $x,y \in \Obj(\mathcal{V})$, a small category $1\Hom_{\mathcal{V}}(x,y)$, objects and morphisms of which are called $1$-morphisms and $2$-morphisms, respectively.
\item For each triple $x,y, z \in \Obj(\mathcal{V})$, a composition bifunctor
\[
- \circ_0 - : 
1\Hom_{\mathcal{V}}(y,z) \times 1\Hom_{\mathcal{V}}(x,y) \rightarrow 1\Hom_{\mathcal{V}}(x,z).
\]
\item For each $x \in \Obj(\mathcal{V})$, an identity $1$-morphism $1_x: x \rightarrow x$.
\item For each triple of composable $1$-morphisms $f,g,h$, an associator $2$-isomorphism
\[
\alpha_{f,g,h}: (f \circ_0 g) \circ_0 h \Longrightarrow f \circ_0 (g \circ_0 h).
\]
\item For each $1$-morphism $f: x\rightarrow y$, a pair of unitor $2$-isomorphisms
\[
\lambda_f: 
1_y \circ_0 f \Longrightarrow f, \qquad \rho_f: f \circ_0 1_x \Longrightarrow f.
\]
\end{enumerate}
The data (i)-(vi) is subject to a number of coherence conditions which we do not recall here.

Composition of $2$-morphisms within the same $1$-morphism category will be denoted by $-\circ_1-$. When it will not lead to confusion we will write $- \circ -$ in place of $- \circ_0 -$ or $-\circ_1 -$. The set of $2$-morphisms $\Hom_{1\Hom_{\mathcal{V}}(x,y)}(f, g)$ will be denoted by $2\Hom_{\mathcal{V}}(f, g)$. Given $1$-morphisms $f_1, f_2: x \rightarrow y$ and $g: y \rightarrow z$ and a $2$-morphism $u: f_1 \Rightarrow f_2$, the left whiskering of $u$ by $g$, namely $1_g \circ_0 u : g \circ_0 f_1 \Rightarrow g \circ_0 f_2$, will be denoted by $g \circ_0 u$. We adopt analogous notation for right whiskering. 

A (strict) $2$-category is a bicategory in which all associator $2$-isomorphisms $\alpha_{f,g,h}$ and all unitor $2$-isomorphisms $\lambda_f$, $\rho_f$ are identity maps. Coherence for bicategories asserts that any bicategory is biequivalent to a $2$-category.

\begin{Ex}
Small categories, functors and natural transformations form a $2$-category $\mathsf{Cat}$. For each field $k$, there is a sub-$2$-category $\mathsf{Cat}_k$ of $\mathsf{Cat}$ consisting of $k$-linear categories, $k$-linear functors and natural transformations.
\end{Ex}

\begin{Ex}
Let $k$ be a field. Kapranov and Voevodsky defined the bicategory $2\mathsf{Vect}_k$ of finite dimensional $2$-vector spaces over $k$ \cite{kapranov1994}. This is a $2$-categorical analogue of the category $\mathsf{Vect}_k$ of finite dimensional vector spaces over $k$. There are a number of standard variants of this bicategory. One definition takes $2\mathsf{Vect}_k$ to be the bicategory of $k$-linear additive finitely semisimple categories, $k$-linear functors and natural transformations. We will use the following slightly different model. Objects of $2\mathsf{Vect}_k$ are non-negative integers $[n]$, $n \in \mathbb{Z}_{\geq 0}$. A $1$-morphism $[n] \rightarrow [m]$ is an $m \times n$ matrix $A=(A_{ij})$ whose entries are finite dimensional vector spaces over $k$. The composition of the $1$-morphisms $A: [m] \rightarrow [n]$ and $B: [n] \rightarrow [p]$ is defined by
\[
(B \circ_0 A)_{ik} = \bigoplus_{j=1}^n B_{ij} \otimes_k A_{jk}.
\]
A $2$-morphism $u: A \Rightarrow B$ is a collection of $k$-linear maps $(u_{ij}: A_{ij} \rightarrow B_{ij})$. The compositions of $2$-morphisms are defined by
\[
(v \circ_0 u )_{ik} = \bigoplus_{j=1}^n v_{ij} \otimes u_{jk}, \qquad
(u^{\prime} \circ_1 u)_{ij} = u^{\prime}_{ij} \circ u_{ij}.
\]
The composition $-\circ_0 -$ is not strictly associative.
\end{Ex}

\subsection{Duality involutions on bicategories}
\label{sec:bicatDual}

We introduce the categorical background required for our formulation of Real $2$-representation theory. Our main approach uses contravariant involutions on categories and bicategories. An alternative approach, described in Sections \ref{sec:RealFGRep} and \ref{sec:antiLinearRealFGRep}, uses antilinear covariant involutions. These approaches are parallel and one can easily translate between the two.

We begin by recalling some basic categorical notions of duality. For further details the reader is referred to \cite[\S 3]{schlichting2010}. Given a category $\mathcal{C}$, we denote by $\mathcal{C}^{\op}$ its opposite category. A category with duality is then a category $\mathcal{C}$ together with a functor $(-)^*: \mathcal{C}^{\op} \rightarrow \mathcal{C}$ and a natural isomorphism $\Theta: 1_{\mathcal{C}} \Rightarrow (-)^* \circ ((-)^*)^{\op}$
which satisfy
\begin{equation}
\label{eq:catDualCompat}
\Theta^*_x \circ \Theta_{x^*} = 1_{x^*}
\end{equation}
for each $x \in \Obj(\mathcal{C})$. A morphism $(\mathcal{C},(-)^*,\Theta) \rightarrow (\mathcal{D}, (-)^*, \Xi)$ of categories with duality, sometimes called a form functor, consists of a functor $F: \mathcal{C} \rightarrow \mathcal{D}$ and a natural transformation $\varphi: F \circ (-)^* \Rightarrow (-)^* \circ F$
which satisfy
\[
\varphi^*_x \circ \Xi_{F(x)} = \varphi_{x^*} \circ F(\Theta_x)
\]
for each $x \in \Obj(\mathcal{C})$. A symmetric form in $(\mathcal{C}, (-)^*, \Theta)$ is an object $x \in \Obj(\mathcal{C})$ together with an isomorphism $\psi: x \rightarrow x^*$ which satisfies
\[
\psi^* \circ \Theta_x = \psi.
\]
Symmetric forms and their partial isometries, that is, morphisms $\phi: x \rightarrow y$ which satisfy $\phi^* \circ \psi_y \circ \phi = \psi_x$, define the homotopy fixed point\footnote{The term `homotopy fixed points' is usually reserved for covariant group actions, but we will use it more generally.} category $\mathcal{C}^{h \mathbb{Z}_2}$. Form functors induce functors between homotopy fixed point categories.

We now turn to the categorification of the above notions. We will use the $2$-cell dual $\mathcal{V}^{\co}$ of a bicategory $\mathcal{V}$, that is, the bicategory obtained from $\mathcal{V}$ by reversing its $2$-cells. Hence, if 
\[
\begin{tikzpicture}[baseline= (a).base]
\node[scale=0.9] (a) at (0,0){
\begin{tikzcd}[column sep=5em]
 x
  \arrow[bend left=35]{r}[name=U,below]{}{f} 
  \arrow[bend right=35]{r}[name=D]{}[swap]{g}
& 
y
     \arrow[shorten <=3pt,shorten >=3pt,Rightarrow,to path={(U) -- node[label=right:\footnotesize \scriptsize $u$] {} (D)}]{}
\end{tikzcd}
};
\end{tikzpicture}
\]
is a $2$-morphism in $\mathcal{V}$, then
\[
\begin{tikzpicture}[baseline= (a).base]
\node[scale=0.9] (a) at (0,0){
\begin{tikzcd}[column sep=5em]
 x
  \arrow[bend left=35]{r}[name=U,below]{}{f} 
  \arrow[bend right=35]{r}[name=D]{}[swap]{g}
& 
y
     \arrow[shorten <=3pt,shorten >=3pt,Rightarrow,to path={(D) -- node[label=right:\footnotesize \scriptsize $u$] {} (U)}]{}
\end{tikzcd}
};
\end{tikzpicture}
\]
is a $2$-morphism in $\mathcal{V}^{\co}$.

\begin{Def}[{\cite[Definition 2.1]{shulman2018}}]
A bicategory with weak duality involution is a bicategory $\mathcal{V}$ together with
\begin{enumerate}[label=(\roman*)]
\item a pseudofunctor $(-)^{\circ} : \mathcal{V}^{\co} \rightarrow \mathcal{V}$,
\item a pseudonatural adjoint equivalence $\eta: 1_{\mathcal{V}} \Rightarrow (-)^{\circ} \circ_0 ((-)^{\circ})^{\co}$, and
\item an invertible modification $\zeta: \eta \circ_0 (-)^{\circ} \xRrightarrow{\;\; \;\;} (-)^{\circ} \circ_0 \eta^{\co}$
\end{enumerate}
such that, for each $x \in \Obj(\mathcal{V})$, the equality
\begin{equation}
\label{eq:bicatDualCompat}
\zeta_{x^{\circ}} \circ_0 \eta_x = (\zeta_x^{\circ} \circ_0 \eta_x) \circ_1 \eta(x)
\end{equation}
of $2$-morphisms holds. Here $\eta(x) : \eta_{x^{\circ \circ}} \circ \eta_x \Rightarrow \eta_x^{\circ \circ} \circ \eta_x$ is a pseudonaturality constraint for $\eta$.
\end{Def}

If $\mathcal{V}$ is a $2$-category, $(-)^{\circ}$ is a strict $2$-functor and $\eta$ and $\zeta$ are the identities, then the above data is said to define a strict duality involution on $\mathcal{V}$.

\begin{Def}[{\cite[Definition 2.2]{shulman2018}}]
A duality pseudofunctor $(\mathcal{V},(-)^{\circ}, \eta) \rightarrow (\mathcal{W},(-)^{\circ}, \lambda)$ between bicategories with weak duality involution is a pseudofunctor $F: \mathcal{V} \rightarrow \mathcal{W}$ together with
\begin{enumerate}[label=(\roman*)]
\item a pseudonatural adjoint equivalence $\mathfrak{i} : (-)^{\circ} \circ_0 F^{\co} \Rightarrow F \circ_0 (-)^{\circ}$, and
\item an invertible modification
\[
\begin{gathered}
\begin{tikzcd}[column sep=1.5cm,row sep=1.5cm]
\mathcal{V} \arrow[r,"F" above] \arrow[d,"(-)^{\circ}" left] & \mathcal{W} \arrow[d,"(-)^{\circ}" right] \arrow[dd, bend left=70,"1_{\mathcal{W}}" {name=A, right}] \arrow[dl,Rightarrow,"\mathfrak{i}^{\co}"] \\
\mathcal{V}^{\co} \arrow[r,"F^{\co}" above] \arrow[d,"((-)^{\circ})^{\co}" left] & |[alias=B]| \mathcal{W}^{\co} \arrow[d,"((-)^{\circ})^{\co}" right] \arrow[dl,Rightarrow,"\mathfrak{i}"]\\
\mathcal{V} \arrow[r,"F" below] & \mathcal{W}
\arrow[Rightarrow, shorten <= 0.45em, shorten >= 0.15em, from=A, to=B, "\lambda" above]
\end{tikzcd}
\end{gathered}
\xRrightarrow{\;\; \theta \;\;}
\begin{gathered}
\begin{tikzcd}
\mathcal{V} \arrow[d,"((-)^{\circ})^{\co}" left] \arrow[dd,bend left=90,"1_{\mathcal{V}}" {name=A, right}]\\
|[alias=B]| \mathcal{V}^{\co} \arrow[d,"(-)^{\circ}" left]\\
\mathcal{V} \arrow[d,"F" left]\\
\mathcal{W}
\arrow[Rightarrow, shorten <= 0.55em, shorten >= 0.05em, from=A, to=B, "\eta" above]
\end{tikzcd}
\end{gathered}
\]
\end{enumerate}
such that, for each $x \in \Obj(\mathcal{V})$, a coherence constraint, which we omit, is satisfied.
\end{Def}

Coherence for bicategories generalizes to the present setting. Indeed, any bicategory with weak duality involution is biequivalent via a duality pseudofunctor to a $2$-category with strict duality involution \cite[Theorem 2.3]{shulman2018}.

\begin{Ex}
The strict $2$-functor $(-)^{\op} : \mathsf{Cat}^{\co} \rightarrow \mathsf{Cat}$ which sends categories, functors and natural transformations to their opposites is a strict duality involution. The restriction of $(-)^{\op}$ to $\mathsf{Cat}_k$ is a $k$-linear strict duality involution.
\end{Ex}

\begin{Ex}
The bicategory $2\mathsf{Vect}_k$ has a weak duality involution $(-)^{\vee}$ which is a $2$-categorical analogue of the $k$-linear duality functor on $\mathsf{Vect}_k$. On objects let $[n]^{\vee} = [n]$ and on $1$- and $2$-morphisms let $(-)^{\vee}$ be given by $k$-linear duality. Explicitly, we have $(A_{ij})^{\vee} = (A_{ij}^{\vee})$ and $(u_{ij})^{\vee} = (u_{ij}^{\vee})$. The adjoint equivalence $\eta$ is induced by $\ev$, the canonical evaluation isomorphism from a finite dimensional vector space to its double dual. The modification $\zeta$ is the identity.
\end{Ex}

Next, we define homotopy fixed point objects.

\begin{Def}
A symmetric form in a bicategory with weak duality involution $(\mathcal{V}, (-)^{\circ}, \eta, \zeta)$ is an object $x \in \Obj(\mathcal{V})$ together with 
\begin{enumerate}[label=(\roman*)]
\item an equivalence $\psi: x^{\circ} \rightarrow x$, and
\item a $2$-isomorphism $\mu: 1_x \Rightarrow \psi \circ_0 \psi^{\circ} \circ_0 \eta_x$
\end{enumerate}
such that the $2$-morphism given by the diagram
\[
\begin{gathered}
\begin{tikzcd}[column sep=1.5cm,row sep=1.25cm]
x^{\circ} \arrow{rrr}[name=A,below]{}{1_{x^{\circ}}} \arrow[bend left=30]{rd}[name=C,below]{}{\eta_x^{\circ}} \arrow[bend right=30]{rd}[name=D,below]{\eta_{x^{\circ}}}{} \arrow[rdd,bend right=30, "\psi"{name=G, below}]{}& & &x^{\circ} \arrow{r}[name=B,below]{}{\psi} & x \\
& x^{\circ \circ \circ} \arrow{r}[name=E,below]{}{\psi^{\circ \circ}} & x^{\circ \circ} \arrow{ru}[name=F,below]{}{\psi^{\circ}} & & \\
& x \arrow[ru,"\eta_x" below] \arrow[bend right=30]{rrruu}[name=I,below]{1_x}{}& & &
\arrow[Rightarrow, shorten <= 1.25em, shorten >= 0.35em, from=E, to=A, "\mu^{\circ}"]
\arrow[Rightarrow, from=D, to=C, "\zeta_x"]
\arrow[Rightarrow, shorten <= 0.95em, shorten >= 0.65em, from=I, to=F, "\mu"]
\arrow[Rightarrow, bend right=20, shorten <= 0.95em, shorten >= 0.65em, from=G, to=E, "\eta_{\psi}" below]
\end{tikzcd}
\end{gathered}
\]
is equal to $1_{\psi}$.
\end{Def}

Symmetric forms in $(\mathcal{V}, (-)^{\circ}, \eta, \zeta)$ are the objects of a homotopy fixed point bicategory $\mathcal{V}^{h \mathbb{Z}_2}$. As we will not have the occasion to use $1$- and $2$-morphisms of $\mathcal{V}^{h \mathbb{Z}_2}$, we omit their explicit definitions.

\begin{Ex}
The homotopy fixed point bicategory $\mathsf{Cat}^{h \mathbb{Z}_2}$, with respect to the duality involution $(-)^{\op}$, is the bicategory of categories with duality, form functors and natural transformations of form functors.
\end{Ex}

Given an object $x$ of a bicategory with weak duality involution, put
\begin{equation}
\label{eq:leftSupNot}
\prescript{\epsilon}{}{x}=
\begin{cases}
x & \mbox{if } \epsilon=1, \\
x^{\circ} & \mbox{if } \epsilon=-1.
\end{cases}
\end{equation}
Similar notation will be used for the action of $(-)^{\circ}$ on $1$- and $2$-morphisms.

Closely related to bicategories with duality involutions are bicategories with contravariance \cite[\S 4]{shulman2018}. Roughly speaking, these are bicategories which have both covariant and contravariant $1$-morphisms. More precisely, a bicategory with contravariance consists of the following data:
\begin{enumerate}[label=(\roman*)]
\item A class $\Obj(\mathcal{V})$ of objects.
\item For each pair $x,y \in \Obj(\mathcal{V})$ and each $\epsilon \in \{\pm 1\}$, a small category $1\Hom^{\epsilon}_{\mathcal{V}}(x,y)$.
\item For each triple $x,y, z \in \Obj(\mathcal{V})$ and each pair $\epsilon_1, \epsilon_2 \in \{\pm 1\}$, a composition bifunctor
\[
- \circ_0 - : 
1\Hom^{\epsilon_2}_{\mathcal{V}}(y,z) \times \prescript{\epsilon_2}{}{1\Hom^{\epsilon_1}_{\mathcal{V}}(x,y)} \rightarrow 1\Hom^{\epsilon_2 \epsilon_1}_{\mathcal{V}}(x,z).
\]
Here we apply equation \eqref{eq:leftSupNot} to $(\mathsf{Cat}, (-)^{\op})$, so that the left superscript $\epsilon_2$ indicates whether or not we consider opposite categories.
\item For each $x \in \Obj(\mathcal{V})$, an identity $1$-morphism $1_x \in \Hom_{\mathcal{V}}^1(x,x)$.
\item Associator and unitor $2$-isomorphisms.
\end{enumerate}
This data is required to satisfy coherence constraints similar to those of a bicategory.

By keeping only the data associated to $1 \in \{\pm 1\}$, each bicategory $\mathcal{V}$ with contravariance defines a bicategory $\mathcal{V}_1$.

A pseudofunctor preserving contravariance between bicategories with contravariance is defined as in the case of a pseudofunctor, but with the additional requirement that the sign $\epsilon \in \{\pm 1\}$ of $1$-morphisms be preserved. Similarly, one defines pseudonatural transformations respecting contravariance, the components of which are required to be $1$-morphisms of degree $+1$.

There is an obvious strictification of the above definition which leads to the notion of a $2$-category with contravariance. Any bicategory with contravariance is biequivalent via a pseudofunctor preserving contravariance to a $2$-category with contravariance \cite[Theorems 8.1, 8.2]{shulman2018}.

\begin{Ex}
\begin{enumerate}[label=(\roman*)]
\item A bicategory $\mathcal{V}$ with weak duality involution defines a bicategory $\underline{\mathcal{V}}$ with contravariance by setting
\[
\Obj(\underline{\mathcal{V}}) = \Obj(\mathcal{V}), \qquad 1\Hom^{\epsilon}_{\underline{\mathcal{V}}}(x,y) = 1\Hom_{\mathcal{V}}(\prescript{\epsilon}{}{x},y).
\]
Composition of $1$- and $2$-morphisms in $\underline{\mathcal{V}}$ is induced by the corresponding compositions in $\mathcal{V}$. See \cite[Theorem 7.2]{shulman2018} for details.

\item Applying the construction from part (i) to $(\mathsf{Cat},(-)^{\op})$ yields a $2$-category with contravariance whose objects are small categories and whose $1$-morphisms are covariant ($\epsilon=1$) and contravariant ($\epsilon=-1$) functors.
\end{enumerate}
\end{Ex}

\subsection{String diagrams}
\label{sec:striDiag}

For a detailed introduction to string diagrams the reader is referred to \cite[\S 4]{baez2004}.

String diagrams, which will be used to perform calculations in $2$-categories, are Poincar\'{e} dual to globular diagrams for $2$-categories. Two dimensional regions of a string diagram are therefore labelled by objects of the $2$-category while strings and nodes are labelled by $1$- and $2$-morphisms, respectively. Our conventions are such that string diagrams are read from right to left and bottom to top. Below is a globular diagram (left) together with its corresponding string diagram (right):
\[
\begin{tikzpicture}[baseline= (a).base]
\node[scale=0.9] (a) at (0,0){
\begin{tikzcd}[column sep=5em]
 x
  \arrow[bend left=65]{r}[name=U,below]{}{f} 
  \arrow[]{r}[name=M]{}[above left]{g}
  \arrow[bend right=65]{r}[name=D,]{}[swap]{h}
& 
y
     \arrow[Rightarrow,to path={(U) -- node[label=right:\footnotesize \scriptsize $u$] {} (M)}]{}
     \arrow[shorten <=3pt,shorten >=3pt,Rightarrow,to path={(M) -- node[label=right:\footnotesize \scriptsize $v$] {} (D)}]{}
\end{tikzcd}
};
\end{tikzpicture}
\qquad \qquad \qquad
\begin{gathered}
\begin{tikzpicture}[scale=0.18,inner sep=0.35mm, place/.style={circle,draw=black,fill=black,thick}]
\draw [decoration={markings, mark=at position 0.55 with {\arrow{>}}},        postaction={decorate}] (0,0) -- (0,4);
\draw [decoration={markings, mark=at position 0.55 with {\arrow{>}}},        postaction={decorate}] (0,4) -- (0,8);
\draw [decoration={markings, mark=at position 0.55 with {\arrow{>}}},        postaction={decorate}] (0,8) -- (0,12);
\draw (0,4) node [shape=circle,draw,fill] {};
\draw (0,8) node [shape=circle,draw,fill] {};
\node [right,label={[label distance=0.4mm]0: \scriptsize$u$}] at (0,4) {};
\node [right,label={[label distance=0.4mm]0:\scriptsize$v$}] at (0,8) {};
\node [left,label={[label distance=0.4mm]180:\scriptsize$h$}] at (0,10) {};
\node [left,label={[label distance=0.4mm]180:\scriptsize$g$}] at (0,6) {};
\node [left,label={[label distance=0.4mm]180:\scriptsize$f$}] at (0,1.5) {};
\node at (-5,5) {\scriptsize$y$};
\node at (5,5) {\scriptsize$x$};
\end{tikzpicture}
\end{gathered}
\]
Compositions of $1$- and $2$-morphisms are represented by the appropriate concatenations of string diagrams. Although arrows drawn on strings are redundant, we will often include them if they clarify diagrams. We sometimes omit labels of two dimensional regions and we do not draw identity $1$-morphisms. For example, a $2$-morphism $u: 1_x \Rightarrow f$ may be depicted as
\[
\begin{tikzpicture}[scale=0.18,inner sep=0.35mm, place/.style={circle,draw=black,fill=black,thick}]
\draw [decoration={markings, mark=at position 0.65 with {\arrow{>}}},        postaction={decorate}] (0,0) -- (0,4);
\draw (0,0) node [shape=circle,draw,fill] {};
\node [right,label={[label distance=0.4mm]0: \scriptsize$u$}] at (0,0) {};
\node [left,label={[label distance=0.4mm]180:\scriptsize$f$}] at (0,2.0) {};
\end{tikzpicture}
\]

String diagrams can also be used for calculations in bicategories. If the bicategory is skeletal, as will be the case in all relevant examples below, then the only additional complication is that we must keep track of associators.

\subsection{\texorpdfstring{$\mathbb{Z}_2$}{}-graded groups}
\label{sec:Z2GrGrp}

Denote by $\mathbb{Z}_2$ the multiplicative group $\{\pm 1\}$. A group homomorphism $\pi: \hat{\mathsf{G}} \rightarrow \mathbb{Z}_2$ is called a $\mathbb{Z}_2$-graded group. Morphisms of $\mathbb{Z}_2$-graded groups are group homomorphisms which respect the structure maps to $\mathbb{Z}_2$. We will always assume that a given $\mathbb{Z}_2$-graded group is non-trivially graded in the sense that the structure map is surjective. A non-trivially graded $\mathbb{Z}_2$-graded group $\hat{\mathsf{G}}$ is necessarily an extension
\begin{equation}
\label{eq:sesGroup}
1 \rightarrow \mathsf{G} \rightarrow \hat{\mathsf{G}} \xrightarrow[]{\pi} \mathbb{Z}_2 \rightarrow 1.
\end{equation}
The subgroup $\mathsf{G} = \ker(\pi)$ is called the ungraded subgroup of $\hat{\mathsf{G}}$.

Alternatively, if we are given a group $\mathsf{G}$, then an extension of the form \eqref{eq:sesGroup} is called a Real structure on $\mathsf{G}$.

\begin{Ex}
An involutive group homomorphism $\varsigma: \mathsf{G} \rightarrow \mathsf{G}$ defines a split Real structure $\hat{\mathsf{G}} = \mathsf{G} \rtimes_{\varsigma} \mathbb{Z}_2$ on $\mathsf{G}$. Atiyah and Segal restrict attention to such Real structures in their study of equivariant $KR$-theory \cite{atiyah1969}.
\end{Ex}

\begin{Rem}
All constructions of this paper apply to general, as opposed to just split, Real structures. We give here two motivations for this level of generality.
\begin{enumerate}[label=(\roman*)]
\item Even if one is ultimately concerned only with split Real structures, inductive arguments in $KR$-theory often involve general Real structures.

\item The assumption that the Real structure is split is often too restrictive for applications to mathematical physics. For example, non-split Real structure are central to the orientifolding of string theory on orbifold backgrounds. See, for example, \cite{distler2011b}.
\end{enumerate}
\end{Rem}

Let $\hat{\mathsf{G}}$ be a $\mathbb{Z}_2$-graded group. Denote by $\Aut^{\textnormal{gen}}_{\textsf{Grp}}(\mathsf{G})$ the $\mathbb{Z}_2$-graded group of automorphisms and antiautomorphisms of $\mathsf{G}$. Define a map of $\mathbb{Z}_2$-graded groups by
\[
\varphi: \hat{\mathsf{G}} \rightarrow \Aut^{\textnormal{gen}}_{\textsf{Grp}}(\mathsf{G}), \qquad  \omega \mapsto (g \mapsto \omega g^{\pi(\omega)} \omega^{-1}).
\]
The induced $\hat{\mathsf{G}}$-action on $\mathsf{G}$ is called Real conjugation.

\begin{Ex}
Let $(\mathcal{C}, (-)^*, \Theta)$ be a category with duality. Given $x \in \Obj(\mathcal{C})$, let $\Aut_{\mathcal{C}}^{\textnormal{gen}}(x)$ be the set of all automorphisms and antiautomorphisms of $x$, the latter being by definition an isomorphism $x^* \rightarrow x$. For $f \in \Aut_{\mathcal{C}}^{\textnormal{gen}}(x)$, define $\pi(f) \in \mathbb{Z}_2$ so that $f: {^{\pi(f)}}x \rightarrow x$, the notation as in equation \eqref{eq:leftSupNot}. Then $\Aut_{\mathcal{C}}^{\textnormal{gen}}(x)$ becomes a $\mathbb{Z}_2$-graded group with ungraded subgroup $\Aut_{\mathcal{C}}(x)$ when given the product and inverse
\[
f_2 f_1 = f_2 \circ {^{\pi(f_2)}}f^{\pi(f_2)}_1 \circ \Theta_x^{\delta_{\pi(f_2),\pi(f_1),-1}}
\]
and
\[
I(f)= \begin{cases}
f^{-1} & \mbox{if } \pi(f)=1, \\
\Theta_x^{-1} \circ f^* & \mbox{if } \pi(f)=-1,
\end{cases}
\]
respectively, where we have introduced the notation
\[
\delta_{\epsilon_2, \epsilon_1 ,-1} =
\begin{cases}
1& \mbox{if } \epsilon_1 = \epsilon_2 =-1, \\
0 & \mbox{otherwise.}
\end{cases}
\]
For example, the associativity of the composition of three antiautomorphisms follows from equation \eqref{eq:catDualCompat}.
\end{Ex}

\subsection{Loop groupoids}
\label{sec:loopGrp}

Recall that a groupoid is a category in which all morphisms are isomorphisms. A groupoid is called finite if it has only finitely many objects and morphisms.

If a group $\mathsf{G}$ acts on a set $X$, then we denote by $X \git \mathsf{G}$ the groupoid with
\[
\Obj(X \git \mathsf{G}) = X, \qquad \Hom_{X \git \mathsf{G}}(x,y) = \{g \in \mathsf{G} \mid g x = y\}.
\]
We will write $B \mathsf{G}$ in place of $\pt \git \mathsf{G}$.

\begin{Def}[{\cite[\S 1.3]{willerton2008}}]
The loop groupoid of a finite groupoid $\mathfrak{G}$ is the functor category
\[
\Lambda \mathfrak{G} = 1\Hom_{\mathsf{Cat}}(B \mathbb{Z}, \mathfrak{G}).
\]
\end{Def}

Concretely, an object $(x,\gamma)$ of $\Lambda \mathfrak{G}$ is a loop $\gamma: x \rightarrow x$ in $\mathfrak{G}$ while a morphism $(x_1, \gamma_1) \rightarrow (x_2,\gamma_2)$ is a morphism $g: x_1 \rightarrow x_2$ which satisfies $\gamma_2 = g \gamma_1 g^{-1}$.

A finite groupoid over $B\mathbb{Z}_2$ is a morphism $\pi: \hat{\mathfrak{G}} \rightarrow B \mathbb{Z}_2$ of finite groupoids. The functor $\pi$ classifies an equivalence class of double covers $\pi:\mathfrak{G} \rightarrow \hat{\mathfrak{G}}$; we fix a choice of such a double cover in what follows. The relevant analogue of the loop groupoid in the setting of finite groupoids over $B \mathbb{Z}_2$ is the following.

\begin{Def}[\cite{mbyoung2018a}]
The unoriented loop groupoid $\Lambda^{\refl}_{\pi} \hat{\mathfrak{G}}$ of a finite groupoid $\hat{\mathfrak{G}}$ over $B \mathbb{Z}_2$ has objects the degree one loops in $\hat{\mathfrak{G}}$ and morphisms $(x_1, \gamma_1) \rightarrow (x_2,\gamma_2)$ the morphisms $\omega: x_1 \rightarrow x_2$ which satisfy $\gamma_2=\omega \gamma^{\pi(\omega)}_1 \omega^{-1}$.
\end{Def}

The superscript `$\refl$' stands for reflection, since $\Lambda_{\pi}^{\refl} \hat{\mathfrak{G}}$ is equivalent to the quotient of $\Lambda \mathfrak{G}$ by the diagonal $\mathbb{Z}_2$-action coming from deck transformations of $\mathfrak{G}$ and reflection of the circle $B \mathbb{Z}$.

\begin{Ex}
\begin{enumerate}[label=(\roman*)]
\item Let $\mathsf{G}$ be a finite group. The loop groupoid $\Lambda B \mathsf{G}$ is equivalent to the conjugation action groupoid $\mathsf{G} \git \mathsf{G}$.
\item Let $\hat{\mathsf{G}}$ be a finite $\mathbb{Z}_2$-graded group. The functor $B \pi: B \hat{\mathsf{G}} \rightarrow B\mathbb{Z}_2$ classifies the double cover $B \mathsf{G} \rightarrow B \hat{\mathsf{G}}$. The unoriented loop groupoid $\Lambda_{\pi}^{\refl} B \hat{\mathsf{G}}$ is equivalent to the Real conjugation action groupoid $\mathsf{G} \git_{\varphi} \hat{\mathsf{G}}$.
\end{enumerate}
\end{Ex}

\subsection{Twisted loop transgression}
\label{sec:twistLoopTran}

Loop transgression for finite groupoids was studied by Willerton \cite{willerton2008}. We recall a version of loop transgression for finite groupoids over $B \mathbb{Z}_2$  \cite{mbyoung2018a}.

Let $\hat{\mathfrak{G}}$ be a finite groupoid over $B \mathbb{Z}_2$. The double cover $\pi: \mathfrak{G} \rightarrow \hat{\mathfrak{G}}$ can be used to twist local systems on (the simplicial complex associated to) $\hat{\mathfrak{G}}$. Given an abelian group $\mathsf{A}$, viewed as a $\mathbb{Z}_2$-module via inversion, denote by $C^{\bullet}(\hat{\mathfrak{G}}, \mathsf{A}_{\pi})$ the complex of $\pi$-twisted $\mathsf{A}$-valued simplicial cochains on $\hat{\mathfrak{G}}$. Write $[\omega_n \vert \cdots \vert \omega_1]$ for the $n$-simplex of $\hat{\mathfrak{G}}$ determined by the diagram $x_1 \xrightarrow[]{\omega_1} \cdots \xrightarrow[]{\omega_n} x_{n+1}$ in $\hat{\mathfrak{G}}$.
In this notation, the differential of $\hat{\beta} \in C^n(\hat{\mathfrak{G}}, \mathsf{A}_{\pi})$ is defined by
\begin{multline*}
d \hat{\beta}([\omega_{n+1} \vert \cdots \vert \omega_1]) = \hat{\beta}([\omega_n \vert \cdots \vert \omega_1])^{\pi(\omega_{n+1})} \hat{\beta}([\omega_{n+1} \vert \cdots \vert \omega_2])^{(-1)^{n+1}} \times \\
\prod_{i=1}^n \hat{\beta}([\omega_{n+1} \vert \cdots \vert \omega_{i+2} \vert \omega_{i+1} \omega_i \vert \omega_{i-1} \vert \cdots \vert \omega_1])^{(-1)^i}.
\end{multline*}
The notation $Z^{\bullet} \subset C^{\bullet}$ indicates the subgroup of cocycles. Without loss of generality, we will assume that all cochains are normalized in the sense that they evaluate to the identity on chains in which one of the morphisms $\omega_i$ is an identity map.

We also write $[\omega_n \vert \cdots \vert \omega_1]\gamma$ for the $n$-simplex of $\Lambda_{\pi}^{\refl}  \hat{\mathfrak{G}}$ determined by the diagram
\[
\gamma \xrightarrow[]{\omega_1} \omega_1 \gamma^{\pi(\omega_1)} \omega_1^{-1} \xrightarrow[]{\omega_2} \cdots \xrightarrow[]{\omega_n} (\omega_n \cdots \omega_1) \gamma^{\pi(\omega_n \cdots \omega_1)} (\omega_n \cdots \omega_1)^{-1}.
\]

Let $k$ be a field. Reflection twisted loop transgression is a cochain map
\[
\uptau_{\pi}^{\refl} : C^{\bullet}(\hat{\mathfrak{G}} , k^{\times}_{\pi}) \rightarrow C^{\bullet-1}(\Lambda_{\pi}^{\refl} \hat{\mathfrak{G}}, k^{\times}).
\]
The map $\uptau_{\pi}^{\refl}$ is defined by a push-pull procedure, the main point being that the pushforward is along an unoriented map. This leads to the change in coefficient systems. We do not require a full description of $\uptau_{\pi}^{\refl}$. Instead, we record that for a $2$-cochain $\hat{\theta} \in C^2(\hat{\mathfrak{G}}, k^{\times}_{\pi})$ we have
\[
\uptau^{\refl}_{\pi}(\hat{\theta})([\omega]\gamma) = \hat{\theta}([\gamma^{-1} \vert \gamma])^{\frac{\pi(\omega)-1}{2}} \frac{\hat{\theta}([\omega \gamma^{\pi(\omega)} \omega^{-1} \vert \omega])}{\hat{\theta}([\omega \vert \gamma^{\pi(\omega)}])}
\]
and for a $3$-cochain $\hat{\alpha} \in C^3(\hat{\mathfrak{G}}, k^{\times}_{\pi})$ we have
\begin{multline*}
\uptau_{\pi}^{\refl}(\hat{\alpha})([\omega_2 \vert \omega_1 ]\gamma)=
\hat{\alpha}([\gamma \vert \gamma^{-1} \vert \gamma])^{\delta_{\pi(\omega_2), \pi(\omega_1),-1}} \times \\
\left(
\frac{\hat{\alpha}([\omega_1 \gamma^{-\pi(\omega_1)} \omega_1^{-1} \vert \omega_1 \gamma^{\pi(\omega_1)} \omega_1^{-1} \vert \omega_1]) \hat{\alpha}([\omega_1 \vert \gamma^{-\pi(\omega_1)} \vert \gamma^{\pi(\omega_1)}])}{\hat{\alpha}([\omega_1 \gamma^{-\pi(\omega_1)} \omega_1^{-1} \vert \omega_1 \vert \gamma^{\pi(\omega_1)}])}
\right)^{\frac{\pi(\omega_2) -1}{2}} \times \\
\frac{\hat{\alpha}([\omega_2 \vert \omega_1 \vert \gamma^{\pi(\omega_2 \omega_1)}]) \hat{\alpha}([\omega_2 \omega_1 \gamma^{\pi(\omega_2 \omega_1)} (\omega_2 \omega_1)^{-1} \vert \omega_2 \vert \omega_1])}{\hat{\alpha}([\omega_2 \vert \omega_1 \gamma^{\pi(\omega_2 \omega_1)} \omega_1^{-1} \vert \omega_1])}.
\end{multline*}
If $\hat{\theta}$ is in fact a $2$-cocycle, then $\uptau_{\pi}^{\refl}(\hat{\theta})$ is a $1$-cocycle, meaning the equality
\begin{equation}
\label{eq:twistTransPartialCocycle}
\uptau_{\pi}^{\refl}(\hat{\theta})([\omega_2] \omega_1 \gamma^{\pi(\omega_1)} \omega_1^{-1}) \uptau_{\pi}^{\refl}(\hat{\theta})([\omega_1] \gamma) = \uptau_{\pi}^{\refl}(\hat{\theta})([\omega_2 \omega_1] \gamma)
\end{equation}
holds for each $2$-chain $[\omega_2 \vert \omega_1]\gamma$. This follows from \cite{mbyoung2018a}, but can also be verified directly. This and the corresponding statement for $3$-cocycles are the only facts about $\uptau_{\pi}^{\refl}$ that we will assume. In particular, the expressions for $\uptau_{\pi}^{\refl}(\hat{\theta})$ and $\uptau_{\pi}^{\refl}(\hat{\alpha})$ will be derived from the point of view of Real ($2$-)representation theory.

Finally, we note that when $\hat{\mathfrak{G}} = B \hat{\mathsf{G}}$ and $\mathfrak{G} = B \mathsf{G}$ the twisted transgression map $\uptau_{\pi}^{\refl}$ restricts to Willerton's transgression map $\uptau : C^{\bullet}(B \mathsf{G} , k^{\times}) \rightarrow C^{\bullet-1}(\Lambda B \mathsf{G}, k^{\times})$.

\section{Twisted Real representation theory of finite groups}
\label{sec:RealRepFinGroup}

As motivation for the remainder of the paper, we recall the basics of the theory of twisted, or projective, Real representations of finite groups, in both its linear and antilinear formulation. For twisted representations of finite group(oid)s, see \cite{karpilovsky1985}, \cite{willerton2008}.

\subsection{The antilinear theory}
\label{sec:RealFGRep}

In the case of untwisted real representations, the material in this section is standard \cite{fulton1991}. Aspects of the untwisted Real case are treated in \cite{atiyah1969}, \cite{karoubi1970}. A general reference is \cite{mbyoung2018a}.

Let $k$ be a field which is a quadratic extension of a field $k_0$. We regard $k_0$ as the fixed subfield of a $k_0$-linear Galois involution $k \rightarrow k$. A standard example is $k_0 = \mathbb{R} \subset k=\mathbb{C}$. A map $V \rightarrow W$ of vector spaces over $k$ is called $+1$-linear (resp. $-1$-linear) if it is $k$-linear (resp. $k$-antilinear).

Let $\mathsf{G}$ be a finite group with Real structure $\hat{\mathsf{G}}$. Let $\hat{\theta} \in Z^2(B \hat{\mathsf{G}}, k^{\times}_{\pi})$, where $\mathsf{G}$ acts trivially on $k^{\times}$ and $\hat{\mathsf{G}} \backslash \mathsf{G}$ acts by the Galois involution. Write $\theta \in Z^2(B \mathsf{G}, k^{\times})$ for the restriction of $\hat{\theta}$ to $B\mathsf{G}$.

\begin{Def}
A $\hat{\theta}$-twisted Real representation of $\mathsf{G}$ is a finite dimensional vector space $V$ over $k$ together with $\pi(\omega)$-linear maps $\rho(\omega) : N \rightarrow N$, $\omega \in \hat{\mathsf{G}}$, which satisfy $\rho(e)=1_N$ and
\[
\rho(\omega_2) \circ \rho(\omega_1) = \hat{\theta}([\omega_2 \vert \omega_1]) \rho(\omega_2 \omega_1).
\]
\end{Def}

Twisted Real representations of $\mathsf{G}$ and their $\hat{\mathsf{G}}$-equivariant $k$-linear maps form a $k_0$-linear additive category $\mathsf{RRep}_k^{\hat{\theta}}(\mathsf{G})$. Despite the notation, $\mathsf{RRep}_k^{\hat{\theta}}(\mathsf{G})$ depends on the Real structure $\hat{\mathsf{G}}$. Let $KR^{0+\hat{\theta}}(B \mathsf{G})$ be the Grothendieck group of $\mathsf{RRep}^{\hat{\theta}}_k(\mathsf{G})$.

The Real character of a $\hat{\theta}$-twisted Real representation $\rho$ is the function
\[
\chi_{\rho}: \mathsf{G} \rightarrow  k, \qquad  g \mapsto \tr_N(\rho(g)).
\]
In other words, $\chi_{\rho}$ is the character of the underlying $\theta$-twisted representation of $\mathsf{G}$. The new feature of Real characters is their Real conjugation equivariance,
\begin{equation}
\label{eq:real1Char}
\chi_{\rho}(\omega g^{\pi(\omega)} \omega^{-1}) = \uptau_{\pi}^{\refl}(\hat{\theta})([\omega] g) \cdot \chi_{\rho}(g), \qquad \omega \in \hat{\mathsf{G}}
\end{equation}
which refines the conjugation equivariance of characters of $\theta$-twisted representations. The Real character map extends to a $k$-linear map
\begin{equation}
\label{eq:RealRepChar}
\chi: KR^{0+\hat{\theta}}(B \mathsf{G}) \otimes_{\mathbb{Z}} k \rightarrow \Gamma_{\Lambda_{\pi}^{\refl} B \hat{\mathsf{G}}}(\uptau_{\pi}^{\refl}(\hat{\theta})_k).
\end{equation}
Adopting the notation of \cite[\S 2.2]{willerton2008}, the right hand side denotes the space of flat sections of the transgressed line bundle $\uptau_{\pi}^{\refl}(\hat{\theta})_k \rightarrow \Lambda_{\pi}^{\refl} B \hat{\mathsf{G}}$. Explicitly, $\Gamma_{\Lambda_{\pi}^{\refl} B \hat{\mathsf{G}}}(\uptau_{\pi}^{\refl}(\hat{\theta})_k)$ is the space of all functions $\chi: \mathsf{G} \rightarrow k$ which satisfy equation \eqref{eq:real1Char}. When $k = \mathbb{C}$ with complex conjugation as the involution, the map \eqref{eq:RealRepChar} is an isomorphism \cite[Theorem 3.7]{mbyoung2018a}.

\begin{Ex}
The real setting is $k_0 = \mathbb{R} \subset k=\mathbb{C}$ with $\pi: \hat{\mathsf{G}} = \mathsf{G} \times \mathbb{Z}_2 \rightarrow \mathbb{Z}_2$ the projection and $\hat{\theta}=1$. Then $\mathsf{RRep}_{\mathbb{C}}(\mathsf{G}) \simeq \mathsf{Rep}_{\mathbb{R}}(\mathsf{G})$ and $KR^0(B \mathsf{G}) \simeq RO(\mathsf{G})$. Equation \eqref{eq:real1Char} becomes the statement that characters of real representations are real valued class functions and the isomorphism \eqref{eq:RealRepChar} identifies $RO(\mathsf{G}) \otimes_{\mathbb{Z}} \mathbb{C}$ with the space of functions on $\mathsf{G}$ which are constant on conjugacy classes and their inverses.
\end{Ex}

\subsection{The linear theory}
\label{sec:linearRealFGRep}

We describe a linear approach to the twisted Real representation theory of a finite group. Aspects of the untwisted real case are discussed in \cite{zibrowius2015}. This section will be the basis for our categorification in later sections.

We keep the notation from Section \ref{sec:RealFGRep}, although $k$ is now an arbitrary field and $\hat{\mathsf{G}} \backslash \mathsf{G}$ acts on $k^{\times}_{\pi}$ by inversion. We give two linear versions of the notion of a Real representation of $\mathsf{G}$. The first is less natural, requiring the choice of an element $\varsigma \in \hat{\mathsf{G}} \backslash \mathsf{G}$, but has the benefit that it fits into the framework of Grothendieck--Witt theory.

\begin{Lem}
\label{lem:dualProjRep}
Let $(V,\rho)$ be a $\theta$-twisted representation of $\mathsf{G}$. For each $\varsigma \in \hat{\mathsf{G}} \backslash \mathsf{G}$, the pair  $(V^{\vee},\rho^{\varsigma})$, where $V^{\vee}$ is the $k$-linear dual of $V$ and 
\[
\rho^{\varsigma}(g) = \uptau_{\pi}^{\refl}(\hat{\theta})([\varsigma^{-1}] g)^{-1} \rho(\varsigma^{-1} g^{-1} \varsigma)^{\vee}, \qquad g \in \mathsf{G}
\]
is a $\theta$-twisted representation of $\mathsf{G}$.
\end{Lem}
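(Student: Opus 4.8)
The plan is to verify directly that the maps $\rho^{\varsigma}(g)$, $g \in \mathsf{G}$, satisfy the defining relations of a $\theta$-twisted representation, namely $\rho^{\varsigma}(e) = 1_{V^{\vee}}$ and $\rho^{\varsigma}(g_2) \circ \rho^{\varsigma}(g_1) = \theta([g_2 \vert g_1]) \rho^{\varsigma}(g_2 g_1)$. The normalization $\rho^{\varsigma}(e) = 1_{V^{\vee}}$ is immediate from the normalization conventions on cochains and the fact that $\rho(e) = 1_V$. The substance is the cocycle relation, and the proof is essentially a bookkeeping computation that combines three ingredients: the twisted representation relation for $\rho$, the behaviour of $k$-linear duality under composition (which reverses order, so $(\rho(a) \circ \rho(b))^{\vee} = \rho(b)^{\vee} \circ \rho(a)^{\vee}$), and the partial cocycle identity \eqref{eq:twistTransPartialCocycle} for the transgressed $1$-cochain $\uptau_{\pi}^{\refl}(\hat{\theta})$.

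Concretely, first I would expand $\rho^{\varsigma}(g_2) \circ \rho^{\varsigma}(g_1)$ using the definition, pulling out the scalar factors $\uptau_{\pi}^{\refl}(\hat{\theta})([\varsigma^{-1}] g_2)^{-1}$ and $\uptau_{\pi}^{\refl}(\hat{\theta})([\varsigma^{-1}] g_1)^{-1}$ and rewriting the composite of duals as $\big(\rho(\varsigma^{-1} g_1^{-1} \varsigma) \circ \rho(\varsigma^{-1} g_2^{-1} \varsigma)\big)^{\vee}$. Applying the $\theta$-twisted relation for $\rho$ to $\rho(\varsigma^{-1} g_1^{-1} \varsigma) \circ \rho(\varsigma^{-1} g_2^{-1} \varsigma)$ produces the factor $\theta([\varsigma^{-1} g_1^{-1} \varsigma \vert \varsigma^{-1} g_2^{-1} \varsigma])$ together with $\rho(\varsigma^{-1} (g_2 g_1)^{-1} \varsigma)^{\vee}$. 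Comparing with the definition of $\rho^{\varsigma}(g_2 g_1)$, the claim then reduces to the scalar identity
\[
\uptau_{\pi}^{\refl}(\hat{\theta})([\varsigma^{-1}] g_2)^{-1} \, \uptau_{\pi}^{\refl}(\hat{\theta})([\varsigma^{-1}] g_1)^{-1} \, \theta([\varsigma^{-1} g_1^{-1} \varsigma \vert \varsigma^{-1} g_2^{-1} \varsigma]) = \theta([g_2 \vert g_1]) \, \uptau_{\pi}^{\refl}(\hat{\theta})([\varsigma^{-1}] g_2 g_1)^{-1}.
\]
Here one must be slightly careful that $\varsigma^{-1}$ has degree $\pi(\varsigma^{-1}) = -1$, so that conjugation by $\varsigma^{-1}$ in the loop groupoid acts on loops by $g \mapsto \varsigma^{-1} g^{-1} \varsigma$; this matches exactly the loops appearing above, so the partial cocycle relation \eqref{eq:twistTransPartialCocycle} applies to the chain $[\varsigma^{-1} \vert \cdots]$ with the appropriate base loop.

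The main obstacle, and the only non-formal part, is establishing that last scalar identity. The cleanest route is to recognize it as an instance of the cocycle condition $d \hat{\theta} = 1$ for $\hat{\theta} \in Z^2(B\hat{\mathsf{G}}, k^{\times}_{\pi})$ combined with the explicit push-pull formula for $\uptau_{\pi}^{\refl}(\hat{\theta})([\omega]\gamma)$ recorded in Section \ref{sec:twistLoopTran}; alternatively, and more conceptually, it is exactly the statement that $\uptau_{\pi}^{\refl}(\hat{\theta})$ is a $1$-cocycle on $\Lambda_{\pi}^{\refl} B\hat{\mathsf{G}}$ evaluated on the chain $[\varsigma^{-1} g_1^{-1} \varsigma \, \varsigma^{-1} \vert \varsigma^{-1}] \cdots$, i.e.\ relation \eqref{eq:twistTransPartialCocycle} applied appropriately, after using that $\uptau_{\pi}^{\refl}$ is a cochain map so that the restriction to loops based at points with trivial conjugation recovers ordinary $\theta$-data. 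I would verify this by substituting the formula for $\uptau_{\pi}^{\refl}(\hat{\theta})$ on $2$-cochains and cancelling, using normalization to kill degenerate terms; this is routine but is where all the real content sits. Everything else—the duality order reversal, the scalar extraction, the normalization of $\rho^{\varsigma}(e)$—is bookkeeping.
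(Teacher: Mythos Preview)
Your approach is correct and is essentially the paper's: both reduce the twisted multiplicativity of $\rho^{\varsigma}$ to a scalar identity that is a consequence of $d\hat{\theta}=1$. The paper records this consequence in the packaged form of equation~\eqref{eqn:keyIdentity}, while you arrive at the equivalent identity phrased through the transgressed values $\uptau_{\pi}^{\refl}(\hat{\theta})([\varsigma^{-1}]g)$; once you substitute the explicit formula from Section~\ref{sec:twistLoopTran}, the two verifications become the same cocycle computation.

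One caveat on your ``more conceptual'' alternative: the scalar identity you need is \emph{not} a direct instance of the $1$-cocycle condition \eqref{eq:twistTransPartialCocycle}. That condition relates the values of $\uptau_{\pi}^{\refl}(\hat{\theta})$ along a single $2$-chain $[\omega_2\vert\omega_1]\gamma$ in $\Lambda_{\pi}^{\refl}B\hat{\mathsf{G}}$, i.e.\ at three successive loops related by Real conjugation. Your three values $\uptau_{\pi}^{\refl}(\hat{\theta})([\varsigma^{-1}]g_1)$, $\uptau_{\pi}^{\refl}(\hat{\theta})([\varsigma^{-1}]g_2)$, $\uptau_{\pi}^{\refl}(\hat{\theta})([\varsigma^{-1}]g_2g_1)$ instead sit over three \emph{different} base loops not connected by a chain, and the identity also involves the untwisted $\theta$-values at those loops. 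So the clean route really is the direct substitution you describe, which is exactly what the paper does via~\eqref{eqn:keyIdentity}.
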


\begin{proof}
The key point is the following easily-verified identity, valid for $g_1, g_2 \in \mathsf{G}$ and $\omega \in \hat{\mathsf{G}}$:
\begin{equation}
\label{eqn:keyIdentity}
\frac{\hat{\theta}([\omega g_2 \omega^{-1} \vert \omega g_1 \omega^{-1}])}{\hat{\theta}([g_2 \vert g_1])^{\pi(\omega)}}
=
\frac{\hat{\theta}([\omega \vert \omega^{-1}]) \hat{\theta}([\omega g_2 g_1 \vert \omega^{-1}]) \hat{\theta}([\omega \vert g_2 g_1])}{\hat{\theta}([\omega \vert g_2]) \hat{\theta}([\omega g_2 \vert \omega^{-1}]) \hat{\theta}([\omega \vert g_1]) \hat{\theta}([\omega g_1 \vert \omega^{-1}])}.
\end{equation}
\end{proof}

Each element $\varsigma \in \hat{\mathsf{G}} \backslash \mathsf{G}$ determines a $k$-linear exact duality structure $(P^{\varsigma}, \Theta^{\varsigma})$ on $\mathsf{Rep}_k^{\theta}(\mathsf{G})$. The functor $P^{\varsigma} : \mathsf{Rep}_k^{\theta}(\mathsf{G})^{\op} \rightarrow  \mathsf{Rep}_k^{\theta}(\mathsf{G})$ is given on objects by $P^{\varsigma}(\rho) = \rho^{\varsigma}$. The natural isomorphism $\Theta^{\varsigma}: 1_{\mathsf{Rep}_k^{\theta}(\mathsf{G})} \Rightarrow P^{\varsigma} \circ (P^{\varsigma}) ^{\op}$ has components
\[
\Theta^{\varsigma}_{\rho} = \hat{\theta}([\varsigma^{-1} \vert \varsigma^{-1}]) \ev_{\rho} \circ \rho(\varsigma^{-2}).
\]
Given two elements $\varsigma_1, \varsigma_2 \in \hat{\mathsf{G}} \backslash \mathsf{G}$, the natural transformation $\nu^{\varsigma_1, \varsigma_2}: P^{\varsigma_1} \Rightarrow P^{\varsigma_2}$ with components $\nu^{\varsigma_1, \varsigma_2}_{\rho} = \rho(\varsigma_1^{-1} \varsigma_2)^{\vee}$ lifts to a non-singular form functor
\[
(\mathsf{Rep}_k^{\theta}(\mathsf{G}), P^{\varsigma_1}, \Theta^{\varsigma_1}) \xrightarrow[]{\sim} (\mathsf{Rep}_k^{\theta}(\mathsf{G}), P^{\varsigma_2}, \Theta^{\varsigma_2}).
\]
In this way, the pair $(\hat{\mathsf{G}}, \hat{\theta})$ determines a $\mathsf{G}$-torsor of duality structures on $\mathsf{Rep}_k^{\theta}(\mathsf{G})$.

\begin{Def}
A $\hat{\theta}$-twisted symmetric representation of $\mathsf{G}$ is a symmetric form in $(\mathsf{Rep}_k^{\theta}(\mathsf{G}), P^{\varsigma}, \Theta^{\varsigma})$.
\end{Def}

Twisted symmetric representations are objects of a homotopy fixed point category, which we denote by $\mathsf{Rep}_k^{\hat{\theta},\varsigma}(\mathsf{G})$.

\begin{Ex}
Let $\hat{\mathsf{G}} =\mathsf{G} \times \mathbb{Z}_2$ with $\varsigma$ the generator of $\mathbb{Z}_2$. An untwisted symmetric representation is a representation together with a $\mathsf{G}$-invariant nondegenerate symmetric bilinear form. If instead the twisting is $\hat{\theta}([\omega_2 \vert \omega_1])= (-1)^{\delta_{\pi(\omega_2), \pi(\omega_1),-1}}$, then the bilinear form is skew-symmetric
\end{Ex}

We now give a more invariant definition.

\begin{Def}
A $\hat{\theta}$-twisted generalized symmetric representation of $\mathsf{G}$ is a vector space $N$ together with linear maps $\rho(\omega): \prescript{\pi(\omega)}{}{N} \rightarrow N$, $\omega \in \hat{\mathsf{G}}$, which satisfy $\rho(e) = 1_N$ and
\[
\rho(\omega_2) \circ \prescript{\pi(\omega_2)}{}{\rho(\omega_1)}^{\pi(\omega_2)} \circ \ev_N^{\delta_{\pi(\omega_1), \pi(\omega_2), -1}} = \hat{\theta}([\omega_2 \vert \omega_1]) \rho(\omega_2 \omega_1).
\]
\end{Def}

Finite dimensional twisted generalized symmetric representations form a category $\mathsf{SRep}_k^{\hat{\theta}}(\mathsf{G})$, morphisms $\phi: N \rightarrow M$ being morphisms of twisted representations which satisfy $\phi \circ \rho_N(\omega) \circ \phi^{\vee} = \rho_M(\omega)$ for each $\omega \in \hat{\mathsf{G}} \backslash \mathsf{G}$.

\begin{Rem}
More generally, a Real representation of $\mathsf{G}$ on an object $x$ of a category with duality $(\mathcal{C}, (-)^*, \Theta)$ is defined to be a $\mathbb{Z}_2$-graded group homomorphism $\rho: \hat{\mathsf{G}} \rightarrow \Aut_{\mathcal{C}}^{\textnormal{gen}}(x)$. To relate this to the definition above, let ${^{\hat{\theta}}}\hat{\mathsf{G}}$ be the  $\mathbb{Z}_2$-graded group which is equal to $k^{\times} \times \hat{\mathsf{G}}$ as a set and has product
\[
(z_2,\omega_2) \cdot (z_1, \omega_1) = (\hat{\theta}([\omega_2 \vert \omega_1]) z_2 z_1^{\pi(\omega_2)}, \omega_2 \omega_1).
\]
A $\hat{\theta}$-twisted generalized symmetric representation of $\mathsf{G}$ is then a Real representation of the ungraded subgroup of ${^{\hat{\theta}}}\hat{\mathsf{G}}$ on an object of $(\mathsf{Vect}_k, (-)^{\vee}, \ev)$ which has the additional property that $k^{\times} \leq {^{\hat{\theta}}}\hat{\mathsf{G}}$ acts by scalar multiplication.
\end{Rem}

\begin{Prop}
\label{prop:symmGenSymmEquiv}
The categories $\mathsf{Rep}_k^{\hat{\theta},\varsigma}(\mathsf{G})$ and $\mathsf{SRep}_k^{\hat{\theta}}(\mathsf{G})$ are equivalent.
\end{Prop}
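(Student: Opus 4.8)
The strategy is to produce mutually inverse functors, using that after fixing $\varsigma \in \hat{\mathsf{G}} \backslash \mathsf{G}$ the two categories encode literally the same data. A $\hat{\theta}$-twisted generalized symmetric representation $(N, \rho)$ records maps $\rho(\omega)$ for all $\omega \in \hat{\mathsf{G}}$; but every $\omega \in \hat{\mathsf{G}} \backslash \mathsf{G}$ is uniquely $g\varsigma$ with $g \in \mathsf{G}$, and the defining relation evaluated at the pair $(g, \varsigma)$ gives $\rho(g\varsigma) = \hat{\theta}([g \vert \varsigma])^{-1} \rho(g) \circ \rho(\varsigma)$. Thus $\rho$ is determined by the $\theta$-twisted representation $\rho\vert_{\mathsf{G}}$ together with the single linear map $\rho(\varsigma) \colon N^{\vee} \rightarrow N$, and the remaining instances of the defining relation become constraints purely on the pair $(\rho\vert_{\mathsf{G}}, \rho(\varsigma))$. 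The plan is to identify these constraints with: \textup{(a)} $\rho\vert_{\mathsf{G}}$ is a $\theta$-twisted representation; \textup{(b)} $\psi := \rho(\varsigma)^{-1} \colon N \rightarrow N^{\vee}$ is an isomorphism $(N, \rho\vert_{\mathsf{G}}) \xrightarrow{\sim} P^{\varsigma}(N, \rho\vert_{\mathsf{G}})$ of $\theta$-twisted representations, $P^{\varsigma}$ as in Lemma~\ref{lem:dualProjRep}; and \textup{(c)} the symmetry relation $\psi^{\vee} \circ \Theta^{\varsigma}_{\rho\vert_{\mathsf{G}}} = \psi$. Since \textup{(b)} and \textup{(c)} are precisely the conditions defining a symmetric form in $(\mathsf{Rep}_k^{\theta}(\mathsf{G}), P^{\varsigma}, \Theta^{\varsigma})$, establishing this identification gives the bijection on objects.

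Concretely I would set $F(N, \rho) = \bigl( (N, \rho\vert_{\mathsf{G}}), \rho(\varsigma)^{-1} \bigr)$ and define the reverse functor $G$ by keeping the underlying space and $\mathsf{G}$-action of a symmetric form $((V, \rho), \psi)$ and putting $\rho(g\varsigma) := \hat{\theta}([g \vert \varsigma])^{-1} \rho(g) \circ \psi^{-1}$ for $g \in \mathsf{G}$. Checking that $F$ lands in $\mathsf{Rep}_k^{\hat{\theta},\varsigma}(\mathsf{G})$ is exactly the task of deriving \textup{(a)}--\textup{(c)}: \textup{(a)} is the defining relation restricted to $\mathsf{G}$; \textup{(b)} and \textup{(c)} come from evaluating the defining relation at those pairs $(\omega_2, \omega_1)$ with $\{\omega_1, \omega_2\} \cap \{\varsigma, \varsigma^{-1}\} \neq \varnothing$ and matching the resulting identities with the explicit formulas for $P^{\varsigma}$ and $\Theta^{\varsigma}$, the crucial input being the cocycle identity \eqref{eqn:keyIdentity} at $\omega = \varsigma$ --- the same identity that produces the $\uptau_{\pi}^{\refl}(\hat{\theta})$-twist in Lemma~\ref{lem:dualProjRep}. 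Conversely, one checks that the $\rho$ produced by $G$ satisfies the defining relation at an arbitrary pair; grading by $\pi$ reduces this to the four cases $(\pi(\omega_2), \pi(\omega_1)) \in \{\pm 1\}^{2}$, each of which follows from the $\theta$-cocycle relation for $\theta = \hat{\theta}\vert_{B\mathsf{G}}$ together with \textup{(b)}, \textup{(c)} and \eqref{eqn:keyIdentity}. By construction $F$ and $G$ are mutually inverse on objects, and on morphisms both act as the identity on underlying $k$-linear maps, the morphism condition $\phi \circ \rho_N(\omega) \circ \phi^{\vee} = \rho_M(\omega)$ of $\mathsf{SRep}_k^{\hat{\theta}}(\mathsf{G})$ translating, via $\psi = \rho(\varsigma)^{-1}$, into the partial isometry condition $\phi^{\vee} \circ \psi_M \circ \phi = \psi_N$ of $\mathsf{Rep}_k^{\hat{\theta},\varsigma}(\mathsf{G})$.

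Alternatively --- and this is probably the cleaner way to organize the bookkeeping --- one can use the Remark preceding the statement: a $\hat{\theta}$-twisted generalized symmetric representation is the same as a $\mathbb{Z}_2$-graded homomorphism $\rho \colon {}^{\hat{\theta}}\hat{\mathsf{G}} \rightarrow \Aut_{\mathsf{Vect}_k}^{\textnormal{gen}}(N)$ with $k^{\times}$ acting by scalars. Restricting along the set-theoretic section $g\varsigma \leftrightarrow (g, \varsigma)$ identifies such a $\rho$ with a $\theta$-twisted representation of $\mathsf{G}$ plus a compatible lift of $\varsigma$ to an element of $\Aut_{\mathsf{Vect}_k}^{\textnormal{gen}}(N)$ of degree $-1$, i.e.\ an isomorphism $N^{\vee} \rightarrow N$; reading off the compatibility from the product and inverse formulas for $\Aut_{\mathcal{C}}^{\textnormal{gen}}(x)$ recorded in the Example of Section~\ref{sec:Z2GrGrp} reproduces \textup{(b)} and \textup{(c)} directly, with the twist $\hat{\theta}$ accounting for the $\uptau_{\pi}^{\refl}(\hat{\theta})$- and $\hat{\theta}([\varsigma^{-1} \vert \varsigma^{-1}])$-factors in $P^{\varsigma}$ and $\Theta^{\varsigma}$. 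From this point of view the proposition expresses that the homotopy fixed point category of the category with duality $(\mathsf{Rep}_k^{\theta}(\mathsf{G}), P^{\varsigma}, \Theta^{\varsigma})$ is computed by any single symmetric-form datum, the independence of the choice of $\varsigma$ being already encoded in the $\mathsf{G}$-torsor of non-singular form functors $\nu^{\varsigma_1, \varsigma_2}$ constructed above.

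The main obstacle is purely computational: the verification of \textup{(b)}, \textup{(c)} and of the defining relation in the \emph{mixed} case $\pi(\omega_2) = \pi(\omega_1) = -1$ (so that $\omega_2 \omega_1 \in \mathsf{G}$), where the symmetry datum, the evaluation maps $\ev_N$ and the double-dual identifications all genuinely intervene and one must match the $\uptau_{\pi}^{\refl}(\hat{\theta})$- and $\hat{\theta}$-factors built into $P^{\varsigma}$ and $\Theta^{\varsigma}$ against $\hat{\theta}$-cocycle data using \eqref{eqn:keyIdentity}. This is routine but fiddly manipulation of normalized $2$-cochains and of the signs $\delta_{\pi(\omega_2), \pi(\omega_1), -1}$; I expect no conceptual difficulty beyond the bookkeeping.
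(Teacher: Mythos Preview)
Your proposal is correct and follows essentially the same approach as the paper. The paper's proof is extremely terse: it simply writes down the functor $F^{\varsigma}\colon \mathsf{Rep}_k^{\hat{\theta},\varsigma}(\mathsf{G}) \rightarrow \mathsf{SRep}_k^{\hat{\theta}}(\mathsf{G})$ on objects by the formula $\rho(\omega) = \hat{\theta}([\omega \vert \varsigma^{-1}])\,\rho(\omega\varsigma^{-1}) \circ \psi_N^{-1}$ for $\omega \in \hat{\mathsf{G}}\setminus\mathsf{G}$ (your $G$, up to a harmless cocycle-normalization factor $\hat{\theta}([\varsigma\vert\varsigma^{-1}])$ coming from writing $\omega = g\varsigma$ versus $\omega = (\omega\varsigma^{-1})\varsigma$) and declares it the identity on morphisms, leaving all the verifications you outline to the reader.
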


\begin{proof}
An equivalence $F^{\varsigma}:\mathsf{Rep}_k^{\hat{\theta},\varsigma}(\mathsf{G}) \rightarrow \mathsf{SRep}_k^{\hat{\theta}}(\mathsf{G})$ is defined on objects by assigning to a twisted symmetric representation $(N, \psi_N)$ the twisted generalized symmetric representation which is equal to $N$ as a twisted representation and has
\[
\rho(\omega) = \hat{\theta}([\omega \vert \varsigma^{-1}]) \rho(\omega \varsigma^{-1}) \circ \psi^{-1}_N, \qquad \omega \in \hat{\mathsf{G}} \backslash \mathsf{G}.
\]
On morphisms $F^{\varsigma}$ acts as the identity.
\end{proof}

Let $GW^{\hat{\theta}}_0(\mathsf{G})$ be the Grothendieck--Witt group of $(\mathsf{Rep}_k^{\theta}(\mathsf{G}), P^{\varsigma}, \Theta^{\varsigma})$. Since non-singular form functors induce isomorphisms of Grothendieck--Witt groups, up to isomorphism, $GW^{\hat{\theta}}_0(\mathsf{G})$ is independent of the choice of $\varsigma \in \hat{\mathsf{G}} \backslash \mathsf{G}$.

Characters of twisted (generalized) symmetric representations of $\mathsf{G}$ are defined in the same way as Section \ref{sec:RealFGRep}. Real conjugation equivariance \eqref{eq:real1Char} continues to hold. When $k=\mathbb{C}$, the isomorphism \eqref{eq:RealRepChar} is replaced by the isomorphism
\[
\chi: GW^{\hat{\theta}}_0(\mathsf{G}) \otimes_{\mathbb{Z}} \mathbb{C} \rightarrow \Gamma_{\Lambda_{\pi}^{\refl} B \hat{\mathsf{G}}}(\uptau_{\pi}^{\refl}(\hat{\theta})_{\mathbb{C}}).
\]
In fact, by picking a $\hat{\mathsf{G}}$-invariant Hermitian metric on each twisted symmetric representation, we obtain an isomorphism of abelian groups
\[
GW^{\hat{\theta}}_0(\mathsf{G}) \rightarrow KR^{0+\hat{\theta}}(B \mathsf{G}).
\]
So while the linear and antilinear Real representation categories are not equivalent, the relevant Grothendieck(--Witt) groups are isomorphic.

\section{Real representations of finite categorical groups}
\label{sec:Real2Reps}

\subsection{Categorical groups}
\label{sec:catGroupBasics}

The concept of a group can be categorified in a number of ways. A detailed discussion of these categorifications, and the relations between them, can be found in \cite{baez2004}.

A categorical group, called a weak $2$-group in \cite{baez2004}, is a monoidal groupoid $(\mathcal{G}, \otimes, \mathbf{1})$ in which every object admits a weak inverse. Explicitly, this means that for each object $x$ of $\mathcal{G}$ there exists a second object $y$ such that both $x \otimes y$ and $y \otimes x$ are equivalent to the monoidal unit $\mathbf{1}$. A morphism of categorical groups is a monoidal functor. By considering also monoidal natural transformations between monoidal functors, categorical groups assemble to a $2$-category.

The monoidal structure $\otimes$ gives the set of connected components $\pi_0(\mathcal{G})$ the structure of a group. The group $\pi_1(\mathcal{G})$ of autoequivalences of the monoidal unit $\mathbf{1}$ is, by an Eckmann--Hilton argument, abelian. As described in Section \ref{sec:sinh} below, the groups $\pi_0(\mathcal{G})$ and $\pi_1(\mathcal{G})$, together with some additional data, determine $\mathcal{G}$ up to equivalence.

\begin{Ex}
Any group $\mathsf{G}$, considered as a discrete category with object set $\mathsf{G}$ and monoidal structure determined (on objects) by its group law, defines a categorical group. By a slight abuse of notation, we will denote this categorical group by $\mathsf{G}$.
\end{Ex}

\begin{Ex}
Let $\mathsf{A}$ be an abelian group. The action groupoid $B \mathsf{A}$ is a categorical group, the monoidal structure determined (on morphisms) by the group law of $\mathsf{A}$.
\end{Ex}

\begin{Ex}
Let $x$ be an object of a bicategory $\mathcal{V}$. Then $1\Aut_{\mathcal{V}}(x)$, the groupoid of autoequivalences of $x$ and the $2$-isomorphisms between them, is a categorical group, called the weak automorphism $2$-group of $x$ \cite[\S 8.1]{baez2004}. If $\mathcal{V}$ is $k$-linear and we restrict attention to $k$-linear autoequivalences and their $k$-linear $2$-isomorphisms, then we obtain a categorical group $\mathsf{GL}_k(x)$ \cite[\S 3.3.2]{frenkel2012}.
\end{Ex}

\begin{Def}
A categorical group $\mathcal{G}$ is called finite if $\pi_0(\mathcal{G})$ is finite. 
\end{Def}

\subsection{Sinh's theorem}
\label{sec:sinh}

The following classification indicates that categorical groups can be viewed as twisted extended versions of groups.

\begin{Thm}[{\cite{sinh1975}; see also \cite[\S 8.3]{baez2004}}]
\label{thm:catGroupClass}
Categorical groups are classified up to equivalence by the following data:
\begin{enumerate}[label=(\roman*)]
\item A group $\mathsf{G}$.
\item An abelian group $\mathsf{A}$.
\item A group homomorphism $\Pi: \mathsf{G} \rightarrow \Aut_{\textnormal{\textsf{Grp}}}(\mathsf{A})$.
\item A cohomology class $[\alpha] \in H^3(B\mathsf{G}, \mathsf{A}_{\Pi})$.
\end{enumerate}
\end{Thm}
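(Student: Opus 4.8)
The plan is to extract the classifying data from a categorical group $\mathcal{G}$ and then to reconstruct a categorical group from abstract data $(\mathsf{G}, \mathsf{A}, \Pi, [\alpha])$, showing these two constructions are mutually inverse up to equivalence. First I would set $\mathsf{G} = \pi_0(\mathcal{G})$ and $\mathsf{A} = \pi_1(\mathcal{G}) = \Aut_{\mathcal{G}}(\mathbf{1})$, the latter being abelian by the Eckmann--Hilton argument already noted in the excerpt. For each $x \in \mathsf{G}$, choosing a representative object $X_x$ of the component $x$ gives, by whiskering, a group isomorphism $\Aut_{\mathcal{G}}(X_x) \cong \mathsf{A}$; conjugation of loops at $\mathbf{1}$ through the monoidal structure then produces the homomorphism $\Pi: \mathsf{G} \rightarrow \Aut_{\textnormal{\textsf{Grp}}}(\mathsf{A})$, independent of the choices up to the inner automorphisms that $\mathsf{A}$ being abelian kills. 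The subtle point is $[\alpha]$: after choosing, for each pair $(x,y)$, an isomorphism $m_{x,y}: X_x \otimes X_y \xrightarrow{\sim} X_{xy}$ (using that the component of $X_x \otimes X_y$ is $xy$), the associator $a_{X_x, X_y, X_z}$ of $\mathcal{G}$, compared against the $m$'s, yields an element $\alpha(x,y,z) \in \Aut_{\mathcal{G}}(X_{xyz}) \cong \mathsf{A}$; the pentagon axiom for $a$ translates precisely into the twisted $3$-cocycle condition for $\alpha$ in $Z^3(B\mathsf{G}, \mathsf{A}_\Pi)$, and a change of the choices $\{X_x\}, \{m_{x,y}\}$ changes $\alpha$ by a coboundary, so $[\alpha] \in H^3(B\mathsf{G}, \mathsf{A}_\Pi)$ is well defined.

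For the reverse direction, given $(\mathsf{G}, \mathsf{A}, \Pi, \alpha)$ with $\alpha \in Z^3(B\mathsf{G}, \mathsf{A}_\Pi)$ a normalized cocycle, I would build the \emph{skeletal} categorical group $\mathcal{G}[\mathsf{G},\mathsf{A},\Pi,\alpha]$ with object set $\mathsf{G}$, with $\Hom(x,y) = \emptyset$ unless $x=y$ and $\Aut(x) = \mathsf{A}$, with tensor product $x \otimes y = xy$ on objects and $a \otimes b = a \cdot \Pi(x)(b)$ on morphisms (for $a \in \Aut(x)$, $b \in \Aut(y)$), and with associator $a_{x,y,z} = \alpha(x,y,z) \in \Aut(xyz)$ and trivial unitors (this is where normalization of $\alpha$ is used). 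The pentagon axiom for this associator is exactly the cocycle identity, and the triangle axiom follows from normalization; every object has a weak (in fact strict) inverse $x^{-1}$, so this is a genuine categorical group. One then checks that replacing $\alpha$ by a cohomologous cocycle gives an equivalent categorical group, via a monoidal equivalence that is the identity on objects and whose monoidal structure constraint is the $2$-cochain witnessing the cohomology.

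Finally I would show the two passages are inverse: applying the invariants construction to $\mathcal{G}[\mathsf{G},\mathsf{A},\Pi,\alpha]$ recovers $(\mathsf{G},\mathsf{A},\Pi,[\alpha])$ on the nose (choosing the evident representatives), and conversely any categorical group $\mathcal{G}$ is monoidally equivalent to $\mathcal{G}[\pi_0(\mathcal{G}), \pi_1(\mathcal{G}), \Pi_{\mathcal{G}}, \alpha_{\mathcal{G}}]$ — one uses the chosen representatives $X_x$ and the isomorphisms $m_{x,y}$ to define a monoidal equivalence onto the skeleton, the coherence of its monoidal structure being guaranteed by the very definition of $\alpha_{\mathcal{G}}$. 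The main obstacle I expect is purely bookkeeping: verifying that the $\Pi$-twisted cocycle and coboundary conditions emerge correctly from the pentagon and from changes of representative, and tracking the $\Pi$-action (equivalently, the nontriviality of the module $\mathsf{A}_\Pi$) consistently through the monoidal structure on morphisms — the conceptual content is entirely in the dictionary ``pentagon $\leftrightarrow$ $3$-cocycle, rechoice $\leftrightarrow$ coboundary,'' and once that is set up the rest is routine diagram-chasing, so I would cite \cite{sinh1975} and \cite[\S 8.3]{baez2004} for the detailed verifications rather than reproduce them.
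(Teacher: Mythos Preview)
Your proposal is a correct outline of the standard proof of Sinh's theorem, and in fact the paper does not prove this theorem at all: it is stated with attribution to \cite{sinh1975} and \cite[\S 8.3]{baez2004} and then simply used. The skeletal model $\mathcal{G}[\mathsf{G},\mathsf{A},\Pi,\alpha]$ you construct in your reverse direction is exactly the explicit model $\mathcal{G}(\mathsf{G},\mathsf{A},\Pi,\alpha)$ the paper writes down immediately after the theorem statement, so your plan is entirely consistent with (and more detailed than) what the paper does.
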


In a similar way, equivalence classes of finite categorical groups are classified by the data (i)-(iv), with the additional condition that $\mathsf{G}$ be finite.

An explicit model for the categorical group  determined by Theorem \ref{thm:catGroupClass} is as follows. Fix a normalized representative $\alpha \in Z^3(B \mathsf{G}, \mathsf{A}_{\Pi})$ of $[\alpha]$. Let $\mathcal{G}(\mathsf{G}, \mathsf{A}, \Pi, \alpha)$ be the skeletal groupoid with set of objects $\mathsf{G}$, a morphism $g \xrightarrow[]{a} g$ for each pair $(g, a) \in \mathsf{G} \times \mathsf{A}$ and composition law
\[
(g \xrightarrow[]{a_2} g) \circ (g \xrightarrow[]{a_1} g) = (g \xrightarrow[]{a_1 \cdot a_2} g).
\]
The monoidal structure $\otimes$ is determined on objects by the group law of $\mathsf{G}$ and on morphisms by
\[
(g \xrightarrow[]{a} g) \otimes (g^{\prime} \xrightarrow[]{a^{\prime}} g^{\prime}) = (g g^{\prime} \xrightarrow[]{a \cdot \Pi(g)(a^{\prime})} g g^{\prime}).
\]
The associator is given by the maps $g_3 g_2 g_1 \xrightarrow[]{\alpha([g_3 \vert g_2 \vert g_1])} g_3 g_2 g_1$. Since $\alpha$ is normalized, the unitors can be taken to be identity maps.

\begin{Ex}
If $\mathsf{A}$ is trivial, then $\mathcal{G}(\mathsf{G}, \mathsf{A}, \Pi, \alpha)$ is simply the group $\mathsf{G}$, viewed as a categorical group. If $\mathsf{A}$ is non-trivial but $\alpha$ is trivial, then $\mathcal{G}(\mathsf{G}, \mathsf{A}, \Pi, \alpha)$ is the categorical group extension of $\mathsf{G}$ by $B \mathsf{A}$ determined by $\Pi$.
\end{Ex}

The next example describes a particularly important and well-studied class of finite categorical groups.

\begin{Ex}
Let $k$ be a field. Let $\mathsf{G}$ be a group and let $\Pi : \mathsf{G} \rightarrow \Aut_{\mathsf{Grp}}(k^{\times})$ be the trivial map. The associated categorical group, denoted simply by $\mathcal{G}(\mathsf{G}, \alpha)$, is a twisted categorical group extension of $\mathsf{G}$ by $B k^{\times}$.
\end{Ex}

\subsection{\texorpdfstring{$\mathbb{Z}_2$}{}-graded categorical groups}
\label{sec:z2CatGrp}

Before introducing Real representations of categorical groups, we categorify the notion of a Real structure on a group.

\begin{Def}
A morphism of categorical groups $\pi: \hat{\mathcal{G}} \rightarrow \mathbb{Z}_2$ is called a $\mathbb{Z}_2$-graded categorical group.
\end{Def}

A morphism of $\mathbb{Z}_2$-graded categorical groups is a morphism of categorical groups which commutes with the structure maps to $\mathbb{Z}_2$.

The ungraded categorical group of (a non-trivially graded) $\hat{\mathcal{G}}$ is the full subcategory $\mathcal{G} \subset \hat{\mathcal{G}}$ on objects which map via $\pi$ to $1 \in \mathbb{Z}_2$. There are morphisms of categorical groups
\begin{equation}
\label{eq:ses2Group}
1 \rightarrow \mathcal{G} \xrightarrow[]{i} \hat{\mathcal{G}} \xrightarrow[]{\pi} \mathbb{Z}_2
\end{equation}
with $i$ an isomorphism onto its image and $\pi$ surjective on objects and full. Alternatively, given a categorical group $\mathcal{G}$, a diagram of the form \eqref{eq:ses2Group} having the above properties is called a Real structure on $\mathcal{G}$.

Since the category $\mathbb{Z}_2$ is skeletal, a $\mathbb{Z}_2$-grading of $\hat{\mathcal{G}}$ is simply a $\mathbb{Z}_2$-grading of $\Obj(\hat{\mathcal{G}})$ which is compatible with the monoidal structure. In particular, a $\mathbb{Z}_2$-grading of $\mathcal{G}(\mathsf{G}, \mathsf{A}, \Pi, \alpha)$ determines, and is determined by, a $\mathbb{Z}_2$-grading of $\mathsf{G}$. The ungraded categorical group is obtained by restriction to the ungraded subgroup of $\mathsf{G}$.

The following example plays an important role in the remainder of the paper.

\begin{Ex}
Let $k$ be a field and let $\pi: \hat{\mathsf{G}} \rightarrow \mathbb{Z}_2$ be a $\mathbb{Z}_2$-graded group. Let $\Pi: \hat{\mathsf{G}} \rightarrow \Aut_{\mathsf{Grp}}(k^{\times})$ be the map $\Pi(\omega)(a) = a^{\pi(\omega)}$, so that $k^{\times}_{\Pi}$ is $k^{\times}_{\pi}$ in the notation of Section \ref{sec:linearRealFGRep}. Let $\hat{\alpha} \in Z^3(B \hat{\mathsf{G}}, k^{\times}_{\pi})$. The categorical group $\mathcal{G}(\hat{\mathsf{G}}, k^{\times}, \Pi, \hat{\alpha})$ defined by Theorem \ref{thm:catGroupClass}, henceforth denoted by $\mathcal{G}(\hat{\mathsf{G}}, \hat{\alpha})$, is $\mathbb{Z}_2$-graded with ungraded categorical group $\mathcal{G}(\mathsf{G}, \alpha)$, where $\alpha \in Z^3(B \mathsf{G}, k^{\times})$ is the restriction of $\hat{\alpha}$ to $B\mathsf{G}$.
\end{Ex}

The following example categorifies the $\mathbb{Z}_2$-graded group $\Aut_{\mathcal{C}}^{\textnormal{gen}}(x)$ of Section \ref{sec:Z2GrGrp}.

\begin{Ex}
Let $x$ be an object of a bicategory $\mathcal{V}$ with weak duality involution. Then $1\Aut^{\textnormal{gen}}_{\mathcal{V}}(x)$, the collection of all equivalences $x \rightarrow x$ and $x^{\circ} \rightarrow x$, together with the $2$-isomorphisms between them, is a $\mathbb{Z}_2$-graded categorical group. The monoidal structure $\otimes$ is defined on objects by
\[
f_2 \otimes f_1 = f_2 \circ_0 ({^{\pi(f_2)}}f_1 \circ_0 \eta_x^{\delta_{\pi(f_2), \pi(f_1), -1}}),
\]
where $\pi(f) \in \mathbb{Z}_2$ is such that $f: {^{\pi(f)}} x \rightarrow x$. The definition of $\otimes$ on morphisms is similar. The associator for three antiautoequivalences is
\begin{multline*}
(f_3 \otimes f_2) \otimes f_1 = (f_3 \circ (f_2^{\circ} \circ \eta_x) ) \circ f_1 \xrightarrow[]{\alpha} f_3 \circ (f_2^{\circ} \circ ( \eta_x \circ f_1)) \xrightarrow[]{\eta}
\\
f_3 \circ (f_2^{\circ} \circ (f_1^{\circ \circ} \circ \eta_{x^{\circ}})) \xrightarrow[]{\zeta_x} f_3 \circ (f_2^{\circ} \circ (f_1^{\circ \circ} \circ \eta_x^{\circ})) = f_3 \otimes (f_2 \otimes f_1),
\end{multline*}
where $\alpha$ is a composition of associators for $\mathcal{V}$ and the arrow labelled by $\eta$ is a pseudo-naturality constraint for $\eta$. The remaining associators are constructed in a similar way, but do not use the modification $\zeta$. The pentagon identity is verified using the constraint \eqref{eq:bicatDualCompat}. If $x$ has at least one antiautoequivalence, then the morphism $\pi: 1\Aut^{\textnormal{gen}}_{\mathcal{V}}(x) \rightarrow \mathbb{Z}_2$ fits into an exact sequence of categorical groups:
\[
1 \rightarrow 1 \Aut_{\mathcal{V}}(x) \rightarrow 1\Aut^{\textnormal{gen}}_{\mathcal{V}}(x) \xrightarrow[]{\pi} \mathbb{Z}_2  \rightarrow 1.
\]
If $\mathcal{V}$ is $k$-linear and we restrict attention to $k$-linear (anti)autoequivalences and $2$-isomorphisms, then we obtain a $\mathbb{Z}_2$-graded categorical group $\mathsf{GL}^{\textnormal{gen}}_k(x)$ whose ungraded categorical group is $\mathsf{GL}_k(x)$.
\end{Ex}

\begin{Ex}
The previous example has a variation in which the bicategory with duality involution is replaced by a bicategory $\mathcal{V}$ with contravariance. In this way, for each $x \in \Obj(\mathcal{V})$ we obtain a $\mathbb{Z}_2$-graded categorical group $1\Aut^{\textnormal{gen}}_{\mathcal{V}}(x)$ whose ungraded categorical group is $1\Aut_{\mathcal{V}_1}(x)$.
\end{Ex}

A $\mathbb{Z}_2$-graded categorical group $\pi: \hat{\mathcal{G}} \rightarrow \mathbb{Z}_2$ defines a bicategory $\underline{\hat{\mathcal{G}}}$ with contravariance as follows. Let $\Obj(\underline{\hat{\mathcal{G}}}) = \{ \pt\}$. For each $\epsilon \in \mathbb{Z}_2$, let $1\Hom_{\underline{\hat{\mathcal{G}}}}^{\epsilon}(\pt,\pt)$ be the full subcategory of $\hat{\mathcal{G}}$ on objects which map via $\pi$ to $\epsilon$. The horizontal compositions and associators in $\underline{\hat{\mathcal{G}}}$ are induced by the monoidal structure of $\hat{\mathcal{G}}$.

\begin{Rem}
It may be interesting to consider gradings of categorical groups by non-trivial categorifications of $\mathbb{Z}_2$. In the $k$-linear setting, one possibility is to use the symmetric monoidal category $\mathsf{Pic}^{\mathbb{Z}_2}(k^{\times})$ of Ganter--Kapranov \cite[Example 3.1.2(d)]{ganter2014}.
\end{Rem}

\subsection{Real representations of finite categorical groups}
\label{sec:Real2RepBasic}

We introduce Real representations of finite categorical groups, categorifying the linear approach of Section \ref{sec:linearRealFGRep}. An antilinear approach can be found in Section \ref{sec:antiLinearRealFGRep}.

Let $\mathcal{G}$ be a finite categorical group with Real structure $\hat{\mathcal{G}}$.

\begin{Def}
A Real representation of $\mathcal{G}$ on a bicategory $\mathcal{V}$ with contravariance is a contravariance preserving pseudofunctor $\rho: \underline{\hat{\mathcal{G}}} \rightarrow \mathcal{V}$, where $\underline{\hat{\mathcal{G}}}$ is the bicategory with contravariance defined is as in Section \ref{sec:z2CatGrp}.
\end{Def}

Real representations of $\mathcal{G}$ on $\mathcal{V}$ assemble to a bicategory $\mathsf{RRep}_{\mathcal{V}}(\mathcal{G})$ whose $1$- and $2$-morphisms are pseudonatural transformations and modifications which respect contravariance, respectively. More compactly, we can define
\[
\mathsf{RRep}_{\mathcal{V}}(\mathcal{G}) = 1\Hom_{\mathsf{Bicat}_{\mathsf{con}}}(\underline{\hat{\mathcal{G}}}, \mathcal{V}),
\]
where $\mathsf{Bicat}_{\mathsf{con}}$ is the tricategory of bicategories with contravariance described in \cite{shulman2018}. If $\mathcal{V}$ is in fact a $2$-category, then so too is $\mathsf{RRep}_{\mathcal{V}}(\mathcal{G})$.

Let $\mathsf{Bicat}^{\leq 1}_{\mathsf{con}}$ be the category of bicategories with contravariance and their pseudofunctors preserving contravariance. Taking $1$-morphism bicategories defines a functor
\[
1\Hom_{\mathsf{Bicat}_{\mathsf{con}}}(-,-) : (\mathsf{Bicat}^{\leq 1}_{\mathsf{con}})^{\op} \times \mathsf{Bicat}^{\leq 1}_{\mathsf{con}} \rightarrow \mathsf{Bicat}^{\leq 1}.
\]
Using this functor, it can be verified that if $\mathcal{V}$ and $\mathcal{V}^{\prime}$ are biequivalent bicategories with contravariance and $\hat{\mathcal{G}}$ and $\hat{\mathcal{G}}^{\prime}$ are equivalent $\mathbb{Z}_2$-graded categorical groups, then $\mathsf{RRep}_{\mathcal{V}}(\mathcal{G})$ and $\mathsf{RRep}_{\mathcal{V}^{\prime}}(\mathcal{G}^{\prime})$ are biequivalent. Compare with \cite[\S 3.5]{elgueta2007}. In view of coherence theorem for bicategories with contravariance, it follows that there is no loss of generality in restricting attention to Real representations on $2$-categories with contravariance.

We will use the following interpretation of Real representations.

\begin{Lem}
\label{lem:RealRepHomCatGrp}
A Real representation of $\mathcal{G}$ on a bicategory $\mathcal{V}$ with contravariance is the data of an object $V \in \Obj(\mathcal{V})$ together with a morphism of $\mathbb{Z}_2$-graded categorical groups $\rho: \hat{\mathcal{G}} \rightarrow 1\Aut^{\textnormal{gen}}_{\mathcal{V}}(V)$.
\end{Lem}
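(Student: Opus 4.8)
The plan is to unwind the definition of a contravariance-preserving pseudofunctor $\rho: \underline{\hat{\mathcal{G}}} \to \mathcal{V}$ and match each piece of data against the data of a $\mathbb{Z}_2$-graded categorical group morphism $\hat{\mathcal{G}} \to 1\Aut^{\textnormal{gen}}_{\mathcal{V}}(V)$. Since $\underline{\hat{\mathcal{G}}}$ has a single object $\pt$, a pseudofunctor out of it is forced to pick an object $V = \rho(\pt) \in \Obj(\mathcal{V})$; this is the promised object. First I would observe that, because $\rho$ preserves contravariance, every $1$-morphism $\rho(\omega)$ for $\omega \in \Obj(\hat{\mathcal{G}})$ of degree $\epsilon = \pi(\omega)$ is a $1$-morphism $\prescript{\epsilon}{}{V} \to V$ in $\mathcal{V}$, i.e.\ an autoequivalence when $\epsilon = 1$ and an equivalence $V^{\circ} \to V$ when $\epsilon = -1$ — so it is an object of $1\Aut^{\textnormal{gen}}_{\mathcal{V}}(V)$ of the correct degree. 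Likewise $2$-morphisms of $\underline{\hat{\mathcal{G}}}$ (which are morphisms of $\hat{\mathcal{G}}$, necessarily of degree $+1$ since they are endomorphisms) go to $2$-isomorphisms in $\mathcal{V}$, i.e.\ morphisms in $1\Aut^{\textnormal{gen}}_{\mathcal{V}}(V)$.

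Next I would check that the compositor (the pseudofunctoriality constraint $\rho(\omega_2) \circ_0 \rho(\omega_1) \Rightarrow \rho(\omega_2 \otimes \omega_1)$) of $\rho$, once one accounts for the fact that composition in $\underline{\hat{\mathcal{G}}}$ was defined using the monoidal structure of $\hat{\mathcal{G}}$ together with the twisting by $\eta_V^{\delta_{\pi(\omega_2),\pi(\omega_1),-1}}$ built into the $2$-category-with-contravariance composition, is exactly the monoidal structure morphism required of a functor into $1\Aut^{\textnormal{gen}}_{\mathcal{V}}(V)$. Here the key point is that the monoidal product on $1\Aut^{\textnormal{gen}}_{\mathcal{V}}(V)$ was defined precisely by $f_2 \otimes f_1 = f_2 \circ_0 (\prescript{\pi(f_2)}{}{f_1} \circ_0 \eta_V^{\delta_{\pi(f_2),\pi(f_1),-1}})$, so that the whiskering and $\eta$-insertion appearing in the source bicategory $\underline{\hat{\mathcal{G}}}$'s composition are absorbed on the target side into the definition of $\otimes$; this makes the compositor of $\rho$ literally the monoidal constraint of the desired morphism. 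Similarly the unitor of $\rho$ gives the unit constraint. The coherence axioms a pseudofunctor must satisfy (compatibility of the compositor with associators and unitors) then translate, under this dictionary, into the hexagon/associativity and triangle/unit axioms of a monoidal functor $\hat{\mathcal{G}} \to 1\Aut^{\textnormal{gen}}_{\mathcal{V}}(V)$ — and the associator on $1\Aut^{\textnormal{gen}}_{\mathcal{V}}(V)$ was itself defined (in the earlier Example) precisely as the composite of $\mathcal{V}$-associators, $\eta$-pseudonaturality constraints and the modification $\zeta$, so the identification is exact rather than merely up to coherent isomorphism. Finally, since $\pi: \hat{\mathcal{G}} \to \mathbb{Z}_2$ is the grading and $\rho$ preserves contravariance degree, the resulting monoidal functor visibly commutes with the structure maps to $\mathbb{Z}_2$, hence is a morphism of $\mathbb{Z}_2$-graded categorical groups; conversely any such morphism $\rho$ reverses the construction, giving a contravariance-preserving pseudofunctor, and these two assignments are mutually inverse (indeed one checks they extend to an equivalence of the respective bicategories of such data, though only the object-level bijection is asserted in the statement).

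The main obstacle I anticipate is purely bookkeeping rather than conceptual: one must be scrupulously careful that the various insertions of $\eta_V$ and $\zeta_V$ — which appear both in the definition of the $2$-category-with-contravariance structure underlying $\underline{\hat{\mathcal{G}}}$ (via the composition bifunctor with its $\epsilon$-dependent domain twist) and, separately, in the hand-crafted monoidal structure on $1\Aut^{\textnormal{gen}}_{\mathcal{V}}(V)$ — match up on the nose, including the subtle case distinctions governed by $\delta_{\pi(\omega_2),\pi(\omega_1),-1}$ and the $3$-fold-antiautoequivalence associator that genuinely invokes $\zeta$. Verifying that the pentagon axiom for $\rho$ (a pseudofunctor axiom) corresponds to the pentagon for $1\Aut^{\textnormal{gen}}_{\mathcal{V}}(V)$, which in the Example was shown to rely on the compatibility constraint \eqref{eq:bicatDualCompat}, is where the proof could become lengthy; I would handle it by invoking the coherence theorem for bicategories with contravariance (cited in the excerpt, \cite[Theorems 8.1, 8.2]{shulman2018}) to reduce to the strict case, where $\eta$ and $\zeta$ are identities and the correspondence becomes essentially tautological, then transporting back. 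In the write-up I would state the strictified version explicitly and remark that the general case follows by coherence, so that the proof remains short.
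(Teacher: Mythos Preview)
Your approach---unwind the definition of a contravariance-preserving pseudofunctor and match it against that of a monoidal functor---is correct and is exactly what the paper does; the paper's entire proof is ``This is straightforward.''

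You have, however, introduced a confusion that makes the write-up more laborious than it needs to be. The lemma is stated for $\mathcal{V}$ a bicategory \emph{with contravariance}, not a bicategory with weak duality involution. In the contravariance setting there is no $\eta$ and no $\zeta$: the graded composition bifunctors
\[
1\Hom^{\epsilon_2}_{\mathcal{V}}(V,V) \times \prescript{\epsilon_2}{}{1\Hom^{\epsilon_1}_{\mathcal{V}}(V,V)} \rightarrow 1\Hom^{\epsilon_2 \epsilon_1}_{\mathcal{V}}(V,V)
\]
and their associators are given directly as part of the data of $\mathcal{V}$. Correspondingly, the relevant $1\Aut^{\textnormal{gen}}_{\mathcal{V}}(V)$ is the variant described in the last Example of Section~\ref{sec:z2CatGrp}, whose monoidal product is simply composition in $\mathcal{V}$---no $\eta$-insertion, no $\delta_{\pi(\omega_2),\pi(\omega_1),-1}$ bookkeeping. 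Thus the ``main obstacle'' you anticipate (tracking $\eta_V$, $\zeta_V$, and the constraint~\eqref{eq:bicatDualCompat} through the pentagon) does not arise here; the pseudofunctor coherence axioms and the monoidal functor axioms for $\hat{\mathcal{G}} \to 1\Aut^{\textnormal{gen}}_{\mathcal{V}}(V)$ are literally the same diagrams. Your confusion likely comes from the paragraph immediately \emph{after} the lemma, where the paper uses it to motivate a parallel definition for bicategories with weak duality involution; that is a separate setting, and the $\eta$, $\zeta$ bookkeeping you describe would be relevant there.
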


\begin{proof}
This is straightforward.
\end{proof}

Motivated by Lemma \ref{lem:RealRepHomCatGrp}, define a Real representation of $\mathcal{G}$ on a bicategory $\mathcal{V}$ with weak duality involution to be an object $V \in \Obj(\mathcal{V})$ together with a morphism $\rho: \hat{\mathcal{G}} \rightarrow 1\Aut^{\textnormal{gen}}_{\mathcal{V}}(V)$ of $\mathbb{Z}_2$-graded categorical groups.

Finally, we state a $k$-linear version of the above definitions. We restrict to categorical groups of the form $\mathcal{G}(\mathsf{G}, \alpha)$ with Real structure $\mathcal{G}(\hat{\mathsf{G}}, \hat{\alpha})$.

\begin{Def}
A linear Real representation of $\mathcal{G}(\mathsf{G}, \alpha)$ on a $k$-linear bicategory $\mathcal{V}$ with contravariance is a contravariance preserving pseudofunctor $\rho: \underline{\mathcal{G}(\hat{\mathsf{G}}, \hat{\alpha})} \rightarrow \mathcal{V}$ with the additional property that $\Aut_{\hat{\mathcal{G}}}(\mathbf{1}) \simeq k^{\times}$ acts by scalar multiplication.
\end{Def}

Linear Real representations of $\mathcal{G}(\mathsf{G}, \alpha)$ form a bicategory $\mathsf{RRep}_{\mathcal{V},k}(\mathcal{G})$. The obvious analogue of Lemma \ref{lem:RealRepHomCatGrp}, with $1\Aut^{\textnormal{gen}}_{\mathcal{V}}(V)$ replaced by $\mathsf{GL}^{\textnormal{gen}}_k(V)$, holds.

To close this section, we describe an interpretation of Real representations of finite categorical groups which categorifies the homotopy fixed point perspective of Section \ref{sec:linearRealFGRep}. Fix an element $\varsigma \in \Obj(\hat{\mathcal{G}})$ such that $\pi(\varsigma)=-1$ together with a weak inverse $\overline{\varsigma}$. Define a biequivalence $F^{\varsigma}: 
\mathcal{G}^{\co} \rightarrow \mathcal{G}$ by assigning to $x: \pt \rightarrow \pt$ and $f: x \Rightarrow y$ in $\mathcal{G}^{\co}$ the $1$- and $2$-morphisms $(\varsigma \otimes x) \otimes \overline{\varsigma}$ and $(\varsigma \otimes f^{-1})  \otimes \overline{\varsigma}$ in $\mathcal{G}$, respectively. Noting that $F^{\varsigma} \circ_0 (F^{\varsigma})^{\co}$ is the adjoint action $\Ad_{\varsigma^2}=(\varsigma^2 \otimes -)\otimes \overline{\varsigma}^2$, the element $\varsigma^2$ and the associator for $\mathcal{G}$ induce a pseudonatural isomorphism $\varsigma^2: 1_{\mathcal{G}} \Rightarrow F^{\varsigma} \circ_0 (F^{\varsigma})^{\co}$. The biequivalence $F^{\varsigma}$ can be used to define a weak duality involution on $\mathsf{Rep}_{\mathcal{V}}(\mathcal{G})$ as follows, giving a $\varsigma$-twisted version of \cite[Example 2.6]{shulman2018}. The duality involution takes a pseudofunctor $\rho: \mathcal{G} \rightarrow \mathcal{V}$ to the composition
\[
\mathcal{G} \xrightarrow[]{(F^{\varsigma})^{\co}} \mathcal{G}^{\co} \xrightarrow[]{\rho^{\co}} \mathcal{V}^{\co} \xrightarrow[]{(-)^{\circ}} \mathcal{V}.
\]
The required adjoint equivalence $\tilde{\eta}$ and modification $\tilde{\zeta}$ are induced by whiskering with $\varsigma^2$ and the duality involution data of $\mathcal{V}$. For example, the component of $\tilde{\eta}$ at $\rho_V$ assigns to $\pt$ the $1$-morphism $\eta_V \circ_0 \rho_V(\varsigma^2): V \rightarrow V^{\circ \circ}$.

\begin{Prop}
\begin{enumerate}[label=(\roman*)]
\item Up to duality biequivalence, the weak duality involution on $\mathsf{Rep}_{\mathcal{V}}(\mathcal{G})$ is independent of the choice of $\varsigma \in \Obj(\hat{\mathcal{G}})$.
 
\item For any $\varsigma$ as above, there is a biequivalence $\mathsf{Rep}_{\mathcal{V}}(\mathcal{G})^{h \mathbb{Z}_2} \simeq \mathsf{RRep}_{\mathcal{V}}(\mathcal{G})$.
\end{enumerate}
\end{Prop}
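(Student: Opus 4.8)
The plan is to prove both statements by unwinding the definitions of the homotopy fixed point bicategory and of $\mathsf{RRep}_{\mathcal{V}}(\mathcal{G})$ in parallel, and exhibiting an explicit biequivalence on objects that then extends formally to $1$- and $2$-morphisms. For part (ii), recall that an object of $\mathsf{Rep}_{\mathcal{V}}(\mathcal{G})^{h\mathbb{Z}_2}$ with respect to the $\varsigma$-twisted duality involution is a pseudofunctor $\rho_V \colon \mathcal{G} \to \mathcal{V}$ equipped with an equivalence $\psi \colon \rho_V^{\circ} \to \rho_V$ in $\mathsf{Rep}_{\mathcal{V}}(\mathcal{G})$ (where $\rho_V^{\circ}$ is the composite $(-)^{\circ} \circ \rho^{\co} \circ (F^{\varsigma})^{\co}$) together with a $2$-isomorphism $\mu\colon 1_{\rho_V} \Rightarrow \psi \circ \psi^{\circ} \circ \tilde\eta_{\rho_V}$ satisfying the coherence equation from the definition of a symmetric form. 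I would first observe that a pseudonatural equivalence $\psi\colon \rho_V^{\circ} \to \rho_V$ of pseudofunctors $\mathcal{G} \to \mathcal{V}$ is exactly the data of an equivalence $\psi_{\pt}\colon V^{\circ} \to V$ in $\mathcal{V}$ together with, for each $1$-morphism $x$ of $\mathcal{G}$, a $2$-isomorphism relating $\psi_{\pt} \circ \rho_V((\varsigma \otimes x)\otimes\overline{\varsigma})^{\circ}\circ(\text{duality data})$ with $\rho_V(x)\circ\psi_{\pt}$; after absorbing $\psi_{\pt}$ this is precisely the content of extending $\rho$ from $\mathcal{G} = \Obj$-degree-$1$ part of $\hat{\mathcal{G}}$ to all of $\hat{\mathcal{G}}$, with the value on $\varsigma$ being (up to the chosen witnesses) $\psi_{\pt}$ itself. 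So the desired functor sends $(\rho_V,\psi,\mu)$ to the $\mathbb{Z}_2$-graded pseudofunctor $\hat\rho\colon \underline{\hat{\mathcal{G}}} \to \mathcal{V}$ whose restriction to $\underline{\mathcal{G}}$ is $\rho$ and whose value on degree-$(-1)$ objects $\omega = \varsigma\otimes g$ is built from $\psi_{\pt}$ and $\rho(g)$; the pseudonaturality constraints of $\psi$ supply the coherence isomorphisms for the mixed products $\omega_2\otimes\omega_1$ and $\omega\otimes g$, and the modification $\mu$ together with the compatibility equation \eqref{eq:bicatDualCompat} for $\mathcal{V}$ supplies exactly the coherence datum needed to make $\hat\rho$ a genuine pseudofunctor on $\underline{\hat{\mathcal{G}}}$ (equivalently, via Lemma \ref{lem:RealRepHomCatGrp}, a morphism $\hat{\mathcal{G}} \to 1\Aut^{\textnormal{gen}}_{\mathcal{V}}(V)$ of $\mathbb{Z}_2$-graded categorical groups). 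I would then check that this assignment is essentially surjective (given $\hat\rho$, set $\rho = \hat\rho|_{\underline{\mathcal{G}}}$, $\psi_{\pt} = \hat\rho(\varsigma)$, and extract $\mu$ from the coherence $2$-cell of $\hat\rho$ on $\varsigma\otimes\varsigma$) and fully faithful on $1$- and $2$-morphisms, which amounts to comparing pseudonatural transformations respecting contravariance on one side with $1$-morphisms in $\mathsf{Rep}_{\mathcal{V}}(\mathcal{G})$ intertwining the symmetric-form structures on the other — a bookkeeping exercise once the dictionary on objects is fixed.

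For part (i), I would argue as in \cite[Example 2.6]{shulman2018}: given $\varsigma, \varsigma' \in \Obj(\hat{\mathcal{G}})$ of degree $-1$, the element $\varsigma'\otimes\overline{\varsigma}$ lies in $\mathcal{G}$ and the associator of $\mathcal{G}$ provides a pseudonatural isomorphism $F^{\varsigma'} \cong F^{\varsigma}\circ_0 \Ad_{\varsigma'\otimes\overline{\varsigma}}^{\co}$, hence after post-composing with $\rho$ and $(-)^{\circ}$ a pseudonatural equivalence between the $\varsigma$- and $\varsigma'$-twisted duality involutions on $\mathsf{Rep}_{\mathcal{V}}(\mathcal{G})$; one then checks this equivalence is compatible with the $\tilde\eta$'s and $\tilde\zeta$'s (again using the element $(\varsigma'\otimes\overline\varsigma)$ and the pentagon for $\mathcal{G}$), so that it upgrades to a duality biequivalence. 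By functoriality of $(-)^{h\mathbb{Z}_2}$ under duality biequivalences this induces the claimed biequivalence of homotopy fixed point bicategories. Alternatively — and more cleanly — (i) follows a posteriori from (ii), since the right-hand side $\mathsf{RRep}_{\mathcal{V}}(\mathcal{G})$ is manifestly independent of $\varsigma$.

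The main obstacle is not the object-level correspondence, which is a direct translation, but verifying that the coherence equation defining a symmetric form in $\mathcal{V}^{h\mathbb{Z}_2}$ (the pasting diagram in the Definition that must equal $1_{\psi}$) corresponds bijectively, under the dictionary above, to the pseudofunctoriality axioms of $\hat\rho$ on triple products involving two or three degree-$(-1)$ objects — in other words, matching the $\zeta$-involving associator of $1\Aut^{\textnormal{gen}}_{\mathcal{V}}(V)$ (displayed in the excerpt) against the modification $\zeta$ appearing in the symmetric-form condition. Both sides are governed by equation \eqref{eq:bicatDualCompat}, so the translation is forced, but writing out the pasting diagrams and confirming the signs $\delta_{\pi(f_2),\pi(f_1),-1}$ line up is where the real work lies. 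Since the paper only needs this biequivalence at the level of objects (it explicitly declines to spell out $1$- and $2$-morphisms of $\mathcal{V}^{h\mathbb{Z}_2}$), I would state the object-level bijection carefully and indicate that the extension to morphisms is a routine, if lengthy, diagram chase.
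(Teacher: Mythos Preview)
Your proposal is correct and follows essentially the same approach as the paper: for (i) you use the element $\varsigma'\otimes\overline{\varsigma}$ (the paper writes $\varsigma_2\otimes\overline{\varsigma}_1$) to build the pseudonatural isomorphism between $F^{\varsigma}$ and $F^{\varsigma'}$ and then whisker; for (ii) you extend $\rho$ to degree $-1$ objects by combining $\psi_{\pt}=\psi(\pt)$ with $\rho$ applied to the degree $+1$ factor, which is exactly the paper's formula $\rho(\omega)=\rho(\omega\otimes\overline{\varsigma})\circ_0\psi(\pt)$, and you correctly identify $\mu$ and the duality data (the paper's $\tilde\eta$) as the source of the remaining coherence $2$-isomorphisms. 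The paper, like you, restricts the explicit argument to the object level and defers the $1$- and $2$-morphism checks; your discussion of the coherence obstacle is more detailed than what the paper records, but the strategy is identical.
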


\begin{proof}
Let $\varsigma_1, \varsigma_2 \in \Obj(\hat{\mathcal{G}})$ be as above with associated biequivalences $F^{\varsigma_1}, F^{\varsigma_2}: \mathcal{G}^{\co} \rightarrow \mathcal{G}$. After fixing an equivalence $\overline{\varsigma}_1 \otimes \varsigma_1 \simeq \mathbf{1}$, the $1$-morphism $\varsigma_2 \otimes \overline{\varsigma}_1$ defines a pseudonatural isomorphism $\Ad_{\varsigma_2 \otimes \overline{\varsigma}_1} : F^{\varsigma_1} \Rightarrow F^{\varsigma_2}$. The remaining components of the duality biequivalence are induced by whiskering.

The second statement is proved in the same way as Proposition \ref{prop:symmGenSymmEquiv}. We will describe a biequivalence at the level of objects, leaving the description on $1$- and $2$-morphisms to the reader. Given a symmetric form $(\rho, \psi, \mu)$ in $\mathsf{Rep}_{\mathcal{V}}(\mathcal{G})$, with $\rho(\pt) =V$, the map $\psi(\pt)$ is an equivalence $V^{\circ} \rightarrow V$. For $\omega \in \Obj(\hat{\mathcal{G}})$ with $\pi(\omega)=-1$, define $\rho(\omega)$ to be the composition $\rho(\omega \otimes \overline{\varsigma}) \circ_0 \psi(\pt)$. The monoidal coherence $2$-isomorphisms $\psi_{\bullet,\bullet}$ are induced by $\mu$ and $\tilde{\eta}$. It is straightforward to verify that this indeed defines a Real representation of $\mathcal{G}$.
\end{proof}

\begin{Rem}
\begin{enumerate}[label=(\roman*)]
\item When restricted to trivially $\mathbb{Z}_2$-graded categorical groups, the above definitions recover the representation theory of finite categorical groups, as studied in \cite{elgueta2007}, \cite{ganter2008}, \cite{bartlett2011}.

\item While the above definitions make sense for categorical groups which are not finite, in the continuous case they should be supplemented with topological coherence conditions.
\end{enumerate}
\end{Rem}

\section{Real \texorpdfstring{$2$}{}-representation theory of finite groups}
\label{sec:Real2Rep}

We study the definitions of Section \ref{sec:Real2RepBasic} when the categorical group is a finite group. The more technical case of categorical groups is the focus of Section \ref{sec:RealProjReps}.

\subsection{Real $2$-representations}
\label{sec:basicDef}

Lemma \ref{lem:RealRepHomCatGrp} leads to an explicit description of a Real representation of the categorical group determined by a finite group, which we state as a new definition.

\begin{Def}
A Real $2$-representation of a finite group $\mathsf{G}$ on a $2$-category $\mathcal{V}$ with strict duality involution consists of the following data:
\begin{enumerate}[label=(\roman*)]
\item An object $V$ of $\mathcal{V}$.
\item For each $\omega \in \hat{\mathsf{G}}$, an equivalence $\rho(\omega) : \prescript{\pi(\omega)}{}{V} \rightarrow V$.

\item For each pair $\omega_1, \omega_2 \in \hat{\mathsf{G}}$, a $2$-isomorphism
\[
\psi_{\omega_2, \omega_1} : \rho(\omega_2) \circ \prescript{\pi(\omega_2)}{}{\rho(\omega_1)} \Longrightarrow \rho(\omega_2 \omega_1).
\]
\item A $2$-isomorphism $\psi_e: \rho(e) \Rightarrow 1_V$.
\end{enumerate}
This data is required to satisfy the following conditions:
\begin{enumerate}[label=(\alph*)]
\item For each triple $\omega_1, \omega_2, \omega_3 \in \hat{\mathsf{G}}$, the equality
\begin{equation}
\label{eq:assCond}
\psi_{\omega_3 \omega_2, \omega_1} \circ_1  \left( \psi_{\omega_3, \omega_2} \circ \prescript{\pi(\omega_3 \omega_2)}{}{\rho(\omega_1)} \right) = \psi_{\omega_3, \omega_2 \omega_1} \circ_1 \left( \rho(\omega_3) \circ \prescript{\pi(\omega_3)}{}{\psi}_{\omega_2, \omega_1}^{\pi(\omega_3)} \right)
\end{equation}
of $2$-isomorphisms $\rho(\omega_3) \circ \prescript{\pi(\omega_3)}{}{\rho(\omega_2)} \circ \prescript{\pi(\omega_3 \omega_2)}{}{\rho(\omega_1)} \Longrightarrow \rho(\omega_3 \omega_2 \omega_1)$ holds.

\item For each $\omega \in \hat{\mathsf{G}}$, the equalities
\begin{equation}
\label{eq:idenCond}
\psi_{e,\omega} = \psi_e \circ \rho(\omega), \qquad \psi_{\omega,e} = \rho(\omega) \circ \prescript{\pi(\omega)}{}{\psi_e}
\end{equation}
of $2$-isomorphisms $\rho(\omega) \Rightarrow \rho(\omega)$ hold.
\end{enumerate}
\end{Def}

Denote by $\psi_{\omega_3, \omega_2, \omega_1}$ the $2$-isomorphism defined by either side of equation \eqref{eq:assCond}.

\subsection{Real conjugation invariance of categorical traces}
\label{sec:conjInv}

We study categorical traces, as introduced by Ganter--Kapranov \cite{ganter2008} and Bartlett \cite{bartlett2011}, in the presence of duality involutions.

Let $x$ be an object of a $2$-category $\mathcal{V}$. As in \cite[\S 3.1]{ganter2008} and \cite[\S 4.1]{bartlett2011}, the categorical trace $\Tr(f)$ of a $1$-morphism $f: x \rightarrow x$ is the set of all $2$-morphisms from $1_x$ to $f$:
\[
\Tr(f) = 2 \Hom_{\mathcal{V}}(1_x, f).
\]
Given a $2$-morphism $u: f_1 \Rightarrow f_2$, define $\Tr(u) : \Tr(f_1) \rightarrow \Tr(f_2)$ to be $u \circ_1 (-)$. These definitions extend the categorical trace to a functor
\[
\Tr: 1\End_{\mathcal{V}}(x) \rightarrow \mathsf{Set}.
\]
If $\mathcal{V}$ is enriched in a category $\mathcal{A}$, then $\Tr$ takes values in $\mathcal{A}$. For example, when $\mathcal{V}$ is $k$-linear the functor $\Tr$ is $\mathsf{Vect}_k$-valued.

In \cite[\S 4.3]{ganter2008} and \cite[\S 4.3]{bartlett2011} a kind of conjugation invariance of categorical traces is established. We generalize this result in what follows by showing that categorical traces in $2$-categories with duality involutions (or contravariance) are invariant under Real conjugation.

Suppose then that $\mathcal{V}$ is a $2$-category with strict duality involution. Fix a sign $\epsilon \in \mathbb{Z}_2$. Let $f : x \rightarrow x$ be an equivalence. When $\epsilon=-1$ we also fix a quasi-inverse $\tilde{f}$ of $f$ and a $2$-isomorphism $\mu: \tilde{f} \circ f \Rightarrow 1_x$. Write
\[
f^{\nu}
=
\begin{cases}
f & \mbox{if } \nu=1, \\
\tilde{f} & \mbox{if } \nu =-1.
\end{cases}
\]
Let $h: \prescript{\epsilon}{}{x} \rightarrow y$ be an equivalence with quasi-inverse $k: y \rightarrow \prescript{\epsilon}{}{x}$ and $2$-isomorphisms $u: 1_y \Rightarrow h \circ k$ and $v: 1_{\prescript{\epsilon}{}{x}} \Rightarrow k \circ h$. This data can be used to define a map
\[
\Psi(h,k,u,v; \mu): \Tr(f) \rightarrow \Tr(h \circ \prescript{\epsilon}{}{f}^{\epsilon} \circ k),
\]
henceforth denoted by $\Psi(h)$. The map $\mu$ is required only when $\epsilon=-1$. Suppose that we are given a $2$-morphism $\phi \in \Tr(f)$. Interpret $u$ as a $2$-morphism $1_y \Longrightarrow h \circ 1_{{^{\epsilon}}x} \circ k$. When $\epsilon=1$ the map $\Psi(h)$ is defined by post-composing $u$ with $\phi$:
\[
\Psi(h)(\phi) = (h \circ_0 \phi \circ_0 k) \circ_1 u.
\]
This is the definition of \cite{ganter2008}, \cite{bartlett2011}. If instead $\epsilon=-1$, then we can form the composition
\[
1_{x^{\circ}} \xLongrightarrow[]{\mu^{\circ}} \tilde{f}^{\circ} \circ f^{\circ}  \xLongrightarrow[]{\tilde{f}^{\circ} \circ_0 \phi^{\circ}} \tilde{f}^{\circ}.
\]
The map $\Psi(h)$ is defined by further pre-composing with $u$:
\[
\Psi(h)(\phi) =  \left( h \circ_0 \left( (\tilde{f}^{\circ} \circ_0 \phi^{\circ}) \circ_1
\mu^{\circ} \right) \circ_0 k \right) \circ_1 u.
\]

The following result generalizes \cite[Proposition 4.10]{ganter2008} and \cite[Proposition 4.3]{bartlett2011}. A further generalization (with a different proof) will be given in Theorem \ref{thm:catCharLoopGrpd} below.

\begin{Prop}
\label{prop:genConjInv}
For each pair of equivalences $f: x \rightarrow x$ and $h:{^{\epsilon}}x \rightarrow y$ with quasi-inverse data as above, the map
\[
\Psi(h) : \Tr(f) \rightarrow \Tr(h \circ {^{\epsilon}}f^{\epsilon} \circ k)
\]
is a bijection. Moreover, $\Psi(1_x) = 1_{\Tr(f)}$ and, given equivalences ${^{\epsilon_1}}x \xrightarrow[]{h_1} y_1$ and ${^{\epsilon_2}}y_1 \xrightarrow[]{h_2} y_2$ with quasi-inverse data, the equality $
\Psi(h_2) \circ \Psi(h_1) = \Psi(h_2 \circ {^{\epsilon_2}}h_1)$ holds.
\end{Prop}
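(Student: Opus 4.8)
The plan is to prove the three assertions in the order $\Psi(1_x)=1_{\Tr(f)}$, then the composition law, then bijectivity (deduced from the first two). By the coherence theorem for $2$-categories with strict duality involution one may assume that $\mathcal{V}$ is strict and that $(-)^{\circ}$ is a strict $2$-functor, so that in particular $(-)^{\circ\circ}=1_{\mathcal{V}}$, $(1_g)^{\circ}=1_{g^{\circ}}$, $(\alpha\circ_0\beta)^{\circ}=\alpha^{\circ}\circ_0\beta^{\circ}$ and $(\alpha\circ_1\beta)^{\circ}=\beta^{\circ}\circ_1\alpha^{\circ}$; every verification below is then a string-diagram manipulation. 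The identity $\Psi(1_x)=1_{\Tr(f)}$ is immediate: the $1$-morphism $1_x$ has degree $+1$, so only the $\epsilon=1$ branch of the definition of $\Psi$ applies, and taking $k=1_x$, $u=v=1_{1_x}$ the formula $\Psi(1_x)(\phi)=(1_x\circ_0\phi\circ_0 1_x)\circ_1 1_{1_x}$ collapses to $\phi$ by the unit axioms of $\mathcal{V}$.

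The bulk of the work is the composition law $\Psi(h_2)\circ\Psi(h_1)=\Psi(h_2\circ {}^{\epsilon_2}h_1)$. The first step is to fix, once and for all, canonical quasi-inverse data for the composite $h_2\circ {}^{\epsilon_2}h_1$ and for the iterated conjugates ${}^{\epsilon_1}f^{\epsilon_1}$ and ${}^{\epsilon_2}(h_1\circ {}^{\epsilon_1}f^{\epsilon_1}\circ k_1)^{\epsilon_2}$ that occur as arguments of $\Psi(h_1)$ and $\Psi(h_2)$, all built functorially from $(k_1,u_1,v_1)$, $(k_2,u_2,v_2)$ and $\mu$; this normalization is what makes the asserted equality of maps even well-posed, and it is the piece I expect to be fussiest to pin down. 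One then checks the equality separately in the four cases $(\epsilon_1,\epsilon_2)\in\{\pm1\}^2$. The case $\epsilon_1=\epsilon_2=+1$ is precisely the functoriality statement of Ganter--Kapranov \cite[Proposition 4.10]{ganter2008} and Bartlett \cite[Proposition 4.3]{bartlett2011}, proved by repeated use of the interchange law. In the remaining three cases the only new manoeuvre is to slide whiskerings past applications of $(-)^{\circ}$ using the strictness identities above, while carrying along the $2$-isomorphisms $\mu^{\circ}$, and $(\mu^{\circ})^{\circ}=\mu$, introduced by the $\epsilon=-1$ branch.

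The genuinely delicate case, which I expect to be the main obstacle, is $\epsilon_1=\epsilon_2=-1$: then $h_2\circ {}^{\epsilon_2}h_1$ has degree $\epsilon_2\epsilon_1=+1$, so $\Psi(h_2\circ {}^{\epsilon_2}h_1)$ is computed by the $\mu$-free formula whereas $\Psi(h_1)$ and $\Psi(h_2)$ each use the $\mu$-laden one; reconciling the two sides amounts to exhibiting an explicit cancellation of the two copies of $\mu^{\circ}$ against the interchange and unit laws, and this is the step where the strict relation $(-)^{\circ\circ}=1_{\mathcal{V}}$ is indispensable.

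Granting the composition law and $\Psi(1_x)=1_{\Tr(f)}$, bijectivity of $\Psi(h)$ for $h\colon {}^{\epsilon}x\to y$ follows formally. Put $h'=k$ if $\epsilon=1$ and $h'=k^{\circ}$ if $\epsilon=-1$, equipped with the quasi-inverse data obtained by exchanging the roles of $h$ and $k$; then $h'$ is an equivalence ${}^{\epsilon}y\to x$. The composite $h'\circ {}^{\epsilon}h$ is an equivalence $x\to x$ of degree $+1$: it equals $k\circ h$ if $\epsilon=1$ and $(k\circ h)^{\circ}$ if $\epsilon=-1$, and since $k\circ h\simeq 1_{{}^{\epsilon}x}$ it is in either case $2$-isomorphic to $1_x$. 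The composition law then gives $\Psi(h')\circ\Psi(h)=\Psi(h'\circ {}^{\epsilon}h)$, and a routine naturality check identifies $\Psi$ of an equivalence $2$-isomorphic to $1_x$ with $\Tr$ of the corresponding $2$-isomorphism $f\Rightarrow(h'\circ {}^{\epsilon}h)\circ f\circ\cdots$, which is a bijection since $\Tr$ is a functor; hence $\Psi(h)$ is injective. The mirror-image computation $\Psi(h)\circ\Psi(h')=\Psi(h\circ {}^{\epsilon}h')$, whose value involves the equivalence $h\circ k\simeq 1_y$, shows $\Psi(h)$ is surjective, which completes the argument.
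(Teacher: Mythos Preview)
Your proposal is correct and essentially follows the paper's approach for the substantive part, namely the composition law: the paper's proof, like yours, is almost entirely devoted to writing down the canonical quasi-inverse data $(u,v)$ for $h_2\circ{}^{\epsilon_2}h_1$ built from $(u_i,v_i)$ and, in the mixed-sign cases, the induced $\mu$ for the composite; once this is fixed the paper, like you, declares the remaining verification straightforward.

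The one difference is in the bijectivity argument. You deduce it from the composition law by producing a companion $h'$ and invoking $\Psi(h')\circ\Psi(h)=\Psi(h'\circ{}^{\epsilon}h)$ together with an auxiliary naturality check that $\Psi$ of something $2$-isomorphic to $1_x$ is $\Tr$ of the connecting $2$-isomorphism. The paper instead dispatches bijectivity in a single line, directly from the fact that $h$ is an equivalence: one simply writes down an explicit inverse to $\Psi(h)$ using $k,u,v$ (and $\mu$ when $\epsilon=-1$) without appealing to the composition law. Your route works, but it is longer and requires the extra naturality statement you flag; the paper's direct argument avoids this and also decouples bijectivity from the normalization choices made for the composite.
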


\begin{proof}
That $\Psi(h)$ is a bijection follows from the assumption that $h$ is an equivalence. The equality $\Psi(1_x) = 1_{\Tr(f)}$ is clear from the construction.

To explain the precise meaning of the final statement, we need to describe the quasi-inverse data implicit in the definition of $\Psi(h_2 \circ {^{\epsilon_2}}h_1)$. Since $h_1$ and $h_2$ are equivalences, so too is $h_2 \circ \prescript{\epsilon_2}{}{h_1}$. We take $
\prescript{\epsilon_2}{}{k_1} \circ k_2$ as the quasi-inverse of $h_2 \circ \prescript{\epsilon_2}{}{h_1}$ with $2$-isomorphisms
\[
u: 1_{y_2} \xLongrightarrow[]{u_2} h_2 \circ k_2 \xLongrightarrow[]{h_2 \circ_0 \prescript{\epsilon_2}{}{u_1^{\epsilon_2}} \circ_0 k_2} h_2 \circ \prescript{\epsilon_2}{}{h_1} \circ \prescript{\epsilon_2}{}{k_1} \circ k_2
\]
and
\[
v: 1_{\prescript{\epsilon_2 \epsilon_1}{}{x}} \xLongrightarrow[]{\prescript{\epsilon_2}{}{v_1^{\epsilon_2}}} \prescript{\epsilon_2}{}{k_1} \circ \prescript{\epsilon_2}{}{h_1} \xLongrightarrow[]{\prescript{\epsilon_2}{}{k_1} \circ_0 v_2 \circ_0 \prescript{\epsilon_2}{}{h_1}} \prescript{\epsilon_2}{}{k_1} \circ k_2 \circ h_2 \circ \prescript{\epsilon_2}{}{h_1}.
\]
When $\epsilon_1\epsilon_2 =1$ no additional data is needed to define $\Psi(h_2 \circ \prescript{\epsilon_2}{}{h_1})$. If $\epsilon_1=-1$ and $\epsilon_2=1$, then we take for $\mu: \tilde{f} \circ f \Rightarrow 1_x$ the data used to define $\Psi(h_1)$. If instead $\epsilon_1=1$ and $\epsilon_2=-1$, then part of the data used to define $\Psi(h_2)$ is a quasi-inverse $\tilde{f}^{\prime}$ of $f^{\prime}=h_1 \circ f \circ k_1$ and a $2$-isomorphism $\mu^{\prime}: \tilde{f}^{\prime} \circ f^{\prime} \Longrightarrow 1_{y_1}$. Set $\tilde{f} = k_1 \circ \tilde{f}^{\prime} \circ h_1$ with $2$-isomorphism $\mu: \tilde{f} \circ f \Rightarrow 1_x$ given by the composition
\[
\tilde{f} \circ f \xLongrightarrow[]{\tilde{f} \circ f \circ v_1^{-1}} \tilde{f} \circ f \circ k_1 \circ h_1 = k_1 \circ \tilde{f}^{\prime} \circ h_1 \circ f \circ k_1 \circ h_1 \xLongrightarrow[]{k_1 \circ_0 \mu^{\prime} \circ_0 h_1} k_1 \circ h_1 \xLongrightarrow[]{v_1} 1_x.
\]
Then $\Psi(h_2 \circ {^{\epsilon_2}} h_1)$ is defined to be $\Psi(h_2 \circ {^{\epsilon_2}} h_1, 
{^{\epsilon_2}}k_1 \circ k_2, u, v; \mu)$. With the above definitions in place, it is now straightforward to verify the claimed equality $\Psi(h_2) \circ \Psi(h_1) = \Psi(h_2 \circ {^{\epsilon_2}} h_1)$.
\end{proof}

\begin{Rem}
While the categorical trace of an arbitrary $1$-morphism $f: x \rightarrow x$ is defined, Proposition \ref{prop:genConjInv} requires that $f$ be an equivalence.
\end{Rem}

Keeping the above notation, let us now further assume that $x=y$ and that the $1$-morphisms $f$ and $h$ graded commute in the sense that we are given a $2$-isomorphism
\[
\kappa : h \circ {^{\epsilon}}f^{\epsilon} \Longrightarrow f \circ h.
\]
We can then define a map $(h,\kappa)_* : \Tr(f) \rightarrow \Tr(f)$ by the composition
\[
\Tr(f) \xrightarrow[]{\Psi(h)} \Tr(h \circ {^{\epsilon}}f^{\epsilon} \circ k) \xrightarrow[]{\Tr(\kappa \circ_0 k)} \Tr(f \circ h \circ k) \xrightarrow[]{\Tr(f \circ_0 u^{-1})} \Tr(f).
\]
When $\epsilon=1$ this reduces to a construction of Ganter--Kapranov. Note that if $\mathcal{V}$ is enriched in $\mathcal{A}$, then $(h,\kappa)_*$ is a morphism in $\mathcal{A}$. In particular, when $\mathcal{A} = \mathsf{Vect}_k$ we can make the following definition, generalizing that of \cite[\S 3.6]{ganter2008}.

\begin{Def}
Let $\mathcal{V}$ be a $k$-linear $2$-category with strict duality involution and let $f: x \rightarrow x$ and $h : {^{\epsilon}}x \rightarrow x$ be graded commuting equivalences. Assuming that the vector space $\Tr(f)$ is finite dimensional, the joint trace of $(f,h)$ is
\[
\tr(f,h) = \tr \left( (h,\kappa)_*: \Tr(f) \rightarrow \Tr(f) \right).
\]
\end{Def}

\subsection{Real categorical characters}
\label{sec:RealCatChar}

Let $\rho$ be a Real $2$-representation of a finite group $\mathsf{G}$ on a $2$-category $\mathcal{V}$ with strict duality involution. For $g \in \mathsf{G}$, write $\Tr_{\rho}(g)$ for the set $\Tr(\rho(g))$. Fix $g \in \mathsf{G}$ and $\omega \in \hat{\mathsf{G}}$. By applying Proposition \ref{prop:genConjInv} to the equivalences
\[
f= \rho(g), \qquad \tilde{f} = \rho(g^{-1}), \qquad h = \rho(\omega), \qquad k= {^{\pi(\omega)}}\rho(\omega^{-1}),
\]
and the $2$-isomorphisms
\[
u=\psi_{\omega,\omega^{-1}}^{-1} \circ_0 \psi_e^{-1}, \qquad \mu = \psi_e \circ_0 \psi_{g^{-1},g}
\]
we obtain a map
\[
\Tr(\rho(g)) \rightarrow \Tr \left( \rho(\omega) \circ {^{\pi(\omega)}}\rho(g^{\pi(\omega)}) \circ {^{\pi(\omega)}}\rho(\omega^{-1}) \right).
\]
Post composing with $\Tr(\psi_{\omega, g^{\pi(\omega)}, \omega^{-1}})$ then gives a map $\beta_{g,\omega} : \Tr_{\rho}(g) \rightarrow \Tr_{\rho}(\omega g^{\pi(\omega)} \omega^{-1})$.

\begin{Def}
The Real categorical character of $\rho$ is the assignment
\[
g \mapsto \Tr_{\rho}(g), \qquad g \in \mathsf{G}
\]
together with the bijections
\[
\beta_{g, \omega} : \Tr_{\rho}(g) \rightarrow \Tr_{\rho}(\omega g^{\pi(\omega)} \omega^{-1}), \qquad (g, \omega) \in \mathsf{G} \times \hat{\mathsf{G}}.
\]
\end{Def}

The sets $\{\Tr_{\rho}(g)\}_{g \in \mathsf{G}}$, together with the bijections $\{\beta_{g_1,g_2}\}_{(g_1, g_2) \in \mathsf{G}^2}$, define the categorical character of the underlying $2$-representation of $\mathsf{G}$ \cite{ganter2008}, \cite{bartlett2011}. In particular, unlike the case of Real characters, Real categorical characters contain strictly more information than the categorical character of the underlying $2$-representation.

\begin{Prop}
\label{prop:catCharLoopGrpd}
The Real categorical character of a Real $2$-representation $\rho$ of $\mathsf{G}$ on $\mathcal{V}$ defines a functor
\[
\Tr(\rho) : \Lambda_{\pi}^{\refl} B \hat{\mathsf{G}} \rightarrow \mathsf{Set}.
\]
Moreover, if $\mathcal{V}$ is enriched in $\mathcal{A}$, then the functor $\Tr(\rho)$ takes values in $\mathcal{A}$.
\end{Prop}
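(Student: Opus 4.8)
The plan is to exhibit $\Tr(\rho)$ explicitly on objects and morphisms and then verify the two functoriality axioms, the composition axiom being the substantive one. First I would use the identification $\Lambda_{\pi}^{\refl} B \hat{\mathsf{G}} \simeq \mathsf{G} \git_{\varphi} \hat{\mathsf{G}}$ of Section \ref{sec:loopGrp}: an object is an element $g \in \mathsf{G}$, a morphism $g \to g'$ is an element $\omega \in \hat{\mathsf{G}}$ with $g' = \omega g^{\pi(\omega)} \omega^{-1}$, the identity at $g$ is $e$, and the composite of $g \xrightarrow{\omega_1} g'$ and $g' \xrightarrow{\omega_2} g''$ is $\omega_2 \omega_1$. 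I would set $\Tr(\rho)(g) = \Tr_{\rho}(g) = 2\Hom_{\mathcal{V}}(1_V, \rho(g))$ on objects and $\Tr(\rho)(\omega) = \beta_{g, \omega}$ on morphisms; this is well-typed since $\beta_{g,\omega} : \Tr_{\rho}(g) \to \Tr_{\rho}(\omega g^{\pi(\omega)} \omega^{-1})$, and all the data entering $\beta_{g,\omega}$ (the quasi-inverses $\rho(\omega^{-1})$, and the $2$-isomorphisms playing the roles of $u$, $\mu$, and $\psi_{\omega, g^{\pi(\omega)}, \omega^{-1}}$) are canonically built from $\rho$ and its coherence isomorphisms $\psi$, so there is no choice to track. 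It then remains to check $\beta_{g, e} = 1_{\Tr_{\rho}(g)}$ and $\beta_{\omega_1 g^{\pi(\omega_1)} \omega_1^{-1}, \omega_2} \circ \beta_{g, \omega_1} = \beta_{g, \omega_2 \omega_1}$.

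For the identity axiom I would unwind $\beta_{g,e}$: the relevant instance of $\Psi$ from Proposition \ref{prop:genConjInv} is built from $h = \rho(e)$, which becomes $1_V$ after the identification $\psi_e$, so by the first part of that proposition it equals the identity up to $\psi_e$; the subsequent post-composition by $\Tr(\psi_{e, g, e})$ then collapses to the identity by the unit conditions \eqref{eq:idenCond}. This step is routine bookkeeping with \eqref{eq:idenCond}.

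For the composition axiom I would write $\beta_{g, \omega} = \Tr(\psi_{\omega, g^{\pi(\omega)}, \omega^{-1}}) \circ \Psi(\rho(\omega))$, where $\Psi(\rho(\omega))$ is the instance of Proposition \ref{prop:genConjInv} with $f = \rho(g)$, $h = \rho(\omega)$, $k = \prescript{\pi(\omega)}{}{\rho(\omega^{-1})}$. Two reconciliations are then needed. First, in $\beta_{\omega_1 g^{\pi(\omega_1)} \omega_1^{-1}, \omega_2} \circ \beta_{g, \omega_1}$ the second map involves $\Psi$ taken with respect to the loop $\rho(\omega_1 g^{\pi(\omega_1)} \omega_1^{-1})$, whereas the composition law $\Psi(h_2) \circ \Psi(h_1) = \Psi(h_2 \circ \prescript{\epsilon_2}{}{h_1})$ of Proposition \ref{prop:genConjInv} produces $\Psi(\rho(\omega_2) \circ \prescript{\pi(\omega_2)}{}{\rho(\omega_1)})$ with respect to the loop $f_1 := \rho(\omega_1) \circ \prescript{\pi(\omega_1)}{}{\rho(g^{\pi(\omega_1)})} \circ \prescript{\pi(\omega_1)}{}{\rho(\omega_1^{-1})}$; these differ by the $2$-isomorphism $\psi_{\omega_1, g^{\pi(\omega_1)}, \omega_1^{-1}} : f_1 \Rightarrow \rho(\omega_1 g^{\pi(\omega_1)} \omega_1^{-1})$, so I would first record the naturality of $\Psi(h)$ both in the loop $f$ and in $h$ together with its quasi-inverse data, each immediate from the explicit formula for $\Psi$ (it is assembled from horizontal whiskering of $\phi$, of its dual $\phi^{\circ}$, and vertical composition with fixed $2$-cells, all of which are $2$-functorial). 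Second, the quasi-inverse and coherence data that Proposition \ref{prop:genConjInv} attaches to $\rho(\omega_2) \circ \prescript{\pi(\omega_2)}{}{\rho(\omega_1)}$ — the built-up $k$, $u$, $v$ and, when $\pi(\omega_1)\pi(\omega_2) = -1$, the built-up $\mu$ — must be transported along $\psi_{\omega_2, \omega_1}$ to the data used in $\beta_{g, \omega_2 \omega_1}$, and that these agree is exactly an instance of the associativity and unit conditions \eqref{eq:assCond}, \eqref{eq:idenCond}. Finally, the leftover post-composed coherence terms (the whiskered $\psi_{\omega_1, g^{\pi(\omega_1)}, \omega_1^{-1}}$, the $\psi_{\omega_2, \cdot, \omega_2^{-1}}$ term, and the term produced by transporting along $\psi_{\omega_2, \omega_1}$) reassemble, by repeated use of \eqref{eq:assCond} (the pentagon-type coherence among the $\psi$'s), into the single term $\Tr(\psi_{\omega_2 \omega_1, g^{\pi(\omega_2 \omega_1)}, (\omega_2 \omega_1)^{-1}})$, and the composition axiom follows.

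For the enriched refinement I would note that each $\Tr_{\rho}(g) = 2\Hom_{\mathcal{V}}(1_V, \rho(g))$ is an object of $\mathcal{A}$ by hypothesis, and that every operation entering the definition of $\beta_{g, \omega}$ — horizontal whiskering $\rho(\omega) \circ_0 (-) \circ_0 \prescript{\pi(\omega)}{}{\rho(\omega^{-1})}$, vertical pre- and post-composition with the fixed $2$-morphisms, and, when $\pi(\omega) = -1$, the action of the duality involution on $2$-morphisms — is a morphism of $\mathcal{A}$ once $\mathcal{V}$ and its strict duality involution are $\mathcal{A}$-enriched; hence $\beta_{g,\omega}$ is an $\mathcal{A}$-morphism and $\Tr(\rho)$ factors through $\mathcal{A}$. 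Modulo the routine identity axiom, I expect the main obstacle to be the combinatorial bookkeeping in the composition step: matching the quasi-inverse and coherence data manufactured by the composition law of Proposition \ref{prop:genConjInv} against the data built into $\beta_{g, \omega_2 \omega_1}$, and reassembling the residual $\psi$-whiskerings — this is precisely where, and the only place where, the Real $2$-representation axioms \eqref{eq:assCond} and \eqref{eq:idenCond} are used.
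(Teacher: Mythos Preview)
Your proposal is correct and follows the same approach as the paper: define $\Tr(\rho)$ on objects by $g\mapsto\Tr_{\rho}(g)$ and on morphisms by $\omega\mapsto\beta_{g,\omega}$, and deduce functoriality from Proposition~\ref{prop:genConjInv}. The paper's own proof is a one-line citation of that proposition, whereas you have spelled out the residual coherence bookkeeping (passing $\Tr(\psi_{\omega_1,g^{\pi(\omega_1)},\omega_1^{-1}})$ through $\Psi$, matching the composed quasi-inverse data against that of $\rho(\omega_2\omega_1)$, and reassembling the $\psi$-terms via \eqref{eq:assCond} and \eqref{eq:idenCond}) that the paper leaves implicit.
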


\begin{proof}
Recall that objects of $\Lambda_{\pi}^{\refl} B\hat{\mathsf{G}}$ are labelled by elements $g \in \mathsf{G}$. Set $\Tr(\rho)(g) = \Tr_{\rho}(g)$. Given a morphism $\omega: g \rightarrow \omega g^{\pi(\omega)} \omega^{-1}$ in $\Lambda_{\pi}^{\refl} B \hat{\mathsf{G}}$, set $\Tr(\rho)(\omega) = \beta_{g, \omega}$. That these assignments define a functor $\Tr(\rho): \Lambda_{\pi}^{\refl} B \hat{\mathsf{G}} \rightarrow \mathsf{Set}$ follows from Proposition \ref{prop:genConjInv}. The final statement is clear.
\end{proof}

For example, when $\mathcal{V}$ is $k$-linear, Proposition \ref{prop:catCharLoopGrpd} states that the Real categorical character of a linear Real $2$-representation of $\mathsf{G}$ is a vector bundle over $\Lambda_{\pi}^{\refl} B \hat{\mathsf{G}}$.

\begin{Ex}
Let $\mathsf{Vect}_{\mathbb{F}_1}(X)$ be the category of $\mathbb{F}_1$-vector bundles over a finite set $X$, as in \cite{kapranovUnpub}. An action of a finite $\mathbb{Z}_2$-graded group $\hat{\mathsf{G}}$ on $X$ defines a Real $2$-representation $\rho$ of $\mathsf{G}$ on $\mathsf{Vect}_{\mathbb{F}_1}(X) \in \Obj(\mathsf{Cat})$ by setting
\[
\rho(g) = (g^{-1})^*, \qquad \rho(\omega) = (-)^{\vee} \circ (\omega^{-1})^*
\]
for $g \in \mathsf{G}$ and $\omega \in \hat{\mathsf{G}} \backslash \mathsf{G}$, where $(-)^{\vee} = \Hom_{\mathbb{F}_1}(-,\mathbb{F}_1)$. As $(-)^{\vee}$ squares to the identify functor, the $2$-isomorphisms $\psi_{\bullet,\bullet}$ are canonical. The Real categorical character $\Tr(\rho)$ is the unoriented loop groupoid of the double cover $X \git \mathsf{G} \rightarrow X \git \hat{\mathsf{G}}$, viewed as a groupoid over $\Lambda_{\pi}^{\refl} B \hat{\mathsf{G}}$. Interpreting the trace of an endomorphism of a finite set as the cardinality of its fixed point set, the joint traces $\tr(\rho(g), \rho(\omega)) = \chi_{\rho}(g,\omega)$ of Section \ref{sec:conjInv} compute the cardinality of joint fixed point sets:
\[
\chi_{\rho}(g,\omega) = \# X^{g,\omega}.
\]

In physical terminology, this example describes an orientifold of the $\mathbb{F}_1$-analogue of the $\mathsf{G}$-equivariant $B$-model on $X$. From this point of view, the Real part of $\chi_{\rho}$ computes the charge of the orientifold plane.
\end{Ex}

\section{Twisted Real \texorpdfstring{$2$}{}-representation theory of finite groups}
\label{sec:RealProjReps}

In this section we study linear Real representations of finite categorical groups. This recovers the $k$-linear version of the results of Section \ref{sec:Real2Rep} when the categorical group is a trivial extension of a finite group by $B k^{\times}$.

\subsection{Basic definitions}
\label{sec:RealProj2Rep}

Fix a field $k$. The following is a Real variant of definitions of Frenkel--Zhu \cite[Definition 2.8]{frenkel2012} and Ganter--Usher \cite[Definition 4.1]{ganter2016}.

\begin{Def}
A twisted Real $2$-representation of a finite group $\mathsf{G}$ on a $k$-linear $2$-category $\mathcal{V}$ with strict duality involution consists of data $V \in \Obj(\mathcal{V})$, $\rho(\omega)$, $\psi_{\omega_2, \omega_1}$ and $\psi_e$ as in Section \ref{sec:basicDef}, with the constraint \eqref{eq:idenCond} unchanged but with the constraint \eqref{eq:assCond} replaced by the condition that
\begin{equation}
\label{eq:projAssCond}
\hat{\alpha}([\omega_3 \vert \omega_2 \vert \omega_1]) \cdot \psi_{\omega_3 \omega_2, \omega_1} \left( \psi_{\omega_3, \omega_2} \circ \prescript{\pi(\omega_3 \omega_2)}{}{\rho(\omega_1)} \right) =\psi_{\omega_3, \omega_2 \omega_1}  \left( \rho(\omega_3) \circ \prescript{\pi(\omega_3)}{}{\psi}^{\pi(\omega_3)}_{\omega_2, \omega_1} \right)
\end{equation}
for some function $\hat{\alpha}: \hat{\mathsf{G}} \times \hat{\mathsf{G}} \times \hat{\mathsf{G}} \rightarrow k^{\times}$.
\end{Def}

We call $\hat{\alpha}$, which we regard as a $3$-cochain on $B \hat{\mathsf{G}}$, the Real $2$-Schur multiplier of the twisted Real $2$-representation $\rho$.

In terms of string diagrams, the $2$-isomorphisms $\psi_{\omega_2, \omega_1}$ and $\psi_e$ are
\[
\begin{gathered}
\begin{tikzpicture}[scale=0.15,inner sep=0.35mm, place/.style={circle,draw=black,fill=black,thick}]
\draw (0,0) -- (4,4);
\draw (4,4) -- (8,0);
\draw (4,4) -- (4,8);
\draw (4,4) node [shape=circle,draw,fill] {};
\node [below] at (0,0) {$\scriptstyle  \omega_2$};
\node [below] at (8,0) {$\scriptstyle \omega_1$};
\node [below] at (4.5,9.65) {$\scriptstyle \omega_2 \omega_1$};
\node [left,label={[label distance=-11.5mm]0: $\scriptstyle \psi_{\omega_2, \omega_1}$}] at (4,4) {};
\end{tikzpicture}
\end{gathered}
\qquad \textnormal{and} \qquad
\begin{gathered}
\begin{tikzpicture}[scale=0.15,inner sep=0.35mm, place/.style={circle,draw=black,fill=black,thick}]
\draw (0,0) -- (0,5.65);
\draw (0,5.65) node [shape=circle,draw,fill] {};
\node [below] at (0,0) {$\scriptstyle e$};
\node [left,label={[label distance=-5.5mm]0: $\scriptstyle \psi_e$}] at (0,5.65) {};
\end{tikzpicture}
\end{gathered}
\]
respectively. Equation \eqref{eq:projAssCond} will be written as
\begin{equation}
\label{diag:associativity}
\begin{gathered}
\begin{tikzpicture}[scale=0.2,inner sep=0.35mm, place/.style={circle,draw=black,fill=black,thick}]
\draw (0,0) -- (4,4);
\draw (4,4) -- (8,0);
\draw (4,4) -- (4,6);
\draw (2,2) -- (4,0);
\draw (2,2) node [shape=circle,draw,fill] {};
\draw (4,4) node [shape=circle,draw,fill] {};
\node [below] at (0,0) {$\scriptstyle \omega_3$};
\node [below] at (4,0) {$\scriptstyle \omega_2$};
\node [below] at (8,0) {$\scriptstyle \omega_1$};
\node [left,label={[label distance=-14.5mm]0: $\scriptstyle \psi_{\omega_3 \omega_2, \omega_1}$}] at (4,4) {};
\node [left,label={[label distance=-12.0mm]0: $\scriptstyle \psi_{\omega_3, \omega_2}$}] at (2,2) {};
\end{tikzpicture}
\end{gathered}
\xrightarrow[]{\; \hat{\alpha}([\omega_3 \vert \omega_2 \vert \omega_1]) \;}
\begin{gathered}
\begin{tikzpicture}[scale=0.2,inner sep=0.35mm, place/.style={circle,draw=black,fill=black,thick}]
\draw (0,0) -- (4,4);
\draw (4,4) -- (8,0);
\draw (4,4) -- (4,6);
\draw (6,2) -- (4,0);
\draw (4,4) node [shape=circle,draw,fill] {};
\draw (6,2) node [shape=circle,draw,fill] {};
\node [below] at (0,0) {$\scriptstyle \omega_3$};
\node [below] at (4,0) {$\scriptstyle \omega_2$};
\node [below] at (8,0) {$\scriptstyle \omega_1$};
\node [right,label={[label distance=0.4mm]0: $\scriptstyle \psi_{\omega_3,  \omega_2 \omega_1}$}] at (4,4) {};
\node [right,label={[label distance=0.4mm]0: $\scriptstyle \prescript{\pi(\omega_3)}{}{\psi}_{\omega_2, \omega_1}^{\pi(\omega_3)}$}] at (6,2) {};
\end{tikzpicture}
\end{gathered}
\end{equation}
the arrow indicating that the $2$-morphism on the right is $\hat{\alpha}([\omega_3 \vert \omega_2 \vert \omega_1])$ times that on the left. When computing with string diagrams, labels of $1$-morphisms will often be omitted when they can be reconstructed from the labelled data in a string diagram. For example, the $2$-morphism $\psi_e \circ \psi_{\omega, \omega^{-1}}$ will often be drawn as
\[
\begin{gathered}
\begin{tikzpicture}[scale=0.2,inner sep=0.35mm, place/.style={circle,draw=black,fill=black,thick}]
\draw[black] (0,0) -- (4,4);
\draw[black] (4,4) -- (8,0);
\draw[black] (4,4) -- (4,8);
\draw (4,4) node [shape=circle,draw,fill] {};
\draw (4,8) node [shape=circle,draw,fill] {};
\node [below] at (0,0) {$\scriptstyle \omega$};
\node [below] at (8,0) {$\scriptstyle \omega^{-1}$};
\node [left,label={[label distance=-11.5mm]0: $\scriptstyle \psi_e$}] at (7,8) {};
\end{tikzpicture}
\end{gathered}
\qquad
=
\qquad
\begin{gathered}
\begin{tikzpicture}[scale=0.2,color=black, baseline]
\draw[black, decoration={markings, mark=at position 0.15 with {\arrow{>}}}, decoration={markings, mark=at position 0.895 with {\arrow{<}}},        postaction={decorate}](0,0) [partial ellipse=180:0:3 and 5];
\node at (-4.7,0.2) {$\scriptstyle \omega$};
\end{tikzpicture}
\end{gathered}
\]

\begin{Lem}
\label{lem:RealProjCocycle}
The Real $2$-Schur multiplier defines a $3$-cocycle $\hat{\alpha} \in Z^3(B \hat{\mathsf{G}}, k_{\pi}^{\times})$.
\end{Lem}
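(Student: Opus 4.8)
The goal is to show that the function $\hat{\alpha}: \hat{\mathsf{G}}^3 \to k^\times$ defined by the associativity constraint \eqref{eq:projAssCond} is a twisted $3$-cocycle, i.e. satisfies $d\hat{\alpha}=1$ in $C^\bullet(B\hat{\mathsf{G}}, k^\times_\pi)$, where the differential is the one recalled in Section \ref{sec:twistLoopTran}. The strategy is the standard pentagon argument: apply \eqref{eq:projAssCond} repeatedly to rebracket a fourfold composite, and compare the two ways of going around the pentagon. Concretely, I would fix $\omega_1, \omega_2, \omega_3, \omega_4 \in \hat{\mathsf{G}}$ and consider the $2$-isomorphism obtained by composing the $\psi$'s associated to the fourfold product $\rho(\omega_4) \circ \prescript{\pi(\omega_4)}{}{\rho(\omega_3)} \circ \prescript{\pi(\omega_4\omega_3)}{}{\rho(\omega_2)} \circ \prescript{\pi(\omega_4\omega_3\omega_2)}{}{\rho(\omega_1)} \Rightarrow \rho(\omega_4\omega_3\omega_2\omega_1)$, which is unambiguously defined (up to scalar) using $\psi_{\bullet,\bullet,\bullet}$. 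Both routes around the pentagon compute this same composite up to a scalar, and the two scalars must agree.

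**Key steps.**
First I would draw the pentagon of rebracketings of $(((\rho(\omega_4)\circ{}^{\pi}\rho(\omega_3))\circ{}^{\pi}\rho(\omega_2))\circ{}^{\pi}\rho(\omega_1))$ in string-diagram form, with the five vertices joined by instances of \eqref{diag:associativity} (equivalently \eqref{eq:projAssCond}); each edge contributes a scalar $\hat{\alpha}$ evaluated on the appropriate triple of group elements, and whiskering by $\rho(\omega_4)$ on the left multiplies the scalar by $\hat{\alpha}(\cdots)^{\pi(\omega_4)}$ because of the $\prescript{\pi(\omega_4)}{}{(-)}^{\pi(\omega_4)}$ appearing on the right side of \eqref{eq:projAssCond}. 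Second, I would read off the two composite scalars: going one way around gives $\hat{\alpha}([\omega_4\vert\omega_3\vert\omega_2])^{\pi(\omega_4)}\cdot\hat{\alpha}([\omega_4\vert\omega_3\omega_2\vert\omega_1])\cdot\hat{\alpha}([\omega_3\vert\omega_2\vert\omega_1])$ (after whiskering) and the other way gives $\hat{\alpha}([\omega_4\vert\omega_3\vert\omega_2\omega_1])\cdot\hat{\alpha}([\omega_4\omega_3\vert\omega_2\vert\omega_1])$. Third, equating these and rearranging yields exactly $d\hat{\alpha}([\omega_4\vert\omega_3\vert\omega_2\vert\omega_1])=1$, with the $\pi(\omega_4)$-twist on the first factor matching the definition of the twisted differential in Section \ref{sec:twistLoopTran}. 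Finally, normalization of $\hat{\alpha}$ follows from the identity constraints \eqref{eq:idenCond}: setting any $\omega_i = e$ in \eqref{eq:projAssCond} and using \eqref{eq:idenCond} forces the corresponding value of $\hat{\alpha}$ to be $1$.

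**The main obstacle.**
The substantive content is bookkeeping rather than deep: one must verify that the pentagon of associativity $2$-isomorphisms actually commutes so that the two scalars are forced to be equal. This commutativity is not automatic from \eqref{eq:projAssCond} alone — it is the coherence one gets from $\rho$ being a genuine (pseudo)functor on $\underline{\mathcal{G}(\hat{\mathsf{G}},\hat{\alpha})}$, or, more directly, from the definition of a twisted Real $2$-representation together with the fact that $\psi_{\bullet,\bullet,\bullet}$ is well-defined, which already encodes one instance of pentagon compatibility. So the careful point is to spell out why the composite of $\psi$'s along the pentagon is independent of the path and hence the product of the associated scalars around the loop is $1$; once that is granted, the cocycle identity drops out. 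Tracking the twist-exponent $\pi(\omega_4)$ correctly through the left-whiskering — and nowhere else, since whiskering on the right introduces no twist — is the one place where the Real (as opposed to ordinary) setting genuinely intervenes, and it is exactly what produces $k^\times_\pi$ rather than $k^\times$ coefficients. I would carry out the diagram chase in string-diagram notation, where the scalars can be read off edge by edge, to keep this transparent.
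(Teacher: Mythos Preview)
Your approach is essentially identical to the paper's: both run the pentagon argument in string-diagram form, reading off five factors of $\hat{\alpha}$ from the five rebracketings and observing that commutativity of the pentagon forces $d\hat{\alpha}=1$. One slip to fix: the $\pi(\omega_4)$-exponent belongs on $\hat{\alpha}([\omega_3\vert\omega_2\vert\omega_1])$, not on $\hat{\alpha}([\omega_4\vert\omega_3\vert\omega_2])$, since it is the step applying \eqref{eq:projAssCond} to the \emph{rightmost} triple $(\omega_3,\omega_2,\omega_1)$ that is left-whiskered by $\rho(\omega_4)$ and hence picks up ${}^{\pi(\omega_4)}(-)^{\pi(\omega_4)}$; your own explanation of the mechanism is correct, only the label is misplaced. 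Also, the pentagon's commutativity does not require any pseudofunctor input beyond the $2$-category interchange law---both composites are literally the same $2$-morphism once you expand the whiskerings---so your ``main obstacle'' is not really an obstacle.
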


\begin{proof}
We need to verify that the equality
\[
\hat{\alpha}([\omega_4 \omega_3 \vert \omega_2 \vert \omega_1]) \hat{\alpha}([\omega_4 \vert \omega_3 \vert \omega_2 \omega_1]) =  \hat{\alpha}([\omega_3 \vert \omega_2 \vert \omega_1])^{\pi(\omega_4)} \hat{\alpha}([\omega_4 \vert \omega_3 \omega_2 \vert \omega_1]) \hat{\alpha}([\omega_4 \vert \omega_3 \vert \omega_2])
\]
holds for all $\omega_1, \omega_2, \omega_3, \omega_4 \in \hat{\mathsf{G}}$. This can be proved using string diagrams, similarly to \cite[Proposition 4.3]{ganter2016}, the corresponding statement for twisted $2$-representations. Repeated application of equation \eqref{diag:associativity} gives the following commutative diagram of string diagrams:
\[
\begin{tikzpicture}[scale=0.9]
\node (P2) at (1*18:4.3cm) { $
\begin{tikzpicture}
\node (n_2) {$
\begin{tikzpicture}[scale=0.20,inner sep=0.35mm, place/.style={circle,draw=black,fill=black,thick}]
\draw (0,0) -- (6,6);
\draw (6,6) -- (12,0);
\draw (6,6) -- (6,8);
\draw (4,0) -- (2,2);
\draw (10,2) -- (8,0);
\draw (2,2) node [shape=circle,draw,fill] {};
\draw (6,6) node [shape=circle,draw,fill] {};
\draw (10,2) node [shape=circle,draw,fill] {};
\node [right,label={[label distance=0.5mm]0: $\scriptscriptstyle \pi(\omega_4 \omega_3)$}] at (10,2) {};
\end{tikzpicture}
$};
\end{tikzpicture} 
$}; 

\node (P1) at (1*18+1*72:4.3cm) {$
\begin{tikzpicture}
\node (n_1) {$
\begin{tikzpicture}[scale=0.2,inner sep=0.35mm, place/.style={circle,draw=black,fill=black,thick}]
\draw (0,0) -- (6,6);
\draw (6,6) -- (12,0);
\draw (6,6) -- (6,8);
\draw (4,0) -- (2,2);
\draw (8,0) -- (4,4);
\draw (2,2) node [shape=circle,draw,fill] {};
\draw (4,4) node [shape=circle,draw,fill] {};
\draw (6,6) node [shape=circle,draw,fill] {};
\node [below] at (0,0) {$\scriptstyle \omega_4$};
\node [below] at (4,0) {$\scriptstyle \omega_3$};
\node [below] at (8,0) {$\scriptstyle \omega_2$};
\node [below] at (12,0) {$\scriptstyle \omega_1$};
\end{tikzpicture}
$};
\end{tikzpicture}
$}; 

\node (P4) at (1*18+2*72:4.3cm) {$
\begin{tikzpicture}
\node (n_4) {$
\begin{tikzpicture}[scale=0.20,inner sep=0.35mm, place/.style={circle,draw=black,fill=black,thick}]
\draw (0,0) -- (6,6);
\draw (6,6) -- (12,0);
\draw (6,6) -- (6,8);
\draw (4,4) -- (8,0);
\draw (6,2) -- (4,0);
\draw (6,2) node [shape=circle,draw,fill] {};
\draw (4,4) node [shape=circle,draw,fill] {};
\draw (6,6) node [shape=circle,draw,fill] {};
\node [right,label={[label distance=0.5mm]0: $\scriptscriptstyle \pi(\omega_4)$}] at (6,2) {};
\end{tikzpicture}
$};
\end{tikzpicture}
$}; ; 

\node (P5) at (1*18+3*72:4.3cm) {$
\begin{tikzpicture}
\node (n_5) {$
\begin{tikzpicture}[scale=0.20,inner sep=0.35mm, place/.style={circle,draw=black,fill=black,thick}]
\draw (0,0) -- (6,6);
\draw (6,6) -- (12,0);
\draw (6,6) -- (6,8);
\draw (8,4) -- (4,0);
\draw (6,2) -- (8,0);
\draw (6,2) node [shape=circle,draw,fill] {};
\draw (8,4) node [shape=circle,draw,fill] {};
\draw (6,6) node [shape=circle,draw,fill] {};
\node [right,label={[label distance=0.5mm]0: $\scriptscriptstyle \pi(\omega_4)$}] at (8,4) {};
\node [right,label={[label distance=0.5mm]0: $\scriptscriptstyle \pi(\omega_4)$}] at (6,2) {};
\end{tikzpicture}
$};
\end{tikzpicture}
$}; 

\node (P3) at (1*18+4*72:4.3cm) {$
\begin{tikzpicture}
\node (n_3) {$
\begin{tikzpicture}[scale=0.20,inner sep=0.35mm, place/.style={circle,draw=black,fill=black,thick}]
\draw (0,0) -- (6,6);
\draw (6,6) -- (12,0);
\draw (6,6) -- (6,8);
\draw (8,4) -- (4,0);
\draw (8,0) -- (10,2);
\draw (6,6) node [shape=circle,draw,fill] {};
\draw (8,4) node [shape=circle,draw,fill] {};
\draw (10,2) node [shape=circle,draw,fill] {};
\node [right,label={[label distance=0.5mm]0: $\scriptscriptstyle \pi(\omega_4)$}] at (8,4) {};
\node [right,label={[label distance=0.5mm]0: $\scriptscriptstyle \pi(\omega_4 \omega_3)$}] at (10,2) {};
\end{tikzpicture}
$};
\end{tikzpicture}
$}; 
\draw [-latex, thick] (P1) -- node[right]{$\scriptstyle \hat{\alpha}([\omega_4 \omega_3 \vert \omega_2 \vert \omega_1])$} (P2);
\draw [-latex, thick] (P2) -- node[right]{$\scriptstyle \hat{\alpha}([\omega_4 \vert \omega_3 \vert \omega_2 \omega_1])$} (P3);
\draw [-latex, thick] (P1) -- node[left]{$\scriptstyle \hat{\alpha}([\omega_4 \vert \omega_3 \vert \omega_2])$} (P4);
\draw [-latex, thick] (P4) -- node[left]{$\scriptstyle \hat{\alpha}([\omega_4 \vert \omega_3 \omega_2 \vert \omega_1])$} (P5);
\draw [-latex, thick] (P5) -- node[above]{$\scriptscriptstyle \hat{\alpha}([\omega_3 \vert \omega_2 \vert \omega_1])^{\pi(\omega_4)}$} (P3);
\end{tikzpicture}
\]
A node labelled by $-1$ indicates that it is $\psi_{\bullet, \bullet}^{- \op}$, instead of $\psi_{\bullet, \bullet}$, which is applied. For example, in the bottom right string diagram of the above diagram the node labelled by $\pi(\omega_4)$ corresponds to the $2$-isomorphism $\prescript{\pi(\omega_4)}{}{\psi}^{\pi(\omega_4)}_{\omega_3, \omega_2 \omega_1}$. The bottom arrow is multiplication by $\hat{\alpha}([\omega_3 \vert \omega_2 \vert \omega_1])^{\pi(\omega_4)}$, since it is the $\pi(\omega_4)$\textsuperscript{th} power of equation \eqref{diag:associativity} which is being applied. Commutativity of the above diagram implies the desired cocycle condition.
\end{proof}

By combining Theorem \ref{thm:catGroupClass} and Lemma \ref{lem:RealProjCocycle}, we find that an $\hat{\alpha}$-twisted Real $2$-representation of $\mathsf{G}$ determines a $\mathbb{Z}_2$-graded categorical group $\mathcal{G}(\hat{\mathsf{G}}, \hat{\alpha})$. The following proposition shows that the corresponding Real ($2$-)representation theories are equivalent.

\begin{Prop}
\label{prop:projToTwisted}
There is a canonical biequivalence between $\mathsf{RRep}_{\mathcal{V},k}(\mathcal{G}(\mathsf{G}, \alpha))$ and the bicategory of $\hat{\alpha}$-twisted Real $2$-representations of $\mathsf{G}$ on $\mathcal{V}$.
\end{Prop}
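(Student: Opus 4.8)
I would prove this by unwinding both sides of the claimed biequivalence into explicit data and exhibiting mutually inverse assignments. We may assume, using the coherence results recalled in Section~\ref{sec:bicatDual} and the reduction explained after Lemma~\ref{lem:RealRepHomCatGrp}, that $\mathcal{V}$ is a $2$-category with strict duality involution, regarded as a $2$-category with contravariance via the construction $\underline{(-)}$. By the linear analogue of Lemma~\ref{lem:RealRepHomCatGrp}, an object of $\mathsf{RRep}_{\mathcal{V},k}(\mathcal{G}(\mathsf{G},\alpha))$ is precisely a choice of $V\in\Obj(\mathcal{V})$ together with a morphism of $\mathbb{Z}_2$-graded categorical groups $\rho:\mathcal{G}(\hat{\mathsf{G}},\hat{\alpha})\rightarrow\mathsf{GL}^{\textnormal{gen}}_k(V)$ whose restriction to $\Aut_{\hat{\mathcal{G}}}(\mathbf{1})\simeq k^{\times}$ is scalar multiplication.

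First I would use the skeletal model of $\mathcal{G}(\hat{\mathsf{G}},\hat{\alpha})$ from after Theorem~\ref{thm:catGroupClass}: its objects are the elements $\omega\in\hat{\mathsf{G}}$, its morphisms are labelled by scalars, its monoidal product is the group law, and its associator is multiplication by $\hat{\alpha}([\omega_3\vert\omega_2\vert\omega_1])$. Unwinding a grading-preserving monoidal functor out of this groupoid, one reads off exactly the data of Section~\ref{sec:RealProj2Rep}: the values on objects are equivalences $\rho(\omega):\prescript{\pi(\omega)}{}{V}\rightarrow V$ (grading preservation is precisely the assertion that $\rho(\omega)$ has this source), the scalar-action constraint forces the values on morphisms, and the monoidal coherence $2$-isomorphisms are the $\psi_{\omega_2,\omega_1}:\rho(\omega_2)\circ_0\prescript{\pi(\omega_2)}{}{\rho(\omega_1)}\Rightarrow\rho(\omega_2\omega_1)$ together with the unit coherence $\psi_e:\rho(e)\Rightarrow 1_V$. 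The associativity coherence axiom for a monoidal functor, with the source associator scaled by $\hat{\alpha}$ and the target being the associator of $\mathsf{GL}^{\textnormal{gen}}_k(V)$ for three (anti)autoequivalences, becomes equation~\eqref{eq:projAssCond} with Real $2$-Schur multiplier equal to the chosen cocycle $\hat{\alpha}$; the unit coherences become \eqref{eq:idenCond}. Conversely, the data $(V,\rho(\omega),\psi_{\omega_2,\omega_1},\psi_e)$ of an $\hat{\alpha}$-twisted Real $2$-representation assembles into such a functor, since \eqref{eq:projAssCond} and \eqref{eq:idenCond} are exactly its coherence constraints. Both assignments are defined without auxiliary choices, which is the sense in which the biequivalence is canonical.

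Second I would extend this to $1$- and $2$-morphisms. A pseudonatural transformation respecting contravariance from $\rho$ to $\rho'$ has a single degree-one component $\theta:V\rightarrow V'$ in $\mathcal{V}$ together with invertible $2$-cells comparing $\rho'(\omega)\circ_0\prescript{\pi(\omega)}{}{\theta}$ and $\theta\circ_0\rho(\omega)$ satisfying the pseudonaturality square; against the skeletal model this is exactly the natural notion of an intertwiner of $\hat{\alpha}$-twisted Real $2$-representations, and modifications match up similarly. Checking that the two assignments are functorial and mutually (pseudo-)inverse then assembles everything into the asserted biequivalence. This part is formal and parallel to the proof of Proposition~\ref{prop:symmGenSymmEquiv} one categorical level down.

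The only real content, and the main obstacle, is the bookkeeping in the second paragraph: one must track the contravariance superscripts $\prescript{\pi(\omega)}{}{(-)}$ through the monoidal coherence of $\rho$ and confirm that the associator of $\mathsf{GL}^{\textnormal{gen}}_k(V)$ for three antiautoequivalences---whose general definition involves a pseudonaturality constraint for $\eta$ and the modification $\zeta$, subject to \eqref{eq:bicatDualCompat}---collapses, in the strict model, precisely to the arrangement of whiskerings appearing on the right-hand side of \eqref{eq:projAssCond}, so that the scalar $\hat{\alpha}([\omega_3\vert\omega_2\vert\omega_1])$ is carried through unchanged. Once the strictification is fixed this is a finite diagram chase of the same shape as, but no harder than, the one in the proof of Lemma~\ref{lem:RealProjCocycle}.
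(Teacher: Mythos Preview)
Your proposal is correct and follows essentially the same approach as the paper: both arguments rest on identifying the hexagon coherence axiom for a $\mathbb{Z}_2$-graded monoidal functor $\mathcal{G}(\hat{\mathsf{G}},\hat{\alpha})\rightarrow\mathsf{GL}^{\textnormal{gen}}_k(V)$ with equation~\eqref{eq:projAssCond}, after which the rest is formal. You have simply written out in more detail what the paper leaves as ``straightforward,'' including the treatment of $1$- and $2$-morphisms and the bookkeeping for the strict-duality model.
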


\begin{proof}
By construction, $\hat{\alpha} \in Z^3(B \hat{\mathsf{G}}, k_{\pi}^{\times})$ determines the associator of the monoidal groupoid $\mathcal{G}(\hat{\mathsf{G}}, \hat{\alpha})$. After observing that equation \eqref{diag:associativity} encodes the hexagon diagram for a monoidal functor $\mathcal{G}(\hat{\mathsf{G}}, \hat{\alpha}) \rightarrow \mathsf{GL}^{\textnormal{gen}}_k(V)$ which is compatible with the structure maps to $\mathbb{Z}_2$, the remainder of the proof is straightforward.
\end{proof}

To end this section, we record some basic string diagram identities.

\begin{Lem}
For all $g \in \mathsf{G}$
and $\omega_1, \omega_2 \in \hat{\mathsf{G}}$, the following identities hold:
\begin{enumerate}[label=(\roman*)]
\item
\begin{equation}
\label{diag:loopRemoval}
\begin{gathered}
\begin{tikzpicture}[scale=0.35,color=black, baseline]
\draw[decoration={markings, mark=at position 0.5 with {\arrow{>}}},        postaction={decorate}] (0,-0.5) -- (0,5.5);
\node at (-1.5,2.5) {$\scriptstyle \omega_2 \omega_1$};
\end{tikzpicture}
\end{gathered}
\qquad
=
\qquad
\begin{gathered}
\begin{tikzpicture}[scale=0.35,color=black, baseline]
\node at (0,0.5) [circle,draw,fill,inner sep=0.35mm] {};
\node at (0,4.5) [circle,draw,fill,inner sep=0.35mm] {};
\draw[decoration={markings, mark=at position 0.7 with {\arrow{>}}},        postaction={decorate}] (0,4.5) -- (0,5.5);
\draw[decoration={markings, mark=at position 0.5 with {\arrow{>}}},        postaction={decorate}] (0,-0.5) -- (0,0.5);\draw[decoration={markings, mark=at position 0.5 with {\arrow{>}}},        postaction={decorate}] (0,0.5) .. controls +(-2,2) and +(-2,-2) .. (0,4.5);
\draw[decoration={markings, mark=at position 0.5 with {\arrow{>}}},        postaction={decorate}] (0,0.5) .. controls +(2,2) and +(2,-2) .. (0,4.5);
\node at (0,6) {$\scriptstyle \omega_2 \omega_1$};
\node at (0,-1) {$\scriptstyle \omega_2 \omega_1$};
\node at (-2.5,2.0) {$\scriptstyle \omega_2$};
\node at (2.5,2.0) {$\scriptstyle \omega_1$};
\end{tikzpicture}
\end{gathered}
\end{equation}

\item 
\begin{equation}
\label{diag:crossRemoval}
\begin{gathered}
\begin{tikzpicture}[scale=0.2,inner sep=0.35mm, place/.style={circle,draw=black,fill=black,thick}]
\draw[decoration={markings, mark=at position 0.5 with {\arrow{>}}},        postaction={decorate}] (0,0) -- (3,3);
\draw[decoration={markings, mark=at position 0.5 with {\arrow{>}}},        postaction={decorate}] (6,0) -- (3,3);
\draw[decoration={markings, mark=at position 0.5 with {\arrow{>}}},        postaction={decorate}] (3,3) -- (3,6);
\draw[decoration={markings, mark=at position 0.5 with {\arrow{>}}},        postaction={decorate}] (3,6) -- (0,9);
\draw[decoration={markings, mark=at position 0.5 with {\arrow{>}}},        postaction={decorate}] (3,6) -- (6,9);
\draw (3,3) node [shape=circle,draw,fill] {};
\draw (3,6) node [shape=circle,draw,fill] {};
\node [below] at (-1,0) {$\scriptstyle  \omega_2$};
\node [below] at (7,0) {$\scriptstyle \omega_1$};
\node [below] at (-1,10) {$\scriptstyle \omega_2$};
\node [below] at (7,10) {$\scriptstyle \omega_1$};
\node [below] at (0,5) {$\scriptstyle \omega_2 \omega_1$};
\end{tikzpicture}
\end{gathered}
\qquad 
=
\qquad
\begin{gathered}
\begin{tikzpicture}[scale=0.2,color=black, baseline]
\draw[decoration={markings, mark=at position 0.5 with {\arrow{>}}},        postaction={decorate}] (0,0) -- (0,9);
\draw[decoration={markings, mark=at position 0.5 with {\arrow{>}}},        postaction={decorate}] (3,0) -- (3,9);
\node at (-2,4) {$\scriptstyle \omega_2$};
\node at (5,4) {$\scriptstyle \omega_1$};
\end{tikzpicture}
\end{gathered}
\end{equation}

\item
\begin{equation}
\label{diag:moveToVertex}
\begin{gathered}
\begin{tikzpicture}[scale=0.18,inner sep=0.35mm, place/.style={circle,draw=black,fill=black,thick}]
\draw[black, decoration={markings, mark=at position 0.15 with {\arrow{>}}}, decoration={markings, mark=at position 0.895 with {\arrow{<}}},        postaction={decorate}](0,0) [partial ellipse=180:0:3 and 5];
\draw[decoration={markings, mark=at position 0.6 with {\arrow{>}}},        postaction={decorate}] (-2,3.9) -- (-2,7);
\node at (-4.7,0.2) {$\scriptstyle \omega_2$};
\node at (4.9,0.2) {$\scriptstyle \omega_1$};
\node at (-1.9,8) {$\scriptstyle \omega_2 \omega_1$};
\draw (-1.9,3.9) node [shape=circle,draw,fill] {};
\end{tikzpicture}
\end{gathered}
\qquad
\xrightarrow[]{\hat{\alpha}([\omega_2 \omega_1 \vert \omega_1^{-1} \vert \omega_1])^{-1}}
\qquad
\begin{gathered}
\begin{tikzpicture}[scale=0.18,inner sep=0.35mm, place/.style={circle,draw=black,fill=black,thick}]
\draw[decoration={markings, mark=at position 0.5 with {\arrow{>}}},        postaction={decorate}] (0,0) -- (4,4);
\draw[decoration={markings, mark=at position 0.5 with {\arrow{>}}},        postaction={decorate}] (8,0) -- (4,4);
\draw[decoration={markings, mark=at position 0.5 with {\arrow{>}}},        postaction={decorate}] (4,4) -- (4,8);
\draw (4,4) node [shape=circle,draw,fill] {};
\node [below] at (0,0) {$\scriptstyle  \omega_2$};
\node [below] at (8,0) {$\scriptstyle \omega_1$};
\node [below] at (1,6) {$\scriptstyle \omega_2 \omega_1$};
\end{tikzpicture}
\end{gathered}
\qquad
\xleftarrow[]{\hat{\alpha}([\omega_2 \vert \omega_2^{-1} \vert \omega_2 \omega_1])}
\begin{gathered}
\begin{tikzpicture}[scale=0.2,inner sep=0.35mm, place/.style={circle,draw=black,fill=black,thick}]
\draw[black, decoration={markings, mark=at position 0.15 with {\arrow{>}}}, decoration={markings, mark=at position 0.895 with {\arrow{<}}},        postaction={decorate}](0,0) [partial ellipse=180:0:3 and 5];
\draw[decoration={markings, mark=at position 0.6 with {\arrow{>}}},        postaction={decorate}] (2,3.9) -- (2,7);
\node at (-4.7,0.2) {$\scriptstyle \omega_2$};
\node at (4.9,0.2) {$\scriptstyle \omega_1$};
\node at (2,8) {$\scriptstyle \omega_2 \omega_1$};
\draw (2,3.9) node [shape=circle,draw,fill] {};
\end{tikzpicture}
\end{gathered}
\end{equation}

\item
\begin{equation}
\label{diag:snakeRemoval}
\begin{gathered}
\begin{tikzpicture}[scale=0.2,inner sep=0.35mm, place/.style={circle,draw=black,fill=black,thick}]
\draw[black](0,0) [partial ellipse=180:0:4 and 4];
\draw[black](-8,0) [partial ellipse=180:360:4 and 4];
\draw[decoration={markings, mark=at position 0.5 with {\arrow{>}}},        postaction={decorate}] (-12,0) -- (-12,5);
\draw[decoration={markings, mark=at position 0.5 with {\arrow{<}}},        postaction={decorate}] (4,0) -- (4,-5);
\node [below] at (-13,4) {$\scriptstyle g$};
\end{tikzpicture}
\end{gathered}
\qquad
\xrightarrow[]{\hat{\alpha}([g \vert g^{-1} \vert g])}
\qquad
\begin{gathered}
\begin{tikzpicture}[scale=0.2,color=black, baseline]
\draw[decoration={markings, mark=at position 0.5 with {\arrow{>}}},        postaction={decorate}] (4,0) -- (4,12);
\node at (2.6,6) {$\scriptstyle g$};
\end{tikzpicture}
\end{gathered}
\end{equation}

\item
\begin{equation}
\label{diag:slide}
\begin{gathered}
\begin{tikzpicture}[scale=0.2,inner sep=0.35mm, place/.style={circle,draw=black,fill=black,thick}]
\draw[black, decoration={markings, mark=at position 0.15 with {\arrow{<}}}, decoration={markings, mark=at position 0.895 with {\arrow{>}}},        postaction={decorate}](0,0) [partial ellipse=180:360:3 and 5];
\draw[decoration={markings, mark=at position 0.8 with {\arrow{>}}},        postaction={decorate}] (0,-5) .. controls +(2,-5) and +(-2,-8) .. (10,0);
\node at (-4.7,0.2) {$\scriptstyle \omega_2$};
\node at (4.9,0.2) {$\scriptstyle \omega_1$};
\node at (10,1.5) {$\scriptstyle \omega_1^{-1} \omega_2^{-1}$};
\draw (0,-5) node [shape=circle,draw,fill] {};
\end{tikzpicture}
\end{gathered}
\qquad
\xrightarrow[]{\; \;\frac{\hat{\alpha}([\omega_1 \vert \omega_1^{-1} \vert \omega_2^{-1}])}{\hat{\alpha}([\omega_2 \vert \omega_1 \vert \omega_1^{-1} \omega_2^{-1}])}\;\;}
\qquad
\begin{gathered}
\begin{tikzpicture}[scale=0.2,inner sep=0.35mm, place/.style={circle,draw=black,fill=black,thick}]
\draw[black, decoration={markings, mark=at position 0.15 with {\arrow{<}}}, decoration={markings, mark=at position 0.895 with {\arrow{>}}},        postaction={decorate}](0,0) [partial ellipse=180:360:8 and 6];
\draw[decoration={markings, mark=at position 0.15 with {\arrow{<}}},        postaction={decorate}] (-10,0) .. controls +(2,-9) and +(-2,-8) .. (6,-4);
\node at (-11,1.0) {$\scriptstyle \omega_2$};
\node at (-7,1.0) {$\scriptstyle \omega_1$};
\node at (10,1.5) {$\scriptstyle \omega_1^{-1} \omega_2^{-1}$};
\draw (6,-4) node [shape=circle,draw,fill] {};
\end{tikzpicture}
\end{gathered}
\end{equation}
\end{enumerate}
\end{Lem}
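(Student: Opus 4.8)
The plan is to prove each identity by expanding the ``cap''/``cup'' shorthand into the underlying structure $2$-isomorphisms $\psi_{\bullet,\bullet}$ and $\psi_e$ and then repeatedly applying the associativity and unit constraints \eqref{eq:projAssCond}, \eqref{eq:idenCond} together with the interchange law of $\mathcal{V}$, keeping a ledger of the $\hat\alpha$-scalars produced along the way. Identities (i) and (ii) are formal: unwinding the right-hand diagram of (i) it is the vertical composite $\psi_{\omega_2,\omega_1}\circ_1\psi_{\omega_2,\omega_1}^{-1}$ of $2$-isomorphisms $\rho(\omega_2\omega_1)\Rightarrow\rho(\omega_2\omega_1)$, hence equals $1_{\rho(\omega_2\omega_1)}$, and (ii) is the opposite composite $\psi_{\omega_2,\omega_1}^{-1}\circ_1\psi_{\omega_2,\omega_1}=1$ on $\rho(\omega_2)\circ_0\prescript{\pi(\omega_2)}{}{\rho(\omega_1)}$; no associator and nothing twisted enters.

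For (iii)--(v) I would first replace each cap or cup fragment by its definition from the conventions fixed just before the statement: a cap labelled $\omega$ is $\psi_e\circ_1\psi_{\omega,\omega^{-1}}$, a cup is its partial inverse, and a fragment sitting in a degree $(-1)$ region is read with $(-)^{\circ}$ applied. After this substitution each side of the asserted relation becomes a composite of copies of $\psi_{\bullet,\bullet}$, $\psi_e$ and the associators of $\mathcal{V}$, and the two sides are joined by a finite sequence of elementary moves. Each move is either (a) an instance of \eqref{diag:associativity}, reassociating or merging two adjacent trivalent nodes and producing exactly one factor $\hat\alpha([-\vert-\vert-])$ --- raised to the power $\pi$ of the outermost $1$-morphism when that morphism has degree $-1$; (b) an instance of \eqref{eq:idenCond}, absorbing a $\psi_e$-node into a strand; or (c) an interchange move, sliding nodes past one another vertically. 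Only moves of type (a) carry scalars, and along the chosen sequence their product telescopes, by the cocycle identity of Lemma \ref{lem:RealProjCocycle}, to the single scalar displayed on the arrow. Concretely, in (iv) the snake $\rho(g)\Rightarrow(\rho(g)\circ\rho(g^{-1}))\circ\rho(g)\Rightarrow\rho(g)$ is straightened by reassociating the triple $(g,g^{-1},g)$ --- one type-(a) move giving $\hat\alpha([g\vert g^{-1}\vert g])$ --- followed by type-(b) moves collapsing the resulting $\rho(e)$ factors via $\psi_{g,e}=\rho(g)\circ\prescript{\pi(g)}{}{\psi_e}$ and $\psi_{e,g}=\psi_e\circ\rho(g)$; this also exhibits the quasi-inverse data used in Section \ref{sec:RealCatChar}. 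In (iii) one slides a trivalent node along the bent strand, a single type-(a) move on the triple $(\omega_2\omega_1,\omega_1^{-1},\omega_1)$ from one side and on $(\omega_2,\omega_2^{-1},\omega_2\omega_1)$ from the other, which yields the two displayed factors; in (v) the node is pushed past the cup by two type-(a) moves whose scalars combine, using the cocycle relation, into the displayed quotient. Alternatively, by Proposition \ref{prop:projToTwisted} all of (iii)--(v) may be transported to the monoidal groupoid $\mathcal{G}(\hat{\mathsf{G}},\hat\alpha)$, whose associator is literally $\hat\alpha([-\vert-\vert-])$, where each becomes an instance of Mac Lane coherence together with the zigzag identity for a weak inverse.

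The step I expect to be the main obstacle is bookkeeping rather than anything conceptual: one must carry the contravariance superscripts $\prescript{\pi(\omega)}{}{(-)}$ through every move, since a cap or cup in a degree $(-1)$ region does not expand as $\psi_e\circ_1\psi_{\omega,\omega^{-1}}$ on the nose, and since each application of \eqref{diag:associativity} inside a $(-)^{\circ}$ both inverts its $\hat\alpha$-factor and reshuffles which $\psi$'s appear with an inverse. Checking that for (iii) and (v), with $\omega_1,\omega_2\in\hat{\mathsf{G}}$ arbitrary, all of these signs and superscripts conspire so that the displayed scalars carry no $\pi$-exponent and exactly the indicated arguments, is the delicate point; but this is the same style of planar diagram chase already carried out for Lemma \ref{lem:RealProjCocycle}, so each identity follows by inspection of the associated pentagon or hexagon of string diagrams.
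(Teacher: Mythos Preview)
Your proposal is correct and follows the same approach as the paper, which simply states that (i) and (ii) are obvious and defers (iii)--(v) to \cite[Lemma 4.8, Corollaries 4.9, 4.10]{ganter2016} and \cite[\S\S 3.2.1, 3.4]{bartlett2011}. Your sketch in fact supplies the argument those references contain: expand caps/cups via $\psi_e\circ_1\psi_{\omega,\omega^{-1}}$, apply \eqref{diag:associativity} and \eqref{eq:idenCond}, and track the resulting $\hat\alpha$-factors.
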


\begin{proof}
The first two identities are obvious. For the remaining identities, see \cite[Lemma 4.8]{ganter2016}, \cite[Corollary 4.9]{ganter2016} and \cite[Corollary 4.10]{ganter2016}. See also \cite[\S\S 3.2.1, 3.4]{bartlett2011}
\end{proof}

\subsection{Real \texorpdfstring{$2$}{}-characters and \texorpdfstring{$2$}{}-class functions}
\label{sec:2Char}

We extend the theory of Real categorical characters to twisted Real $2$-representations. Instead of the direct approach of Section \ref{sec:RealCatChar}, we use string diagrams. See \cite[\S 4]{ganter2016} for the ungraded case.

We begin with some terminology.

\begin{Def}
A pair $(g, \omega) \in \mathsf{G} \times \hat{\mathsf{G}}$ is said to graded commute if $\omega g^{\pi(\omega)} = g \omega$.
\end{Def}

The group $\hat{\mathsf{G}}$ acts on $\mathsf{G} \times \hat{\mathsf{G}}$ by $\sigma \cdot (g, \omega) = (\sigma g^{\pi(\sigma)} \sigma^{-1}, \sigma \omega \sigma^{-1})$. This action preserves the subset $\hat{\mathsf{G}}^{(2)}$ of graded commuting pairs.

\begin{Def}
The Real categorical character of a twisted Real $2$-representation $\rho$ of $\mathsf{G}$ is the assignment
\[
g \mapsto \Tr_{\rho}(g) \in \Obj(\mathsf{Vect}_k), \qquad g \in \mathsf{G}
\]
together with the collection of $k$-linear isomorphisms
\[
\beta_{g, \omega} : \Tr_{\rho}(g) \rightarrow \Tr_{\rho}(\omega g^{\pi(\omega)} \omega^{-1}), \qquad (g, \omega) \in \mathsf{G} \times \hat{\mathsf{G}}
\]
defined by the string diagrams
\begin{equation}
\label{eq:charStriDiagPos}
\phi \longmapsto
\begin{tikzpicture}[scale=0.30,color=black, baseline]
\draw [decoration={markings, mark=at position 0.575 with {\arrow{<}}}, decoration={markings, mark=at position 0.945 with {\arrow{>}}},        postaction={decorate}] (0,0) ellipse (3 and 5);
\node (n1) at (0,1.0) [circle,draw,fill,inner sep=0.35mm] {};
\node[below=0.01mm of n1] {$\scriptstyle \phi$};
\node at (-2.75,2) [circle,draw,fill,inner sep=0.5mm] {};
\node at (-2.2,3.3) [circle,draw,fill,inner sep=0.5mm] {};
\draw[decoration={markings, mark=at position 0.5 with {\arrow{>}}},        postaction={decorate}] (n1) -- (-2.75,2);
\draw[decoration={markings, mark=at position 0.7 with {\arrow{>}}},        postaction={decorate}] (-2.2,3.3) -- (-3.5,5);
\node at (-4.5,-2) {$\scriptstyle \omega$};
\node at (4.7,-1.9) {$\scriptstyle \omega^{-1}$};
\node at (-0.9,2.5) {$\scriptstyle g$};
\node at (-3.5,6) {$\scriptstyle \omega g \omega^{-1}$};
\end{tikzpicture}
\qquad \omega \in \mathsf{G}
\end{equation}
and
\begin{equation}
\label{eq:charStriDiagNeg}
\phi \longmapsto
\begin{tikzpicture}[scale=0.30,color=black, baseline]
\draw [decoration={markings, mark=at position 0.575 with {\arrow{<}}}, decoration={markings, mark=at position 0.945 with {\arrow{>}}},        postaction={decorate}] (0,0) ellipse (3 and 5);
\node (n1) at (0,1.0) [circle,draw,fill,inner sep=0.35mm] {};
\node[above=0.01mm of n1] {$\scriptstyle \phi^{\circ}$};
\node at (-2.75,2) [circle,draw,fill,inner sep=0.5mm] {};
\node at (-2.2,3.3) [circle,draw,fill,inner sep=0.5mm] {};
\draw[decoration={markings, mark=at position 0.7 with {\arrow{>}}},decoration={markings, mark=at position 0.15 with {\arrow{<}}},        postaction={decorate}] (n1) .. controls +(-1,-5) and +(1,-3) .. (-2.75,2);
\draw[decoration={markings, mark=at position 0.7 with {\arrow{>}}},        postaction={decorate}] (-2.2,3.3) -- (-3.5,5);
\node at (-4.5,-2) {$\scriptstyle \omega$};
\node at (4.7,-1.9) {$\scriptstyle \omega^{-1}$};
\node at (0.7,-0.9) {$\scriptstyle g$};
\node at (-3.5,6) {$\scriptstyle \omega g^{-1} \omega^{-1}$};
\end{tikzpicture}
\qquad \omega \in \hat{\mathsf{G}} \backslash \mathsf{G}.
\end{equation}
\end{Def}

This definition recovers that of Section \ref{sec:RealCatChar} in the case of trivial Real $2$-Schur multiplier. The next definition formulates the joint trace of Section \ref{sec:conjInv} in the context of Real $2$-representation theory.

\begin{Def}
Assume that each vector space $\Tr_{\rho}(g)$, $g \in \mathsf{G}$, is finite dimensional. Then the Real $2$-character of $\rho$ is the collection of joint traces
\[
\chi_{\rho}(g,\omega) = \tr_{\Tr_{\rho}(g)}(\beta_{g, \omega}), \qquad (g, \omega) \in \hat{\mathsf{G}}^{(2)}.
\]
\end{Def}

Before proceeding, we note that the theory of $2$-characters is weaker than its non-categorical counterpart, in that inequivalent $2$-representations may have the same $2$-character. For an explicit example, see \cite[\S 5]{osorno2010}. Analogous statements apply to Real $2$-characters as can be seen, for example, by using Proposition \ref{prop:Real2VectChar} below.

\begin{Ex}
In this example we assume basic familiarity with homological matrix factorizations. For all necessary background, see \cite{polishchuk2012}, \cite{dyckerhoff2011}, \cite{carqueville2016}.

Let $k$ be a field of characteristic zero. Denote by $\mathsf{LG}_k$ the bicategory of Landau--Ginzburg models over $k$, as in \cite[\S 2.2]{carqueville2016}. Objects of $\mathsf{LG}_k$ are pairs $(R,W)$ consisting of a ring $R$ of the form $k\pser{x_1, \dots, x_n}$ for some $n \geq 0$ and a potential $W \in R$. The $1$-morphism category $1\Hom_{\mathsf{LG}_k}((R_1,W_1), (R_2,W_2))$ is the $2$-periodic triangulated category of finite rank matrix factorizations of $W_2 - W_1$:
\[
1\Hom_{\mathsf{LG}_k}((R_1,W_1), (R_2,W_2)) = \mathsf{HMF}(R_1 \hat{\otimes}_k R_2, W_2 - W_1).
\]
The composition $-\circ_0-$ is tensor product of matrix factorizations. For example, the identity $1$-morphism $1_{(R,W)}: (R,W) \rightarrow (R,W)$ is represented by the stabilized diagonal
\[
\Delta_W = {\bigwedge}_{R^e}^{\bullet} (\bigoplus_{i=1}^n R^e \cdot \theta_i), \qquad
d_{\Delta_W} = \sum_{i=1}^n (x_i - x_i^{\prime}) \cdot \theta_i^{\vee} + \sum_{i=1}^n \partial^{x,x^{\prime}}_{[i]} W \cdot \theta_i \wedge -.
\]
Here $\theta_i$ are Grassmann variables, $R^e = R \hat{\otimes}_k R \simeq k\pser{x_1, \dots, x_n, x_1^{\prime}, \dots, x_n^{\prime}}$ and
\[
\partial^{x,x^{\prime}}_{[i]} W = \frac{W(x^{\prime}_1, \dots, x^{\prime}_{i-1}, x_i, \dots, x_n, x^{\prime}) - W(x^{\prime}_1, \dots, x^{\prime}_i, x_{i+1}, \dots, x_n, x^{\prime})}{x_i - x_i^{\prime}}.
\]

Define a weak duality involution $(-)^{\vee}$ on $\mathsf{LG}_k$ as follows. On objects set $(R,W)^{\vee} = (R,-W)$. On $1$-morphism categories, $(-)^{\vee}$ acts as the linear (with respect to the ground ring) dual of matrix factorizations. Because of the Koszul sign rule, the dual of a matrix factorization of $W$ is canonically a matrix factorization of $-W$. The adjoint equivalence $\eta$ is induced by the canonical evaluation isomorphism from a finite rank free module to its double dual.

Suppose now that a finite $\mathbb{Z}_2$-graded group $\hat{\mathsf{G}}$ acts on $R=k\pser{x_1, \dots, x_n}$ by unital algebra automorphisms. Assume that $W \in R$ is a potential which satisfies
\[
\omega(W) = \pi(\omega) W, \qquad \omega \in \hat{\mathsf{G}}.
\]
That is, $\mathsf{G}$ and $\hat{\mathsf{G}} \backslash \mathsf{G}$ act by symmetries and antisymmetries of $W$, respectively. A potential together with a finite group of symmetries defines a Landau--Ginzburg orbifold model. Mathematically, such models can be studied within the framework of equivariant matrix factorizations. On the other hand, a potential with an action of $\hat{\mathsf{G}}$ as above defines a Landau--Ginzburg orientifold model \cite{hori2008}. Such models have not been studied in the mathematical literature. As an explicit example, take $a,b \geq 1$ and let $W= x^{2a+1} + x y^{2b} \in k\pser{x,y}$. Consider the exact sequence of multiplicative groups
\[
1 \rightarrow \mathbb{Z}_b \rightarrow \mathbb{Z}_{2b} \xrightarrow[]{\pi} \mathbb{Z}_2 \rightarrow 1.
\]
Let $\xi$ be a primitive $2b$\textsuperscript{th} root of unity, which we assume to lie in $k$. Then a $W$-compatible action of $\mathbb{Z}_{2b}$ on $k\pser{x,y}$ is given by $\xi \cdot (x,y) = (- x, \xi y)$.

Define a Real $2$-representation $\rho$ of $\mathsf{G}$ on $(R,W) \in \Obj(\mathsf{LG}_k)$ by letting $\omega \in \hat{\mathsf{G}}$ act by the $1$-morphism ${_{\omega}}\Delta_W \in \mathsf{HMF}(R \hat{\otimes}_k R, W - \pi(\omega) W)$ which is the pullback of $\Delta_W$ by $\omega \otimes 1$. Explicitly, ${_{\omega}}\Delta_W$ is equal to $\Delta_W$ as an $R^e$-module but has the twisted differential
\[
d_{{_{\omega}}\Delta_W} = \sum_{i=1}^n (\omega(x_i) - x_i^{\prime}) \theta_i^{\vee} + \sum_{i=1}^n \partial^{\omega(x),x^{\prime}}_{[i]} W \cdot \theta_i \wedge -.
\]
The coherence $2$-isomorphisms $\psi_{\bullet,\bullet}$ are induced by the associators in $\mathsf{LG}_k$. More generally, the maps $\psi_{\bullet,\bullet}$ can be twisted by a $2$-cocycle $\hat{\theta} \in Z^2(B \hat{\mathsf{G}}, k^{\times}_{\pi})$, thereby incorporating discrete torsion. We will not do this here; see, however, Section \ref{sec:2VectRep}.

Using \cite[Lemma 2.5.3]{polishchuk2012}, we find that the Real categorical character is the Hochschild homology,
\[
\Tr_{\rho}(g) \simeq HH_{\bullet}(\mathsf{MF}(R^g, W^g)), \qquad g \in \mathsf{G}.
\]
The maps
\[
\beta_{g,\omega} : HH_{\bullet}(\mathsf{MF}(R^g, W^g)) \rightarrow HH_{\bullet}(\mathsf{MF}(R^{\omega g^{\pi(\omega)} \omega^{-1}}, W^{\omega g^{\pi(\omega)} \omega^{-1}}))
\]
are the canonical $k$-linear isomorphisms. The pair $(R^g, W^g)$ is defined as follows. Choose coordinates $x_1, \dots, x_n$ of $R$ in which $g$ acts linearly and such that $\Span_k \{x_1, \dots, x_n\}^g = \Span_k \{x_{t+1}, \dots, x_n\}$. Then $R^g = R \slash (x_1, \dots, x_t)$ and $W^g$ is the image of $W$ in $R^g$. By \cite[\S 6.3]{dyckerhoff2011}, the Hochschild homology $HH_{\bullet}(\mathsf{MF}(R^g, W^g))$ is isomorphic to the Milnor algebra of $W^g$, supported in degree $n-t \mod 2$.

As for Real $2$-characters, we do not know a geometric interpretation of each joint trace $\chi_{\rho}(g,\omega)$. However, it follows from \cite[Theorem 2.5.4]{polishchuk2012} that we have
\[
\frac{1}{\vert \mathsf{G} \vert}
\sum_{(g, h) \in \mathsf{G}^{(2)}} \chi_{\rho}(g,h) = \dim_k HH^{\mathsf{G}}_{\bullet}(\mathsf{MF}(R,W)),
\]
where $\mathsf{G}^{(2)} \subset \mathsf{G}^2$ is the subset of commuting pairs. Similarly, the Real $2$-character computes the dimension of the $\mathsf{G}$-equivariant involutive Hochschild homology:
\[
\frac{1}{2 \vert \mathsf{G} \vert}
\sum_{(g, \omega) \in \hat{\mathsf{G}}^{(2)}} \chi_{\rho}(g,\omega) = \dim_k HH^{\mathsf{G},+}_{\bullet}(\mathsf{MF}(R,W)).
\]
The cohomology $HH^{\mathsf{G},+}_{\bullet}(\mathsf{MF}(R,W))$ controls the deformation theory of the equivariant matrix factorization category $\mathsf{MF}_{\mathsf{G}}(R,W)$, considered as a category with duality determined by $\hat{\mathsf{G}}$, and so controls the deformation theory of the corresponding Landau--Ginzburg orientifold model.
\end{Ex}

Our next goal is to give geometric interpretations of $\Tr(\rho)$ and $\chi_{\rho}$. We require some preliminary material. Let $\mathfrak{G}$ be a finite groupoid. Following Willerton \cite[\S 2.3.1]{willerton2008}, a $2$-cocycle $\theta \in Z^2(\mathfrak{G}, k^{\times})$ defines a $k^{\times}$-gerbe $\prescript{\theta}{}{\mathfrak{G}}$ over $\mathfrak{G}$. Explicitly, $\prescript{\theta}{}{\mathfrak{G}}$ is the category with
\[
\Obj(\prescript{\theta}{}{\mathfrak{G}}) =\Obj(\mathfrak{G}), \qquad
\Hom_{\prescript{\theta}{}{\mathfrak{G}}}(x_1, x_2) = k^{\times} \times \Hom_{\mathfrak{G}}(x_1, x_2)
\] 
and composition law
\[
(z_2, g_2) \circ (z_1, g_1) = \left( \theta([g_2 \vert g_1]) z_2 z_1, g_2 g_1 \right), \qquad z_1, z_2 \in k^{\times}, \; g_1, g_2 \in \Mor(\mathfrak{G}).
\]
A vector bundle over $\prescript{\theta}{}{\mathfrak{G}}$, also called a $\theta$-twisted vector bundle over $\mathfrak{G}$, is a functor $\prescript{\theta}{}{\mathfrak{G}} \rightarrow \mathsf{Vect}_k$ with the additional property that each subgroupoid $(B k^{\times})_{\vert x} \subset \prescript{\theta}{}{\mathfrak{G}}$, $x \in \Obj(\mathfrak{G})$, acts by scalar multiplication.

The following result generalizes Proposition \ref{prop:catCharLoopGrpd} to finite categorical groups. The analogous result in the ungraded setting is \cite[Theorem 4.17]{ganter2016}.

\begin{Thm}
\label{thm:catCharLoopGrpd}
The Real categorical character of an $\hat{\alpha}$-twisted Real $2$-representation $\rho$ of $\mathsf{G}$ defines a $\uptau_{\pi}^{\refl}(\hat{\alpha})$-twisted vector bundle over $\Lambda_{\pi}^{\refl} B \hat{\mathsf{G}}$,
\[
\Tr(\rho) : {^{\uptau_{\pi}^{\refl}(\hat{\alpha})}}\Lambda_{\pi}^{\refl}B \hat{\mathsf{G}} \rightarrow \mathsf{Vect}_k.
\]
\end{Thm}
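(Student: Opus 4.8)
The plan is to combine the functoriality established in Proposition \ref{prop:catCharLoopGrpd} (which handles the untwisted case) with a careful bookkeeping of the scalars that arise from the twisting $\hat{\alpha}$. By Proposition \ref{prop:projToTwisted}, an $\hat{\alpha}$-twisted Real $2$-representation $\rho$ of $\mathsf{G}$ is the same as a linear Real representation of $\mathcal{G}(\mathsf{G},\alpha)$, so we have at our disposal all the string-diagram identities of the $\psi_{\bullet,\bullet}$ and $\psi_e$, subject to the associativity constraint \eqref{diag:associativity}. First I would verify that the vector space assignment $g \mapsto \Tr_\rho(g)$ together with the linear maps $\beta_{g,\omega}$ of \eqref{eq:charStriDiagPos} and \eqref{eq:charStriDiagNeg} is \emph{projectively} functorial on $\Lambda_\pi^{\refl} B\hat{\mathsf{G}}$: that is, for composable morphisms $g \xrightarrow{\omega_1} \omega_1 g^{\pi(\omega_1)}\omega_1^{-1} \xrightarrow{\omega_2} (\omega_2\omega_1) g^{\pi(\omega_2\omega_1)}(\omega_2\omega_1)^{-1}$, the composite $\beta_{\omega_1 g^{\pi(\omega_1)}\omega_1^{-1},\,\omega_2} \circ \beta_{g,\omega_1}$ differs from $\beta_{g,\omega_2\omega_1}$ by a nonzero scalar, and that scalar is exactly $\uptau_\pi^{\refl}(\hat{\alpha})([\omega_2\vert\omega_1]g)$.

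Concretely, the string diagram for $\beta_{g,\omega_1}$ is a loop labelled $\omega_1$ with the trace element $\phi$ inserted (and a $(-)^\circ$ applied when $\pi(\omega_1)=-1$), so the composite $\beta_{\cdots,\omega_2}\circ\beta_{g,\omega_1}$ is a nested pair of such loops. Pulling the inner $\omega_1$-loop through the outer $\omega_2$-loop requires applying the identities \eqref{diag:loopRemoval}--\eqref{diag:slide}: merging the two $\omega$-strands into a single $\omega_2\omega_1$-strand via \eqref{diag:moveToVertex} produces the factors $\hat{\alpha}([\omega_2\omega_1\vert\omega_1^{-1}\vert\omega_1])^{-1}$ and $\hat{\alpha}([\omega_2\vert\omega_2^{-1}\vert\omega_2\omega_1])$, sliding the returning strand past the vertex via \eqref{diag:slide} contributes a ratio of $\hat{\alpha}$-values, removing snakes via \eqref{diag:snakeRemoval} contributes a factor $\hat{\alpha}([g\vert g^{-1}\vert g])$ (relevant precisely when $\pi(\omega_1)=\pi(\omega_2)=-1$, giving the $\delta_{\pi(\omega_2),\pi(\omega_1),-1}$ exponent), and when $\pi(\omega_2)=-1$ one additionally applies $(-)^\circ$ to the whole inner configuration, which raises the accumulated $\hat{\alpha}$-factors coming from the $\omega_1$-loop to the power $\tfrac{\pi(\omega_2)-1}{2}$. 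Collecting all these scalars and comparing with the explicit formula for $\uptau_\pi^{\refl}(\hat{\alpha})([\omega_2\vert\omega_1]\gamma)$ recorded in Section \ref{sec:twistLoopTran} (with $\gamma=g$) shows they agree; this is in fact how the stated formula for $\uptau_\pi^{\refl}$ is meant to be \emph{derived}, as promised in the remark there. The cocycle identity \eqref{eq:twistTransPartialCocycle} then guarantees that these scalars are consistent, i.e.\ that we really do get a functor into $\mathsf{Vect}_k$ after passing to the gerbe $\prescript{\uptau_\pi^{\refl}(\hat{\alpha})}{}{\Lambda_\pi^{\refl}B\hat{\mathsf{G}}}$, and the normalization $\psi_e$, together with \eqref{eq:idenCond}, gives $\beta_{g,e}=1_{\Tr_\rho(g)}$. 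The extra condition that the subgroupoid $(Bk^\times)_{|g}$ acts by scalars is immediate since $\rho$ is a linear Real $2$-representation.

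I expect the main obstacle to be the scalar bookkeeping when $\pi(\omega_2)=-1$: applying $(-)^\circ$ to a string diagram reverses $2$-cells and, via the associator of $1\Aut^{\textnormal{gen}}_{\mathcal{V}}(V)$ which uses the modification $\zeta$ (see \eqref{eq:bicatDualCompat}), conjugates the accumulated $\hat{\alpha}$-scalars by the Galois/inversion action on $k^\times$ — this is the source of the $\pi(\omega_2)$-dependent exponents in $\uptau_\pi^{\refl}(\hat{\alpha})$, and matching signs and which $\hat{\alpha}$-arguments get conjugated requires care. A clean way to organize this is to first treat the four cases $(\pi(\omega_1),\pi(\omega_2)) \in \{\pm1\}^2$ separately using the reduction to strict $2$-categories with strict duality involution (permissible by the coherence theorem cited after the definition of duality pseudofunctor), where $\eta$ and $\zeta$ are identities and only the $\hat{\alpha}$-twist of the associativity constraint \eqref{diag:associativity} survives; then the remaining computation is purely a manipulation of the scalars attached to the nodes in the nested-loop diagram, and the comparison with $\uptau_\pi^{\refl}(\hat{\alpha})$ becomes a finite check.
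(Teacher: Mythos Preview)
Your proposal is correct and follows essentially the same approach as the paper: reduce to verifying that the composite $\beta_{\omega_1 g^{\pi(\omega_1)}\omega_1^{-1},\omega_2}\circ\beta_{g,\omega_1}$ agrees with $\beta_{g,\omega_2\omega_1}$ up to the scalar $\uptau_\pi^{\refl}(\hat\alpha)([\omega_2\vert\omega_1]g)$, then carry this out by a string-diagram manipulation of the nested-loop picture, collecting the $\hat\alpha$-factors. The paper organizes the case split slightly differently---it first disposes of $\pi(\omega_2)=1$ by reducing to the ungraded computation of Ganter--Usher (with $g$ replaced by $g^{\pi(\omega_1)}$), and then treats $\pi(\omega_2)=-1$ by an explicit sequence of moves relying primarily on repeated applications of \eqref{diag:associativity} together with \eqref{diag:crossRemoval} and \eqref{diag:slide}, rather than the \eqref{diag:moveToVertex} merge you emphasize; the key bookkeeping point there is that each instance of \eqref{diag:associativity} applied \emph{inside the region labelled by $V^{\op}$} contributes its $\hat\alpha$-factor with inverted exponent, which is precisely what produces the $\tfrac{\pi(\omega_2)-1}{2}$ powers in $\uptau_\pi^{\refl}(\hat\alpha)$---and the extra $\hat\alpha([g\vert g^{-1}\vert g])$ from \eqref{diag:snakeRemoval} appears exactly when $\pi(\omega_1)=\pi(\omega_2)=-1$, as you anticipated.
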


\begin{proof}
The theorem is equivalent to twisted commutativity of the diagram
\[
\begin{tikzpicture}
\node (n1) at (0,0) {$\Tr_{\rho}(g)$};
\node (n2) at (9.5,0) {$\Tr_{\rho}(\omega_2 \omega_1 g^{\pi(\omega_2 \omega_1)} \omega_1^{-1} \omega_2^{-1})$};
\node (n3) at (4,-2) {$\Tr_{\rho}(\omega_1 g^{\pi(\omega_1)} \omega_1^{-1})$};
\draw[->] (n1) to node[sloped, anchor=center, above, scale=.7] (a1) [above] {$\beta_{g,\omega_2 \omega_1}$} (n2);
\draw[->] (n1) to node[sloped, anchor=center, above, scale=.7] (a2) [below] {$\beta_{g,\omega_1}$} (n3);
\draw[->] (n3) to node[sloped, anchor=center, above, scale=.7] (a3) [below] {$\beta_{\omega_1 g^{\pi(\omega_1)} \omega_1^{-1},\omega_2}$} (n2);
\draw[double,->,shorten <=6pt,shorten >=6pt] (a1) to node[scale=.7] [right] {$\uptau_{\pi}^{\refl}(\hat{\alpha})([\omega_2 \vert \omega_1]g)$} (n3);
\end{tikzpicture}
\]
for all $g \in \mathsf{G}$ and $\omega_1, \omega_2 \in \hat{\mathsf{G}}$. The vertical double arrow indicates that the top arrow is  $\uptau_{\pi}^{\refl}(\hat{\alpha})([\omega_2 \vert \omega_1]g)$ times the bottom composition. To prove twisted commutativity, suppose first that $\pi(\omega_2)=1$. In this case the expression for $\uptau_{\pi}^{\refl}(\hat{\alpha})([\omega_2 \vert \omega_1]g)$ differs from that of $\uptau(\alpha)([\omega_2 \vert \omega_1]g)$ only through the replacement of $g$ with $g^{\pi(\omega_1)}$ (see Section \ref{sec:twistLoopTran}). The desired equality can therefore be verified by a straightforward modification of the arguments used to prove \cite[Theorem 4.17]{ganter2016}.

Suppose then that $\pi(\omega_2)=-1$. Consider first the case $\pi(\omega_1)=1$. Let $\phi \in \Tr_{\rho}(g)$. Then $\beta_{\omega_1 g^{\pi(\omega_1)} \omega_1^{-1}, \omega_2} \left( \beta_{g, \omega_1} (\phi) \right)$ is computed by the string diagram
\[
\begin{tikzpicture}[scale=0.225,color=black, baseline]
\draw [decoration={markings, mark=at position 0.915 with {\arrow{>}}},        postaction={decorate}] (0,0) ellipse (3 and 5);
\draw [decoration={markings, mark=at position 0.0 with {\arrow{>}}},        postaction={decorate}] (0,0) ellipse (5 and 7);
\node (n1) at (0,2) [circle,draw,fill,inner sep=0.35mm] {};
\node[above=0.01mm of n1] {$\scriptstyle \phi^{\circ}$};
\node at (-3,0) [circle,draw,fill,inner sep=0.35mm] {};
\node at (-2.85,-1.5) [circle,draw,fill,inner sep=0.35mm] {};
\node at (-4.5,3) [circle,draw,fill,inner sep=0.35mm] {};
\node at (-4.8,-1.5) [circle,draw,fill,inner sep=0.35mm] {};
\draw[decoration={markings, mark=at position 0.8 with {\arrow{>}}},        postaction={decorate},] (-4.8,-1.5) .. controls +(1,-1) and +(-1,-1) .. (-2.85,-1.5);
\draw [decoration={markings, mark=at position 0.7 with {\arrow{>}}},        postaction={decorate}] (-4.5,3) -- (-7,6);
\draw [decoration={markings, mark=at position 0.6 with {\arrow{<}}},        postaction={decorate}] (n1) -- (-3,0);
\node at (7.15,-0.5) {$\scriptstyle \omega^{-1}_2$};
\node at (0.8,-2.5) {$\scriptstyle \omega^{-1}_1$};
\node at (-8,7.5) {$\scriptstyle \omega_2 \omega_1 g^{-1} \omega_1^{-1} \omega_2^{-1}$};
\node at (-1.15,-0.1) {$\scriptstyle g$};
\node at (-8.2,-0.85) {$\scriptstyle \omega_1 g^{-1} \omega_1^{-1}$};
\draw[thin,->] (-7.5,-1.75) .. controls +(0.9,-0.9) and +(-0.9,-1.9) .. (-3.5,-2.65);
\end{tikzpicture}
\]
In this diagram, and those which follow, the exterior region is labelled by the category $V$ while the interior regions are labelled by $V^{\op}$. Using equations \eqref{diag:crossRemoval} and \eqref{diag:slide}, the previous string diagram is seen to equal
\[
\begin{tikzpicture}[scale=0.28,color=black, baseline]
\draw [decoration={markings, mark=at position 0.02 with {\arrow{>}}}, decoration={markings, mark=at position 0.2 with {\arrow{>}}},decoration={markings, mark=at position 0.9 with {\arrow{>}}}, postaction={decorate}] (0,0) ellipse (5 and 7);
\draw[black, decoration={markings, mark=at position 0.1 with {\arrow{>}}}, postaction={decorate}](0,0) [partial ellipse=45:210:3 and 5];
\draw[black] (0,0) [partial ellipse=245:318:3 and 5];
\node (n1) at (0,2) [circle,draw,fill,inner sep=0.35mm] {};
\node[above=0.01mm of n1] {$\scriptstyle \phi^{\circ}$};
\node at (-3,0) [circle,draw,fill,inner sep=0.35mm] {};
\node at (-2.85,-1.5) [circle,draw,fill,inner sep=0.35mm] {};
\node at (-4.5,3) [circle,draw,fill,inner sep=0.35mm] {};
\node at (-4.8,-1.5) [circle,draw,fill,inner sep=0.35mm] {};
\node at (4.8,2) [circle,draw,fill,inner sep=0.35mm] {};
\node at (4.8,-2) [circle,draw,fill,inner sep=0.35mm] {};
\node at (-4.2,-3.8) [circle,draw,fill,inner sep=0.35mm] {};
\node at (-3.9,-4.4) [circle,draw,fill,inner sep=0.35mm] {};
\draw[decoration={markings, mark=at position 0.8 with {\arrow{>}}},        postaction={decorate},] (-4.8,-1.5) .. controls +(1,-1) and +(-1,-1) .. (-2.85,-1.5);
\draw [decoration={markings, mark=at position 0.7 with {\arrow{>}}},        postaction={decorate}] (-4.5,3) -- (-7,6);
\draw [decoration={markings, mark=at position 0.5 with {\arrow{<}}},        postaction={decorate}] (n1) -- (-3,0);
\draw[decoration={markings, mark=at position 0.02 with {\arrow{>}}}, postaction={decorate}] (2.10,-3.5) .. controls +(0.7,1) and +(-1,-1) .. (4.8,-2);
\draw (2.1,3.6) .. controls +(0.7,-1) and +(-1,1) .. (4.8,2);
\draw (-1.18,-4.58) .. controls +(-0.5,0.1) and +(1,-1) .. (-3.9,-4.4);
\draw (-2.62,-2.4) .. controls +(-0.1,-1.4) and +(-0.2,1) .. (-4.2,-3.8);
\node at (-8.2,-1.05) {$\scriptstyle \omega_1 g^{-1} \omega_1^{-1}$};
\draw[thin,->] (-7.5,-2.05) .. controls +(0.5,-0.5) and +(-0.5,-1.25) .. (-3.2,-2.5);
\node at (8.0,0.4) {$\scriptstyle \omega_1 ^{-1} \omega_2^{-1}$};
\node at (1.0,5.7) {$\scriptstyle \omega_1^{-1}$};
\node at (1.0,7.9) {$\scriptstyle  \omega_2^{-1}$};
\node at (-8,7.5) {$\scriptstyle \omega_2 \omega_1 g^{-1} \omega_1^{-1} \omega_2^{-1}$};
\node at (-1.0,0.0) {$\scriptstyle g$};
\node at (1.0,-2.5) {$\scriptstyle \omega_1^{-1}$};
\node at (6.0,-4.25) {$\scriptstyle  \omega_2^{-1}$};
\end{tikzpicture}
=
\begin{tikzpicture}[scale=0.28,color=black, baseline]
\draw [decoration={markings, mark=at position 0.015 with {\arrow{>}}},        postaction={decorate}] (0,0) ellipse (3 and 5);
\node (n1) at (0,2) [circle,draw,fill,inner sep=0.35mm] {};
\node[above=0.01mm of n1] {$\scriptstyle \phi^{\circ}$};
\node at (-3,0) [circle,draw,fill,inner sep=0.5mm] {};
\node at (-2.85,-1.5) [circle,draw,fill,inner sep=0.5mm] {};
\node at (-1.8,-4) [circle,draw,fill,inner sep=0.5mm] {};
\node at (-1.8,4) [circle,draw,fill,inner sep=0.5mm] {};
\node at (-4.8,-1.5) [circle,draw,fill,inner sep=0.5mm] {};
\node at (-3.5,2) [circle,draw,fill,inner sep=0.5mm] {};
\draw[decoration={markings, mark=at position 0.6 with {\arrow{>}}},        postaction={decorate}] (-1.8,-4) .. controls +(-4,1) and +(-4,-4) .. (-1.8,4);
\draw[decoration={markings, mark=at position 0.8 with {\arrow{>}}},        postaction={decorate},] (-4.8,-1.5) .. controls +(1,-1) and +(-1,-1) .. (-2.85,-1.5);
\draw[decoration={markings, mark=at position 0.6 with {\arrow{>}}},        postaction={decorate}] (-3.5,2) -- (-5.5,4.5);
\draw[decoration={markings, mark=at position 0.5 with {\arrow{<}}},        postaction={decorate}] (n1) -- (-3,0);
\draw[thin,->] (-7.5,-2.05) .. controls +(0.5,-0.5) and +(-0.5,-1.25) .. (-3.3,-2.5);
\node at (5.9,0.4) {$\scriptstyle \omega_1 ^{-1} \omega_2^{-1}$};
\node at (-1.0,0.0) {$\scriptstyle g$};
\node at (-7.4,0.4) {$\scriptstyle \omega_2 \omega_1 g^{-1}$};
\node at (-7,6) {$\scriptstyle \omega_2 \omega_1 g^{-1} \omega_1^{-1} \omega_2^{-1}$};
\node at (-7.5,-1.4) {$\scriptstyle \omega_1 g^{-1} \omega_1^{-1}$};
\end{tikzpicture}
\]
which, by equation \eqref{diag:crossRemoval}, is equal to
\[
\begin{tikzpicture}[scale=0.28,color=black, baseline]
\draw[decoration={markings, mark=at position 0.015 with {\arrow{>}}},decoration={markings, mark=at position 0.5 with {\arrow{<}}},        postaction={decorate}] (0,0) ellipse (3 and 5);
\node (n1) at (0,2) [circle,draw,fill,inner sep=0.35mm] {};
\node[above=0.01mm of n1] {$\scriptstyle \phi^{\circ}$};
\node at (-2.75,2) [circle,draw,fill,inner sep=0.35mm] {};
\node at (-2.4,3) [circle,draw,fill,inner sep=0.35mm] {};
\node at (-0.5,-4.9) [circle,draw,fill,inner sep=0.35mm] {};
\node at (-0.45,-2) [circle,draw,fill,inner sep=0.35mm] {};
\node at (-2.95,-1) [circle,draw,fill,inner sep=0.35mm] {};
\node at (-1.2,0) [circle,draw,fill,inner sep=0.35mm] {};
\node at (-3.4,4.4) [circle,draw,fill,inner sep=0.35mm] {};
\node at (-1.22,4.55) [circle,draw,fill,inner sep=0.35mm] {};
\draw[decoration={markings, mark=at position 0.4 with {\arrow{>}}},        postaction={decorate}] (-2.4,3) -- (-4.2,5.4);
\draw (-3.4,4.4) -- (-1.22,4.55);
\draw[decoration={markings, mark=at position 0.85 with {\arrow{<}}},        postaction={decorate}] (-2.75,2) .. controls +(3,-3) and +(0,2) .. (-0.5,-4.9);
\draw [decoration={markings, mark=at position 0.6 with {\arrow{<}}},        postaction={decorate}] (n1) -- (-1.2,0);
\draw[decoration={markings, mark=at position 0.85 with {\arrow{>}}},        postaction={decorate}] (-2.95,-1) .. controls +(1,-3) and +(-1,-2) .. (-0.45,-2);
\node at (5.9,0.4) {$\scriptstyle \omega_1^{-1} \omega_2^{-1}$};
\node at (-4.8,0.4) {$\scriptstyle \omega_2 \omega_1$};
\node at (0.7,-3.4) {$\scriptstyle \omega_1$};
\node at (0.3,0.3) {$\scriptstyle g$};
\node at (-5.3,-2.35) {$\scriptstyle \omega_1 g \omega_1^{-1}$};
\draw[thin,->] (-5.3,-1.75) .. controls +(0.5,0.5) and +(-0.5,0.5) .. (-1.2,-2.05);
\node at (-5.1,6.5) {$\scriptstyle \omega_2 \omega_1 g^{-1} \omega_1^{-1} \omega_2^{-1}$};
\end{tikzpicture}
\]
Repeatedly applying equation \eqref{diag:associativity} gives
\[
\begin{tikzpicture}[scale=0.28,color=black, baseline]
\draw[decoration={markings, mark=at position 0.015 with {\arrow{>}}},decoration={markings, mark=at position 0.5 with {\arrow{<}}},        postaction={decorate}] (0,0) ellipse (3 and 5);
\node (n1) at (0,2) [circle,draw,fill,inner sep=0.35mm] {};
\node[above=0.01mm of n1] {$\scriptstyle \phi^{\circ}$};
\node at (-2.75,2) [circle,draw,fill,inner sep=0.35mm] {};
\node at (-2.4,3) [circle,draw,fill,inner sep=0.35mm] {};
\node at (-0.5,-4.9) [circle,draw,fill,inner sep=0.35mm] {};
\node at (-0.45,-2) [circle,draw,fill,inner sep=0.35mm] {};
\node at (-2.95,-1) [circle,draw,fill,inner sep=0.35mm] {};
\node at (-1.2,0) [circle,draw,fill,inner sep=0.35mm] {};
\node at (-3.4,4.4) [circle,draw,fill,inner sep=0.35mm] {};
\node at (-1.22,4.55) [circle,draw,fill,inner sep=0.35mm] {};
\draw (-3.4,4.4) -- (-1.22,4.55);
\draw[decoration={markings, mark=at position 0.4 with {\arrow{>}}},        postaction={decorate}] (-2.4,3) -- (-4.2,5.4);
\draw[decoration={markings, mark=at position 0.85 with {\arrow{<}}},        postaction={decorate}] (-2.75,2) .. controls +(3,-3) and +(0,2) .. (-0.5,-4.9);
\draw[decoration={markings, mark=at position 0.6 with {\arrow{<}}},        postaction={decorate}] (n1) -- (-1.2,0);
\draw[decoration={markings, mark=at position 0.85 with {\arrow{>}}},        postaction={decorate}] (-2.95,-1) .. controls +(1,-3) and +(-1,-2) .. (-0.45,-2);
\node at (5.9,0.4) {$\scriptstyle \omega_1^{-1} \omega_2^{-1}$};
\node at (-4.8,0.4) {$\scriptstyle \omega_2 \omega_1$};
\node at (0.7,-3.4) {$\scriptstyle \omega_1$};
\node at (0.3,0.3) {$\scriptstyle g$};
\node at (-5.3,-2.35) {$\scriptstyle \omega_1 g \omega_1^{-1}$};
\draw[thin,->] (-5.3,-1.75) .. controls +(0.5,0.5) and +(-0.5,0.5) .. (-1.2,-2.05);
\end{tikzpicture}
\xrightarrow[]{\hat{\alpha}([\omega_2 \omega_1 g^{-1} \omega_1^{-1} \omega_2^{-1} \vert \omega_2 \vert \omega_1])^{-1}}
\begin{tikzpicture}[scale=0.28,color=black, baseline]
\draw[decoration={markings, mark=at position 0.015 with {\arrow{>}}},decoration={markings, mark=at position 0.5 with {\arrow{<}}},        postaction={decorate}] (0,0) ellipse (3 and 5);
\node (n1) at (0,2) [circle,draw,fill,inner sep=0.35mm] {};
\node[above=0.01mm of n1] {$\scriptstyle \phi^{\circ}$};
\node at (-2.75,2) [circle,draw,fill,inner sep=0.35mm] {};
\node at (-2.4,3) [circle,draw,fill,inner sep=0.35mm] {};
\node at (-0.5,-4.9) [circle,draw,fill,inner sep=0.35mm] {};
\node at (-0.45,-2) [circle,draw,fill,inner sep=0.35mm] {};
\node at (-2.95,-1) [circle,draw,fill,inner sep=0.35mm] {};
\node at (-1.2,0) [circle,draw,fill,inner sep=0.35mm] {};
\draw[decoration={markings, mark=at position 0.65 with {\arrow{>}}},        postaction={decorate}] (-2.4,3) -- (-3.5,5);
\draw[decoration={markings, mark=at position 0.85 with {\arrow{<}}},        postaction={decorate}] (-2.75,2) .. controls +(3,-3) and +(0,2) .. (-0.5,-4.9);
\draw[decoration={markings, mark=at position 0.6 with {\arrow{<}}},        postaction={decorate}] (n1) -- (-1.2,0);
\draw[decoration={markings, mark=at position 0.85 with {\arrow{>}}},        postaction={decorate}] (-2.95,-1) .. controls +(1,-3) and +(-1,-2) .. (-0.45,-2);
\node at (5.9,0.4) {$\scriptstyle \omega_1^{-1} \omega_2^{-1}$};
\node at (-4.8,0.4) {$\scriptstyle \omega_2 \omega_1 $};
\node at (0.7,-3.4) {$\scriptstyle \omega_1$};
\node at (0.3,0.3) {$\scriptstyle g$};
\node at (-5.3,-2.35) {$\scriptstyle \omega_1 g \omega_1^{-1}$};
\draw[thin,->] (-5.3,-1.75) .. controls +(0.5,0.5) and +(-0.5,0.5) .. (-1.2,-2.05);
\end{tikzpicture}
\]
\[
\xrightarrow[]{\hat{\alpha}([\omega_2 \vert \omega_1 g^{-1} \omega_1^{-1} \vert \omega_1])}
\begin{tikzpicture}[scale=0.28,color=black, baseline]
\draw[decoration={markings, mark=at position 0.015 with {\arrow{>}}},decoration={markings, mark=at position 0.5 with {\arrow{<}}},        postaction={decorate}] (0,0) ellipse (3 and 5);
\node (n1) at (0,2) [circle,draw,fill,inner sep=0.35mm] {};
\node[above=0.01mm of n1] {$\scriptstyle \phi^{\circ}$};
\node at (-2.75,2) [circle,draw,fill,inner sep=0.35mm] {};
\node at (-2.4,3) [circle,draw,fill,inner sep=0.35mm] {};
\node at (-0.5,-4.9) [circle,draw,fill,inner sep=0.35mm] {};
\node at (-2.35,0.8) [circle,draw,fill,inner sep=0.35mm] {};
\node at (-1.1,-3) [circle,draw,fill,inner sep=0.35mm] {};
\node at (-1.6,-1.6) [circle,draw,fill,inner sep=0.35mm] {};
\draw[decoration={markings, mark=at position 0.65 with {\arrow{>}}},        postaction={decorate}] (-2.4,3) -- (-3.5,5);
\draw[decoration={markings, mark=at position 0.88 with {\arrow{<}}},        postaction={decorate}] (-2.75,2) -- (-0.5,-4.9);
\draw[decoration={markings, mark=at position 0.6 with {\arrow{<}}},        postaction={decorate}] (n1) -- (-1.6,-1.6);
\draw[decoration={markings, mark=at position 0.6 with {\arrow{<}}},        postaction={decorate}] (-2.35,0.8) .. controls +(0,-2.5) and +(-1,-1.5) .. (-1.1,-3);
\node at (5.9,0.4) {$\scriptstyle \omega_1^{-1} \omega_2^{-1}$};
\node at (-4.2,0.4) {$\scriptstyle \omega_2$};
\node at (-0.0,-0.5) {$\scriptstyle g$};
\node at (0.2,-3.8) {$\scriptstyle \omega_1$};
\draw[thin,decoration={markings, mark=at position 1.0 with {\arrow{<}}},        postaction={decorate}] (-5.3,-3.15) .. controls +(0.5,-0.5) and +(0.5,0.5) .. (-2.65,-1.15);
\node at (-6.9,-2.15) {$\scriptstyle \omega_1 g^{-1} \omega_1^{-1}$};
\end{tikzpicture}
\xrightarrow[]{\hat{\alpha}([\omega_1 g^{-1} \omega_1^{-1} \vert \omega_1 g \omega_1^{-1} \vert \omega_1])}
\]
\[
\begin{tikzpicture}[scale=0.28,color=black, baseline]
\draw[decoration={markings, mark=at position 0.015 with {\arrow{>}}},decoration={markings, mark=at position 0.5 with {\arrow{<}}},        postaction={decorate}] (0,0) ellipse (3 and 5);
\node (n1) at (0,2) [circle,draw,fill,inner sep=0.35mm] {};
\node[above=0.01mm of n1] {$\scriptstyle \phi^{\circ}$};
\node at (-2.75,2) [circle,draw,fill,inner sep=0.35mm] {};
\node at (-2.4,3) [circle,draw,fill,inner sep=0.35mm] {};
\node at (-0.5,-4.9) [circle,draw,fill,inner sep=0.35mm] {};
\node at (-2.35,0.8)  [circle,draw,fill,inner sep=0.35mm] {};
\node at (-1.1,-3) [circle,draw,fill,inner sep=0.35mm] {};
\node at (-1.6,-1.6) [circle,draw,fill,inner sep=0.35mm] {};
\draw[decoration={markings, mark=at position 0.6 with {\arrow{>}}},        postaction={decorate}] (-2.4,3) -- (-3.5,5);
\draw[decoration={markings, mark=at position 0.885 with {\arrow{<}}},        postaction={decorate}] (-2.75,2) -- (-0.5,-4.9);
\draw[decoration={markings, mark=at position 0.5 with {\arrow{<}}},        postaction={decorate}] (n1) -- (-1.6,-1.6);
\draw[decoration={markings, mark=at position 0.4 with {\arrow{<}}},        postaction={decorate}] (-2.35,0.8) .. controls +(0,-2.5) and +(-1.2,0.5) .. (-1.1,-3);
\node at (5.5,0.4) {$\scriptstyle \omega_1^{-1} \omega_2^{-1}$};
\node at (-4.0,0.4) {$\scriptstyle \omega_2$};
\node at (-0.0,-0.5) {$\scriptstyle g$};
\node at (0.2,-3.8) {$\scriptstyle \omega_1$};
\draw[thin,decoration={markings, mark=at position 1.0 with {\arrow{<}}},        postaction={decorate}] (-5.0,-3.05) .. controls +(0.5,-1.5) and +(0.5,0.5) .. (-2.65,-1.55);
\node at (-5.9,-2.15) {$\scriptstyle \omega_1 g^{-1} \omega_1^{-1}$};
\node at (-3.5,6) {$\scriptstyle \omega_2 \omega_1 g^{-1} \omega_1^{-1} \omega_2^{-1}$};
\end{tikzpicture}
\xrightarrow[]{\hat{\alpha}([\omega_1 g^{-1} \omega^{-1}_1 \vert \omega_1 \vert g])^{-1}}
\begin{tikzpicture}[scale=0.28,color=black, baseline]
\draw[decoration={markings, mark=at position 0.015 with {\arrow{>}}},decoration={markings, mark=at position 0.5 with {\arrow{<}}},        postaction={decorate}] (0,0) ellipse (3 and 5);
\node (n1) at (0,2) [circle,draw,fill,inner sep=0.35mm] {};
\node[above=0.01mm of n1] {$\scriptstyle \phi^{\circ}$};
\node at (-2.75,2) [circle,draw,fill,inner sep=0.35mm] {};
\node at (-2.4,3) [circle,draw,fill,inner sep=0.35mm] {};
\node at (-0.5,-4.9) [circle,draw,fill,inner sep=0.35mm] {};
\node at (-2.35,0.8)  [circle,draw,fill,inner sep=0.35mm] {};
\node at (-1.6,-1.5) [circle,draw,fill,inner sep=0.35mm] {};
\node at (-1.05,-2.9) [circle,draw,fill,inner sep=0.35mm] {};
\draw[decoration={markings, mark=at position 0.6 with {\arrow{>}}},        postaction={decorate}] (-2.4,3) -- (-3.5,5);
\draw (-2.75,2) -- (-2.35,0.8);
\draw[decoration={markings, mark=at position 0.75 with {\arrow{<}}},        postaction={decorate}] (-1.6,-1.5) -- (-0.5,-4.9);
\draw[decoration={markings, mark=at position 0.5 with {\arrow{<}}},        postaction={decorate}] (n1) -- (-1.05,-2.9);
\draw[decoration={markings, mark=at position 0.65 with {\arrow{<}}},        postaction={decorate}] (-2.35,0.8) .. controls +(0.5,-0.5) and +(0.5,0.9) .. (-1.6,-1.5);
\draw[decoration={markings, mark=at position 0.65 with {\arrow{<}}},        postaction={decorate}] (-2.35,0.8)  .. controls +(0,-1) and +(-0.7,0.5) .. (-1.6,-1.5);
\node at (5.5,0.4) {$\scriptstyle \omega_1^{-1} \omega_2^{-1}$};
\node at (-4.0,0.4) {$\scriptstyle \omega_2$};
\node at (0.45,-0.6) {$\scriptstyle g$};
\node at (0.2,-3.8) {$\scriptstyle \omega_1$};
\node at (-5.9,-2.15) {$\scriptstyle \omega_1 g^{-1} \omega_1^{-1}$};
\draw[thin,decoration={markings, mark=at position 1.0 with {\arrow{>}}},        postaction={decorate}] (-5.6,-3.05) .. controls +(0.5,-1.5) and +(0.5,0.5) .. (-2.4,-1.35);
\node at (-3.5,6) {$\scriptstyle \omega_2 \omega_1 g^{-1} \omega_1^{-1} \omega_2^{-1}$};
\end{tikzpicture}
\]
More precisely, the first two arrows arise from the inverted form of equation \eqref{diag:associativity} while the last two arrows used equation \eqref{diag:associativity} but applied in the category $V^{\op}$. This explains why we multiply by $\hat{\alpha}([\omega_1 g^{-1} \omega_1^{-1} \vert \omega_1 g \omega_1^{-1} \vert \omega_1])$ and $\hat{\alpha}([\omega_1 g^{-1} \omega^{-1}_1 \vert \omega_1 \vert g])^{-1}$, rather than their inverses. Continuing, by first removing the loop and then adding a different loop (see equation \eqref{diag:loopRemoval}), the previous diagram becomes
\[
\begin{tikzpicture}[scale=0.28,color=black, baseline]
\draw[decoration={markings, mark=at position 0.015 with {\arrow{>}}},decoration={markings, mark=at position 0.5 with {\arrow{<}}},        postaction={decorate}] (0,0) ellipse (3 and 5);
\node (n1) at (0,2) [circle,draw,fill,inner sep=0.35mm] {};
\node[above=0.01mm of n1] {$\scriptstyle \phi^{\circ}$};
\node at (-2.75,2) [circle,draw,fill,inner sep=0.35mm] {};
\node at (-2.4,3) [circle,draw,fill,inner sep=0.35mm] {};
\node at (-0.5,-4.9) [circle,draw,fill,inner sep=0.35mm] {};
\node at (-1.6,-1.6) [circle,draw,fill,inner sep=0.35mm] {};
\draw[decoration={markings, mark=at position 0.6 with {\arrow{>}}},        postaction={decorate}] (-2.4,3) -- (-3.5,5);
\draw[decoration={markings, mark=at position 0.75 with {\arrow{<}}},        postaction={decorate}] (-2.75,2) -- (-0.5,-4.9);
\draw[decoration={markings, mark=at position 0.5 with {\arrow{<}}},        postaction={decorate}] (n1) -- (-1.6,-1.6);
\node at (5.9,0.4) {$\scriptstyle \omega_1^{-1} \omega_2^{-1}$};
\node at (-4.8,0.4) {$\scriptstyle \omega_2$};
\node at (-0.0,-0.5) {$\scriptstyle g$};
\node at (0.2,-3.2) {$\scriptstyle \omega_1$};
\node at (-3.5,6) {$\scriptstyle \omega_2 \omega_1 g^{-1} \omega_1^{-1} \omega_2^{-1}$};
\end{tikzpicture}
=
\begin{tikzpicture}[scale=0.28,color=black, baseline]
\draw[decoration={markings, mark=at position 0.015 with {\arrow{>}}},decoration={markings, mark=at position 0.5 with {\arrow{<}}},        postaction={decorate}] (0,0) ellipse (3 and 5);
\node (n1) at (0,2) [circle,draw,fill,inner sep=0.35mm] {};
\node[above=0.01mm of n1] {$\scriptstyle \phi^{\circ}$};
\node at (-2.75,2) [circle,draw,fill,inner sep=0.35mm] {};
\node at (-2.4,3) [circle,draw,fill,inner sep=0.35mm] {};
\node at (-0.5,-4.9) [circle,draw,fill,inner sep=0.35mm] {};
\node at (-2.35,0.8)  [circle,draw,fill,inner sep=0.35mm] {};
\node at (-1.6,-1.5) [circle,draw,fill,inner sep=0.35mm] {};
\node at (-1.05,-2.9) [circle,draw,fill,inner sep=0.35mm] {};
\draw[decoration={markings, mark=at position 0.6 with {\arrow{>}}},        postaction={decorate}] (-2.4,3) -- (-3.5,5);
\draw (-2.75,2) -- (-2.35,0.8);
\draw[decoration={markings, mark=at position 0.75 with {\arrow{<}}},        postaction={decorate}] (-1.6,-1.5) -- (-0.5,-4.9);
\draw[decoration={markings, mark=at position 0.5 with {\arrow{<}}},        postaction={decorate}] (n1) -- (-1.05,-2.9);
\draw[decoration={markings, mark=at position 0.65 with {\arrow{<}}},        postaction={decorate}] (-2.35,0.8) .. controls +(0.5,-0.5) and +(0.5,0.9) .. (-1.6,-1.5);
\draw[decoration={markings, mark=at position 0.65 with {\arrow{<}}},        postaction={decorate}] (-2.35,0.8)  .. controls +(0,-1) and +(-0.7,0.5) .. (-1.6,-1.5);
\node at (5.9,0.4) {$\scriptstyle \omega_1 ^{-1} \omega_2^{-1}$};
\node at (-4.4,0.4) {$\scriptstyle \omega_2$};
\node at (0.45,-0.6) {$\scriptstyle g$};
\node at (0.5,-3.8) {$\scriptstyle \omega_1$};
\node at (-5.6,-2.35) {$\scriptstyle \omega_1$};
\draw[thin,decoration={markings, mark=at position 1.0 with {\arrow{<}}},        postaction={decorate}] (-5.6,-3.05) .. controls +(0.5,-0.5) and +(0.5,0.5) .. (-2.6,-1.35);
\node at (-3.5,6) {$\scriptstyle \omega_2 \omega_1 g^{-1} \omega_1^{-1} \omega_2^{-1}$};
\end{tikzpicture}
\]
Repeatedly applying equation \eqref{diag:associativity} then gives
\[
\begin{tikzpicture}[scale=0.28,color=black, baseline]
\draw[decoration={markings, mark=at position 0.015 with {\arrow{>}}},decoration={markings, mark=at position 0.5 with {\arrow{<}}},        postaction={decorate}] (0,0) ellipse (3 and 5);
\node (n1) at (0,2) [circle,draw,fill,inner sep=0.35mm] {};
\node[above=0.01mm of n1] {$\scriptstyle \phi^{\circ}$};
\node at (-2.75,2) [circle,draw,fill,inner sep=0.35mm] {};
\node at (-2.4,3) [circle,draw,fill,inner sep=0.35mm] {};
\node at (-0.5,-4.9) [circle,draw,fill,inner sep=0.35mm] {};
\node at (-2.35,0.8)  [circle,draw,fill,inner sep=0.35mm] {};
\node at (-1.6,-1.5) [circle,draw,fill,inner sep=0.35mm] {};
\node at (-1.05,-2.9) [circle,draw,fill,inner sep=0.35mm] {};
\draw[decoration={markings, mark=at position 0.6 with {\arrow{>}}},        postaction={decorate}] (-2.4,3) -- (-3.5,5);
\draw (-2.75,2) -- (-2.35,0.8);
\draw[decoration={markings, mark=at position 0.75 with {\arrow{<}}},        postaction={decorate}] (-1.6,-1.5) -- (-0.5,-4.9);
\draw[decoration={markings, mark=at position 0.5 with {\arrow{<}}},        postaction={decorate}] (n1) -- (-1.05,-2.9);
\draw[decoration={markings, mark=at position 0.65 with {\arrow{<}}},        postaction={decorate}] (-2.35,0.8) .. controls +(0.5,-0.5) and +(0.5,0.9) .. (-1.6,-1.5);
\draw[decoration={markings, mark=at position 0.65 with {\arrow{<}}},        postaction={decorate}] (-2.35,0.8)  .. controls +(0,-1) and +(-0.7,0.5) .. (-1.6,-1.5);
\node at (5.9,0.4) {$\scriptstyle \omega_1 ^{-1} \omega_2^{-1}$};
\node at (-4.4,0.4) {$\scriptstyle \omega_2$};
\node at (0.45,-0.6) {$\scriptstyle g$};
\node at (0.4,-3.8) {$\scriptstyle \omega_1$};
\node at (-5.6,-2.35) {$\scriptstyle \omega_1$};
\draw[thin,decoration={markings, mark=at position 1.0 with {\arrow{<}}},        postaction={decorate}] (-5.3,-3.15) .. controls +(0.5,-0.5) and +(0.5,0.5) .. (-2.6,-1.35);
\node at (-3.5,6) {$\scriptstyle \omega_2 \omega_1 g^{-1} \omega_1^{-1} \omega_2^{-1}$};
\end{tikzpicture}
\xrightarrow[]{\hat{\alpha}([\omega_1 \vert g^{-1} \vert g])}
\begin{tikzpicture}[scale=0.28,color=black, baseline]
\node (n1) at (0,2) [circle,draw,fill,inner sep=0.35mm] {};
\node[above=0.01mm of n1] {$\scriptstyle \phi^{\circ}$};
\node at (-2.75,2) [circle,draw,fill,inner sep=0.35mm] {};
\node at (-2.4,3) [circle,draw,fill,inner sep=0.35mm] {};
\node at (-2.25,0.7) [circle,draw,fill,inner sep=0.35mm] {};
\node at (-1.8,-4) [circle,draw,fill,inner sep=0.35mm] {};
\draw[decoration={markings, mark=at position 0.015 with {\arrow{>}}},decoration={markings, mark=at position 0.5 with {\arrow{<}}},        postaction={decorate}] (0,0) ellipse (3 and 5);
\draw[decoration={markings, mark=at position 0.2 with {\arrow{<}}},        postaction={decorate}]  (n1) .. controls +(-1,-3) and +(1,-3) .. (-2.75,2);
\draw[decoration={markings, mark=at position 0.6 with {\arrow{>}}},        postaction={decorate}] (-2.4,3) -- (-3.5,5);
\draw[decoration={markings, mark=at position 0.5 with {\arrow{<}}},        postaction={decorate}] (-2.25,0.7) -- (-1.8,-4);
\node at (5.9,0.4) {$\scriptstyle \omega_1 ^{-1} \omega_2^{-1}$};
\node at (-1.0,-2.5) {$\scriptstyle \omega_1$};
\node at (0.4,0.5) {$\scriptstyle g$};
\node at (-4.8,0.4) {$\scriptstyle \omega_2$};
\node at (-3.5,6) {$\scriptstyle \omega_2 \omega_1 g^{-1} \omega_1^{-1} \omega_2^{-1}$};
\end{tikzpicture}
\xrightarrow[]{\hat{\alpha}([\omega_2 \vert \omega_1 \vert g^{-1}])^{-1}}
\]
\[
\begin{tikzpicture}[scale=0.28,color=black, baseline]
\node (n1) at (0,2) [circle,draw,fill,inner sep=0.35mm] {};
\node[above=0.01mm of n1] {$\scriptstyle \phi^{\circ}$};
\node at (-2.75,2) [circle,draw,fill,inner sep=0.35mm] {};
\node at (-2.4,3) [circle,draw,fill,inner sep=0.35mm] {};
\node at (-2.95,-1.2) [circle,draw,fill,inner sep=0.35mm] {};
\node at (-1.4,-4.5) [circle,draw,fill,inner sep=0.35mm] {};
\draw[decoration={markings, mark=at position 0.015 with {\arrow{>}}},decoration={markings, mark=at position 0.5 with {\arrow{<}}},        postaction={decorate}] (0,0) ellipse (3 and 5);
\draw[decoration={markings, mark=at position 0.2 with {\arrow{<}}},        postaction={decorate}] (n1) .. controls +(-1,-3) and +(1,-3) .. (-2.75,2);
\draw[decoration={markings, mark=at position 0.6 with {\arrow{>}}},        postaction={decorate}] (-2.4,3) -- (-3.5,5);
\draw[decoration={markings, mark=at position 0.5 with {\arrow{<}}},        postaction={decorate}] (-2.95,-1.2) .. controls +(1,-1) .. (-1.4,-4.5);
\node at (5.9,0.4) {$\scriptstyle \omega_1 ^{-1} \omega_2^{-1}$};
\node at (-4.8,0.4) {$\scriptstyle \omega_2 \omega_1$};
\node at (0.3,0.5) {$\scriptstyle g$};
\node at (-0.6,-2.4) {$\scriptstyle \omega_1$};
\node at (-3.5,6) {$\scriptstyle \omega_2 \omega_1 g^{-1} \omega_1^{-1} \omega_2^{-1}$};
\end{tikzpicture}
=
\begin{tikzpicture}[scale=0.28,color=black, baseline]
\draw [decoration={markings, mark=at position 0.575 with {\arrow{<}}}, decoration={markings, mark=at position 0.945 with {\arrow{>}}},        postaction={decorate}] (0,0) ellipse (3 and 5);
\node (n1) at (0,1.0) [circle,draw,fill,inner sep=0.35mm] {};
\node[above=0.01mm of n1] {$\scriptstyle \phi^{\circ}$};
\node at (-2.75,2) [circle,draw,fill,inner sep=0.35mm] {};
\node at (-2.4,3) [circle,draw,fill,inner sep=0.35mm] {};
\draw[decoration={markings, mark=at position 0.6 with {\arrow{>}}},decoration={markings, mark=at position 0.15 with {\arrow{<}}},        postaction={decorate}] (n1) .. controls +(-1,-5) and +(1,-3) .. (-2.75,2);
\draw[decoration={markings, mark=at position 0.7 with {\arrow{>}}},        postaction={decorate}] (-2.4,3) -- (-3.5,5);
\node at (-4.7,-2) {$\scriptstyle \omega_2 \omega_1$};
\node at (5.9,-1.9) {$\scriptstyle \omega_1^{-1} \omega_2^{-1}$};
\node at (0.7,-0.4) {$\scriptstyle g$};
\node at (-3.5,6) {$\scriptstyle \omega_2 \omega_1 g^{-1} \omega_1^{-1} \omega_2^{-1}$};
\end{tikzpicture}
\]
The first arrow used equation \eqref{diag:associativity} in $V^{\op}$. By definition, the final diagram computes $\beta_{g, \omega_2 \omega_1}(\phi)$. The scalar introduced in the entire computation is thus
\[
\frac{\hat{\alpha}([\omega_1 \vert g^{-1} \vert g]) \hat{\alpha}([\omega_1 g^{-1} \omega_1^{-1} \vert \omega_1 g \omega_1^{-1} \vert \omega_1])}{\hat{\alpha}([\omega_1 g^{-1} \omega^{-1}_1 \vert \omega_1 \vert g])} \times 
\frac{\hat{\alpha}([\omega_2 \vert \omega_1 g^{-1} \omega_1^{-1} \vert \omega_1])}{\hat{\alpha}([\omega_2 \omega_1 g^{-1} \omega_1^{-1} \omega_2^{-1} \vert \omega_2 \vert \omega_1]) \hat{\alpha}([\omega_2 \vert \omega_1 \vert g^{-1}])},
\]
which we recognize as $\uptau_{\pi}^{\refl}(\hat{\alpha})([\omega_2 \vert \omega_1]g)^{-1}$.

A similar calculation can be performed when $\pi(\omega_1)=-1$. The key difference is that, since both $\omega_1$ and $\omega_2$ are of degree $-1$, at the final stage of the calculation we produce a scalar multiple of the string diagram
\[
\begin{tikzpicture}[scale=0.25,color=black, baseline]
\draw [decoration={markings, mark=at position 0.575 with {\arrow{<}}}, decoration={markings, mark=at position 0.945 with {\arrow{>}}},        postaction={decorate}] (0,0) ellipse (3 and 5);
\node (n1) at (-0.17,1.9) [circle] {};
\node at (1.5,0.0) {$\scriptstyle \phi$};
\node at (-2.2,3.3) [circle,draw,fill,inner sep=0.35mm] {};
\node at (-2.75,2) [circle,draw,fill,inner sep=0.35mm] {};
\node at (-4.5,-2) {$\scriptstyle \omega_2 \omega_1$};
\node at (5.5,-1.9) {$\scriptstyle \omega_1^{-1} \omega_2^{-1}$};
\node at (1.0,2.6) {$\scriptstyle g$};
\node at (-5,6) {$\scriptstyle \omega_2 \omega_1 g \omega_1^{-1} \omega_2^{-1}$};
\draw[black, decoration={markings, mark=at position 0.795 with {\arrow{<}}},        postaction={decorate}](0.55,1.0) [partial ellipse=180:0:0.85 and 0.85];
\draw[black](-1.1,1.0) [partial ellipse=180:360:0.8 and 0.8];
\draw[] (-1.9,0.95) -- (-2.75,2);
\node at (1.5,1.0) [circle,draw,fill,inner sep=0.3mm] {};
\draw[decoration={markings, mark=at position 0.7 with {\arrow{>}}},        postaction={decorate}] (-2.2,3.3) -- (-3.5,5);
\end{tikzpicture}
\xrightarrow[]{\hat{\alpha}([g\vert g^{-1} \vert g])^{-1}}
\begin{tikzpicture}[scale=0.25,color=black, baseline]
\draw [decoration={markings, mark=at position 0.575 with {\arrow{<}}}, decoration={markings, mark=at position 0.945 with {\arrow{>}}},        postaction={decorate}] (0,0) ellipse (3 and 5);
\node (n1) at (0,1.0) [circle,draw,fill,inner sep=0.35mm] {};
\node[below=0.01mm of n1] {$\scriptstyle \phi$};
\node at (-2.75,2) [circle,draw,fill,inner sep=0.35mm] {};
\node at (-2.2,3.3) [circle,draw,fill,inner sep=0.35mm] {};
\draw[decoration={markings, mark=at position 0.7 with {\arrow{>}}},        postaction={decorate}] (-2.2,3.3) -- (-3.5,5);
\draw[decoration={markings, mark=at position 0.5 with {\arrow{>}}},        postaction={decorate}] (n1) -- (-2.75,2);
\node at (-4.5,-2) {$\scriptstyle \omega_2 \omega_1$};
\node at (5.5,-1.9) {$\scriptstyle \omega_1^{-1} \omega_2^{-1}$};
\node at (0.5,2.0) {$\scriptstyle g$};
\node at (-5,6) {$\scriptstyle \omega_2 \omega_1 g \omega_1^{-1} \omega_2^{-1}$};
\end{tikzpicture}
\]
The last step used equation \eqref{diag:snakeRemoval}. This gives the additional factor of $\hat{\alpha}([g \vert g^{-1} \vert g])$ appearing in $\uptau_{\pi}^{\refl}(\hat{\alpha})([\omega_2 \vert \omega_1] g)$ when both $\omega_1$ and $\omega_2$ are of degree $-1$. This completes the proof.
\end{proof}

Let $\alpha \in Z^3(B \mathsf{G}, k^{\times})$. In \cite[\S 3.1]{willerton2008} Willerton showed that the $\alpha$-twisted Drinfeld double of $\mathsf{G}$, as introduced in \cite{dijkgraaf1992}, is isomorphic (as an algebra) to the $\uptau(\alpha)$-twisted groupoid algebra of $\Lambda B \mathsf{G}$:
\[
D^{\alpha}(\mathsf{G}) \simeq k^{\uptau(\alpha)}[\Lambda B \mathsf{G}].
\]
Motivated by this, for a twisted $3$-cocycle $\hat{\alpha} \in Z^3(B \hat{\mathsf{G}}, k^{\times}_{\pi})$ the $\hat{\alpha}$-twisted thickened Drinfeld double of $\mathsf{G}$ was defined in \cite[\S 4.2]{mbyoung2018a} to be the $\uptau_{\pi}^{\refl}(\hat{\alpha})$-twisted groupoid algebra of $\Lambda_{\pi}^{\refl} B \hat{\mathsf{G}}$:
\[
D^{\hat{\alpha}}(\mathsf{G}) = k^{\uptau_{\pi}^{\refl}(\hat{\alpha})}[ \Lambda_{\pi}^{\refl} B \hat{\mathsf{G}}].
\]
The inclusion $\mathsf{G} \hookrightarrow \hat{\mathsf{G}}$ defines a faithful functor $\Lambda B \mathsf{G} \rightarrow \Lambda_{\pi}^{\refl} B \hat{\mathsf{G}}$ under which $\uptau_{\pi}^{\refl}(\hat{\alpha})$ restricts to $\uptau(\alpha)$. It follows that there is a $k$-algebra embedding $D^{\alpha}(\mathsf{G}) \hookrightarrow D^{\hat{\alpha}}(\mathsf{G})$, hence the terminology.

\begin{Cor}
\label{cor:thickDDAction}
The Real categorical character of an $\hat{\alpha}$-twisted Real $2$-representation of $\mathsf{G}$ is a module over the $\hat{\alpha}$-twisted thickened Drinfeld double of $\mathsf{G}$.
\end{Cor}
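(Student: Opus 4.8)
The plan is to deduce this directly from Theorem \ref{thm:catCharLoopGrpd} together with the dictionary between twisted vector bundles over a finite groupoid and modules over its twisted groupoid algebra. Theorem \ref{thm:catCharLoopGrpd} tells us that the Real categorical character $\Tr(\rho)$ of an $\hat{\alpha}$-twisted Real $2$-representation $\rho$ of $\mathsf{G}$ is a $\uptau_{\pi}^{\refl}(\hat{\alpha})$-twisted vector bundle over $\Lambda_{\pi}^{\refl} B \hat{\mathsf{G}}$, that is, a functor ${}^{\uptau_{\pi}^{\refl}(\hat{\alpha})}\Lambda_{\pi}^{\refl} B \hat{\mathsf{G}} \rightarrow \mathsf{Vect}_k$ on which each subgroupoid $(B k^{\times})_{\vert g}$ acts by scalar multiplication. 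The first step is simply to recall the standard equivalence between the category of such twisted vector bundles on a finite groupoid $\mathfrak{G}$ and the category of modules over the twisted groupoid algebra $k^{\theta}[\mathfrak{G}]$: a functor $F$ out of the gerbe $\prescript{\theta}{}{\mathfrak{G}}$ on which $B k^{\times}$ acts by scalars assigns to each object $x$ a vector space $F(x)$ and to each morphism an invertible linear map, and the $k^{\theta}[\mathfrak{G}]$-module structure on $\bigoplus_{x \in \Obj(\mathfrak{G})} F(x)$ is obtained by letting the basis element corresponding to a morphism $x_1 \xrightarrow{g} x_2$ act by $F(g): F(x_1) \rightarrow F(x_2)$; the twisted composition law in $k^{\theta}[\mathfrak{G}]$ matches the functoriality of $F$ precisely because $F$ is defined on the gerbe rather than on $\mathfrak{G}$ itself.

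Applying this with $\mathfrak{G} = \Lambda_{\pi}^{\refl} B \hat{\mathsf{G}}$ and $\theta = \uptau_{\pi}^{\refl}(\hat{\alpha})$, the vector space $\bigoplus_{g \in \mathsf{G}} \Tr_{\rho}(g)$ becomes a module over $k^{\uptau_{\pi}^{\refl}(\hat{\alpha})}[\Lambda_{\pi}^{\refl} B \hat{\mathsf{G}}]$, which by the definition recalled just before the statement is exactly the $\hat{\alpha}$-twisted thickened Drinfeld double $D^{\hat{\alpha}}(\mathsf{G})$. Concretely, the basis element of $D^{\hat{\alpha}}(\mathsf{G})$ indexed by a morphism $\omega: g \rightarrow \omega g^{\pi(\omega)}\omega^{-1}$ in $\Lambda_{\pi}^{\refl} B \hat{\mathsf{G}}$ acts on $\Tr_{\rho}(g)$ via the bijection $\beta_{g,\omega}$ of Section \ref{sec:2Char}, and the relation $\beta_{\omega_1 g^{\pi(\omega_1)}\omega_1^{-1},\omega_2} \circ \beta_{g,\omega_1} = \uptau_{\pi}^{\refl}(\hat{\alpha})([\omega_2 \vert \omega_1]g)\, \beta_{g,\omega_2\omega_1}$ established in the proof of Theorem \ref{thm:catCharLoopGrpd} is precisely the statement that this assignment respects the twisted multiplication. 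One should also note that the hypothesis that each $\Tr_{\rho}(g)$ is finite dimensional (needed for a finitely generated module, and implicitly in force whenever Real $2$-characters are discussed) and the fact that $\mathsf{G}$ is finite together guarantee that the resulting module is finite dimensional over $k$.

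I do not expect a genuine obstacle here: the corollary is a formal consequence of Theorem \ref{thm:catCharLoopGrpd} plus bookkeeping. The only point requiring a modicum of care is verifying that the scalar subgroupoids $(B k^{\times})_{\vert g}$ acting by scalar multiplication — which is part of the definition of a twisted vector bundle — corresponds on the algebra side to the central copy of $k$ acting as scalars, so that one genuinely obtains a module over the twisted groupoid \emph{algebra} $k^{\uptau_{\pi}^{\refl}(\hat{\alpha})}[\Lambda_{\pi}^{\refl} B \hat{\mathsf{G}}]$ rather than over the larger algebra of the gerbe; this is immediate from the normalization conventions on cochains fixed in Section \ref{sec:twistLoopTran}. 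The proof in the paper can therefore be a one- or two-line reduction: "This is immediate from Theorem \ref{thm:catCharLoopGrpd} and the equivalence between $\theta$-twisted vector bundles over a finite groupoid $\mathfrak{G}$ and modules over $k^{\theta}[\mathfrak{G}]$."
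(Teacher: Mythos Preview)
Your proposal is correct and follows essentially the same approach as the paper: invoke Theorem \ref{thm:catCharLoopGrpd} and then use the standard equivalence between $\theta$-twisted vector bundles over a finite groupoid $\mathfrak{G}$ and modules over $k^{\theta}[\mathfrak{G}]$. The paper's proof is indeed the one- or two-line reduction you anticipate, phrasing the equivalence as passing through $\uptau_{\pi}^{\refl}(\hat{\alpha})$-twisted $k[\Lambda_{\pi}^{\refl} B\hat{\mathsf{G}}]$-modules, and your remarks on finite dimensionality and the scalar action are extra care beyond what the paper records.
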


\begin{proof}
The category of vector bundles over ${^{\uptau_{\pi}^{\refl}(\hat{\alpha})}}\Lambda_{\pi}^{\refl}B \hat{\mathsf{G}}$ is equivalent to the category of $\uptau_{\pi}^{\refl}(\hat{\alpha})$-twisted $k[\Lambda_{\pi}^{\refl}B \hat{\mathsf{G}}]$-modules, which is in turn equivalent to the category of $k^{\uptau_{\pi}^{\refl}(\hat{\alpha})}[\Lambda_{\pi}^{\refl} B \hat{\mathsf{G}}]$-modules. The statement now follows from Theorem \ref{thm:catCharLoopGrpd}.
\end{proof}

The next result describes the equivariance properties of Real $2$-characters.

\begin{Thm}
\label{thm:conjInv2Char}
The Real $2$-character of an $\hat{\alpha}$-twisted Real $2$-representation $\rho$ of $\mathsf{G}$ is a flat section of the line bundle $\uptau \uptau_{\pi}^{\refl}(\hat{\alpha})_k \rightarrow \Lambda \Lambda_{\pi}^{\refl} B \hat{\mathsf{G}}$. Equivalently, the equality
\[
\chi_{\rho}(\sigma g^{\pi(\sigma)} \sigma^{-1}, \sigma \omega \sigma^{-1}) = \frac{\uptau_{\pi}^{\refl}(\hat{\alpha})([\sigma \omega \sigma^{-1} \vert \sigma] g)}{\uptau_{\pi}^{\refl}(\hat{\alpha})([\sigma \vert \omega] g)} \cdot \chi_{\rho}(g, \omega)
\]
holds for all $(g,\omega) \in \hat{\mathsf{G}}^{(2)}$ and $\sigma \in \hat{\mathsf{G}}$.
\end{Thm}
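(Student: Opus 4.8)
The plan is to deduce Theorem \ref{thm:conjInv2Char} from Theorem \ref{thm:catCharLoopGrpd} by a "holonomy of a flat twisted bundle" argument, exactly parallel to how conjugation invariance of ordinary $2$-characters is recovered from the loop groupoid description in \cite{ganter2016}. The starting point is the observation that, since $(g,\omega)$ graded commutes, $\omega$ is an automorphism of the object $g$ in $\Lambda_{\pi}^{\refl} B\hat{\mathsf{G}}$, so $\beta_{g,\omega}$ is an endomorphism of the fibre $\Tr_{\rho}(g)$ of the twisted bundle $\Tr(\rho)$, and $\chi_{\rho}(g,\omega)$ is its trace — i.e. the holonomy of $\Tr(\rho)$ around the loop $[\omega]g$ in $\Lambda_{\pi}^{\refl}B\hat{\mathsf{G}}$. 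Equivalently, $\chi_{\rho}$ is the trace of the $D^{\hat\alpha}(\mathsf{G})$-module of Corollary \ref{cor:thickDDAction} evaluated on the group-like elements $[\omega]g$. The iterated unoriented loop groupoid $\Lambda\Lambda_{\pi}^{\refl}B\hat{\mathsf{G}}$ has objects exactly graded commuting pairs $(g,\omega)$ and morphisms $\sigma\colon (g,\omega)\to(\sigma g^{\pi(\sigma)}\sigma^{-1},\sigma\omega\sigma^{-1})$, so the claimed equivariance is precisely the statement that the function $\chi_\rho$ is a flat section of the transgressed line bundle $\uptau\,\uptau_{\pi}^{\refl}(\hat\alpha)_k$ over this groupoid.

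First I would set up the general lemma: if $\mathcal{F}\colon {}^{\vartheta}\mathfrak{H}\to\mathsf{Vect}_k$ is a $\vartheta$-twisted vector bundle over a finite groupoid $\mathfrak{H}$ (here $\mathfrak{H}=\Lambda_{\pi}^{\refl}B\hat{\mathsf{G}}$, $\vartheta=\uptau_{\pi}^{\refl}(\hat\alpha)$), then for any object $x$ and automorphism $\gamma\in\Aut(x)$ the assignment $x\mapsto\tr_{\mathcal F(x)}(\mathcal F(\gamma))$ is, as $(x,\gamma)$ ranges over $\Lambda\mathfrak H$, a flat section of $\uptau(\vartheta)_k$. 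This is Willerton's observation for ordinary twisted bundles and goes through unchanged; concretely, for a morphism $\sigma\colon x\to y$ with $\sigma\gamma\sigma^{-1}=\gamma'$ one has $\mathcal F(\sigma)\circ\mathcal F(\gamma)=\vartheta([\gamma'\vert\sigma])\vartheta([\sigma\vert\gamma])^{-1}\,\mathcal F(\gamma')\circ\mathcal F(\sigma)$ directly from the cocycle relation defining ${}^{\vartheta}\mathfrak H$, and taking traces and using cyclicity gives $\tr\mathcal F(\gamma')=\frac{\vartheta([\gamma'\vert\sigma])}{\vartheta([\sigma\vert\gamma])}\tr\mathcal F(\gamma)$. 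By definition of loop transgression, the ratio $\frac{\vartheta([\gamma'\vert\sigma])}{\vartheta([\sigma\vert\gamma])}$ with $\vartheta=\uptau_{\pi}^{\refl}(\hat\alpha)$ is exactly $\uptau(\uptau_{\pi}^{\refl}(\hat\alpha))([\sigma]$ of the loop $(g,\omega))$, i.e. the announced factor $\frac{\uptau_{\pi}^{\refl}(\hat\alpha)([\sigma\omega\sigma^{-1}\vert\sigma]g)}{\uptau_{\pi}^{\refl}(\hat\alpha)([\sigma\vert\omega]g)}$.

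Then I would specialise: take $\mathcal F=\Tr(\rho)$, which by Theorem \ref{thm:catCharLoopGrpd} is a $\uptau_{\pi}^{\refl}(\hat\alpha)$-twisted bundle over $\Lambda_{\pi}^{\refl}B\hat{\mathsf{G}}$; take $x=g$ and $\gamma=[\omega]g$, which is an automorphism of $g$ precisely because $(g,\omega)\in\hat{\mathsf{G}}^{(2)}$; and take the morphism $\sigma\colon g\to\sigma g^{\pi(\sigma)}\sigma^{-1}$ in $\Lambda_{\pi}^{\refl}B\hat{\mathsf{G}}$. Since $\Tr(\rho)(\gamma)=\beta_{g,\omega}$ and $\Tr(\rho)(\sigma)=\beta_{g,\sigma}$, the functoriality in Proposition \ref{prop:catCharLoopGrpd} together with the twisted functoriality of Theorem \ref{thm:catCharLoopGrpd} gives $\beta_{g,\sigma}\circ\beta_{g,\omega}=\frac{\uptau_{\pi}^{\refl}(\hat\alpha)([\sigma\omega\sigma^{-1}\vert\sigma]g)}{\uptau_{\pi}^{\refl}(\hat\alpha)([\sigma\vert\omega]g)}\,\beta_{\sigma g^{\pi(\sigma)}\sigma^{-1},\,\sigma\omega\sigma^{-1}}\circ\beta_{g,\sigma}$ (after conjugating the loop $[\omega]g$ by $\sigma$, which yields the loop $[\sigma\omega\sigma^{-1}](\sigma g^{\pi(\sigma)}\sigma^{-1})$); taking traces over $\Tr_\rho(g)$, using that $\beta_{g,\sigma}$ is invertible, and cyclicity of the trace produces exactly the asserted identity for $\chi_\rho$. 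The identification of the scalar with a flat section of $\uptau\,\uptau_{\pi}^{\refl}(\hat\alpha)_k\to\Lambda\Lambda_{\pi}^{\refl}B\hat{\mathsf{G}}$ is then immediate from the definition of $\uptau$ on the groupoid $\Lambda_{\pi}^{\refl}B\hat{\mathsf{G}}$.

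The main obstacle is bookkeeping rather than conceptual: I need to check that conjugating the loop $[\omega]g$ by the morphism $\sigma$ in $\Lambda_{\pi}^{\refl}B\hat{\mathsf{G}}$ genuinely produces the loop $[\sigma\omega\sigma^{-1}](\sigma g^{\pi(\sigma)}\sigma^{-1})$ — this uses the graded-commutation relation $\omega g^{\pi(\omega)}=g\omega$ and a short computation with the Real conjugation action — and, more importantly, that the transgression-of-transgression factor coming out of the twisted cocycle identity for ${}^{\uptau_{\pi}^{\refl}(\hat\alpha)}\Lambda_{\pi}^{\refl}B\hat{\mathsf{G}}$ really equals the ratio $\frac{\uptau_{\pi}^{\refl}(\hat\alpha)([\sigma\omega\sigma^{-1}\vert\sigma]g)}{\uptau_{\pi}^{\refl}(\hat\alpha)([\sigma\vert\omega]g)}$ as displayed. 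Both amount to unwinding the definitions of $\uptau$ and $\uptau_{\pi}^{\refl}$ recalled in Section \ref{sec:twistLoopTran}; the relevant compatibility is exactly Willerton's computation \cite{willerton2008} applied to the groupoid $\Lambda_{\pi}^{\refl}B\hat{\mathsf{G}}$ in place of $B\mathsf{G}$, and no genuinely new identity is needed beyond equation \eqref{eq:twistTransPartialCocycle} and its analogue for $3$-cocycles.
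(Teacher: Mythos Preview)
Your approach is correct and is exactly the paper's own argument: invoke Theorem \ref{thm:catCharLoopGrpd} to realise $\Tr(\rho)$ as a $\uptau_{\pi}^{\refl}(\hat{\alpha})$-twisted vector bundle over $\Lambda_{\pi}^{\refl} B\hat{\mathsf{G}}$, then apply Willerton's holonomy-of-a-twisted-bundle computation \cite[\S 2.3.3]{willerton2008} to conclude that the trace function is a flat section of $\uptau\,\uptau_{\pi}^{\refl}(\hat{\alpha})_k$ over $\Lambda\Lambda_{\pi}^{\refl}B\hat{\mathsf{G}}$; the explicit ratio is Willerton's formula for $\uptau$ of a $2$-cocycle \cite[\S 1.3.3]{willerton2008}. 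The only difference is that you spell out the holonomy lemma rather than cite it.

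One small bookkeeping slip: in your displayed intermediate identity the ratio is inverted. From $\mathcal{F}(\sigma)\mathcal{F}(\gamma)=\vartheta([\sigma\vert\gamma])\,\mathcal{F}(\sigma\gamma)$ and $\mathcal{F}(\gamma')\mathcal{F}(\sigma)=\vartheta([\gamma'\vert\sigma])\,\mathcal{F}(\gamma'\sigma)$ with $\sigma\gamma=\gamma'\sigma$ one gets $\mathcal{F}(\sigma)\mathcal{F}(\gamma)=\dfrac{\vartheta([\sigma\vert\gamma])}{\vartheta([\gamma'\vert\sigma])}\,\mathcal{F}(\gamma')\mathcal{F}(\sigma)$, not the reciprocal; conjugating and taking traces then gives $\tr\mathcal{F}(\gamma')=\dfrac{\vartheta([\gamma'\vert\sigma])}{\vartheta([\sigma\vert\gamma])}\tr\mathcal{F}(\gamma)$, which is the formula you state (and the one matching the theorem). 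So your conclusion is right, but the line just before it needs its ratio flipped.
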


\begin{proof}
By Theorem \ref{thm:catCharLoopGrpd}, the Real categorical character $\Tr(\rho)$ is a $\uptau_{\pi}^{\refl}(\hat{\alpha})$-twisted vector bundle over $\Lambda_{\pi}^{\refl} B \hat{\mathsf{G}}$. By the results of \cite[\S 2.3.3]{willerton2008}, the holonomy of $\Tr(\rho)$ is a flat section of the transgressed line bundle $\uptau \uptau_{\pi}^{\refl}(\hat{\alpha})_k \rightarrow \Lambda \Lambda_{\pi}^{\refl} B \hat{\mathsf{G}}$. On the other hand, the holonomy of $\Tr(\rho)$ is by construction the Real $2$-character of $\rho$. Combining these results gives the desired statement.

The explicit description of the $\hat{\mathsf{G}}$-equivariance of $\chi_{\rho}$ follows from Willerton's formula for the loop transgression of an untwisted $2$-cocycle \cite[\S 1.3.3]{willerton2008}.
\end{proof}

Flat sections of the line bundle $\uptau \uptau_{\pi}^{\refl}(\hat{\alpha})_{\mathbb{C}} \rightarrow \Lambda \Lambda_{\pi}^{\refl} B \hat{\mathsf{G}}$ were first studied in \cite[\S 4.4]{mbyoung2018a}, where they were shown to describe the complexified representation ring of $D^{\hat{\alpha}}(\mathsf{G})$. Theorem \ref{thm:conjInv2Char} gives a second interpretation of such sections, namely, as Real $2$-class functions for $\hat{\alpha}$-twisted Real $2$-representations of $\mathsf{G}$. Corollary \ref{cor:thickDDAction} explains the relationship between the two seemingly unrelated interpretations.

Upon substitution of the explicit expression for $\uptau_{\pi}^{\refl}(\hat{\alpha})$ into the equality appearing in the statement of Theorem \ref{thm:conjInv2Char}, the coefficient of $\chi_{\rho}(g, \omega)$ reproduces Sharpe's $C$-field discrete torsion phase factors for the three dimensional torus and the Klein bottle times $S^1$ in $M$-theory with orientifolds \cite[\S 6.2]{sharpe2011}. This strongly suggests a role for Real $2$-representation theory in $M$-theory. More precisely, consider an orientifold compatible $C$-field which is pulled back from the $C$-field $\hat{\alpha}$ on $B \hat{\mathsf{G}}$. We expect the $2$-Hilbert spaces resulting from the higher geometric quantization of membranes in this background to be a Real representation of $\mathcal{G}(\mathsf{G}, \alpha)$.

\subsection{Real \texorpdfstring{$2$}{}-representations on \texorpdfstring{$2\mathsf{Vect}_k$}{}}
\label{sec:2VectRep}

We study twisted Real $2$-representations on $2 \mathsf{Vect}_k$. The ungraded case is treated in \cite{elgueta2007}; see also \cite[\S\S 5.1-2]{ganter2008}, \cite{osorno2010}.

Consider $2 \mathsf{Vect}_k$ with its weak duality involution $(-)^{\vee}$ from Section \ref{sec:bicatDual}. We begin with a cohomological classification of linear Real $2$-representations on $2 \mathsf{Vect}_k$. The underlying object $[n] \in \Obj(2 \mathsf{Vect}_k)$ of such a representation is called its dimension. Denote by $\mathfrak{S}_n$ the symmetric group on $n$ letters.

\begin{Thm}
\label{thm:Real2VectClass}
Equivalence classes of linear Real $2$-representations of $\mathsf{G}$ on $2\mathsf{Vect}_k$ of dimension $n$ are in bijection with equivalence classes of data consisting of
\begin{enumerate}[label=(\roman*)]
\item a group homomorphism $\rho_0: \hat{\mathsf{G}} \rightarrow \mathfrak{S}_n$, and
\item a class $[\hat{\theta}] \in H^2(B \hat{\mathsf{G}}, (k^{\times}_{\pi})_{\rho_0}^n)$, where $(k^{\times}_{\pi})_{\rho_0}^n$ is the abelian group $(k^{\times})^n$ with $\hat{\mathsf{G}}$-action
\[
\omega \cdot (a_1, \dots, a_n) = (a_{\rho_0(\omega)^{-1}(1)}^{\pi(\omega)}, \dots, a_{\rho_0(\omega)^{-1}(n)}^{\pi(\omega)}).
\]
\end{enumerate}
Two such data are equivalent if they differ by the action of $\mathfrak{S}_n$ on $\Hom_{\mathsf{Grp}}(\hat{\mathsf{G}}, \mathfrak{S}_n)$.
\end{Thm}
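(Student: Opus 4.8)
The strategy is to compute, up to equivalence, the data of a linear Real $2$-representation of $\mathsf{G}$ on $2\mathsf{Vect}_k$ by unwinding the definition in terms of the skeletal model of $2\mathsf{Vect}_k$ and its weak duality involution $(-)^\vee$, then matching it against group-cohomological data. First I would observe that an equivalence $\rho(\omega):\prescript{\pi(\omega)}{}{[n]}\to[n]$, i.e. an invertible $1$-morphism $[n]\to[n]$ in $2\mathsf{Vect}_k$ (the duality involution fixes objects, $[n]^\vee=[n]$), is, by invertibility of the matrix of vector spaces, forced to be a permutation matrix of one-dimensional spaces; up to $2$-isomorphism we may assume each entry is exactly $k$ in the relevant positions. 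This produces a map $\rho_0:\hat{\mathsf{G}}\to\mathfrak{S}_n$ on underlying permutations, and the composition constraint \eqref{eq:projAssCond}/\eqref{eq:idenCond}, together with functoriality of $\pi$, forces $\rho_0$ to be a genuine group homomorphism. One must here track the contravariance: when $\pi(\omega)=-1$, $\rho(\omega)$ is built from $(-)^\vee$, but since every entry is $k$ and $k^\vee\cong k$ canonically, the permutation $\rho_0(\omega)$ is still well defined, and the $\mathbb{Z}_2$-grading of $\hat{\mathsf{G}}$ does not affect $\rho_0$ itself.

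Next I would isolate the coherence isomorphisms $\psi_{\omega_2,\omega_1}$. Once the permutation data $\rho_0$ is fixed, $\psi_{\omega_2,\omega_1}$ is a $2$-isomorphism $\rho(\omega_2)\circ\prescript{\pi(\omega_2)}{}{\rho(\omega_1)}\Rightarrow\rho(\omega_2\omega_1)$ between $1$-morphisms whose matrices agree as permutation matrices, so it amounts to a scalar $\hat{\theta}([\omega_2\vert\omega_1])\in(k^\times)^n$, one scalar per nonzero matrix entry, i.e. indexed by $\{1,\dots,n\}$. The associativity constraint \eqref{eq:projAssCond} for a \emph{linear} Real $2$-representation in which $\Aut_{\hat{\mathcal G}}(\mathbf 1)\simeq k^\times$ acts by scalars — equivalently, with trivial Real $2$-Schur multiplier after reducing to $\mathcal G(\mathsf{G},1)$, cf.\ Proposition \ref{prop:projToTwisted} — becomes exactly the twisted $2$-cocycle condition for $\hat{\theta}\in Z^2(B\hat{\mathsf{G}},(k^\times_\pi)^n_{\rho_0})$, where the $\hat{\mathsf{G}}$-action on $(k^\times)^n$ permutes coordinates by $\rho_0$ and applies the Galois/inversion twist $a\mapsto a^{\pi(\omega)}$ on degree $-1$ elements. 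The key point producing the stated action $\omega\cdot(a_1,\dots,a_n)=(a^{\pi(\omega)}_{\rho_0(\omega)^{-1}(1)},\dots)$ is precisely the interaction of the permutation with the contravariant duality $(-)^\vee$ on the index set: whiskering by $\prescript{\pi(\omega_2)}{}{\rho(\omega_1)}$ reindexes \emph{and} dualizes. I would verify this by an explicit string-diagram or matrix-entry computation, identifying \eqref{eq:projAssCond} with the cocycle identity written in Section \ref{sec:twistLoopTran}'s differential $d$ applied to the twisted module $(k^\times_\pi)^n_{\rho_0}$.

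Then I would handle the equivalences. Changing $\rho$ by a $1$-morphism equivalence of Real $2$-representations does two things: (a) it may conjugate the permutation homomorphism $\rho_0$ by an element of $\mathfrak{S}_n$ — this is the action of $\mathfrak{S}_n$ on $\Hom_{\mathsf{Grp}}(\hat{\mathsf{G}},\mathfrak{S}_n)$ in the statement; and (b) after fixing $\rho_0$, it acts on $\hat{\theta}$ by a coboundary, changing its class in $H^2$ by zero. Conversely, the class $[\hat{\theta}]$ together with the $\mathfrak{S}_n$-orbit of $\rho_0$ recovers $\rho$ up to equivalence by the explicit model in Section \ref{sec:sinh}: one builds $\rho$ directly from a cocycle representative. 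So the assignment $\rho\mapsto(\rho_0,[\hat{\theta}])$ descends to a well-defined bijection on equivalence classes, once we quotient the target by the $\mathfrak{S}_n$-action as stated. This is analogous to the ungraded classification of Elgueta \cite{elgueta2007}; the proof is the $\mathbb{Z}_2$-graded refinement of his argument, with the twisted coefficient system $(k^\times_\pi)^n_{\rho_0}$ replacing the untwisted $(k^\times)^n_{\rho_0}$.

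\textbf{Main obstacle.} The routine parts — that $\rho_0$ is a homomorphism, that $\psi$ reduces to scalars, that changing the $2$-representation by an equivalence changes $\hat{\theta}$ by a coboundary — are bookkeeping. The real work is pinning down the \emph{correct twisted module structure}: showing that \eqref{eq:projAssCond}, with its left-superscripts $\prescript{\pi(\omega_i)}{}{(-)}$ indicating passage to opposite/dual, translates precisely into the $\hat{\mathsf{G}}$-action $\omega\cdot(a_i)=(a^{\pi(\omega)}_{\rho_0(\omega)^{-1}(i)})$ and not some variant (e.g.\ with $\rho_0(\omega)$ instead of its inverse, or with the twist applied before permutation rather than after). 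This requires carefully fixing conventions for the skeletal model of $2\mathsf{Vect}_k$, for $(-)^\vee$ and its unit $\mathrm{ev}$, and for the direction in which string diagrams are read, and then doing one honest compatibility computation. Once that normalization is correct, everything else follows formally.
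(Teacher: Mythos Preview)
Your proposal is correct and follows essentially the same approach as the paper: reduce each $\rho(\omega)$ to a permutation $2$-matrix (noting that $(-)^\vee$ fixes its isomorphism class), extract the homomorphism $\rho_0$, read off $\psi_{\omega_2,\omega_1}$ as an $n$-tuple of scalars, identify the associativity constraint \eqref{eq:assCond} with the twisted $2$-cocycle condition for $(k^\times_\pi)^n_{\rho_0}$, and check that equivalences act by $\mathfrak{S}_n$-conjugation on $\rho_0$ and by coboundaries on $\hat{\theta}$. The paper's proof is a terse version of exactly this argument, citing the ungraded case \cite{elgueta2007}, \cite{osorno2010} and noting only the new ingredient, which is precisely the point you flag as the main obstacle---verifying that the twisted $\hat{\mathsf{G}}$-module structure on $(k^\times)^n$ comes out as stated.
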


\begin{proof}
The proof is a modification of the classification in the ungraded case \cite[Theorem 5.5]{elgueta2007}, \cite[Proposition 4]{osorno2010}. Let $\rho$ be a linear Real $2$-representation of dimension $n$. For each $\omega \in \hat{\mathsf{G}}$, the $1$-morphism $\rho(\omega) : [n] \rightarrow [n]$ is an equivalence and hence is a permutation $2$-matrix (see \cite[Lemma 5.3]{ganter2008}). After noting that $(-)^{\vee}$ fixes the isomorphism class of a permutation $2$-matrix, the existence of a $2$-isomorphism $\rho(\omega_2) \circ \prescript{\pi(\omega_2)}{}{\rho}(\omega_1) \simeq \rho(\omega_2 \omega_1)$ implies that $\rho$ defines a group homomorphism $\rho_0: \hat{\mathsf{G}} \rightarrow \mathfrak{S}_n$. Fix a basis of each vector space appearing in each $1$-morphism $\rho(\omega)$. Then the $2$-isomorphism $\psi_{\omega_2, \omega_1}$ is given by an $n$-tuple $(\hat{\theta}_i([\omega_2 \vert \omega_1]))^n_{i=1} \in (k^{\times})^n$, the $i$\textsuperscript{th} component being the isomorphism between the unique one dimensional vector spaces of the $i$\textsuperscript{th} rows of $\rho(\omega_2) \circ \prescript{\pi(\omega_2)}{}{\rho(\omega_1)}$ and $\rho(\omega_2 \omega_1)$. By the associativity\footnote{As $2 \mathsf{Vect}_k$ is a bicategory which is not a $2$-category, a non-trivial associator $2$-isomorphism must be incorporated in equation \eqref{eq:assCond} in the obvious way.} constraint \eqref{eq:assCond}, the $\hat{\theta}_i$ assemble to a $2$-cocycle $\hat{\theta} \in Z^2(B \hat{\mathsf{G}}, (k^{\times}_{\pi})_{\rho_0}^n)$. A different choice of basis of $\rho(\omega)$ defines a cohomologous $2$-cocycle. Similarly, a contravariance respecting pseudonatural isomorphism $u: \rho \Rightarrow \rho^{\prime}$ defines a twisted $1$-cochain $\lambda_u$ such that $\theta = \theta^{\prime} \cdot d \lambda_u$. In this way, each equivalence class of linear Real $2$-representations of dimension $n$ defines a class in $H^2(B \hat{\mathsf{G}}, (k^{\times}_{\pi})_{\rho_0}^n)$.

Reversing the above construction associates to a $2$-cocycle $\hat{\theta} \in Z^2(B \hat{\mathsf{G}}, (k^{\times}_{\pi})_{\rho_0}^n)$ an $n$-dimensional linear Real $2$-representation, the entries of each $1$-morphism $\rho(\omega)$ being either trivial or the trivialized $k$-line. This association is quasi-inverse to the construction of the previous paragraph.
\end{proof}

The next result is a Real version of \cite[Theorem 10]{osorno2010}.

\begin{Prop}
\label{prop:Real2VectChar}
The Real $2$-character of the Real $2$-representation $\rho_{[\hat{\theta}]}$ determined by $[\hat{\theta}] \in H^2(B \hat{\mathsf{G}}, (k^{\times}_{\pi})_{\rho_0}^n)$ is
\[
\chi_{\rho_{[\hat{\theta}]}}(g,\omega) = \sum_{\substack{i \in \{1, \dots, n\} \\ \rho_0(g)(i) = \rho_0(\omega)(i)}} \hat{\theta}_i([g^{-1} \vert g])^{-\frac{\pi(\omega)-1}{2}} \frac{\hat{\theta}_i([\omega \vert g^{\pi(\omega)}])}{\hat{\theta}_i([g \vert \omega])}.
\]
\end{Prop}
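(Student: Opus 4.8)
The strategy is to compute the joint trace $\chi_{\rho_{[\hat\theta]}}(g,\omega) = \tr_{\Tr_\rho(g)}(\beta_{g,\omega})$ directly from the explicit model of $\rho_{[\hat\theta]}$ furnished by the proof of Theorem \ref{thm:Real2VectClass}, in which each $1$-morphism $\rho(\omega)\colon[n]\to[n]$ is a permutation $2$-matrix with entries the trivialized line $k$ or the zero space, supported on the graph of $\rho_0(\omega)\in\mathfrak S_n$, and each $\psi_{\omega_2,\omega_1}$ is the scalar $(\hat\theta_i([\omega_2\vert\omega_1]))_{i=1}^n$ in the $i$-th row. First I would identify $\Tr_\rho(g)=2\Hom_{2\mathsf{Vect}_k}(1_{[n]},\rho(g))$: since $1_{[n]}$ is the identity $2$-matrix, a $2$-morphism into $\rho(g)$ is forced to vanish except on the rows $i$ fixed by $\rho_0(g)$, so $\Tr_\rho(g)$ has a basis $\{e_i : \rho_0(g)(i)=i\}$. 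This is the categorified analogue of ``character $=$ sum over fixed points''; the interesting content is the scalar that $\beta_{g,\omega}$ attaches to each such $e_i$.

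The core of the argument is to trace the basis vector $e_i$ (with $\rho_0(g)(i)=i$) through the string-diagram definition of $\beta_{g,\omega}$ in equations \eqref{eq:charStriDiagPos}--\eqref{eq:charStriDiagNeg}, specialized to the permutation-$2$-matrix model. Because all the $1$-morphisms are (essentially) permutation matrices and all the $\psi_{\bullet,\bullet}$ are diagonal, every whiskering, composition, and application of $\psi$ or $\psi^{-\op}$ in the diagram acts on the row labelled $i$ by multiplication by a specific value of $\hat\theta_i$. Concretely, $\beta_{g,\omega}(e_i)$ will be a scalar multiple of $e_{\rho_0(\omega)(i)}$ (using $\rho_0(\omega)\rho_0(g)^{\pi(\omega)}\rho_0(\omega)^{-1}=\rho_0(\omega g^{\pi(\omega)}\omega^{-1})$ to land in the correct trace space), and reading off the scalar amounts to collecting the $\hat\theta_i$-factors produced by: the unit/counit data $u=\psi_{\omega,\omega^{-1}}^{-1}\circ\psi_e^{-1}$, the $\mu=\psi_e\circ\psi_{g^{-1},g}$ (only when $\pi(\omega)=-1$, contributing the $\hat\theta_i([g^{-1}\vert g])^{-(\pi(\omega)-1)/2}$ factor), and the reassociation $\psi_{\omega,g^{\pi(\omega)},\omega^{-1}}$, which by the definition of the composite $\psi$ in \eqref{eq:assCond} is a product/quotient of $\hat\theta_i$-values. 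The diagonal contribution to the trace is then exactly $\sum_{i:\rho_0(\omega)(i)=i=\rho_0(g)(i)}(\text{scalar}_i)$; rewriting ``$\rho_0(\omega)(i)=i$ and $\rho_0(g)(i)=i$'' as the stated index set $\{i:\rho_0(g)(i)=\rho_0(\omega)(i)\}$ restricted to the fixed rows, and matching the scalar with $\hat\theta_i([g^{-1}\vert g])^{-\frac{\pi(\omega)-1}{2}}\frac{\hat\theta_i([\omega\vert g^{\pi(\omega)}])}{\hat\theta_i([g\vert\omega])}$, gives the claim.

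There are essentially two alternative routes I would keep in mind, and I would pick whichever is cleaner in write-up. The first, as above, is the hands-on diagram chase. The second is to invoke Theorem \ref{thm:catCharLoopGrpd}: the Real $2$-character is the holonomy of the $\uptau_\pi^{\refl}(\hat\alpha)$-twisted vector bundle $\Tr(\rho)$ over $\Lambda_\pi^{\refl}B\hat{\mathsf G}$, and in the $2\mathsf{Vect}_k$ case this bundle is built cell-by-cell from the $\hat\theta_i$, so its holonomy around the loop $[\omega]g$ in $\Lambda_\pi^{\refl}B\hat{\mathsf G}$ is computable from the displayed formula for $\uptau_\pi^{\refl}(\hat\theta)$ in Section \ref{sec:twistLoopTran} applied componentwise; summing the fixed-point contributions then reproduces the stated expression, with the appearance of $\uptau_\pi^{\refl}(\hat\theta)_i([\omega]g)=\hat\theta_i([g^{-1}\vert g])^{\frac{\pi(\omega)-1}{2}}\frac{\hat\theta_i([\omega g^{\pi(\omega)}\omega^{-1}\vert\omega])}{\hat\theta_i([\omega\vert g^{\pi(\omega)}])}$ making the bookkeeping essentially automatic (note the sign discrepancy in the exponent will be absorbed when one carefully matches the two conventions — the formula in the Proposition uses $\hat\theta_i([\omega\vert g^{\pi(\omega)}])$ in the numerator, i.e. an inverted convention, which must be checked against \eqref{eq:charStriDiagNeg}). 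I expect the main obstacle to be precisely this sign-and-convention matching: keeping straight (a) which power $\pi(\omega)$ appears on which $\hat\theta_i$-argument, (b) the effect of the duality involution $(-)^\vee$ on the row index $i$ (it fixes the isomorphism class of a permutation $2$-matrix but acts on the trivialized lines, potentially inverting scalars), and (c) the orientation conventions in reading the string diagrams right-to-left and bottom-to-top. Once those are pinned down on a single carefully-drawn diagram for each of the cases $\pi(\omega)=1$ and $\pi(\omega)=-1$, the computation collapses to the stated sum.
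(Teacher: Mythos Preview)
Your primary approach is essentially the same as the paper's: identify $\Tr_\rho(g)$ with the span of basis vectors $e_i$ indexed by $\rho_0(g)$-fixed $i$, reduce the computation to the one-dimensional case, and read off the scalar $\beta_{g,\omega}$ directly from the string diagrams \eqref{eq:charStriDiagPos}--\eqref{eq:charStriDiagNeg}, then restrict to graded commuting pairs. The paper carries this out in two sentences (computing the scalar as $\hat\theta([g^{-1}\vert g])^{-\frac{\pi(\omega)-1}{2}}\hat\theta([\omega\vert g^{\pi(\omega)}])/\hat\theta([\omega g^{\pi(\omega)}\omega^{-1}\vert\omega])$ before imposing graded commutativity), and your more detailed bookkeeping of which $\psi$-factors contribute is exactly the right elaboration; your second route via Theorem \ref{thm:catCharLoopGrpd} and the transgression formula is not used in the paper but would also work, and indeed the paper remarks just after the proof that the result can be rewritten as $\sum_{x\in X^{g,\omega}}\uptau_\pi^{\refl}(\hat\theta_x)([\omega]g)^{-1}$.
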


\begin{proof}
A short calculation shows that the right hand side of the claimed formula is independent of the choice of normalized cocycle representative of $[\hat{\theta}]$. Fix such a choice $\hat{\theta}$. A $2$-morphism $\phi: 1_{[n]} \Rightarrow \rho_{\hat{\theta}}(g)$, $g \in \mathsf{G}$, is an $n\times n$ matrix which has non-zero entries only at those diagonals for which $\rho_{\hat{\theta}}(g)$ is non-zero. Using this observation, the computation of $\chi_{\rho_{\hat{\theta}}}$ reduces to the one dimensional case. In this case, direct inspection of the string diagrams \eqref{eq:charStriDiagPos} and \eqref{eq:charStriDiagNeg} shows that $\beta_{g, \omega}: k \rightarrow k$ is multiplication by
\[
\hat{\theta}([g^{-1} \vert g])^{-\frac{\pi(\omega)-1}{2}} \frac{\hat{\theta}([\omega \vert g^{\pi(\omega)}])}{\hat{\theta}([\omega g^{\pi(\omega)} \omega^{-1} \vert \omega])}.
\]
Upon restriction to graded commuting pairs, this gives the claimed formula.
\end{proof}

A linear variation of the example from Section \ref{sec:RealCatChar} can be used to give a geometric interpretation of Theorem \ref{thm:Real2VectClass}. Instead of a matrix model of $2\mathsf{Vect}_k$, we work with the model given by $k$-linear additive finitely semisimple categories. The object $[n] \in \Obj(2 \mathsf{Vect}_k)$ is modelled by the category of vector bundles $\mathsf{Vect}_k(X)$ over any set $X$ of cardinality $n$. Suppose that $\hat{\mathsf{G}}$ acts on $X$ and fix a $2$-cocycle $\hat{\theta} \in Z^2(X \git \hat{\mathsf{G}} , k^{\times}_{\pi})$ with coefficient system $k_{\pi}^{\times}$ twisted by the double cover $\pi: X \git \mathsf{G} \rightarrow X \git \hat{\mathsf{G}}$. Write $\hat{\theta}_x([\omega_2 \vert \omega_1])$ for the value of $\hat{\theta}$ on the $2$-chain $[\omega_2 \vert \omega_1]x$. A linear Real $2$-representation $\rho_{\hat{\theta}}$ is defined on $\mathsf{Vect}_k(X)$ as in the $\mathbb{F}_1$-linear case, where now the $2$-isomorphism $\psi_{\omega_2, \omega_1}$ multiplies the fibre over $x \in X$ by $\hat{\theta}_x([\omega_2 \vert \omega_1])$, pre-composing with $\ev$ if $\omega_1, \omega_2 \in \hat{\mathsf{G}} \backslash \mathsf{G}$. In this language, Theorem \ref{thm:Real2VectClass} becomes the statement that all linear Real $2$-representations of $\mathsf{G}$ in $2$-vector spaces arise in this way while Proposition \ref{prop:Real2VectChar} becomes the joint fixed point formula
\[
\chi_{\rho_{\hat{\theta}}}(g,\omega) = \sum_{x \in X^{g, \omega}} \uptau_{\pi}^{\refl}(\hat{\theta}_x)([\omega] g)^{-1}.
\]

To end this section we mention the twisted generalization of Theorem \ref{thm:Real2VectClass}. An $\hat{\alpha}$-twisted Real $2$-representation of $\mathsf{G}$ on $2\mathsf{Vect}_k$ determines an equivalence class of data consisting of
\begin{enumerate}[label=(\roman*)]
\item a group homomorphism $\rho_0: \hat{\mathsf{G}} \rightarrow \mathfrak{S}_n$,
\item a morphism $\rho_1: k^{\times}_{\pi} \rightarrow (k^{\times}_{\pi})^n_{\rho_0}$ of $\hat{\mathsf{G}}$-modules, and
\item a normalized $2$-cochain $\hat{\theta} \in C^2 (B \hat{\mathsf{G}}, (k^{\times}_{\pi})^n_{\rho_0})$
\end{enumerate}
such that the equality $d \hat{\theta} = \rho_1(\hat{\alpha})$ holds in $Z^3(B \hat{\mathsf{G}}, (k^{\times}_{\pi})^n_{\rho_0})$. This data, up to  equivalence as in Theorem \ref{thm:Real2VectClass}, classifies equivalence classes of twisted Real $2$-representations on $2 \mathsf{Vect}_k$. We omit the proof, which is a straightforward combination of those of \cite[Theorem 5.5]{elgueta2007} and Theorem \ref{thm:Real2VectClass}. Proposition \ref{prop:Real2VectChar} is unchanged.

\subsection{The antilinear theory}
\label{sec:antiLinearRealFGRep}

We explain a categorification of the antilinear theory of Real representations of a finite group from Section \ref{sec:RealFGRep}.

Let $k$ be a field which is a quadratic extension of a subfield $k_0$. Galois conjugation defines a strict $k_0$-linear involution $\overline{(-)} : \mathsf{Vect}_k \rightarrow \mathsf{Vect}_k$. Note that Section \ref{sec:RealFGRep} could have been formulated in terms of this involution, in much the same way that Section \ref{sec:linearRealFGRep} was formulated in terms of $(-)^{\vee}$.

Given a $k$-linear category $\mathcal{C}$, denote by $\overline{\mathcal{C}}$ the category with
\[
\Obj(\overline{\mathcal{C}}) = \Obj(\mathcal{C}), \qquad
\Hom_{\overline{\mathcal{C}}}(x,y) = \overline{\Hom_{\mathcal{C}}(x,y)}.
\]
The assignment $\mathcal{C} \mapsto \overline{\mathcal{C}}$ extends to a strict involutive $2$-functor $\overline{(-)} : \mathsf{Cat}_k \rightarrow \mathsf{Cat}_k$.

Let $\mathcal{V}$ be a $k$-linear bicategory. Define an involutive pseudofunctor $\overline{(-)}: \mathcal{V} \rightarrow \mathcal{V}$ by acting trivially on objects and by acting by conjugation on $1$-morphism categories. In particular, the action of $\overline{(-)}$ on $2$-morphisms is antilinear.

\begin{Def}
A linear Real $2$-representation of $\mathsf{G}$ on a $k$-linear $2$-category $\mathcal{V}$ (in the antilinear approach) consists of data $V \in \Obj(\mathcal{V})$, $\rho(\omega)$, $\psi_{\omega_2, \omega_1}$ and $\psi_e$ as in Section \ref{sec:basicDef}, with the constraint \eqref{eq:idenCond} unchanged but with the constraint \eqref{eq:assCond} replaced by the constraint
\begin{equation}
\label{eq:assCondReal}
\psi_{\omega_3 \omega_2, \omega_1} \circ  \left( \psi_{\omega_3, \omega_2} \circ \prescript{\pi(\omega_3 \omega_2)}{}{\rho(\omega_1)} \right) = \psi_{\omega_3, \omega_2 \omega_1} \circ \left( \rho(\omega_3) \circ \prescript{\pi(\omega_3)}{}{\psi}_{\omega_2, \omega_1} \right).
\end{equation}
Left superscripts now determine whether or not the $2$-functor $\overline{(-)}$ is applied.
\end{Def}

The above definition admits an obvious twisted generalization. All results of Section \ref{sec:RealProjReps}, and Section \ref{sec:indReal2Rep} below, continue to hold in the antilinear approach with essentially the same proofs, although Galois conjugation is used to define the coefficients of Real $2$-Schur multipliers. The key point to keep in mind is that while $\prescript{\pi(\omega_3)}{}{\psi}_{\omega_2, \omega_1}$ instead of $\prescript{\pi(\omega_3)}{}{\psi}^{\pi(\omega_3)}_{\omega_2, \omega_1}$ appears in equation \eqref{eq:assCondReal}, the pseudofunctor $\overline{(-)}: \mathcal{V} \rightarrow \mathcal{V}$ is antilinear on $2$-morphism spaces, as opposed to linear and direction reversing.

\section{Twisted Real induction}
\label{sec:indRealRep}

This section, which contains motivation and background material for that which follows, describes relevant aspects of the theory of twisted (Real) induction.

\subsection{Induction}
\label{sec:complexInd}

Let $\mathsf{G}$ be a finite group. Fix a cocycle $\theta \in Z^2(B \mathsf{G}, k^{\times})$. For later use, note that a $\theta$-twisted representation $\rho$ of $\mathsf{G}$ satisfies
\begin{equation}
\label{eq:projInverse}
\rho(g)^{-1} = \theta([g \vert g^{-1}])^{-1} \rho(g^{-1}), \qquad g \in \mathsf{G}.
\end{equation}
The character of $\rho$ satisfies (\textit{cf.} equation \eqref{eq:real1Char})
\begin{equation}
\label{eq:projCharInvar}
\chi_{\rho}(h g h^{-1}) = \uptau(\theta)([h]g) \chi_{\rho}(g), \qquad g,h \in \mathsf{G}.
\end{equation}

Let $\mathsf{H} \leq \mathsf{G}$ be a subgroup. The induction functor $\Ind_{\mathsf{H}}^{\mathsf{G}} : \mathsf{Rep}_k^{\theta_{\vert \mathsf{H}}} (\mathsf{H}) \rightarrow \mathsf{Rep}_k^{\theta} (\mathsf{G})$ can be defined as follows. Fix a complete set of representatives $\{r_1, \dots, r_p\}$ of $\mathsf{G} \slash \mathsf{H}$. Up to suitable equivalence, all constructions in this section, and the analogous constructions in those which follow, are independent of the choice of coset representatives. Given a $\theta_{\vert \mathsf{H}}$-twisted representation $\rho$ of $\mathsf{H}$ on $V$, set 
\[
\Ind_{\mathsf{H}}^{\mathsf{G}}(\rho)= \bigoplus_{i=1}^p r_i \cdot V.
\]
Here $r_i \cdot V$ is an isomorphic copy of $V$. For each $g \in \mathsf{G}$ and $i,j \in \{1, \dots, p\}$, set
\[
\Ind_{\mathsf{H}}^{\mathsf{G}}(\rho)(g)_{r_j r_i}=
\begin{cases}
\frac{\theta([g \vert r_i])}{\theta([r_j \vert r_j^{-1} g r_i])}  \rho(r_j^{-1} g r_i) & \mbox{if } r_j^{-1} g r_i \in \mathsf{H},\\
0 & \mbox{else}
\end{cases}
\]
where we view $\Ind_{\mathsf{H}}^{\mathsf{G}}(\rho)(g)$ as a $p \times p$ block matrix.

\begin{Prop}
\label{prop:indTwistChar}
The character of $\Ind_{\mathsf{H}}^{\mathsf{G}}(\rho)$ is
\[
\chi_{\Ind_{\mathsf{H}}^{\mathsf{G}}(\rho)}(g) = \frac{1}{\vert \mathsf{H} \vert} \sum_{ \substack{ r \in \mathsf{G} \\ r g r^{-1} \in \mathsf{H}} }
\uptau(\theta)([r] g)^{-1} \chi_{\rho}(r g r^{-1}).
\]
\end{Prop}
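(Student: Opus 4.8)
The plan is to compute the character of $\Ind_{\mathsf{H}}^{\mathsf{G}}(\rho)$ directly from the block-matrix description, then reorganize the resulting sum into the claimed form. First I would observe that, by definition of the categorical trace (or rather, the ordinary trace of the induced representation), the character is the sum of the diagonal blocks:
\[
\chi_{\Ind_{\mathsf{H}}^{\mathsf{G}}(\rho)}(g) = \sum_{i=1}^p \tr\bigl(\Ind_{\mathsf{H}}^{\mathsf{G}}(\rho)(g)_{r_i r_i}\bigr) = \sum_{\substack{i \in \{1,\dots,p\} \\ r_i^{-1} g r_i \in \mathsf{H}}} \frac{\theta([g \vert r_i])}{\theta([r_i \vert r_i^{-1} g r_i])} \, \chi_{\rho}(r_i^{-1} g r_i).
\]
So the first step is purely to read off this formula from the definition of $\Ind_{\mathsf{H}}^{\mathsf{G}}(\rho)(g)$.

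Next I would pass from the sum over coset representatives $r_i$ to a sum over all $r \in \mathsf{G}$ with $r g r^{-1} \in \mathsf{H}$, introducing the factor $\tfrac{1}{|\mathsf{H}|}$. The key point is that if $r = r_i h$ with $h \in \mathsf{H}$, then $r^{-1} g r = h^{-1}(r_i^{-1} g r_i) h$, so $r_i^{-1} g r_i \in \mathsf{H}$ iff $r^{-1} g r \in \mathsf{H}$; moreover, using the conjugation equivariance of twisted characters of $\mathsf{H}$ (equation \eqref{eq:projCharInvar} applied to $\theta_{\vert \mathsf{H}}$) together with the cocycle identity for $\theta$, one checks that the summand $\tfrac{\theta([g \vert r])}{\theta([r \vert r^{-1} g r])}\chi_\rho(r^{-1} g r)$ is independent of the choice of $r$ within a given coset $r_i \mathsf{H}$. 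This is the computational heart of the argument: it amounts to verifying that for $h \in \mathsf{H}$,
\[
\frac{\theta([g \vert r_i h])}{\theta([r_i h \vert h^{-1} r_i^{-1} g r_i h])} \chi_\rho(h^{-1} r_i^{-1} g r_i h) = \frac{\theta([g \vert r_i])}{\theta([r_i \vert r_i^{-1} g r_i])} \chi_\rho(r_i^{-1} g r_i),
\]
which follows by repeated application of the $2$-cocycle condition on $\theta$ (to break up $\theta([g \vert r_i h])$ and $\theta([r_i h \vert h^{-1} r_i^{-1} g r_i h])$) and equation \eqref{eq:projCharInvar}. Averaging over the $|\mathsf{H}|$ elements of each coset then replaces $\sum_i$ by $\tfrac{1}{|\mathsf{H}|}\sum_{r}$.

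Finally I would identify the prefactor $\tfrac{\theta([g \vert r])}{\theta([r \vert r^{-1} g r])}$ with $\uptau(\theta)([r] g)^{-1}$. By the formula for Willerton's loop transgression recalled in Section \ref{sec:twistLoopTran} (the restriction of $\uptau_\pi^{\refl}$ to the oriented case, or directly the $\pi(\omega)=1$ specialization of the displayed formula for $\uptau^{\refl}_\pi(\hat\theta)([\omega]\gamma)$), one has $\uptau(\theta)([r]g) = \tfrac{\theta([r g r^{-1} \vert r])}{\theta([r \vert g])}$; a short manipulation with the cocycle identity shows this equals $\tfrac{\theta([r \vert r^{-1} g r])}{\theta([g \vert r])}$ after conjugating, i.e. its inverse is exactly the prefactor above — and one must be slightly careful here to conjugate $g$ correctly so that the argument of $\chi_\rho$ on the right is $rgr^{-1}$ rather than $r^{-1}gr$ (replace $r$ by $r^{-1}$ in the summation index, which is a bijection of the index set). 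The main obstacle is the cocycle bookkeeping in the second step; everything else is formal. I would present the cocycle verification as a single displayed identity and indicate that it is checked by direct expansion, in the style of the proof of Lemma \ref{lem:dualProjRep}.
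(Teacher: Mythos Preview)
Your proof is correct and follows essentially the same approach as the paper's: compute the trace from the block-matrix description, average over $\mathsf{H}$ using the $2$-cocycle condition together with equation~\eqref{eq:projCharInvar}, and identify the resulting prefactor with $\uptau(\theta)([r]g)^{-1}$. The paper is even terser---it simply remarks that after a short cocycle calculation the statement becomes \cite[Proposition 4.1]{karpilovsky1985}---so your argument is a spelled-out version of the same standard computation.
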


\begin{proof}
After a short calculation using the $2$-cocycle condition, the claimed equality becomes \cite[Proposition 4.1]{karpilovsky1985}.
\end{proof}

Proposition \ref{prop:indTwistChar} admits the following generalization to finite groupoids.

\begin{Prop}
\label{prop:twistedGrpdCharInd}
Let $f: \mathfrak{H} \rightarrow \mathfrak{G}$ be a faithful functor of finite groupoids. Fix $\theta \in Z^2(\mathfrak{G}, k^{\times})$. For each $\theta_{\vert \mathfrak{H}}$-twisted representation $\rho$ of $\mathfrak{H}$ and loop $(x,\gamma)$ in $\mathfrak{G}$, the equality
\[
\chi_{\Ind_{\mathfrak{H}}^{\mathfrak{G}}(\rho)}(x \xrightarrow[]{\gamma} x) = \sum_{y \in \Obj(\mathfrak{H})} \frac{1}{\vert \mathcal{O}_{\mathfrak{H}}(y) \vert \vert \Aut_{\mathfrak{H}}(y)\vert} \sum_{\substack{(x \xrightarrow[]{s} y) \in \mathfrak{G} \\ s \gamma s^{-1} \in \Aut_{\mathfrak{H}}(y)}} \uptau(\theta)([s]\gamma)^{-1} \chi_{\rho}(y \xrightarrow[]{s \gamma s^{-1}} y)
\]
holds, where $\mathcal{O}_{\mathfrak{H}}(y)$ denotes the orbit of $y$ in $\mathfrak{H}$.
\end{Prop}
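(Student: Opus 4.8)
The plan is to reduce the groupoid statement of Proposition \ref{prop:twistedGrpdCharInd} to the group-level formula of Proposition \ref{prop:indTwistChar} by decomposing the groupoid $\mathfrak{H}$ into its connected components and recalling the explicit form of groupoid induction along a faithful functor.

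First I would fix notation for induction of twisted representations along a faithful functor $f:\mathfrak{H}\to\mathfrak{G}$ of finite groupoids. Up to equivalence we may assume $f$ is injective on objects (replace $\mathfrak{H}$ by an equivalent groupoid; the character formula is invariant under equivalence and both sides of the claimed identity are manifestly so, since the right-hand side is weighted by $\vert\mathcal{O}_{\mathfrak{H}}(y)\vert^{-1}\vert\Aut_{\mathfrak{H}}(y)\vert^{-1}$). Then $\Ind_{\mathfrak{H}}^{\mathfrak{G}}(\rho)$ is the twisted representation whose value at $z\in\Obj(\mathfrak{G})$ is $\bigoplus \rho(y)$ over chosen coset representatives $s\colon z\to y$ with $y\in\Obj(\mathfrak{H})$, with structure maps given by the same block-matrix formula as in Section \ref{sec:complexInd}, the twisting cocycle $\theta$ evaluated on the relevant $2$-simplices of $\mathfrak{G}$. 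Tracing through this definition, $\chi_{\Ind_{\mathfrak{H}}^{\mathfrak{G}}(\rho)}(x\xrightarrow{\gamma}x)$ is the sum over those coset representatives $s\colon x\to y$ for which $s\gamma s^{-1}\in\Aut_{\mathfrak{H}}(y)$, of a cocycle ratio times $\chi_\rho(y\xrightarrow{s\gamma s^{-1}}y)$; the cocycle ratio is exactly the groupoid analogue of $\frac{\theta([\gamma\vert r_i])}{\theta([s\vert s^{-1}\gamma r_i])}$ appearing there, which after a $2$-cocycle manipulation becomes $\uptau(\theta)([s]\gamma)^{-1}$ as in the proof of Proposition \ref{prop:indTwistChar}.

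Next I would pass from a choice of coset representatives to a sum over all morphisms $s\colon x\to y$ in $\mathfrak{G}$. The set of morphisms $x\to y$ in $\mathfrak{G}$ with $y$ running over a fixed $\mathfrak{H}$-orbit $\mathcal{O}$ is acted on freely and transitively (from the right, via precomposition with $f$ of morphisms of $\mathfrak{H}$) by $\Aut_{\mathfrak{H}}(y_0)$ together with translation within $\mathcal{O}$; concretely, the chosen representatives biject with $\coprod_{y\in\mathcal{O}}\Hom_{\mathfrak{G}}(x,y)$ modulo the action of $\Aut_{\mathfrak{H}}(y)$, and this action has order $\vert\mathcal{O}_{\mathfrak{H}}(y)\vert\cdot\vert\Aut_{\mathfrak{H}}(y)\vert$ for each $y$ in the orbit. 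Replacing the sum over representatives by $\frac{1}{\vert\mathcal{O}_{\mathfrak{H}}(y)\vert\vert\Aut_{\mathfrak{H}}(y)\vert}$ times the sum over all $s\in\Hom_{\mathfrak{G}}(x,y)$ requires checking that the summand $\uptau(\theta)([s]\gamma)^{-1}\chi_\rho(y\xrightarrow{s\gamma s^{-1}}y)$ is invariant under replacing $s$ by $s'=hs$ with $h\in\Aut_{\mathfrak{H}}(y)$ (more precisely $f(h)s$); this is precisely the combination of the groupoid conjugation invariance of $\chi_\rho$, equation \eqref{eq:projCharInvar} in its groupoid form, and the cocycle identity $\uptau(\theta)([s'] \gamma)=\uptau(\theta)([h] s\gamma s^{-1})\uptau(\theta)([s]\gamma)$, which is equation \eqref{eq:twistTransPartialCocycle} restricted to $\mathfrak{H}$ composed with $\mathfrak{G}$. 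Summing over all orbits $\mathcal{O}$, hence over all $y\in\Obj(\mathfrak{H})$, yields exactly the stated formula.

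The main obstacle I anticipate is purely bookkeeping: keeping the $2$-cocycle prefactors consistent when one (i) unwinds the block-matrix definition of groupoid induction, (ii) performs the $2$-cocycle rewriting that turns the raw ratio of $\theta$-values into $\uptau(\theta)([s]\gamma)^{-1}$, and (iii) checks the $\Aut_{\mathfrak{H}}(y)$-invariance needed to average. Each of these is a routine but error-prone computation of the type carried out in \cite[\S4]{karpilovsky1985} and \cite[\S2.3]{willerton2008}; the conceptual content is entirely contained in the observation that groupoid induction is a componentwise ordinary induction together with the orbit–stabilizer count, so once the group case (Proposition \ref{prop:indTwistChar}) and the transgression cocycle identity \eqref{eq:twistTransPartialCocycle} are in hand there is no genuine difficulty. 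I would therefore present the proof as: reduce to $f$ injective on objects by equivalence-invariance; identify $\Ind_{\mathfrak{H}}^{\mathfrak{G}}$ componentwise with ordinary twisted induction and invoke Proposition \ref{prop:indTwistChar}; then convert the coset sum to a full morphism sum using orbit–stabilizer and the invariance of the summand under \eqref{eq:projCharInvar} and \eqref{eq:twistTransPartialCocycle}.
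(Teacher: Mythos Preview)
Your proposal is correct and follows essentially the same route as the paper: reduce to the group case via Proposition \ref{prop:indTwistChar}, then use the conjugation equivariance \eqref{eq:projCharInvar} and the transgression cocycle identity \eqref{eq:twistTransPartialCocycle} to pass from a choice of representatives to the full sum over morphisms. The only cosmetic difference is that the paper first picks one object $y_j$ per $\mathfrak{H}$-isomorphism class mapping to $\mathcal{O}_{\mathfrak{G}}(x)$ and a single $s_j\colon x\to f(y_j)$, writes $\chi_{\Ind_{\mathfrak{H}}^{\mathfrak{G}}(\rho)}(\gamma)=\sum_j \chi_{\Ind_{B\Aut_{\mathfrak{H}}(y_j)}^{\mathfrak{G}}(\rho)}(\gamma)$, and then invokes Proposition \ref{prop:indTwistChar} componentwise, whereas you unwind the block-matrix induction directly and average over the $\Aut_{\mathfrak{H}}(y)$-action; the bookkeeping is identical.
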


\begin{proof}
The proof is a twisted generalization of \cite[Proposition 6.11]{ganter2008}. Let $\{y_1, \dots, y_n\}$ be a complete set of representatives for the $\mathfrak{H}$-isomorphism classes of objects which map to $\mathcal{O}_{\mathfrak{G}}(x)$ via $f$. For each $j \in \{1, \dots, n\}$, fix a morphism $x \xrightarrow[]{s_j} f(y_j)$. We have
\begin{eqnarray*}
\chi_{\Ind_{\mathfrak{H}}^{\mathfrak{G}}(\rho)}(x \xrightarrow[]{\gamma} x) &=& \sum_{j=1}^n \chi_{\Ind_{B\Aut_{\mathfrak{H}}(y_j)}^{\mathfrak{G}}(\rho)}(x \xrightarrow[]{\gamma} x) \\
&\overset{\scriptsize \mbox{Eq. } \eqref{eq:projCharInvar}}{=}& \sum_{j=1}^n \uptau(\theta)([s_j] \gamma)^{-1} \chi_{\Ind_{B\Aut_{\mathfrak{H}}(y_j)}^{\mathfrak{G}}(\rho)} \big( f(y_j) \xrightarrow[]{s_j \gamma s_j^{-1}} f(y_j) \big) \\
&\overset{\scriptsize \mbox{Prop. } \ref{prop:indTwistChar}}{=}&
\sum_{j=1}^n \frac{1}{\vert \Aut_{\mathfrak{H}}(y_j) \vert} \times \\
&& \sum_{\substack{ x \xrightarrow[]{s} y_j \\ s \gamma s^{-1} \in \Aut_{\mathfrak{H}}(y_j)}} \uptau(\theta)([s_j] \gamma)^{-1} \uptau(\theta)([s s_j^{-1}] s_j \gamma s_j^{-1})^{-1} \chi_{\rho}(s \gamma s^{-1}) \\
&\overset{\scriptsize \mbox{Eq. } \eqref{eq:twistTransPartialCocycle}}{=}& 
\sum_{j=1}^n \frac{1}{\vert \Aut_{\mathfrak{H}}(y_j) \vert} \sum_{\substack{ x \xrightarrow[]{s} y_j \\ s \gamma s^{-1} \in \Aut_{\mathfrak{H}}(y_j)}} \uptau(\theta)([s] \gamma )^{-1} \chi_{\rho}(s \gamma s^{-1}),
\end{eqnarray*}
which is easily seen to equal the desired expression. Note that Proposition \ref{prop:indTwistChar} applies because of the assumption that $f$ is faithful.
\end{proof}

\subsection{Real induction}
\label{sec:RealInd}

Let $\hat{\mathsf{H}} \leq \hat{\mathsf{G}}$ be a (non-trivially graded) $\mathbb{Z}_2$-graded subgroup. Fix $\hat{\theta} \in Z^2(B \hat{\mathsf{G}} , k^{\times}_{\pi})$. In this section we interpret Real representations as generalized symmetric representations, as in Section \ref{sec:linearRealFGRep}. The Real induction functor $\RInd^{\hat{\mathsf{G}}}_{\hat{\mathsf{H}}} : \mathsf{RRep}_k^{\hat{\theta}_{\vert \hat{\mathsf{H}}}}(\mathsf{H}) \rightarrow \mathsf{RRep}_k^{\hat{\theta}}(\mathsf{G})$ is defined as follows. Fix a complete set $\mathcal{S}=\{\sigma_1, \dots, \sigma_q\}$ of representatives of $\hat{\mathsf{G}} \slash \hat{\mathsf{H}}$. Let $\rho$ be a $\hat{\theta}_{\vert \hat{\mathsf{H}}}$-twisted Real representation of $\mathsf{H}$ on $V$. As a vector space, put 
\[
\RInd^{\hat{\mathsf{G}}}_{\hat{\mathsf{H}}}(\rho) = 
\bigoplus_{i=1}^q \sigma_i \cdot {^{\pi(\sigma_i)}}V.
\]
For each $\omega \in \hat{\mathsf{G}}$ and $i,j \in \{1, \dots, q\}$, set
\[
\RInd_{\hat{\mathsf{H}}}^{\hat{\mathsf{G}}}(\rho)(\omega)_{\sigma_j \sigma_i}=
\begin{cases}
\frac{\hat{\theta}([\omega \vert \sigma_i])}{\hat{\theta}([\sigma_j \vert \sigma_j^{-1} \omega \sigma_i])} \cdot \prescript{\pi(\sigma_j)}{}{\rho}(\sigma_j^{-1} \omega \sigma_i)^{\pi(\sigma_j)} & \mbox{if } \sigma_j^{-1} \omega \sigma_i \in \hat{\mathsf{H}},\\
0 & \mbox{else}.
\end{cases}
\]
Given a morphism $\phi: \rho_1 \rightarrow \rho_2$ of twisted Real representations, the value of $\RInd_{\hat{\mathsf{H}}}^{\hat{\mathsf{G}}}(\phi)$ on the $i$\textsuperscript{th} summand is $\phi$ if $\pi(\sigma_i)=1$ and is $\rho_2(\sigma_i)^{-1} \circ \phi \circ \rho_1(\sigma_i)$ if $\pi(\sigma_i)=-1$.

Observe that there is a natural isomorphism
\[
\Res_{\mathsf{G}}^{\hat{\mathsf{G}}} \circ \RInd_{\hat{\mathsf{H}}}^{\hat{\mathsf{G}}} \simeq \Ind_{\mathsf{H}}^{\mathsf{G}} \circ \Res_{\mathsf{H}}^{\hat{\mathsf{H}}}.
\]
This isomorphism can be used to compute $\chi_{\RInd_{\hat{\mathsf{H}}}^{\hat{\mathsf{G}}}(\rho)}$. However, for comparison with the case of Real $2$-representations, it is instructive to compute $\chi_{\RInd_{\hat{\mathsf{H}}}^{\hat{\mathsf{G}}}(\rho)}$ directly.

\begin{Prop}
\label{prop:RealIndChar}
The Real character of $\RInd_{\hat{\mathsf{H}}}^{\hat{\mathsf{G}}}(\rho)$ is
\[
\chi_{\RInd_{\hat{\mathsf{H}}}^{\hat{\mathsf{G}}}(\rho)}(g) = \frac{1}{2 \vert \mathsf{H} \vert} \sum_{ \substack{ \omega \in \hat{\mathsf{G}} \\ \omega g^{\pi(\omega)} \omega^{-1} \in \mathsf{H}} }
\uptau_{\pi}^{\refl}(\hat{\theta})([\omega] g)^{-1} \chi_{\rho}(\omega g^{\pi(\omega)} \omega^{-1}).
\]
\end{Prop}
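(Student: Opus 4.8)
The plan is to compute the trace of $\RInd_{\hat{\mathsf{H}}}^{\hat{\mathsf{G}}}(\rho)(g)$ directly from the block-matrix formula, taking only the diagonal blocks into account, and then to convert the resulting sum over coset representatives into a sum over all of $\hat{\mathsf{G}}$. First I would fix the complete set $\mathcal{S} = \{\sigma_1, \dots, \sigma_q\}$ of representatives of $\hat{\mathsf{G}} \slash \hat{\mathsf{H}}$ and observe that, since $g \in \mathsf{G}$ has degree $+1$, the $i$\textsuperscript{th} diagonal block of $\RInd_{\hat{\mathsf{H}}}^{\hat{\mathsf{G}}}(\rho)(g)$ is nonzero precisely when $\sigma_i^{-1} g \sigma_i \in \hat{\mathsf{H}}$, in which case it equals $\frac{\hat{\theta}([g \vert \sigma_i])}{\hat{\theta}([\sigma_i \vert \sigma_i^{-1} g \sigma_i])} \cdot \prescript{\pi(\sigma_i)}{}{\rho}(\sigma_i^{-1} g \sigma_i)^{\pi(\sigma_i)}$. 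Here one must note that $\sigma_i^{-1} g \sigma_i$ automatically lies in $\mathsf{H}$ (not merely $\hat{\mathsf{H}}$), since conjugation preserves the degree and $g$ has degree $1$; thus $\prescript{\pi(\sigma_i)}{}{\rho}$ is applied to an ungraded element. Taking traces and using that $\tr(\rho(h)^{\vee}) = \tr(\rho(h))$ (so the left superscript $\pi(\sigma_i)$ does not affect the trace, whether in the linear approach via $(-)^\vee$ or the antilinear approach via Galois conjugation followed by the observation that characters of the relevant pieces are handled as in Section \ref{sec:RealFGRep}), we get
\[
\chi_{\RInd_{\hat{\mathsf{H}}}^{\hat{\mathsf{G}}}(\rho)}(g) = \sum_{\substack{i \in \{1,\dots,q\} \\ \sigma_i^{-1} g \sigma_i \in \mathsf{H}}} \frac{\hat{\theta}([g \vert \sigma_i])}{\hat{\theta}([\sigma_i \vert \sigma_i^{-1} g \sigma_i])} \cdot \chi_{\rho}(\sigma_i^{-1} g \sigma_i).
\]

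Next I would replace $\sigma_i^{-1}$ by a general element $\omega \in \hat{\mathsf{G}}$. Writing $\omega = h^{-1}\sigma_i^{-1}$ for a unique $i$ and a unique $h \in \hat{\mathsf{H}}$, one has $\omega g^{\pi(\omega)}\omega^{-1} = h^{-1}(\sigma_i^{-1}g\sigma_i)^{\pi(\omega)}h$; when $\sigma_i^{-1}g\sigma_i \in \mathsf{H}$ this is a conjugate in $\hat{\mathsf{H}}$ of $(\sigma_i^{-1}g\sigma_i)^{\pm 1}$, hence lies in $\mathsf{H}$, and by Real conjugation equivariance of the Real character \eqref{eq:real1Char} for $\hat{\mathsf{H}}$,
\[
\chi_{\rho}(\omega g^{\pi(\omega)}\omega^{-1}) = \uptau_{\pi}^{\refl}(\hat{\theta}_{\vert \hat{\mathsf{H}}})([h^{-1}](\sigma_i^{-1}g\sigma_i)^{\pi(\omega)}) \cdot \chi_{\rho}((\sigma_i^{-1}g\sigma_i)^{\pi(\omega)}).
\]
Summing over all such $\omega$ introduces a factor $\vert \hat{\mathsf{H}}\vert = 2\vert \mathsf{H}\vert$. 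The cochain bookkeeping is then organized exactly as in the proof of Proposition \ref{prop:twistedGrpdCharInd}: one uses the partial-cocycle identity \eqref{eq:twistTransPartialCocycle} for $\uptau_{\pi}^{\refl}(\hat{\theta})$ together with the key identity \eqref{eqn:keyIdentity} to collapse the product of transgression factors and the explicit $\hat{\theta}$-ratio $\frac{\hat{\theta}([g\vert\sigma_i])}{\hat{\theta}([\sigma_i\vert\sigma_i^{-1}g\sigma_i])}$ into the single factor $\uptau_{\pi}^{\refl}(\hat{\theta})([\omega]g)^{-1}$, yielding the stated formula.

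The main obstacle I anticipate is the cocycle/cochain manipulation in the last step: one must carefully track how the explicit formula for $\uptau_{\pi}^{\refl}(\hat{\theta})([\omega]g)$ (which includes the sign-dependent term $\hat{\theta}([g^{-1}\vert g])^{\frac{\pi(\omega)-1}{2}}$) interacts with the transgression factor arising from conjugating by $h \in \hat{\mathsf{H}}$, and check that the extra factors produced when $\pi(\omega)=-1$ — in particular those coming from $g$ versus $g^{-1}$ — assemble correctly. This is the Real analogue of the identity invoked in the proof of Proposition \ref{prop:indTwistChar}, and while purely computational, it is where the twisting by $\pi$ genuinely enters; alternatively, one can sidestep part of it by invoking the natural isomorphism $\Res_{\mathsf{G}}^{\hat{\mathsf{G}}} \circ \RInd_{\hat{\mathsf{H}}}^{\hat{\mathsf{G}}} \simeq \Ind_{\mathsf{H}}^{\mathsf{G}} \circ \Res_{\mathsf{H}}^{\hat{\mathsf{H}}}$ noted above together with Proposition \ref{prop:indTwistChar} applied to $\mathsf{H} \leq \mathsf{G}$, and then symmetrizing over the two cosets of $\mathsf{H}$ in $\hat{\mathsf{H}}$ and over $\hat{\mathsf{G}}\slash\mathsf{G}$ to recover the factor $\frac{1}{2\vert\mathsf{H}\vert}$ and the sum over $\hat{\mathsf{G}}$ — but the direct computation is the more instructive route and the one I would present.
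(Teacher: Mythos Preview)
Your overall strategy --- take the trace of the block matrix, pass from coset representatives to a sum over all of $\hat{\mathsf{G}}$, then collapse the cocycle factors using \eqref{eq:twistTransPartialCocycle} and \eqref{eqn:keyIdentity} --- is exactly the route the paper takes. There is, however, a genuine error in your first displayed formula. The diagonal block at $\sigma_i$ is $\prescript{\pi(\sigma_i)}{}{\rho}(\sigma_i^{-1}g\sigma_i)^{\pi(\sigma_i)}$, and the right superscript $\pi(\sigma_i)$ is an honest inversion, not merely part of the dualization: when $\pi(\sigma_i)=-1$ the operator acting on $V^{\vee}$ is $\bigl(\rho(\sigma_i^{-1}g\sigma_i)^{-1}\bigr)^{\vee}$, whose trace is $\tr_V\bigl(\rho(\sigma_i^{-1}g\sigma_i)^{-1}\bigr)$, not $\chi_{\rho}(\sigma_i^{-1}g\sigma_i)$. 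You have correctly observed that the dual does not affect the trace, but you have silently dropped the inversion.

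This matters downstream. The paper's next move is to invoke \eqref{eq:projInverse} to rewrite $\tr_V\bigl(\rho(h)^{-1}\bigr) = \hat{\theta}([h\vert h^{-1}])^{-1}\chi_{\rho}(h^{-1})$, which produces both the extra factor $\hat{\theta}([\sigma g\sigma^{-1}\vert \sigma g^{-1}\sigma^{-1}])^{\frac{\pi(\sigma)-1}{2}}$ and the correct argument $\sigma g^{\pi(\sigma)}\sigma^{-1}$ of $\chi_{\rho}$; these are precisely what allow the final cocycle manipulation (via \eqref{eqn:keyIdentity}) to collapse to $\uptau_{\pi}^{\refl}(\hat{\theta})([\sigma]g)^{-1}$. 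Without the inversion your ``$g$ versus $g^{-1}$'' bookkeeping in the last paragraph cannot be made to balance --- the sign-dependent piece $\hat{\theta}([g^{-1}\vert g])^{\frac{\pi(\omega)-1}{2}}$ of the transgression has nothing to cancel against. Once you restore the right superscript and apply \eqref{eq:projInverse}, your argument becomes the paper's.
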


\begin{proof}
For each $g \in \mathsf{G}$, we compute
\begin{eqnarray*}
\chi_{\RInd_{\hat{\mathsf{H}}}^{\hat{\mathsf{G}}}(\rho)}(g) 
&=&
\sum_{ \substack{ i \in \{1, \dots, q\} \\ \sigma_i^{-1} g \sigma_i \in \mathsf{H}} }
\frac{\hat{\theta}([g \vert \sigma_i])}{\hat{\theta}([\sigma_i \vert \sigma_i^{-1} g \sigma_i])} \tr_{\prescript{\pi(\sigma_i)}{}{V}} \left( \prescript{\pi(\sigma_i)}{}{\rho(\sigma_i^{-1} g \sigma_i)}^{\pi(\sigma_i)} \right) \\
&=&
\frac{1}{2 \vert \mathsf{H} \vert}
\sum_{ \substack{\sigma \in \hat{\mathsf{G}} \\ \sigma g^{\pi(\sigma)} \sigma^{-1} \in \mathsf{H}} }
\frac{\hat{\theta}([g \vert \sigma^{-1}])}{\hat{\theta}([\sigma^{-1} \vert \sigma g \sigma^{-1}])} \tr_V \left( \rho(\sigma^{-1} g \sigma)^{\pi(\sigma)} \right).
\end{eqnarray*}
After using equation \eqref{eq:projInverse}, this is seen to equal
\[\frac{1}{2 \vert \mathsf{H} \vert}
\sum_{ \substack{\sigma \in \hat{\mathsf{G}} \\ \sigma g^{\pi(\sigma)} \sigma^{-1} \in \mathsf{H}} }
\frac{\hat{\theta}([g \vert \sigma^{-1}])}{\hat{\theta}([\sigma^{-1} \vert \sigma g \sigma^{-1}])} \hat{\theta}([\sigma g \sigma^{-1} \vert \sigma g^{-1} \sigma^{-1}])^{\frac{\pi(\sigma)-1}{2}} \chi_{\rho}(\sigma g^{\pi(\sigma)} \sigma^{-1}).
\]
A short calculation using equation \eqref{eqn:keyIdentity} now completes the proof.
\end{proof}

\subsection{Hyperbolic induction}
\label{sec:hypInd}

A second form of Real induction, different from that of Section \ref{sec:RealInd}, is hyperbolic induction $\HInd^{\hat{\mathsf{G}}}_{\mathsf{G}} : \mathsf{Rep}_k^{\theta}(\mathsf{G}) \rightarrow \mathsf{RRep}_k^{\hat{\theta}}(\mathsf{G})$. This is simply the hyperbolic functor from Grothendieck--Witt theory. It admits the following explicit description. Fix an element $\varsigma \in \hat{\mathsf{G}} \backslash \mathsf{G}$. Given a $\theta$-twisted representation $\rho$ of $\mathsf{G}$ on $V$, the underlying vector space of $\HInd^{\hat{\mathsf{G}}}_{\mathsf{G}}(\rho)$ is $V \oplus V^{\vee}$. By Lemma \ref{lem:dualProjRep} and Proposition \ref{prop:symmGenSymmEquiv}, the $\hat{\mathsf{G}}$-action is given by
\[
\HInd_{\mathsf{G}}^{\hat{\mathsf{G}}} (\rho)(g)= 
\left(
\begin{array}{cc}
\rho(g) & 0 \\
0& \uptau_{\pi}^{\refl}(\hat{\theta})([\varsigma^{-1}] g)^{-1} \rho(\varsigma^{-1} g^{-1} \varsigma)^{\vee}
\end{array}
\right), \qquad g \in \mathsf{G}
\]
and
\[
\HInd_{\mathsf{G}}^{\hat{\mathsf{G}}} (\rho)(\omega)= 
\left(
\begin{array}{cc}
0 & \frac{\hat{\theta}([\omega \vert \varsigma])}{\hat{\theta}([\varsigma^{-1} \vert \varsigma])} \rho( \omega \varsigma) \\
\frac{\hat{\theta}([\omega \vert \omega^{-1}]) \hat{\theta}([\omega^{-1} \vert \varsigma])}{\hat{\theta}([\varsigma^{-1} \vert \varsigma])} \rho( \omega^{-1} \varsigma)^{\vee} & 0
\end{array}
\right), \qquad \omega \in \hat{\mathsf{G}} \backslash \mathsf{G}.
\]

More generally, given a subgroup $\mathsf{H} \leq \mathsf{G}$, the functor $\HInd^{\hat{\mathsf{G}}}_{\mathsf{H}} : \mathsf{Rep}_k^{\theta_{\vert \mathsf{H}}}(\mathsf{H}) \rightarrow \mathsf{RRep}_k^{\hat{\theta}}(\mathsf{G})$ is defined to be the composition $\HInd^{\hat{\mathsf{G}}}_{\mathsf{G}} \circ \Ind^{\mathsf{G}}_{\mathsf{H}}$.

\begin{Prop}
\label{prop:RealHIndChar}
The Real character of $\HInd^{\hat{\mathsf{G}}}_{\mathsf{H}}(\rho)$ is
\[
\chi_{\HInd^{\hat{\mathsf{G}}}_{\mathsf{H}}(\rho)}(g) = \frac{1}{\vert \mathsf{H} \vert} \sum_{\substack{ \omega \in \hat{\mathsf{G}} \\ \omega g^{\pi(\omega)} \omega^{-1} \in \mathsf{H} } }
\uptau_{\pi}^{\refl}(\hat{\theta})([\omega] g)^{-1} \chi_{\rho}(\omega g^{\pi(\omega)} \omega^{-1}).
\]
\end{Prop}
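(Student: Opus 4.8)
The plan is to reduce the computation of $\chi_{\HInd^{\hat{\mathsf{G}}}_{\mathsf{H}}(\rho)}$ to the two already-established character formulas, namely Proposition \ref{prop:indTwistChar} (for ordinary twisted induction $\Ind^{\mathsf{G}}_{\mathsf{H}}$) and Proposition \ref{prop:RealHIndChar} in the special case $\mathsf{H} = \mathsf{G}$, which is exactly the hyperbolic functor $\HInd^{\hat{\mathsf{G}}}_{\mathsf{G}}$. Since by definition $\HInd^{\hat{\mathsf{G}}}_{\mathsf{H}} = \HInd^{\hat{\mathsf{G}}}_{\mathsf{G}} \circ \Ind^{\mathsf{G}}_{\mathsf{H}}$, the first step is to record the Real character of $\HInd^{\hat{\mathsf{G}}}_{\mathsf{G}}(\pi)$ for an arbitrary $\theta$-twisted representation $\pi$ of $\mathsf{G}$. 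Reading off the block-diagonal form of $\HInd^{\hat{\mathsf{G}}}_{\mathsf{G}}(\pi)(g)$ given in Section \ref{sec:hypInd}, one gets
\[
\chi_{\HInd^{\hat{\mathsf{G}}}_{\mathsf{G}}(\pi)}(g) = \chi_{\pi}(g) + \uptau_{\pi}^{\refl}(\hat{\theta})([\varsigma^{-1}] g)^{-1} \chi_{\pi}(\varsigma^{-1} g^{-1} \varsigma),
\]
using that $\tr$ of $\rho(\varsigma^{-1} g^{-1} \varsigma)^{\vee}$ equals $\chi_{\pi}(\varsigma^{-1} g^{-1} \varsigma)$. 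The second step is to rewrite this sum as a single average over the coset $\hat{\mathsf{G}} \backslash \mathsf{G}$: using the conjugation equivariance \eqref{eq:projCharInvar} of $\chi_{\pi}$ together with the partial cocycle relation \eqref{eq:twistTransPartialCocycle} for $\uptau_{\pi}^{\refl}(\hat{\theta})$ restricted to odd-degree chains, one checks that each summand $\uptau_{\pi}^{\refl}(\hat{\theta})([\omega] g)^{-1} \chi_{\pi}(\omega g^{-1} \omega^{-1})$, as $\omega$ ranges over $\hat{\mathsf{G}} \backslash \mathsf{G}$, reproduces the second term up to the $\vert \mathsf{G}\vert$ choices of $\omega$ in a fixed coset; similarly the first term $\chi_{\pi}(g)$ is the average over $\omega \in \mathsf{G}$ of $\uptau(\theta)([\omega]g)^{-1}\chi_{\pi}(\omega g \omega^{-1})$. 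Combining, one obtains the clean formula
\[
\chi_{\HInd^{\hat{\mathsf{G}}}_{\mathsf{G}}(\pi)}(g) = \frac{1}{\vert \mathsf{G}\vert} \sum_{\substack{\omega \in \hat{\mathsf{G}}}} \uptau_{\pi}^{\refl}(\hat{\theta})([\omega]g^{\pi(\omega)})^{-1}\, \chi_{\pi}\big(\omega g^{\pi(\omega)}\omega^{-1}\big),
\]
where in the term of degree $\omega$ the argument $g^{\pi(\omega)}$ appears because the hyperbolic partner involves $g^{-1}$; this is precisely the $\mathsf{H}=\mathsf{G}$ instance of the statement, and in fact its proof is the bulk of the work.

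The third step is then to substitute $\pi = \Ind^{\mathsf{G}}_{\mathsf{H}}(\rho)$ and apply Proposition \ref{prop:indTwistChar} to expand $\chi_{\Ind^{\mathsf{G}}_{\mathsf{H}}(\rho)}(\omega g^{\pi(\omega)}\omega^{-1})$. This produces a double sum over $\omega \in \hat{\mathsf{G}}$ and $r \in \mathsf{G}$ with $r\omega g^{\pi(\omega)}\omega^{-1}r^{-1} \in \mathsf{H}$, weighted by $\uptau(\theta)([r]\omega g^{\pi(\omega)}\omega^{-1})^{-1}\uptau_{\pi}^{\refl}(\hat{\theta})([\omega]g^{\pi(\omega)})^{-1}$. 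The fourth step is the reindexing: set $\sigma = r\omega \in \hat{\mathsf{G}}$, so that $\sigma g^{\pi(\sigma)}\sigma^{-1} = r\omega g^{\pi(\omega)}\omega^{-1}r^{-1}$ (note $\pi(\sigma) = \pi(\omega)$ since $r \in \mathsf{G}$). The cocycle factors then combine, via the partial cocycle property \eqref{eq:twistTransPartialCocycle} of $\uptau_{\pi}^{\refl}(\hat{\theta})$ applied to the chain $[\sigma] g^{\pi(\sigma)} = [r\omega]g^{\pi(\omega)}$ and the compatibility of $\uptau_{\pi}^{\refl}(\hat{\theta})$ with $\uptau(\theta)$ on even-degree chains (stated at the end of Section \ref{sec:twistLoopTran}), into the single scalar $\uptau_{\pi}^{\refl}(\hat{\theta})([\sigma]g)^{-1}$. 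Each pair $(g',\sigma)$ with $g' = \sigma g^{\pi(\sigma)}\sigma^{-1} \in \mathsf{H}$ is hit exactly once, and the prefactor $\tfrac{1}{\vert\mathsf{G}\vert}\cdot\tfrac{1}{\vert\mathsf{H}\vert}$ from the two applied propositions, after accounting for the $\vert\mathsf{G}\vert$-to-one nature of $\sigma \mapsto \omega$, collapses to $\tfrac{1}{\vert\mathsf{H}\vert}$, giving the claimed formula.

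The main obstacle will be the bookkeeping in the second step: verifying carefully that the two-term expression for $\chi_{\HInd^{\hat{\mathsf{G}}}_{\mathsf{G}}(\pi)}$ coincides with the symmetrized average over all of $\hat{\mathsf{G}}$. This requires knowing that $\uptau_{\pi}^{\refl}(\hat{\theta})([\omega]g)$ is independent of the choice of $\omega$ within a coset of $\hat{\mathsf{G}}/\mathsf{H}$ only up to a $\chi_{\pi}$-compensating coboundary factor — precisely the content of equivariance \eqref{eq:real1Char} (equivalently \eqref{eq:projCharInvar} for the underlying twisted representation) — and a short identity relating $\uptau_{\pi}^{\refl}(\hat{\theta})([\varsigma^{-1}]g)$ to $\uptau_{\pi}^{\refl}(\hat{\theta})([\omega]g)$ for general odd $\omega$, which follows from \eqref{eq:twistTransPartialCocycle}. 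Alternatively — and this is the cleaner route I would actually take — I would bypass the two-term computation entirely by using the natural isomorphism $\Res^{\hat{\mathsf{G}}}_{\mathsf{G}}\circ\RInd^{\hat{\mathsf{G}}}_{\hat{\mathsf{H}}} \simeq \Ind^{\mathsf{G}}_{\mathsf{H}}\circ\Res^{\hat{\mathsf{H}}}_{\mathsf{H}}$ from Section \ref{sec:RealInd} together with the observation that $\HInd^{\hat{\mathsf{G}}}_{\mathsf{H}}$ and $\RInd^{\hat{\mathsf{G}}}_{\mathsf{H}\times\mathbb{Z}_2}\circ\HInd^{\mathsf{H}\times\mathbb{Z}_2}_{\mathsf{H}}$ agree — but since the formula to be proven is literally Proposition \ref{prop:RealIndChar} with $\hat{\mathsf{H}}$ replaced by $\mathsf{H}$ regarded inside $\hat{\mathsf{G}}$ via hyperbolic induction, the fastest honest proof is: apply Proposition \ref{prop:RealIndChar}'s proof verbatim to the split case, or cite Proposition \ref{prop:indTwistChar} through the composition $\HInd^{\hat{\mathsf{G}}}_{\mathsf{G}}\circ\Ind^{\mathsf{G}}_{\mathsf{H}}$ as outlined above. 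I expect no genuinely new phenomenon beyond careful tracking of the transgression cocycle factors under the reindexing $\sigma = r\omega$.
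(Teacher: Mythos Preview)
Your approach is essentially the paper's own: first settle the case $\mathsf{H}=\mathsf{G}$ by reading off the two-term block-diagonal trace, average each term over $\mathsf{G}$ using \eqref{eq:projCharInvar}, and then merge into a single sum over $\hat{\mathsf{G}}$ via \eqref{eq:twistTransPartialCocycle}; for general $\mathsf{H}$ compose with $\Ind^{\mathsf{G}}_{\mathsf{H}}$, apply Proposition~\ref{prop:indTwistChar}, and reindex by $\sigma=r\omega$. One small slip: in your displayed average over $\hat{\mathsf{G}}$ the transgression should be $\uptau_{\pi}^{\refl}(\hat{\theta})([\omega]g)^{-1}$, not $\uptau_{\pi}^{\refl}(\hat{\theta})([\omega]g^{\pi(\omega)})^{-1}$ --- the $1$-chain $[\omega]\gamma$ records the \emph{source} loop $\gamma$, and the exponent $\pi(\omega)$ belongs only in the argument of $\chi_\pi$. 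Your alternative route via $\mathsf{H}\times\mathbb{Z}_2$ is not well-posed (that group need not sit inside $\hat{\mathsf{G}}$), so drop it; the direct computation you outline is already the cleanest path.
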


\begin{proof}
Suppose first that $\mathsf{H} = \mathsf{G}$. For each $g \in \mathsf{G}$, we compute
\begin{eqnarray*}
\chi_{\HInd^{\hat{\mathsf{G}}}_{\mathsf{G}}(\rho)}(g) &=& \chi_{\rho}(g) + \uptau_{\pi}^{\refl}(\hat{\theta})([\varsigma^{-1}] g)^{-1} \chi_{\rho}(\varsigma^{-1} g^{-1} \varsigma) \\
&\overset{\scriptsize \mbox{Eq. }\eqref{eq:projCharInvar}}{=}& \frac{1}{\vert \mathsf{G} \vert} \sum_{s \in \mathsf{G}} \uptau(\theta)([s]g)^{-1} \chi_{\rho}(s g s^{-1}) + \\
&& \frac{1}{\vert \mathsf{G} \vert} \sum_{s \in \mathsf{G}} \uptau_{\pi}^{\refl}(\hat{\theta})([\varsigma^{-1}] g)^{-1} \uptau(\theta)([s]\varsigma^{-1} g^{-1} \varsigma)^{-1} \chi_{\rho}(s \varsigma^{-1} g^{-1} \varsigma s^{-1}) \\
&\overset{\scriptsize \mbox{Eq. }\eqref{eq:twistTransPartialCocycle}}{=}& \frac{1}{\vert \mathsf{G} \vert} \sum_{\omega \in \hat{\mathsf{G}}} \uptau_{\pi}^{\refl}(\hat{\theta})([\omega] g)^{-1}  \chi_{\rho}(\omega g^{\pi(\omega)} \omega^{-1}).
\end{eqnarray*}
The case of an arbitrary subgroup follows by combining the previous case with equation \eqref{eq:twistTransPartialCocycle} and Proposition \ref{prop:indTwistChar}.
\end{proof}

\section{Twisted Real \texorpdfstring{$2$}{}-induction}
\label{sec:indReal2Rep}

We define and study various forms of induction for linear Real representations of finite categorical groups. In particular, we compute the result of induction at the level of Real $2$-characters.

\subsection{Twisted \texorpdfstring{$2$}{}-induction}

Let $\mathsf{G}$ be a finite group with subgroup $\mathsf{H}$. Fix $\alpha \in Z^3(B \mathsf{G}, k^{\times})$. Let $\rho$ be a linear representation of $\mathcal{H} = \mathcal{G}(\mathsf{H}, \alpha_{\vert \mathsf{H}})$ on a category $V$. The induced linear representation $\Ind_{\mathcal{H}}^{\mathcal{G}}(\rho)$ of $\mathcal{G}= \mathcal{G}(\mathsf{G}, \alpha)$ was constructed in \cite[Proposition 5.6]{ganter2016}. An explicit construction, generalizing that of \cite[\S 7.1]{ganter2008} in the untwisted case, is as follows. Keeping the notation from Section \ref{sec:complexInd}, set
\[
\Ind_{\mathcal{H}}^{\mathcal{G}} (\rho) = \prod_{i=1}^p r_i \cdot V.
\]
An element $g \in \mathsf{G}$ then acts via the $p \times p$ matrix whose $(j,i)$\textsuperscript{th} entry is
\[
\Ind_{\mathcal{H}}^{\mathcal{G}} (\rho)(g)_{r_j r_i} =
\begin{cases}
\rho(h) & \mbox{if } g r_i = r_j h \mbox{ for some } h \in \mathsf{H}, \\
0& \mbox{ else.}
\end{cases}
\]
The $(k, i)$\textsuperscript{th} entry of $\Ind_{\mathcal{H}}^{\mathcal{G}} (\rho)(g_2) \circ \Ind_{\mathcal{H}}^{\mathcal{G}} (\rho)(g_1)$ is $\rho(h_2) \circ \rho(h_1)$ if $g_1 r_i = r_j h_1$ and $g_2 r_j = r_k h_2$ for some $h_1, h_2 \in \mathsf{H}$ and is zero otherwise. If non-trivial, the corresponding entry of the $2$-isomorphism $\Ind_{\mathcal{H}}^{\mathcal{G}}(\psi)_{g_2, g_1}$ is defined to be
\[
\frac{\alpha([g_2 \vert r_j \vert h_1])}{\alpha([g_2 \vert g_1 \vert r_i]) \alpha([r_k \vert h_2 \vert h_1])}
\psi_{h_2, h_1}.
\]
The coefficient of $\psi_{h_2, h_1}$ ensures that this defines a Real representation of $\mathcal{G}$.

\begin{Thm}
\label{thm:twistedIndFunctProj}
There is an isomorphism
\[
\Tr(\Ind_{\mathcal{H}}^{\mathcal{G}} (\rho)) \simeq \Ind_{\Lambda B \mathsf{H}}^{\Lambda B \mathsf{G}} \left( \Tr(\rho) \right)
\]
of $\uptau(\alpha)$-twisted representations of $\Lambda B \mathsf{G}$.
\end{Thm}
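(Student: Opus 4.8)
The plan is to compute both sides of the claimed isomorphism explicitly as $\uptau(\alpha)$-twisted vector bundles over $\Lambda B \mathsf{G} \simeq \mathsf{G} \git \mathsf{G}$ and exhibit a natural isomorphism between them. Recall that objects of $\Lambda B \mathsf{G}$ are elements $g \in \mathsf{G}$, and that by Theorem \ref{thm:catCharLoopGrpd} (specialized to the ungraded case, i.e. \cite[Theorem 4.17]{ganter2016}) the functor $\Tr(\rho)$ sends $h \in \mathsf{H}$ to $\Tr_{\rho}(h) = 2\Hom_V(1_V, \rho(h))$. First I would unwind the definition of $\Ind_{\mathcal{H}}^{\mathcal{G}}(\rho)$ given just above: on the object $g \in \mathsf{G}$ its categorical trace is
\[
\Tr(\Ind_{\mathcal{H}}^{\mathcal{G}}(\rho))(g) = 2\Hom\bigl(1, \Ind_{\mathcal{H}}^{\mathcal{G}}(\rho)(g)\bigr) \simeq \bigoplus_{\substack{i \in \{1,\dots,p\} \\ r_i^{-1} g r_i \in \mathsf{H}}} \Tr_{\rho}(r_i^{-1} g r_i),
\]
since a $2$-morphism from the identity to a $p\times p$ permutation-type matrix is supported on the diagonal blocks $(i,i)$ for which $g r_i = r_i h$, i.e. $h = r_i^{-1} g r_i \in \mathsf{H}$. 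On the other side, by definition of groupoid induction (as in Section \ref{sec:complexInd}, applied to the faithful functor $\Lambda B \mathsf{H} \to \Lambda B \mathsf{G}$ induced by $\mathsf{H} \hookrightarrow \mathsf{G}$), the value of $\Ind_{\Lambda B \mathsf{H}}^{\Lambda B \mathsf{G}}(\Tr(\rho))$ at $g$ is a direct sum over cosets $r_i \mathsf{H}$ with $r_i^{-1} g r_i \in \mathsf{H}$ of copies of $\Tr_{\rho}(r_i^{-1} g r_i)$ — literally the same underlying vector space. So the isomorphism on underlying bundles is tautological once the indexing is matched up.

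The content is therefore entirely in checking that this identification is equivariant for the $\uptau(\alpha)$-twisted $\Lambda B \mathsf{G}$-action, i.e. that it intertwines the structure isomorphisms $\beta^{\Ind}_{g, x}$ coming from the $2$-representation structure on $\Ind_{\mathcal{H}}^{\mathcal{G}}(\rho)$ (as in Proposition \ref{prop:catCharLoopGrpd}/Theorem \ref{thm:catCharLoopGrpd}) with the structure isomorphisms of the induced twisted groupoid representation (as in Proposition \ref{prop:twistedGrpdCharInd}). The key computation is to take a morphism $x : g \to x g x^{-1}$ in $\Lambda B \mathsf{G}$, track how it permutes the summands — if $r_i^{-1} g r_i \in \mathsf{H}$ then $x r_i$ lies in some coset $r_j \mathsf{H}$, say $x r_i = r_j h_x$ with $h_x \in \mathsf{H}$, and the summand indexed by $i$ for $g$ maps to the summand indexed by $j$ for $x g x^{-1}$ via the map $\beta_{r_i^{-1} g r_i, h_x^{-1}}$ associated to $\Tr(\rho)$ — and to compare the scalar factors. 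On the induction-of-character side the relevant scalar is the one appearing in the induction formula of Section \ref{sec:complexInd}, namely $\frac{\alpha([x \mid r_i \mid h_x^{-1} r_i^{-1} g r_i])}{\alpha([x \mid g \mid r_i]) \, \alpha([r_j \mid h_x \mid h_x^{-1} r_i^{-1} g r_i])}$-type expressions transgressed via $\uptau$; on the other side it comes from the formula for $\Ind_{\mathcal{H}}^{\mathcal{G}}(\psi)$ combined with the definition of $\beta^{\Ind}$. The main obstacle will be this bookkeeping of $3$-cocycle factors: one must show the two scalars agree, which reduces to a $3$-cocycle identity for $\alpha$ together with the explicit formula for Willerton's loop transgression $\uptau$ on $2$- and $3$-cochains recalled in Section \ref{sec:twistLoopTran}. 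This is exactly the structure of the proof of \cite[Theorem 4.17]{ganter2016} relating $\Tr$ and induction, so I would organize the verification to parallel that argument, invoking the cocycle identity \eqref{eq:twistTransPartialCocycle} (in its untwisted form) where the coset-representative changes need to be absorbed.

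A cleaner packaging, which I would use if the direct computation gets unwieldy, is to observe that both $\Tr(-)$ and the respective induction functors are instances of the same push-pull construction. Induction $\Ind_{\mathcal{H}}^{\mathcal{G}}$ for $2$-representations is pushforward along $B\mathcal{H} \to B\mathcal{G}$, categorical trace is a form of pullback to the (inertia/loop) groupoid followed by fibre integration along $B\mathbb{Z}$, and these operations commute because the relevant square of groupoids
\[
\begin{tikzcd}
\Lambda B \mathsf{H} \arrow[r] \arrow[d] & \Lambda B \mathsf{G} \arrow[d] \\
B \mathsf{H} \arrow[r] & B \mathsf{G}
\end{tikzcd}
\]
is a (homotopy) pullback of finite groupoids — this is the standard fact that the loop groupoid functor preserves such squares, and it is exactly the geometric input behind \cite[Proposition 6.11]{ganter2008} and its twisted refinement Proposition \ref{prop:twistedGrpdCharInd} above. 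The twist $\alpha$ transgresses compatibly to $\uptau(\alpha)$ on both loop groupoids, so the base-change isomorphism upgrades to an isomorphism of $\uptau(\alpha)$-twisted bundles. I would present the explicit-matrix proof as the main argument (since the paper's style favors string-diagram/matrix computations and the explicit model of $\Ind_{\mathcal{H}}^{\mathcal{G}}$ is already on the table) and mention the push-pull viewpoint as a remark, flagging that the only nontrivial check is the agreement of transgressed $3$-cocycle scalars, reducible to \eqref{eq:twistTransPartialCocycle} and the cocycle condition for $\alpha$.
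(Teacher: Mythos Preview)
The paper actually omits the proof of this theorem, saying only that it is an elaboration of \cite[Theorem 7.5]{ganter2008} and that a further elaboration appears in the proof of Theorem \ref{thm:Real2RealIndFunct}. Your proposal is correct and matches that implied approach: identify the underlying vector spaces summand-by-summand over cosets $r_i$ with $r_i^{-1} g r_i \in \mathsf{H}$, then check equivariance by comparing the $3$-cocycle scalars coming from $\Ind_{\mathcal{H}}^{\mathcal{G}}(\psi)$ against those from the $\uptau(\alpha)$-twisted groupoid induction.

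The one organizational difference worth noting is how the paper packages the scalar bookkeeping in the Real case (Lemmas \ref{lem:repChoice} and \ref{lem:summandIdent}), which specializes to the ungraded situation here. Rather than tracking a single morphism $x : g \to x g x^{-1}$ and a single coset directly as you propose, the paper first decomposes $[g]_{\mathsf{G}} \cap \mathsf{H}$ into $\mathsf{H}$-conjugacy classes, groups the summands accordingly, and chooses the coset representatives so that each block is manifestly an induced representation $\Ind_{\mathsf{Z}_{\mathsf{H}}(h_i)}^{\mathsf{Z}_{\mathsf{G}}(g)}(\Tr_\rho(h_i))$. The scalar comparison then becomes the verification that the two actions $\xi_1(\mu)$ and $\xi_2(\mu)$ differ by a fixed coboundary-type factor $\uptau(\alpha)([\sigma_i^{-1} \vert \tilde{\sigma}]h_i)/\uptau(\alpha)([\sigma_i^{-1} \vert \sigma]h_i)$, which is exactly the intertwining isomorphism. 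Your direct approach and the paper's centralizer-block approach are equivalent; the latter localizes the cocycle identity you would need to a cleaner form and avoids having to name the specific $3$-cocycle expression you wrote down (which, as stated, is not quite the right combination --- the correct factors are those labelled $c_1$, $c_2$ in the proof of Lemma \ref{lem:summandIdent}, specialized to $\pi \equiv 1$). Your push-pull remark is a fine conceptual gloss but the paper does not invoke it.
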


Here $\Tr(\rho)$ is viewed as a $\uptau(\alpha_{\vert \mathsf{H}})$-twisted representation of $\Lambda B \mathsf{H}$ and $\Ind_{\Lambda B \mathsf{H}}^{\Lambda B \mathsf{G}}$ denotes twisted induction for groupoids. The untwisted version of Theorem \ref{thm:twistedIndFunctProj} was proved by Ganter--Kapranov \cite[Theorem 7.5]{ganter2008}. The twisted case can be handled by an elaboration of their argument. We omit this argument, as a further elaboration will be used to prove Theorem \ref{thm:Real2RealIndFunct} below.

\begin{Cor}[{\textit{cf.} \cite[Corollary 7.6]{ganter2008}}]
\label{cor:ind2Char}
The $2$-character of $\Ind_{\mathcal{H}}^{\mathcal{G}} (\rho)$ is
\[
\chi_{\Ind_{\mathcal{H}}^{\mathcal{G}} (\rho)}(g_1, g_2) = \frac{1}{\vert \mathsf{H} \vert} \sum_{\substack{s \in \mathsf{G} \\ s(g_1,g_2) s^{-1} \in \mathsf{H}^2}} \uptau^2(\alpha)([s]g_1 \xrightarrow[]{g_2} g_1)^{-1} \cdot \chi_{\rho}(s g_1 s^{-1}, s g_2 s^{-1}).
\]
\end{Cor}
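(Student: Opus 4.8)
The goal is to deduce Corollary~\ref{cor:ind2Char} from Theorem~\ref{thm:twistedIndFunctProj} by applying the character formula for twisted induction of groupoid representations (Proposition~\ref{prop:twistedGrpdCharInd}) to the faithful functor $\Lambda B \mathsf{H} \to \Lambda B \mathsf{G}$. The plan is as follows. The $2$-character $\chi_{\Ind_{\mathcal{H}}^{\mathcal{G}}(\rho)}(g_1,g_2)$ is, by definition, the joint trace $\tr_{\Tr(\Ind_{\mathcal{H}}^{\mathcal{G}}(\rho))}$ associated to the graded commuting pair $(g_1,g_2)$; since in the ungraded setting $\Tr(\rho)$ is a twisted vector bundle over $\Lambda B \mathsf{G}$ and the joint trace is the holonomy of this bundle around the loop $(g_1 \xrightarrow{g_2} g_1)$ in $\Lambda B \mathsf{G}$, it equals the ordinary character of the $\uptau(\alpha)$-twisted representation $\Tr(\Ind_{\mathcal{H}}^{\mathcal{G}}(\rho))$ of $\Lambda B \mathsf{G}$ evaluated at that loop. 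Theorem~\ref{thm:twistedIndFunctProj} identifies this representation with $\Ind_{\Lambda B \mathsf{H}}^{\Lambda B \mathsf{G}}(\Tr(\rho))$, so the problem reduces to computing the character of an induced twisted groupoid representation.

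Next I would invoke Proposition~\ref{prop:twistedGrpdCharInd} with $\mathfrak{H} = \Lambda B \mathsf{H}$, $\mathfrak{G} = \Lambda B \mathsf{G}$, $f$ the (faithful) functor induced by the inclusion $\mathsf{H} \hookrightarrow \mathsf{G}$, and $\theta = \uptau(\alpha)$. One must unwind the combinatorics: objects of $\Lambda B \mathsf{H}$ are elements $h \in \mathsf{H}$, automorphism groups are centralizers $C_{\mathsf{H}}(h)$, orbits are $\mathsf{H}$-conjugacy classes, and a morphism $(g_1 \xrightarrow{g_2} g_1) \to (h \xrightarrow{\gamma} h)$ in $\Lambda B \mathsf{G}$ from the point of view of $\Lambda B \mathsf{H}$ is an element $s \in \mathsf{G}$ with $s g_1 s^{-1} = h$; the pair $(s g_1 s^{-1}, s g_2 s^{-1})$ lands in $\mathsf{H}^2$ precisely when $s(g_1,g_2)s^{-1} \in \mathsf{H}^2$, and the loop $\gamma = s g_2 s^{-1}$ is an automorphism of $h$ in $\Lambda B \mathsf{H}$. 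Summing over representatives $y_j$ of $\mathsf{H}$-conjugacy classes and then over $s$, the factor $\frac{1}{|\mathcal{O}_{\mathfrak{H}}(y)| |\Aut_{\mathfrak{H}}(y)|}$ collapses the double sum into $\frac{1}{|\mathsf{H}|}\sum_{s}$, exactly as in the passage from Proposition~\ref{prop:indTwistChar} to the groupoid statement. The twisting cocycle $\uptau(\uptau(\alpha)) = \uptau^2(\alpha)$ on $\Lambda \Lambda B \mathsf{G}$ appears through the factor $\uptau(\theta)([s]\gamma)^{-1}$, where $\theta = \uptau(\alpha)$ and $\gamma$ is the loop $(g_1 \xrightarrow{g_2} g_1)$; translating $\uptau$ applied to $\uptau(\alpha)$ into $\uptau^2(\alpha)$ evaluated at $[s](g_1 \xrightarrow{g_2} g_1)$ gives the coefficient in the statement. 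Finally, $\chi_{\rho}(s g_1 s^{-1}, s g_2 s^{-1})$ is precisely the character of $\Tr(\rho)$ at the corresponding loop in $\Lambda B \mathsf{H}$, completing the identification.

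The main obstacle will be bookkeeping the transgression cocycles correctly: one must verify that the combination of $\uptau(\alpha)$-twisted induction on $\Lambda B \mathsf{G}$ composed with the loop transgression $\uptau$ to $\Lambda \Lambda B \mathsf{G}$ produces exactly $\uptau^2(\alpha)$, and that the normalizations match so that no stray coboundary term survives. This is a cocycle-level identity, and the honest check requires expanding Willerton's formula for $\uptau$ twice; but it is routine given that $f$ is faithful (so Proposition~\ref{prop:twistedGrpdCharInd} genuinely applies) and given the cocycle identity \eqref{eq:twistTransPartialCocycle}, which is what handles the recombination of coset-representative factors into a single clean sum over $s \in \mathsf{G}$. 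A secondary point to state carefully is why the joint trace equals the groupoid character of $\Tr(\Ind_{\mathcal{H}}^{\mathcal{G}}(\rho))$ at the specified loop — this is the holonomy interpretation already used in the proof of Theorem~\ref{thm:conjInv2Char} (via \cite[\S 2.3.3]{willerton2008}), restricted to the ungraded case, so it can simply be cited. With these pieces in place, the corollary follows by direct substitution, and I would present it as a short computation rather than a lengthy argument.
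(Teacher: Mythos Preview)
Your proposal is correct and follows essentially the same approach as the paper: reduce via Theorem~\ref{thm:twistedIndFunctProj} to computing the character of $\Ind_{\Lambda B \mathsf{H}}^{\Lambda B \mathsf{G}}(\Tr(\rho))$, then apply Proposition~\ref{prop:twistedGrpdCharInd} using faithfulness of $\Lambda B \mathsf{H} \to \Lambda B \mathsf{G}$. The paper's proof is just two sentences to this effect, so your elaboration of the combinatorics and cocycle bookkeeping is more detailed than what the paper records but tracks the same argument.
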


\begin{proof}
By Theorem \ref{thm:twistedIndFunctProj}, it suffices to compute the character of $\Ind_{\Lambda B \mathsf{H}}^{\Lambda B \mathsf{G}} \left( \Tr(\rho) \right)$. As the canonical functor $\Lambda B \mathsf{H} \rightarrow \Lambda B \mathsf{G}$ is faithful, this character can be computed using Proposition \ref{prop:twistedGrpdCharInd}. Doing so gives the claimed result.
\end{proof}

\subsection{Real \texorpdfstring{$2$}{}-induction}
\label{sec:Real2Ind}

Consider now a finite $\mathbb{Z}_2$-graded group $\hat{\mathsf{G}}$ with $\mathbb{Z}_2$-graded subgroup $\hat{\mathsf{H}}$. Let $\hat{\alpha} \in Z^3 (B \hat{\mathsf{G}}, k^{\times}_{\pi})$ and let $\rho$ be a linear Real representation of $\mathcal{H}=\mathcal{G}(\mathsf{H}, \alpha_{\vert \mathsf{H}})$ on a category $V$, the Real structure being $\hat{\mathcal{H}}=\mathcal{G}(\hat{\mathsf{H}}, \hat{\alpha}_{\vert \hat{\mathsf{H}}})$. We define a Real $2$-representation $\RInd_{\hat{\mathcal{H}}}^{\hat{\mathcal{G}}}(\rho)$ of $\mathcal{G}=\mathcal{G}(\mathsf{G}, \alpha)$ with Real structure $\hat{\mathcal{G}}=\mathcal{G}(\hat{\mathsf{G}}, \hat{\alpha})$. As a category, set
\[
\RInd_{\hat{\mathcal{H}}}^{\hat{\mathcal{G}}}(\rho) = \prod_{i=1}^q \sigma_i \cdot \prescript{\pi(\sigma_i)}{}{V}.
\]
An element $\omega \in \hat{\mathsf{G}}$ acts by the $q \times q$ matrix whose $(j,i)$\textsuperscript{th} entry is
\[
\RInd_{\hat{\mathcal{H}}}^{\hat{\mathcal{G}}}(\rho)(\omega)_{\sigma_j \sigma_i} =
\begin{cases}
\prescript{\pi(\sigma_j)}{}{\rho}(\eta) & \mbox{if } \omega \sigma_i = \sigma_j \eta \mbox{ for some } \eta \in \hat{\mathsf{H}}, \\
0& \mbox{ else.}
\end{cases}
\]
The $(k,i)$\textsuperscript{th} entry of $\RInd_{\hat{\mathcal{H}}}^{\hat{\mathcal{G}}}(\rho)(\omega_2) \circ {^{\pi(\omega_2)}} \RInd_{\hat{\mathcal{H}}}^{\hat{\mathcal{G}}}(\rho) (\omega_1)$ is $\prescript{\pi(\sigma_j)}{}{\rho}(\omega^{\prime}_2) \circ \prescript{\pi(\eta_2 \sigma_j)}{}{\rho}(\eta_1)$ if $\omega_1 \sigma_i = \sigma_j \eta_1$ and $\omega_2 \sigma_j = \sigma_k \eta_2$ for some $\eta_1, \eta_2 \in \hat{\mathsf{H}}$ and is zero otherwise. The component of $\RInd_{\hat{\mathcal{H}}}^{\hat{\mathcal{G}}}(\psi)_{\omega_2, \omega_1}$ at this entry is defined to be
\[
\frac{\hat{\alpha}([\omega_2 \vert \sigma_j \vert \eta_1])}{\hat{\alpha}([\omega _2 \vert \omega_1 \vert \sigma_i]) \hat{\alpha}([\sigma_k \vert \eta_2 \vert \eta_1])} \cdot 
\prescript{\pi(\sigma_k)}{}{\psi}^{\pi(\sigma_k)}_{\eta_2, \eta_1}.
\]
It is straightforward to verify that this defines a linear Real representation of $\mathcal{G}$.

\subsection{Induced Real categorical and \texorpdfstring{$2$}{}-characters}
\label{sec:Real2IndCar}

We generalize the work of Ganter--Kapranov in the untwisted ungraded case to compute the Real categorical and $2$-characters of $\RInd_{\hat{\mathcal{H}}}^{\hat{\mathcal{G}}}(\rho)$. We begin with the Real categorical character.

\begin{Thm}
\label{thm:Real2RealIndFunct}
There is a canonical isomorphism
\[
\Tr(\RInd_{\hat{\mathcal{H}}}^{\hat{\mathcal{G}}}(\rho))
\simeq
\Ind_{\Lambda_{\pi}^{\refl} B \hat{\mathsf{H}}}^{\Lambda_{\pi}^{\refl} B \hat{\mathsf{G}}} \left( \Tr(\rho) \right)
\]
of $\uptau_{\pi}^{\refl}(\hat{\alpha})$-twisted representations of $\Lambda_{\pi}^{\refl} B \hat{\mathsf{G}}$.
\end{Thm}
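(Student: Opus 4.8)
The plan is to produce an explicit isomorphism of $\uptau_{\pi}^{\refl}(\hat{\alpha})$-twisted representations of $\Lambda_{\pi}^{\refl} B \hat{\mathsf{G}}$, following the template of Ganter--Kapranov's proof of the untwisted ungraded statement \cite[Theorem 7.5]{ganter2008} and the twisted ungraded Theorem \ref{thm:twistedIndFunctProj}, but keeping careful track of the $\pi$-twisting. First I would recall that the underlying category of $\RInd_{\hat{\mathcal{H}}}^{\hat{\mathcal{G}}}(\rho)$ is $\prod_{i=1}^q \sigma_i \cdot \prescript{\pi(\sigma_i)}{}{V}$, so for $g \in \mathsf{G}$ the $1$-morphism $\RInd_{\hat{\mathcal{H}}}^{\hat{\mathcal{G}}}(\rho)(g)$ is the $q \times q$ matrix with $(j,i)$-entry $\prescript{\pi(\sigma_j)}{}{\rho}(\sigma_j^{-1} g \sigma_i)$ when $\sigma_j^{-1} g \sigma_i \in \hat{\mathsf{H}}$ (necessarily in $\mathsf{H}$ when $\pi(\sigma_i) = \pi(\sigma_j)$, and a degree $-1$ element otherwise) and zero else. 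A $2$-morphism $\phi \in \Tr(\RInd_{\hat{\mathcal{H}}}^{\hat{\mathcal{G}}}(\rho)(g))$ is thus a tuple of $2$-morphisms $\phi_i$, one for each $i$ with $\sigma_i^{-1} g \sigma_i \in \hat{\mathsf{H}}$, where $\phi_i \in 2\Hom(1_{\prescript{\pi(\sigma_i)}{}{V}}, \prescript{\pi(\sigma_i)}{}{\rho}(\sigma_i^{-1} g \sigma_i))$; applying the duality involution data (and $\psi_e$, $\psi_{\bullet,\bullet}$) converts this into an element of $\Tr_\rho(\sigma_i^{-1} g^{\pi(\sigma_i)} \sigma_i)$. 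This gives the vector-space level identification with $\Ind_{\Lambda_{\pi}^{\refl} B \hat{\mathsf{H}}}^{\Lambda_{\pi}^{\refl} B \hat{\mathsf{G}}}(\Tr(\rho))$, since objects of $\Lambda_{\pi}^{\refl} B \hat{\mathsf{G}}$ over the point are elements of $\mathsf{G}$, the ones mapping into $\Lambda_{\pi}^{\refl} B \hat{\mathsf{H}}$ under an element $\sigma$ are exactly those with $\sigma g^{\pi(\sigma)} \sigma^{-1} \in \mathsf{H}$, and the groupoid induction formula $\Ind_{\mathfrak{H}}^{\mathfrak{G}}(W)(x,\gamma) = \bigoplus_{\sigma \in \hat{\mathsf{G}}/\hat{\mathsf{H}},\, \sigma g^{\pi(\sigma)}\sigma^{-1} \in \mathsf{H}} W(\sigma g^{\pi(\sigma)}\sigma^{-1})$ matches summand by summand.

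Next I would check naturality: for a morphism $\omega : g \rightarrow \omega g^{\pi(\omega)} \omega^{-1}$ in $\Lambda_{\pi}^{\refl} B \hat{\mathsf{G}}$, the action of $\omega$ on $\Ind_{\Lambda_{\pi}^{\refl} B \hat{\mathsf{H}}}^{\Lambda_{\pi}^{\refl} B \hat{\mathsf{G}}}(\Tr(\rho))$ is the twisted permutation-of-summands map dictated by the groupoid induction construction together with the cocycle $\uptau_{\pi}^{\refl}(\hat{\alpha})$, while on $\Tr(\RInd_{\hat{\mathcal{H}}}^{\hat{\mathcal{G}}}(\rho))$ it is the map $\beta_{g,\omega}$ of Section \ref{sec:RealCatChar}, computed as in Section \ref{sec:2Char} via string diagrams involving the coherence data $\RInd_{\hat{\mathcal{H}}}^{\hat{\mathcal{G}}}(\psi)_{\bullet,\bullet}$. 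The whole content of the theorem is that these agree under the summand-wise identification. Concretely I would split into the cases $\pi(\omega) = 1$ and $\pi(\omega) = -1$ (and within the latter, track whether the relevant coset representatives are of degree $+1$ or $-1$), and in each case expand $\beta_{g,\omega}$ using equation \eqref{diag:associativity} and the identities \eqref{diag:loopRemoval}--\eqref{diag:slide} to extract the scalar it contributes; this scalar is a product of values of $\hat{\alpha}$ on simplices built from $\omega$, the coset representatives $\sigma_i, \sigma_j$, and the elements $\eta_1, \eta_2 \in \hat{\mathsf{H}}$ with $\omega \sigma_i = \sigma_j \eta_1$ etc. One then verifies, using the $3$-cocycle identity for $\hat{\alpha}$ and the explicit formula for $\uptau_{\pi}^{\refl}(\hat{\alpha})$ recalled in Section \ref{sec:twistLoopTran} together with the key identity \eqref{eqn:keyIdentity}, that this scalar equals the scalar appearing in the twisted groupoid induction action, namely a product of $\uptau_{\pi}^{\refl}(\hat{\alpha})$-values matching the groupoid-induction recipe. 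Finally, independence from the choice of coset representatives $\mathcal{S}$ follows from the corresponding independence of both induction constructions, so the isomorphism is canonical.

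The main obstacle will be the bookkeeping in the $\pi(\omega) = -1$ case: because $\RInd_{\hat{\mathcal{H}}}^{\hat{\mathcal{G}}}(\rho)(\omega)$ mixes the summands $\sigma_i \cdot \prescript{\pi(\sigma_i)}{}{V}$ with those of differing grading, the string-diagram computation of $\beta_{g,\omega}$ must be carried out in both $V$ and $V^{\op}$, and the extra factors $\hat{\alpha}([g \vert g^{-1} \vert g])$, $\hat{\alpha}([\gamma \vert \gamma^{-1} \vert \gamma])$ that distinguish $\uptau_{\pi}^{\refl}$ from $\uptau$ (compare the explicit $3$-cochain formula in Section \ref{sec:twistLoopTran}) must be shown to emerge precisely from the duality-involution data used in \eqref{eq:charStriDiagNeg} and from applications of \eqref{diag:snakeRemoval}. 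This is exactly the kind of computation carried out in the proof of Theorem \ref{thm:catCharLoopGrpd}, and the cleanest route is probably to reduce to that theorem: namely, first establish the vector-space and naturality statement for $\pi(\omega)=1$ by a direct elaboration of Ganter--Kapranov's argument (as in Theorem \ref{thm:twistedIndFunctProj}), and then deduce the degree $-1$ compatibility from Theorem \ref{thm:catCharLoopGrpd} applied to both $\RInd_{\hat{\mathcal{H}}}^{\hat{\mathcal{G}}}(\rho)$ and $\rho$, observing that both sides are $\uptau_{\pi}^{\refl}(\hat{\alpha})$-twisted vector bundles over $\Lambda_{\pi}^{\refl} B \hat{\mathsf{G}}$ and that an isomorphism of the underlying $\Lambda B \mathsf{G}$-representations which is compatible with the reflection maps must be an isomorphism of $\Lambda_{\pi}^{\refl} B \hat{\mathsf{G}}$-representations. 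I would write the proof to emphasize this reduction, relegating the degree $-1$ string-diagram verification to the observation that it is formally identical to the final stage of the proof of Theorem \ref{thm:catCharLoopGrpd}.
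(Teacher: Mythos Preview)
Your plan is correct in substance and very close in spirit to the paper's argument: both reduce to matching the scalar produced by the string-diagram computation of $\beta_{g,\omega}$ against the transgressed cocycle $\uptau_{\pi}^{\refl}(\hat{\alpha})$ appearing in groupoid induction. The paper organises this differently, however. Rather than checking compatibility with an arbitrary morphism $\omega$ in $\Lambda_{\pi}^{\refl} B \hat{\mathsf{G}}$, it first fixes the equivalence $\Lambda_{\pi}^{\refl} B \hat{\mathsf{G}} \simeq \bigsqcup_g B \mathsf{Z}_{\hat{\mathsf{G}}}^{\varphi}(g)$ and, for each $g$, decomposes $\pser{g}_{\hat{\mathsf{G}}} \cap \mathsf{H}$ into Real $\hat{\mathsf{H}}$-conjugacy classes. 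A combinatorial lemma (the Real analogue of \cite[Lemma 7.7]{ganter2008}) shows that the coset representatives $\mathcal{S}$ can be rechosen so that each $\mathcal{S}_i$ is mapped bijectively onto a system of representatives for $\mathsf{Z}_{\hat{\mathsf{G}}}^{\varphi}(h_i)/\mathsf{Z}_{\hat{\mathsf{H}}}^{\varphi}(h_i)$. The theorem then reduces to a single computational lemma identifying $\Tr_{\RInd(\rho)}(g) \simeq \bigoplus_i \Ind_{\mathsf{Z}_{\hat{\mathsf{H}}}^{\varphi}(h_i)}^{\mathsf{Z}_{\hat{\mathsf{G}}}^{\varphi}(g)}(\Tr_{\rho}(h_i))$ as twisted $\mathsf{Z}_{\hat{\mathsf{G}}}^{\varphi}(g)$-representations. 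This packaging localises the case analysis: the string-diagram scalar (split into a factor $c_1$ from the induced coherence data $\RInd(\psi)$ and a factor $c_2$ from the maps $F_i^{\pm 1}$ identifying $\Tr_{\rho^{\op}}(h_i^{-1}) \simeq \Tr_{\rho}(h_i)$) is computed once per degree pattern of $(\mu,\sigma,\tilde{\sigma},\eta)$ and matched against $\uptau_{\pi}^{\refl}(\hat{\alpha})([\mu\vert\sigma]h_i)/\uptau_{\pi}^{\refl}(\hat{\alpha})([\tilde{\sigma}\vert\eta]h_i)$. Your direct approach would reach the same identities but has to carry the global groupoid bookkeeping throughout.

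One caution: your proposed ``cleanest route'' shortcut at the end does not actually buy anything. Knowing via Theorem \ref{thm:catCharLoopGrpd} that both sides are $\uptau_{\pi}^{\refl}(\hat{\alpha})$-twisted bundles over $\Lambda_{\pi}^{\refl} B \hat{\mathsf{G}}$ does not promote a $\Lambda B \mathsf{G}$-isomorphism to a $\Lambda_{\pi}^{\refl} B \hat{\mathsf{G}}$-isomorphism for free; the phrase ``compatible with the reflection maps'' is precisely the degree $-1$ intertwining condition you still have to verify by the string-diagram computation. The paper does not attempt such a reduction and simply carries out the degree $-1$ cases explicitly.
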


Let $\mathsf{Z}_{\hat{\mathsf{G}}}^{\varphi}(g)$ be the stabilizer of $g \in \mathsf{G}$ under Real $\hat{\mathsf{G}}$-conjugation. Fix an equivalence
\begin{equation}
\label{eq:unoriLoopDecomp}
\Lambda_{\pi}^{\refl} B \hat{\mathsf{G}} \simeq \bigsqcup_{g \in \pi_0(\Lambda_{\pi}^{\refl} B \hat{\mathsf{G}})} B \mathsf{Z}_{\hat{\mathsf{G}}}^{\varphi}(g).
\end{equation}
The set $\pi_0(\Lambda_{\pi}^{\refl} B \hat{\mathsf{G}})$ is identified with the set of Real conjugacy classes of $\mathsf{G}$. Denote by $\pser{g}_{\hat{\mathsf{G}}} \subset \mathsf{G}$ the Real conjugacy class of $g$. To prove Theorem \ref{thm:Real2RealIndFunct} we first describe the action of $\mathsf{Z}_{\hat{\mathsf{G}}}^{\varphi}(g)$ on $\Tr_{\RInd_{\hat{\mathcal{H}}}^{\hat{\mathcal{G}}} (\rho)}(g)$. We require some notation. The decomposition
\begin{equation}
\label{eq:RealConjDecomp}
\pser{g}_{\hat{\mathsf{G}}} \cap \mathsf{H} = \bigsqcup_{i=1}^l \pser{h_i}_{\hat{\mathsf{H}}}
\end{equation}
induces a decomposition
\[
\{\sigma \in \mathcal{S} \mid \sigma^{-1} g^{\pi(\sigma)} \sigma \in \mathsf{H}\} = \bigsqcup_{i=1}^l \mathcal{S}_i
\]
where $\mathcal{S}_i = \{ \sigma \in \mathcal{S} \mid \sigma^{-1} g^{\pi(\sigma)} \sigma \in \pser{h_i}_{\hat{\mathsf{H}}} \}$. For each $i \in \{1, \dots, l\}$, fix an element $\sigma_i \in \mathcal{S}_i$. Relabel the representatives of the Real $\hat{\mathsf{H}}$-conjugacy classes appearing in the decomposition \eqref{eq:RealConjDecomp} by $h_i = \sigma_i^{-1} g^{\pi(\sigma_i)} \sigma_i$.

\begin{Lem}[{\textit{cf.} \cite[Lemma 7.7]{ganter2008}}]
\label{lem:repChoice}
Elements of $\mathcal{S}_i$ can be chosen so that left multiplication by $\sigma_i^{-1}$ induces a bijection from $\mathcal{S}_i$ to a complete system of representatives of $\mathsf{Z}_{\hat{\mathsf{G}}}^{\varphi}(h_i) \slash \mathsf{Z}_{\hat{\mathsf{H}}}^{\varphi}(h_i)$.
\end{Lem}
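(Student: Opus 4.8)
The plan is to mimic the proof of \cite[Lemma 7.7]{ganter2008}, adapting it to the Real (graded) setting. First I would set up the relevant double coset bookkeeping. Recall that $\mathcal{S}$ is a fixed complete set of representatives of $\hat{\mathsf{G}} \slash \hat{\mathsf{H}}$, and $\mathcal{S}_i = \{ \sigma \in \mathcal{S} \mid \sigma^{-1} g^{\pi(\sigma)} \sigma \in \pser{h_i}_{\hat{\mathsf{H}}} \}$. The key observation is that the set $\{\sigma \in \hat{\mathsf{G}} \mid \sigma^{-1} g^{\pi(\sigma)} \sigma = h_i\}$, if nonempty, is a single left coset of $\mathsf{Z}_{\hat{\mathsf{G}}}^{\varphi}(g)$ and simultaneously a single right coset of $\mathsf{Z}_{\hat{\mathsf{H}}}^{\varphi}(h_i)$: indeed if $\sigma^{-1} g^{\pi(\sigma)} \sigma = h_i = \tau^{-1} g^{\pi(\tau)} \tau$ then, writing the Real conjugation action $\varphi$ as in Section \ref{sec:Z2GrGrp}, one checks $\sigma \tau^{-1}$ stabilizes $g$ under Real conjugation (so $\sigma \in \mathsf{Z}_{\hat{\mathsf{G}}}^{\varphi}(g) \tau$) and $\sigma^{-1} \tau$ stabilizes $h_i$. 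This is where the graded exponent $\pi(\sigma)$ must be tracked carefully, but the verification is the same two-line computation as in the ungraded case once one uses the identity $\varphi(\sigma\tau^{-1}) = \varphi(\sigma)\varphi(\tau)^{-1}$ for the Real conjugation action.

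Next I would exhibit the bijection itself. The subset $\{\sigma \in \hat{\mathsf{G}} \mid \sigma^{-1} g^{\pi(\sigma)}\sigma \in \pser{h_i}_{\hat{\mathsf{H}}}\}$ decomposes as $\mathsf{Z}_{\hat{\mathsf{G}}}^{\varphi}(g) \cdot \{\sigma \in \hat{\mathsf{G}} \mid \sigma^{-1}g^{\pi(\sigma)}\sigma = h_i\}$, and by the previous paragraph the latter factor is $\{\sigma_i\} \cdot \mathsf{Z}_{\hat{\mathsf{H}}}^{\varphi}(h_i)$ for our chosen $\sigma_i$. Hence this subset equals $\mathsf{Z}_{\hat{\mathsf{G}}}^{\varphi}(g) \, \sigma_i \, \mathsf{Z}_{\hat{\mathsf{H}}}^{\varphi}(h_i)$, a single $(\mathsf{Z}_{\hat{\mathsf{G}}}^{\varphi}(g), \mathsf{Z}_{\hat{\mathsf{H}}}^{\varphi}(h_i))$-double coset, and intersecting with a transversal $\mathcal{S}$ of $\hat{\mathsf{G}}\slash\hat{\mathsf{H}}$ recovers $\mathcal{S}_i$. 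Now $\mathcal{S}_i$ is a set of representatives for the $\hat{\mathsf{H}}$-orbits inside this double coset; equivalently, via $\sigma \mapsto \sigma_i^{-1}\sigma$ composed with the conjugation $x \mapsto \sigma_i^{-1} x \sigma_i$ (which carries $\mathsf{Z}_{\hat{\mathsf{G}}}^{\varphi}(g)$ isomorphically onto $\mathsf{Z}_{\hat{\mathsf{G}}}^{\varphi}(h_i)$ and fixes $\mathsf{Z}_{\hat{\mathsf{H}}}^{\varphi}(h_i)$), these orbits correspond to cosets in $\mathsf{Z}_{\hat{\mathsf{G}}}^{\varphi}(h_i) \slash \mathsf{Z}_{\hat{\mathsf{H}}}^{\varphi}(h_i)$. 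So one is free to re-choose the elements of $\mathcal{S}_i$ within their $\hat{\mathsf{H}}$-orbits so that $\sigma_i^{-1}\mathcal{S}_i$ maps bijectively onto a chosen transversal of $\mathsf{Z}_{\hat{\mathsf{G}}}^{\varphi}(h_i)\slash \mathsf{Z}_{\hat{\mathsf{H}}}^{\varphi}(h_i)$. Since altering a representative of $\hat{\mathsf{G}}\slash\hat{\mathsf{H}}$ within its coset only changes the induced representation by the canonical coherence isomorphisms (as noted in Section \ref{sec:complexInd}), this re-choice is harmless.

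The main obstacle I anticipate is purely notational: keeping the graded exponents $\pi(\sigma)$ consistent through the coset identifications, since $\sigma \mapsto \sigma^{-1} g^{\pi(\sigma)} \sigma$ is not simply conjugation. The resolution is to work throughout with the Real conjugation action groupoid $\mathsf{G} \git_{\varphi} \hat{\mathsf{G}} \simeq \Lambda_{\pi}^{\refl} B \hat{\mathsf{G}}$ from the Example following Section \ref{sec:loopGrp}: there the morphisms $g \to g'$ are genuinely group elements acting by $\varphi$, and the stabilizer $\mathsf{Z}_{\hat{\mathsf{G}}}^{\varphi}(g)$ is just an automorphism group in this groupoid, so all the double-coset manipulations become the standard ones for a faithful functor of groupoids $B\mathsf{Z}_{\hat{\mathsf{H}}}^{\varphi}(h_i) \hookrightarrow \Lambda_{\pi}^{\refl} B\hat{\mathsf{G}}$ and no special care about signs is needed beyond invoking $\varphi(\sigma\tau) = \varphi(\sigma)\varphi(\tau)$. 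With that reformulation in place the argument is a transcription of Ganter--Kapranov's, so I would write it concisely, emphasizing the groupoid picture and relegating the exponent bookkeeping to the single identity above.
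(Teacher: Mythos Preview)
Your overall strategy---recast everything in the Real conjugation groupoid $\Lambda_{\pi}^{\refl} B\hat{\mathsf{G}}$ and read off the bijection from a double-coset description---is sound and is essentially the same argument the paper gives, just packaged structurally rather than element-by-element. The paper adjusts each $\sigma\in\mathcal{S}_i$ to $\sigma\eta^{-1}$ so that $\sigma^{-1}g^{\pi(\sigma)}\sigma=h_i$, then checks injectivity and surjectivity of $\sigma\mapsto\sigma_i^{-1}\sigma$ by hand; your groupoid formulation makes the same moves implicit.

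However, there is a genuine slip that breaks the argument as written. The set $\{\sigma\in\hat{\mathsf{G}}\mid\sigma^{-1}g^{\pi(\sigma)}\sigma=h_i\}$ is \emph{not} a coset of $\mathsf{Z}_{\hat{\mathsf{H}}}^{\varphi}(h_i)$: if $\sigma,\tau$ lie in it then $\sigma^{-1}\tau$ stabilizes $h_i$ under the $\hat{\mathsf{G}}$-action, but there is no reason for $\sigma^{-1}\tau$ to lie in $\hat{\mathsf{H}}$. The set is the coset $\sigma_i\,\mathsf{Z}_{\hat{\mathsf{G}}}^{\varphi}(h_i)=\mathsf{Z}_{\hat{\mathsf{G}}}^{\varphi}(g)\,\sigma_i$. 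Consequently your decomposition in the second paragraph is off: the full subset $\{\sigma\mid\sigma^{-1}g^{\pi(\sigma)}\sigma\in\pser{h_i}_{\hat{\mathsf{H}}}\}$ is not $\mathsf{Z}_{\hat{\mathsf{G}}}^{\varphi}(g)\,\sigma_i\,\mathsf{Z}_{\hat{\mathsf{H}}}^{\varphi}(h_i)$ but rather the $(\mathsf{Z}_{\hat{\mathsf{G}}}^{\varphi}(g),\hat{\mathsf{H}})$-double coset $\mathsf{Z}_{\hat{\mathsf{G}}}^{\varphi}(g)\,\sigma_i\,\hat{\mathsf{H}}$. With this correction the rest of your outline goes through cleanly: the $\hat{\mathsf{H}}$-cosets in that double coset are in bijection with
\[
\mathsf{Z}_{\hat{\mathsf{G}}}^{\varphi}(g)\big/\bigl(\mathsf{Z}_{\hat{\mathsf{G}}}^{\varphi}(g)\cap\sigma_i\hat{\mathsf{H}}\sigma_i^{-1}\bigr)
\;\xrightarrow[\sim]{\;\sigma_i^{-1}(-)\sigma_i\;}\;
\mathsf{Z}_{\hat{\mathsf{G}}}^{\varphi}(h_i)\big/\mathsf{Z}_{\hat{\mathsf{H}}}^{\varphi}(h_i),
\]
and one then re-chooses the $\sigma$'s within their $\hat{\mathsf{H}}$-cosets exactly as you describe. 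This is the same content as the paper's injectivity/surjectivity check, so once the subscript is fixed the two proofs coincide.
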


\begin{proof}
The proof is nearly the same as that of \cite[Lemma 7.7]{ganter2008}; we include it for completeness. Let $\sigma \in \mathcal{S}_i$. Then $\sigma^{-1} g^{\pi(\sigma)} \sigma = \eta^{-1} h_i^{\pi(\eta)} \eta$ for some $\eta \in \hat{\mathsf{H}}$. It follows that $\sigma \eta^{-1}=\sigma$ in $\hat{\mathsf{G}} \slash \hat{\mathsf{H}}$ and $(\sigma \eta^{-1})^{-1} g^{\pi(\sigma \eta)} \sigma \eta^{-1} = h_i$ so that
\[
(\sigma_i^{-1} \sigma \eta^{-1})^{-1} h_i^{\pi(\sigma_i^{-1} \sigma \eta^{-1})} \sigma_i^{-1} \sigma \eta^{-1} = h_i
\]
and $\sigma_i^{-1} \sigma \eta^{-1} \in \mathsf{Z}_{\hat{\mathsf{G}}}^{\varphi}(h_i)$. Replacing $\sigma$ with $\sigma \eta^{-1}$, we henceforth assume that $\sigma \in \mathcal{S}_i$ is such that $\sigma^{-1} g^{\pi(\sigma)} \sigma= h_i$ and $\sigma_i^{-1} \sigma \in \mathsf{Z}_{\hat{\mathsf{G}}}^{\varphi}(h_i)$.

Let $\sigma, \sigma^{\prime} \in \mathcal{S}_i$ be distinct. Then $(\sigma_i^{-1} \sigma)^{-1} (\sigma_i^{-1} \sigma^{\prime}) = \sigma^{-1} \sigma^{\prime}$ does not lie in $\hat{\mathsf{H}}$. It follows that $\sigma_i^{-1} \sigma \neq \sigma_i^{-1} \sigma^{\prime}$ in $\mathsf{Z}_{\hat{\mathsf{G}}}^{\varphi}(h_i) \slash \mathsf{Z}_{\hat{\mathsf{H}}}^{\varphi}(h_i)$, proving injectivity of the map under consideration. To prove surjectivity, let $\mu \in \mathsf{Z}_{\hat{\mathsf{G}}}^{\varphi}(h_i)$. Then $\sigma_i \mu = \sigma \eta$ for some $\sigma \in \mathcal{S}$ and $\eta \in \hat{\mathsf{H}}$. We compute
\[
\sigma^{-1} g^{\pi(\sigma)} \sigma = \eta h_i^{\pi(\sigma)} \eta^{-1},
\]
whence $\sigma \in \mathcal{S}_i$. Since $\sigma^{-1} g^{\pi(\sigma)} \sigma = h_i$, we also find that $\eta \in \mathsf{Z}_{\hat{\mathsf{H}}}^{\varphi}(h_i)$. So $\sigma_i^{-1} \sigma = \mu \eta^{-1}$, showing that $\sigma_i^{-1} \sigma = \mu$ in $\mathsf{Z}_{\hat{\mathsf{G}}}^{\varphi}(h_i) \slash \mathsf{Z}_{\hat{\mathsf{H}}}^{\varphi}(h_i)$.
\end{proof}

\begin{Rem}
The representatives $\mathcal{S}$ of $\hat{\mathsf{G}} \slash \hat{\mathsf{H}}$ can be chosen to be a subset of $\mathsf{G}$. Such a choice simplifies the description of induced Real $2$-representations. However, it does not appear that there is a version of Lemma \ref{lem:repChoice} which produces a set of representatives which is again a subset of $\mathsf{G}$.
\end{Rem}

We henceforth assume that $\mathcal{S}$ is chosen as in Lemma \ref{lem:repChoice}. We have
\begin{eqnarray*}
\Tr_{\RInd_{\hat{\mathcal{H}}}^{\hat{\mathcal{G}}}(\rho)}(g)
&=&
\bigoplus_{i=1}^l \bigoplus_{\{\sigma \in \mathcal{S}_i \mid \sigma^{-1} g \sigma \in \mathsf{H}\}} \sigma \cdot \Tr_{\prescript{\pi(\sigma)}{}{\rho}}(h_i^{\pi(\sigma)}).
\end{eqnarray*}
If $\pi(\sigma)=-1$, then
\[
\Tr_{\rho^{\op}}(h_i^{-1}) \simeq 2\Hom_{\mathsf{Cat}}(\rho(h_i^{-1}),1_V).
\]
Define $k$-linear a map $F_i: 2\Hom_{\mathsf{Cat}}(\rho(h_i^{-1}),1_V) \rightarrow 2\Hom_{\mathsf{Cat}}(1_V, \rho(h_i))$
by
\[
\begin{gathered}
\begin{tikzpicture}[scale=0.2,inner sep=0.35mm, place/.style={circle,draw=black,fill=black,thick}]
\draw [decoration={markings, mark=at position 0.65 with {\arrow{>}}},        postaction={decorate}] (0,0) -- (0,4);
\draw (0,0) node [shape=circle,draw,fill] {};
\node [right,label={[label distance=0.4mm]0: \scriptsize$u$}] at (0,0) {};
\node [left,label={[label distance=0.4mm]180:\scriptsize$h_i$}] at (0,2.0) {};
\end{tikzpicture}
\end{gathered}
\qquad
\longmapsto
\qquad
\begin{gathered}
\begin{tikzpicture}[scale=0.2,inner sep=0.35mm, place/.style={circle,draw=black,fill=black,thick}]
\draw[black, decoration={markings, mark=at position 0.2 with {\arrow{>}}}, decoration={markings, mark=at position 0.82 with {\arrow{<}}},        postaction={decorate}](0,0) [partial ellipse=180:0:3 and 5];
\node at (4.9,2.5) {$\scriptstyle  h_i$};
\draw (3,0) node [shape=circle,draw,fill] {};
\node [right,label={[label distance=0.4mm]0:\scriptsize$u$}] at (3,0) {};
\end{tikzpicture}
\end{gathered}.
\]
Using equation \eqref{diag:snakeRemoval}, the inverse $F_i^{-1} : 2\Hom_{\mathsf{Cat}}(1_V, \rho(h_i)) \rightarrow 2\Hom_{\mathsf{Cat}}(\rho(h_i^{-1}),1_V)$ is seen to be
\[
\begin{gathered}
\begin{tikzpicture}[scale=0.2,inner sep=0.35mm, place/.style={circle,draw=black,fill=black,thick}]
\draw [decoration={markings, mark=at position 0.65 with {\arrow{>}}},        postaction={decorate}] (0,0) -- (0,4);
\draw (0,4) node [shape=circle,draw,fill] {};
\node [right,label={[label distance=0.4mm]0: \scriptsize$v$}] at (0,4) {};
\node [left,label={[label distance=0.4mm]180:\scriptsize$h^{-1}_i$}] at (0,2.0) {};
\end{tikzpicture}
\end{gathered}
\qquad
\longmapsto
\qquad
\hat{\alpha}([h_i \vert h^{-1}_i \vert h_i]) \times
\begin{gathered}
\begin{tikzpicture}[scale=0.2,inner sep=0.35mm, place/.style={circle,draw=black,fill=black,thick}]
\draw[black, decoration={markings, mark=at position 0.15 with {\arrow{<}}}, decoration={markings, mark=at position 0.895 with {\arrow{>}}},        postaction={decorate}](0,0) [partial ellipse=-180:0:3 and 5];
\node at (4.9,-2.5) {$\scriptstyle  h_i^{-1}$};
\draw (3,0) node [shape=circle,draw,fill] {};
\node [right,label={[label distance=0.4mm]0:\scriptsize$v$}] at (3,0) {};
\end{tikzpicture}
\end{gathered}.
\]
The maps $F_i$ induce a vector space isomorphism
\[
\Tr_{\RInd_{\hat{\mathcal{H}}}^{\hat{\mathcal{G}}}(\rho)}(g)
\simeq
\bigoplus_{i=1}^l 
\bigoplus_{\{\sigma \in \mathcal{S}_i \mid \sigma^{-1} g \sigma \in \mathsf{H}\}} \sigma \cdot \Tr_{\rho}(h_i)
\]
which we use below without mention.

\begin{Lem}[{\textit{cf.} \cite[Lemma 7.8]{ganter2008}}]
\label{lem:summandIdent}
There is an isomorphism
\[
\Tr_{\RInd_{\hat{\mathcal{H}}}^{\hat{\mathcal{G}}} (\rho)}(g) \simeq \bigoplus_{i=1}^l \Ind_{\mathsf{Z}_{\hat{\mathsf{H}}}^{\varphi}(h_i)}^{\mathsf{Z}_{\hat{\mathsf{G}}}^{\varphi}(g)} \left( \Tr_{\rho}(h_i) \right)
\]
of $\uptau_{\pi}^{\refl}(\hat{\alpha})$-twisted representations of $\mathsf{Z}_{\hat{\mathsf{G}}}^{\varphi}(g)$, the induction being along the composition $\mathsf{Z}_{\hat{\mathsf{H}}}^{\varphi}(h_i) \hookrightarrow \mathsf{Z}_{\hat{\mathsf{G}}}^{\varphi}(h_i) \xrightarrow[]{l \mapsto \sigma_i l \sigma_i^{-1}} \mathsf{Z}_{\hat{\mathsf{G}}}^{\varphi}(g)$.
\end{Lem}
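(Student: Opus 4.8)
The plan is to follow the strategy of \cite[Lemma 7.8]{ganter2008}, incorporating the $\mathbb{Z}_2$-grading and the twisting by $\hat{\alpha}$. I would start from the data already assembled above: the maps $F_i$ provide a $k$-linear identification $\Tr_{\RInd_{\hat{\mathcal{H}}}^{\hat{\mathcal{G}}}(\rho)}(g) \simeq \bigoplus_{i=1}^l \bigoplus_{\sigma \in \mathcal{S}_i} \sigma \cdot \Tr_{\rho}(h_i)$, and by Lemma \ref{lem:repChoice} left multiplication by $\sigma_i^{-1}$ carries $\mathcal{S}_i$ bijectively onto a complete system of representatives of $\mathsf{Z}_{\hat{\mathsf{G}}}^{\varphi}(h_i) \slash \mathsf{Z}_{\hat{\mathsf{H}}}^{\varphi}(h_i)$. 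Hence, as a graded vector space, the $i$-th inner sum is precisely the underlying space of $\Ind_{\mathsf{Z}_{\hat{\mathsf{H}}}^{\varphi}(h_i)}^{\mathsf{Z}_{\hat{\mathsf{G}}}^{\varphi}(h_i)}\bigl(\Tr_{\rho}(h_i)\bigr)$, relabelled by these representatives and pushed forward along the isomorphism $l \mapsto \sigma_i l \sigma_i^{-1}$ onto $\mathsf{Z}_{\hat{\mathsf{G}}}^{\varphi}(g)$. It therefore remains to check that, under this identification, the $\mathsf{Z}_{\hat{\mathsf{G}}}^{\varphi}(g)$-action on the left-hand side, which by Proposition \ref{prop:catCharLoopGrpd} and the definition of the Real categorical character is given by the maps $\beta_{g,\mu}$, matches the twisted-induced action for the twisting $\uptau_{\pi}^{\refl}(\hat{\alpha})$ restricted to $B\mathsf{Z}_{\hat{\mathsf{G}}}^{\varphi}(g)$.

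I would carry out this verification at the level of matrix entries. Fix $\mu \in \mathsf{Z}_{\hat{\mathsf{G}}}^{\varphi}(g)$ and an element $\phi$ of the summand indexed by $\sigma \in \mathcal{S}_i$. Substituting the explicit formulas for $\RInd_{\hat{\mathcal{H}}}^{\hat{\mathcal{G}}}(\rho)(\mu)$ and for the coherence $2$-isomorphisms $\RInd_{\hat{\mathcal{H}}}^{\hat{\mathcal{G}}}(\psi)$ from Section \ref{sec:Real2Ind} into the string diagrams \eqref{eq:charStriDiagPos}, \eqref{eq:charStriDiagNeg} defining $\beta_{g,\mu}$, one sees that $\beta_{g,\mu}(\phi)$ lies in the summand indexed by the unique $\sigma' \in \mathcal{S}_i$ with $\mu \sigma \in \sigma' \hat{\mathsf{H}}$, and that after transporting by $F_i$ its component there is a scalar multiple of $\beta_{h_i,\eta}(\phi)$, where $\eta \in \mathsf{Z}_{\hat{\mathsf{H}}}^{\varphi}(h_i)$ is determined by $\sigma_i^{-1}\mu\sigma_i$ modulo the coset action (using $\sigma,\sigma' \in \mathcal{S}_i$ and the normalization $h_i = \sigma_i^{-1} g^{\pi(\sigma_i)} \sigma_i$ of Lemma \ref{lem:repChoice}). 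The scalar is a ratio of values of $\hat{\alpha}$, which I would reduce --- using the string identities \eqref{diag:loopRemoval}--\eqref{diag:snakeRemoval}, equation \eqref{diag:associativity}, the partial-cocycle property \eqref{eq:twistTransPartialCocycle} of $\uptau_{\pi}^{\refl}$, and the explicit formula for $\uptau_{\pi}^{\refl}(\hat{\alpha})$ of Section \ref{sec:twistLoopTran} --- to exactly the coefficient occurring in the induction formula of Section \ref{sec:complexInd} (equivalently Section \ref{sec:RealInd}) for the groupoid composition $B\mathsf{Z}_{\hat{\mathsf{H}}}^{\varphi}(h_i) \hookrightarrow B\mathsf{Z}_{\hat{\mathsf{G}}}^{\varphi}(h_i) \xrightarrow{\,l \mapsto \sigma_i l \sigma_i^{-1}\,} B\mathsf{Z}_{\hat{\mathsf{G}}}^{\varphi}(g)$. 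Summing over $i$ then yields the asserted isomorphism of $\uptau_{\pi}^{\refl}(\hat{\alpha})$-twisted representations.

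The main obstacle will be the bookkeeping of the grading-dependent signs intertwined with the cochain identities, which is genuinely more delicate than in \cite[Lemma 7.8]{ganter2008}. When $\pi(\sigma) = -1$ the summand carries a built-in application of the contravariance functor, and the passage through $F_i$ and $F_i^{-1}$ contributes the factors $\hat{\alpha}([h_i \vert h_i^{-1} \vert h_i])$ coming from \eqref{diag:snakeRemoval}, which must recombine with the $\delta_{\pi(\cdot),\pi(\cdot),-1}$-term in the formula for $\uptau_{\pi}^{\refl}(\hat{\alpha})$; moreover the degree of $\mu$ interacts both with the $\pi(\sigma)$-th powers appearing in $\RInd_{\hat{\mathcal{H}}}^{\hat{\mathcal{G}}}(\psi)$ and with the inversion action on $k^{\times}$, so that the scalar must be recognized as a value of the \emph{twisted} transgression rather than the ordinary one. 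A useful organizing principle, which I would adopt to keep the computation manageable, is to treat first the case $\hat{\mathsf{H}} = \hat{\mathsf{G}}$, $l=1$ --- where the statement is essentially the Real conjugation-invariance already contained in Theorem \ref{thm:catCharLoopGrpd} --- and then bootstrap to general $\hat{\mathsf{H}}$ by composing with the ungraded groupoid induction of Proposition \ref{prop:twistedGrpdCharInd} along $B\mathsf{Z}_{\hat{\mathsf{H}}}^{\varphi}(h_i) \hookrightarrow B\mathsf{Z}_{\hat{\mathsf{G}}}^{\varphi}(h_i)$, exactly as in the reduction of Proposition \ref{prop:RealHIndChar} to Proposition \ref{prop:indTwistChar}.
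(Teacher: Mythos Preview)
Your overall strategy matches the paper's: compute the matrix entries of $\beta_{g,\mu}$ on $\Tr_{\RInd_{\hat{\mathcal{H}}}^{\hat{\mathcal{G}}}(\rho)}(g)$ via the string diagrams, recognize each as a scalar multiple of $\beta_{h_i,\eta}$, and compare with the induction formula. The paper, however, uncovers a subtlety you do not anticipate: the scalars do \emph{not} reduce exactly to the coefficients of the induction formula. Writing $\xi_1(\mu)$ for the action on the left-hand side and $\xi_2(\mu)$ for the induced action on the right, the paper finds
\[
\xi_2(\mu) \;=\; \frac{\uptau_{\pi}^{\refl}(\hat{\alpha})([\sigma_i^{-1}\vert\tilde{\sigma}]h_i)}{\uptau_{\pi}^{\refl}(\hat{\alpha})([\sigma_i^{-1}\vert\sigma]h_i)}\,\xi_1(\mu),
\]
so the intertwiner is not the identity on summands but the diagonal rescaling $\sigma\cdot\Tr_{\rho}(h_i) \xrightarrow{\,\uptau_{\pi}^{\refl}(\hat{\alpha})([\sigma_i^{-1}\vert\sigma]h_i)\,} \sigma_i^{-1}\sigma\cdot\Tr_{\rho}(h_i)$. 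You would likely discover this during the computation, but it is worth knowing in advance that the target identity is $c_1 c_2 = \frac{\theta_{h_i}([\mu\vert\sigma])}{\theta_{h_i}([\tilde{\sigma}\vert\eta])}$ (with $\theta_{\gamma}([\omega_2\vert\omega_1]) = \uptau_{\pi}^{\refl}(\hat{\alpha})([\omega_2\vert\omega_1]\gamma)$), not equality with the raw induction coefficient at $\sigma_i^{-1}\mu\sigma_i$.

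Your proposed bootstrap --- do $\hat{\mathsf{H}} = \hat{\mathsf{G}}$ first, then compose with ordinary groupoid induction --- does not work as stated, and the paper does not attempt it. When $\hat{\mathsf{H}} = \hat{\mathsf{G}}$ the functor $\RInd$ is the identity and the lemma is vacuous; and there is no factorization of $\RInd_{\hat{\mathcal{H}}}^{\hat{\mathcal{G}}}$ through an intermediate $2$-induction that would let you peel off the loop-groupoid induction piece (contrast the hyperbolic case in Section~\ref{sec:hyp2IndCar}, where $\HInd_{\mathcal{H}}^{\hat{\mathcal{G}}} = \HInd_{\mathcal{G}}^{\hat{\mathcal{G}}} \circ \Ind_{\mathcal{H}}^{\mathcal{G}}$ makes exactly such a reduction possible). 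The paper instead performs the string-diagram calculation directly, working through a representative selection of degree combinations $(\pi(\mu),\pi(\sigma),\pi(\tilde{\sigma}),\pi(\eta))$ and tracking separately the factor $c_1$ coming from the coherence scalars in $\RInd_{\hat{\mathcal{H}}}^{\hat{\mathcal{G}}}(\psi)$ and the factor $c_2$ coming from the maps $F_i^{\pm 1}$; this case analysis appears to be unavoidable.
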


\begin{proof}
Let $\mu \in \mathsf{Z}_{\hat{\mathsf{G}}}^{\varphi}(g)$ and $\sigma \in \mathcal{S}_i$. Then $
\mu \sigma = \tilde{\sigma} \eta$ for some $\tilde{\sigma} \in \mathcal{S}$ and $\eta \in \hat{\mathsf{H}}$. It is straightforward to verify that in fact $\tilde{\sigma} \in \mathcal{S}_i$ and $\eta \in \mathsf{Z}_{\hat{\mathsf{H}}}^{\varphi}(h_i)$. The equations
\[
\mu^{-1} \tilde{\sigma} = \sigma \eta^{-1} , \qquad g \sigma = \sigma h_i^{\pi(\sigma)}, \qquad \mu \sigma = \tilde{\sigma} \eta
\]
imply that $\mu$ acts on $\Tr_{\RInd_{\hat{\mathcal{H}}}^{\hat{\mathcal{G}}} (\rho)}(g)$ by a linear map $\xi_1(\mu) : \sigma \cdot \Tr_{\rho}(h_i) \rightarrow \tilde{\sigma} \cdot \Tr_{\rho}(h_i)$. We claim that $\xi_1(\mu)$ is equal to $c_1 c_2 \cdot \beta_{h_i, \eta}$, where
\begin{multline*}
c_1 = 
\left( \frac{\hat{\alpha}([g \vert \mu \vert \sigma]) \hat{\alpha}([\tilde{\sigma} \vert h_i^{\pi(\tilde{\sigma})} \vert \eta])}{\hat{\alpha}([g \vert \tilde{\sigma} \vert \eta])} \right)
\left(\frac{\hat{\alpha}([\mu \vert \sigma \vert h_i^{\pi(\mu \sigma)}])}{\hat{\alpha}([\mu \vert g^{\pi(\mu)} \vert \sigma]) \hat{\alpha}([\tilde{\sigma} \vert \eta \vert h_i^{\pi(\mu \sigma)}])} \right)  \times \\
\left( \frac{\hat{\alpha}([g^{-1} \vert \sigma \vert h_i^{\pi(\sigma)}])}{\hat{\alpha}([g^{-1} \vert g \vert \sigma]) \hat{\alpha}([\sigma \vert h_i^{-\pi(\sigma)} \vert h_i^{\pi(\sigma)}])} \right)^{-\frac{\pi(\mu)-1}{2}}
\end{multline*}
and
\[
c_2 =
\hat{\alpha}([h_i \vert h_i^{-1} \vert h_i])^{\delta_{\pi(\mu),\pi(\sigma),-1} -\delta_{\pi(\tilde{\sigma}),\pi(\eta),-1}} 
\left( \frac{\hat{\alpha}([h_i^{-1} \vert h_i \vert \eta]) \hat{\alpha}([\eta \vert h_i^{-\pi(\eta)} \vert h_i^{\pi(\eta)}])}{\hat{\alpha}([h_i^{-1} \vert \eta \vert h_i^{\pi(\eta)}])} \right)^{-\frac{\pi(\tilde{\sigma})-1}{2}}.
\]
Indeed, the factor $c_1$ is due to the scalars relating $\RInd_{\hat{\mathcal{H}}}^{\hat{\mathcal{G}}}(\psi)_{g,\mu}^{-1}$, $\RInd_{\hat{\mathcal{H}}}^{\hat{\mathcal{G}}}(\psi)_{\mu, g^{\pi(\mu)}}$ and $
\RInd_{\hat{\mathcal{H}}}^{\hat{\mathcal{G}}}(\psi)^{\op}_{g^{-1}, g}$ to $\psi^{-1}_{h_i^{\pi(\tilde{\sigma})},\eta}$, $\psi_{\eta, h_i^{\pi(\mu \sigma)}}$ and $\psi_{h_i^{-\pi(\sigma)}, h_i^{\pi(\sigma)}}$, respectively. Note that $
\RInd_{\hat{\mathcal{H}}}^{\hat{\mathcal{G}}}(\psi)^{\op}_{g^{-1}, g}$ appears only when $\pi(\mu)=-1$. These maps contribute to $\xi_1(\mu)$ regardless of the degrees of $\sigma$ and $\tilde{\sigma}$. The factor $c_2$ is due the maps $F^{\pm 1}_i$ and is best understood using string diagrams. For example, when $\mu$ and $\eta$ are of degree $+1$ and $\sigma$ and $\tilde{\sigma}$ are of degree $-1$, we have
\[
\xi_1(\mu)(\phi)
=
c_1 \cdot
\begin{tikzpicture}[scale=0.25,color=black, baseline]
\draw [decoration={markings, mark=at position 0.575 with {\arrow{<}}}, decoration={markings, mark=at position 0.945 with {\arrow{>}}},        postaction={decorate}] (0,0) ellipse (3 and 5);
\node (n1) at (-0.17,1.9) [circle] {};
\node at (1.5,0.0) {$\scriptstyle \phi$};
\node at (-1.15,-4.55) [circle,draw,fill,inner sep=0.35mm] {};
\node at (-2.25,-3.5) [circle,draw,fill,inner sep=0.35mm] {};
\draw[decoration={markings, mark=at position 0.5 with {\arrow{<}}},        postaction={decorate}] (n1) -- (-2.25,-3.5);
\node at (-3.7,-2) {$\scriptstyle \eta$};
\node at (4.7,-1.9) {$\scriptstyle \eta^{-1}$};
\node at (-0.0,-1.5) {$\scriptstyle h_i^{-1}$};
\node at (-0.0,-6.5) {$\scriptstyle h_i^{-1}$};
\node at (-6.7,0) {$\scriptstyle h_i$};
\node at (1.5,2.5) {$\scriptstyle h_i$};
\draw[black, decoration={markings, mark=at position 0.895 with {\arrow{>}}},        postaction={decorate}](-3.2,-4.55) [partial ellipse=-180:0:2 and 2];
\draw[decoration={markings, mark=at position 0.5 with {\arrow{>}}},        postaction={decorate}] (-5.2,-4.55) -- (-6.0,5);
\draw[black, decoration={markings, mark=at position 0.795 with {\arrow{<}}},        postaction={decorate}](0.5,1.0) [partial ellipse=180:0:1 and 1];
\node at (1.5,1.0) [circle,draw,fill,inner sep=0.3mm] {};
\end{tikzpicture}
\]
This string diagram can be evaluated as follows:
\[
\begin{tikzpicture}[scale=0.25,color=black, baseline]
\draw [decoration={markings, mark=at position 0.575 with {\arrow{<}}}, decoration={markings, mark=at position 0.945 with {\arrow{>}}},        postaction={decorate}] (0,0) ellipse (3 and 5);
\node (n1) at (-0.17,1.9) [circle] {};
\node at (1.5,0.0) {$\scriptstyle \phi$};
\node at (-1.15,-4.55) [circle,draw,fill,inner sep=0.35mm] {};
\node at (-2.25,-3.5) [circle,draw,fill,inner sep=0.35mm] {};
\draw[decoration={markings, mark=at position 0.5 with {\arrow{<}}},        postaction={decorate}] (n1) -- (-2.25,-3.5);
\node at (-3.7,-2) {$\scriptstyle \eta$};
\node at (4.7,-1.9) {$\scriptstyle \eta^{-1}$};
\node at (-0.0,-1.5) {$\scriptstyle h_i^{-1}$};
\node at (-0.0,-6.5) {$\scriptstyle h_i^{-1}$};
\node at (-6.7,0) {$\scriptstyle h_i$};
\node at (1.5,2.5) {$\scriptstyle h_i$};
\draw[black, decoration={markings, mark=at position 0.895 with {\arrow{>}}},        postaction={decorate}](-3.2,-4.55) [partial ellipse=-180:0:2 and 2];
\draw[decoration={markings, mark=at position 0.5 with {\arrow{>}}},        postaction={decorate}] (-5.2,-4.55) -- (-6.0,5);
\draw[black, decoration={markings, mark=at position 0.795 with {\arrow{<}}},        postaction={decorate}](0.5,1.0) [partial ellipse=180:0:1 and 1];
\node at (1.5,1.0) [circle,draw,fill,inner sep=0.3mm] {};
\end{tikzpicture}
\xrightarrow[\scriptstyle \text{Eq. \eqref{diag:moveToVertex}}]{\hat{\alpha}([h_i^{-1} \vert \eta \vert \eta^{-1}])}
\begin{tikzpicture}[scale=0.25,color=black, baseline]
\draw [decoration={markings, mark=at position 0.575 with {\arrow{<}}}, decoration={markings, mark=at position 0.945 with {\arrow{>}}},        postaction={decorate}] (0,0) ellipse (3 and 5);
\node (n1) at (-0.17,1.9) [circle] {};
\node at (1.5,0.0) {$\scriptstyle \phi$};
\node at (-0,-5.0) [circle,draw,fill,inner sep=0.35mm] {};
\node at (-2.25,-3.5) [circle,draw,fill,inner sep=0.35mm] {};
\draw[decoration={markings, mark=at position 0.5 with {\arrow{<}}},        postaction={decorate}] (n1) -- (-2.25,-3.5);
\node at (-3.7,-2) {$\scriptstyle \eta$};
\node at (4.4,-1.9) {$\scriptstyle \eta^{-1}$};
\node at (-0.0,-1.5) {$\scriptstyle h_i^{-1}$};
\node at (1.5,-6.5) {$\scriptstyle h_i^{-1}$};
\node at (-6.7,0) {$\scriptstyle h_i$};
\node at (1.5,2.5) {$\scriptstyle h_i$};
\draw[black, decoration={markings, mark=at position 0.895 with {\arrow{>}}},        postaction={decorate}](-1.9,-4.95) [partial ellipse=-180:0:2 and 2];
\draw[decoration={markings, mark=at position 0.5 with {\arrow{>}}},        postaction={decorate}] (-3.95,-4.95) -- (-6.0,5);
\draw[black, decoration={markings, mark=at position 0.795 with {\arrow{<}}},        postaction={decorate}](0.5,1.0) [partial ellipse=180:0:1 and 1];
\node at (1.5,1.0) [circle,draw,fill,inner sep=0.3mm] {};
\end{tikzpicture}
\xrightarrow[\scriptstyle \text{Eq. \eqref{diag:associativity}}]{\hat{\alpha}([\eta \vert h_i^{-1} \vert \eta^{-1}])^{-1}}
\]
\[
\begin{tikzpicture}[scale=0.25,color=black, baseline]
\draw [decoration={markings, mark=at position 0.575 with {\arrow{<}}}, decoration={markings, mark=at position 0.945 with {\arrow{>}}},        postaction={decorate}] (0,0) ellipse (3 and 5);
\node (n1) at (-0.97,1.7) [circle] {};
\node at (1.5,0.0) {$\scriptstyle \phi$};
\node at (-0,-5.0) [circle,draw,fill,inner sep=0.35mm] {};
\node at (2.25,-3.5) [circle,draw,fill,inner sep=0.35mm] {};
\draw[decoration={markings, mark=at position 0.5 with {\arrow{<}}},        postaction={decorate}] (n1) -- (2.25,-3.5);
\node at (-3.7,-2) {$\scriptstyle \eta$};
\node at (4.7,-1.9) {$\scriptstyle \eta^{-1}$};
\node at (-0.7,-0.9) {$\scriptstyle h_i^{-1}$};
\node at (1.5,-6.5) {$\scriptstyle h_i^{-1}$};
\node at (-5,6) {$\scriptstyle h_i$};
\node at (1.5,2.5) {$\scriptstyle h_i$};
\draw[black, decoration={markings, mark=at position 0.895 with {\arrow{>}}},        postaction={decorate}](-1.9,-4.95) [partial ellipse=-180:0:2 and 2];
\draw[decoration={markings, mark=at position 0.5 with {\arrow{>}}},        postaction={decorate}] (-3.95,-4.95) -- (-6.0,5);
\draw[black,decoration={markings, mark=at position 0.795 with {\arrow{<}}}, postaction={decorate}](0.5,1.0) [partial ellipse=180:0:1 and 1];
\node at (1.5,1.0) [circle,draw,fill,inner sep=0.3mm] {};
\end{tikzpicture}
\xrightarrow[\scriptstyle \text{Eq. \eqref{diag:associativity}}]{\hat{\alpha}([h_i \vert \eta \vert \eta^{-1} h_i^{-1}])}
\begin{tikzpicture}[scale=0.25,color=black, baseline]
\draw [decoration={markings, mark=at position 0.575 with {\arrow{<}}}, decoration={markings, mark=at position 0.945 with {\arrow{>}}},        postaction={decorate}] (0,0) ellipse (3 and 5);
\node (n1) at (-0.97,1.7) [circle] {};
\node at (1.5,0.0) {$\scriptstyle \phi$};
\node at (-2.2,3.3) [circle,draw,fill,inner sep=0.35mm] {};
\node at (2.25,-3.5) [circle,draw,fill,inner sep=0.35mm] {};
\draw[decoration={markings, mark=at position 0.5 with {\arrow{<}}},        postaction={decorate}] (n1) -- (2.25,-3.5);
\node at (-4.1,-2) {$\scriptstyle \eta h_i$};
\node at (4.7,-1.9) {$\scriptstyle \eta^{-1}$};
\node at (-0.7,-0.9) {$\scriptstyle h_i^{-1}$};
\node at (-5,6) {$\scriptstyle h_i$};
\node at (1.5,2.5) {$\scriptstyle h_i$};
\draw[black, decoration={markings, mark=at position 0.795 with {\arrow{<}}},        postaction={decorate}](0.5,1.0) [partial ellipse=180:0:1 and 1];
\node at (1.5,1.0) [circle,draw,fill,inner sep=0.3mm] {};
\draw[decoration={markings, mark=at position 0.7 with {\arrow{>}}},        postaction={decorate}] (-2.2,3.3) -- (-3.5,5);
\end{tikzpicture}
\xrightarrow[\scriptstyle  \text{Eq. \eqref{diag:associativity}}]{\hat{\alpha}([\eta h_i \vert h_i^{-1} \vert \eta^{-1}])}
\]
\[
\begin{tikzpicture}[scale=0.25,color=black, baseline]
\draw [decoration={markings, mark=at position 0.575 with {\arrow{<}}}, decoration={markings, mark=at position 0.945 with {\arrow{>}}},        postaction={decorate}] (0,0) ellipse (3 and 5);
\node (n1) at (-0.17,1.9) [circle] {};
\node at (1.5,0.0) {$\scriptstyle \phi$};
\node at (-2.2,3.3) [circle,draw,fill,inner sep=0.35mm] {};
\node at (-2.25,-3.5) [circle,draw,fill,inner sep=0.35mm] {};
\draw[decoration={markings, mark=at position 0.5 with {\arrow{<}}},        postaction={decorate}] (n1) -- (-2.25,-3.5);
\node at (-4.0,-2) {$\scriptstyle \eta h_i$};
\node at (4.7,-1.9) {$\scriptstyle \eta^{-1}$};
\node at (-0.0,-1.5) {$\scriptstyle h_i^{-1}$};
\node at (1.0,2.6) {$\scriptstyle h_i$};
\node at (-5,6) {$\scriptstyle h_i$};
\draw[black, decoration={markings, mark=at position 0.795 with {\arrow{<}}},        postaction={decorate}](0.5,1.0) [partial ellipse=180:0:1 and 1];
\node at (1.5,1.0) [circle,draw,fill,inner sep=0.3mm] {};
\draw[decoration={markings, mark=at position 0.7 with {\arrow{>}}},        postaction={decorate}] (-2.2,3.3) -- (-3.5,5);
\end{tikzpicture}
\xrightarrow[\scriptstyle \text{Eq. \eqref{diag:moveToVertex}}]{\hat{\alpha}([\eta \vert h_i \vert h_i^{-1}])^{-1}}
\begin{tikzpicture}[scale=0.25,color=black, baseline]
\draw [decoration={markings, mark=at position 0.575 with {\arrow{<}}}, decoration={markings, mark=at position 0.945 with {\arrow{>}}},        postaction={decorate}] (0,0) ellipse (3 and 5);
\node (n1) at (-0.17,1.9) [circle] {};
\node at (1.5,0.0) {$\scriptstyle \phi$};
\node at (-2.2,3.3) [circle,draw,fill,inner sep=0.35mm] {};
\node at (-2.75,2) [circle,draw,fill,inner sep=0.35mm] {};
\node at (-4.0,-2) {$\scriptstyle \eta$};
\node at (4.7,-1.9) {$\scriptstyle \eta^{-1}$};
\node at (1.0,2.6) {$\scriptstyle h_i$};
\node at (-5,6) {$\scriptstyle h_i$};
\draw[black, decoration={markings, mark=at position 0.795 with {\arrow{<}}},        postaction={decorate}](0.6,1.0) [partial ellipse=180:0:0.85 and 0.85];
\draw[black](-1.1,1.0) [partial ellipse=180:360:0.8 and 0.8];
\draw[] (-1.85,0.95) -- (-2.75,2);
\node at (1.5,1.0) [circle,draw,fill,inner sep=0.3mm] {};
\draw[decoration={markings, mark=at position 0.7 with {\arrow{>}}},        postaction={decorate}] (-2.2,3.3) -- (-3.5,5);
\end{tikzpicture}
\xrightarrow[\scriptstyle  \text{Eq. \eqref{diag:snakeRemoval}}]{\hat{\alpha}([h_i \vert h_i^{-1} \vert h_i])^{-1}}
\begin{tikzpicture}[scale=0.25,color=black, baseline]
\draw [decoration={markings, mark=at position 0.575 with {\arrow{<}}}, decoration={markings, mark=at position 0.945 with {\arrow{>}}},        postaction={decorate}] (0,0) ellipse (3 and 5);
\node (n1) at (0,1.0) [circle,draw,fill,inner sep=0.35mm] {};
\node[below=0.01mm of n1] {$\scriptstyle \phi$};
\node at (-2.75,2) [circle,draw,fill,inner sep=0.35mm] {};
\node at (-2.2,3.3) [circle,draw,fill,inner sep=0.35mm] {};
\draw[decoration={markings, mark=at position 0.7 with {\arrow{>}}},        postaction={decorate}] (-2.2,3.3) -- (-3.5,5);
\draw[decoration={markings, mark=at position 0.5 with {\arrow{>}}},        postaction={decorate}] (n1) -- (-2.75,2);
\node at (-4.5,-2) {$\scriptstyle \eta$};
\node at (4.7,-1.9) {$\scriptstyle \eta^{-1}$};
\node at (-1.0,2.5) {$\scriptstyle h_i$};
\node at (-3.5,6) {$\scriptstyle h_i$};
\end{tikzpicture}
\]
so that, after a short calculation, we arrive at
\[
\begin{tikzpicture}[scale=0.25,color=black, baseline]
\draw [decoration={markings, mark=at position 0.575 with {\arrow{<}}}, decoration={markings, mark=at position 0.945 with {\arrow{>}}},        postaction={decorate}] (0,0) ellipse (3 and 5);
\node (n1) at (-0.17,1.9) [circle] {};
\node at (1.5,0.0) {$\scriptstyle \phi$};
\node at (-1.15,-4.55) [circle,draw,fill,inner sep=0.35mm] {};
\node at (-2.25,-3.5) [circle,draw,fill,inner sep=0.35mm] {};
\draw[decoration={markings, mark=at position 0.5 with {\arrow{<}}},        postaction={decorate}] (n1) -- (-2.25,-3.5);
\node at (-3.7,-2) {$\scriptstyle \eta$};
\node at (4.7,-1.9) {$\scriptstyle \eta^{-1}$};
\node at (-0.0,-1.5) {$\scriptstyle h_i^{-1}$};
\node at (-0.0,-6.5) {$\scriptstyle h_i^{-1}$};
\node at (-6.7,0) {$\scriptstyle h_i$};
\node at (1.5,2.5) {$\scriptstyle h_i$};
\draw[black, decoration={markings, mark=at position 0.895 with {\arrow{>}}},        postaction={decorate}](-3.2,-4.55) [partial ellipse=-180:0:2 and 2];
\draw[decoration={markings, mark=at position 0.5 with {\arrow{>}}},        postaction={decorate}] (-5.2,-4.55) -- (-6.0,5);
\draw[black, decoration={markings, mark=at position 0.795 with {\arrow{<}}},        postaction={decorate}](0.5,1.0) [partial ellipse=180:0:1 and 1];
\node at (1.5,1.0) [circle,draw,fill,inner sep=0.3mm] {};
\end{tikzpicture}
=
\frac{\hat{\alpha}([h_i^{-1} \vert h_i \vert \eta]) \hat{\alpha}([\eta \vert h_i^{-1} \vert h_i])}{\hat{\alpha}([h_i^{-1} \vert \eta \vert h_i])} \beta_{h_i,\eta}(\phi) = c_2 \beta_{h_i, \eta}(\phi).
\]
Similarly, when $\mu, \sigma$ are of degree $+1$ and $\tilde{\sigma}, \eta$ are of degree $-1$, we have
\[
\xi_1(\mu)(\phi)
=
c_1 \cdot
\begin{tikzpicture}[scale=0.25,color=black, baseline]
\draw [decoration={markings, mark=at position 0.575 with {\arrow{<}}}, decoration={markings, mark=at position 0.945 with {\arrow{>}}},        postaction={decorate}] (0,0) ellipse (3 and 5);
\node (n1) at (-0.17,1.6) [circle,draw,fill,inner sep=0.3mm] {};
\node at (-0.5,2.5) {$\scriptstyle \phi$};
\node at (-1.15,-4.55) [circle,draw,fill,inner sep=0.35mm] {};
\node at (-2.25,-3.5) [circle,draw,fill,inner sep=0.35mm] {};
\draw[decoration={markings, mark=at position 0.5 with {\arrow{<}}},        postaction={decorate}] (n1) -- (-2.25,-3.5);
\node at (-3.7,-2) {$\scriptstyle \eta$};
\node at (4.7,-1.9) {$\scriptstyle \eta^{-1}$};
\node at (0.2,-1.5) {$\scriptstyle h_i$};
\node at (-0.0,-6.5) {$\scriptstyle h_i^{-1}$};
\node at (-6.7,0) {$\scriptstyle h_i$};
\draw[black, decoration={markings, mark=at position 0.895 with {\arrow{>}}},        postaction={decorate}](-3.2,-4.55) [partial ellipse=-180:0:2 and 2];
\draw[decoration={markings, mark=at position 0.5 with {\arrow{>}}},        postaction={decorate}] (-5.2,-4.55) -- (-6.0,5);
\end{tikzpicture}
\]
This string diagram can be evaluated in the same way as the previous diagram, the main difference being that the final step is not required. This leads to an additional factor of $\hat{\alpha}([h_i \vert h_i^{-1} \vert h_i])^{-1}$:
\[
\begin{tikzpicture}[scale=0.25,color=black, baseline]
\draw [decoration={markings, mark=at position 0.575 with {\arrow{<}}}, decoration={markings, mark=at position 0.945 with {\arrow{>}}},        postaction={decorate}] (0,0) ellipse (3 and 5);
\node (n1) at (-0.17,1.6) [circle,draw,fill,inner sep=0.3mm] {};
\node at (-0.5,2.5) {$\scriptstyle \phi$};
\node at (-1.15,-4.55) [circle,draw,fill,inner sep=0.35mm] {};
\node at (-2.25,-3.5) [circle,draw,fill,inner sep=0.35mm] {};
\draw[decoration={markings, mark=at position 0.5 with {\arrow{<}}},        postaction={decorate}] (n1) -- (-2.25,-3.5);
\node at (-3.7,-2) {$\scriptstyle \eta$};
\node at (4.7,-1.9) {$\scriptstyle \eta^{-1}$};
\node at (0.2,-1.5) {$\scriptstyle h_i$};
\node at (-0.0,-6.5) {$\scriptstyle h_i^{-1}$};
\node at (-6.7,0) {$\scriptstyle h_i$};
\draw[black, decoration={markings, mark=at position 0.895 with {\arrow{>}}},        postaction={decorate}](-3.2,-4.55) [partial ellipse=-180:0:2 and 2];
\draw[decoration={markings, mark=at position 0.5 with {\arrow{>}}},        postaction={decorate}] (-5.2,-4.55) -- (-6.0,5);
\end{tikzpicture}
=
\hat{\alpha}([h_i \vert h_i^{-1} \vert h_i])^{-1} \frac{\hat{\alpha}([h_i^{-1} \vert h_i \vert \eta]) \hat{\alpha}([\eta \vert h_i \vert h_i^{-1}])}{\hat{\alpha}([h_i^{-1} \vert \eta \vert h_i^{-1}])}
\beta_{h_i, \eta}(\phi).
\]
The coefficient of $\beta_{h_i, \eta}(\phi)$ is again $c_2$. The remaining cases are dealt with similarly.

On the other hand, as $(\sigma_i^{-1} \mu \sigma_i)(\sigma_i^{-1} \sigma) = (\sigma_i^{-1} \tilde{\sigma}) \eta$, the results of Section \ref{sec:complexInd} imply that $\sigma_i^{-1} \mu \sigma_i \in \mathsf{Z}_{\hat{\mathsf{G}}}^{\varphi}(h_i)$ acts on
\[
\Ind_{\mathsf{Z}_{\hat{\mathsf{H}}}^{\varphi}(h_i)}^{\mathsf{Z}_{\hat{\mathsf{G}}}^{\varphi}(h_i)} \left( \Tr_{\rho}(h_i) \right) \simeq \bigoplus_{\sigma \in \mathcal{S}_i} \sigma_i^{-1} \sigma \cdot \Tr_{\rho}(h_i)
\]
by the linear map $\xi_2(\sigma_i^{-1} \mu \sigma_i): \sigma_i^{-1} \sigma\cdot \Tr_{\rho}(h_i) \rightarrow \sigma_i^{-1} \tilde{\sigma} \cdot \Tr_{\rho}(h_i)$ given by
\[
\xi_2(\sigma_i^{-1} \mu \sigma_i) = \frac{\theta_{h_i}([\sigma_i^{-1} \mu \sigma_i \vert \sigma_i^{-1} \sigma])}{\theta_{h_i}([\sigma_i^{-1} \tilde{\sigma} \vert \eta])} \beta_{h_i,\eta}.
\]
Here we have set $\theta_{\gamma}([\omega_2 \vert \omega_1]) = \uptau_{\pi}^{\refl}(\hat{\alpha})([\omega_2 \vert \omega_1]\gamma)$. Noting that
\[
\xi_2(\mu) =
\frac{\theta_{g}([\sigma_i^{-1} \vert \mu])}{\theta_{g}([\sigma_i^{-1} \mu \sigma_i \vert \sigma_i^{-1}])} \xi_2(\sigma_i^{-1} \mu \sigma_i),
\]
closedness of $\uptau_{\pi}^{\refl}(\hat{\alpha})$ then gives
\[
\xi_2(\mu) = \frac{\theta_{h_i}([\mu \vert \sigma])}{\theta_{h_i}([\tilde{\sigma} \vert \eta])} \frac{\theta_{h_i}([\sigma_i^{-1} \vert \tilde{\sigma}])}{\theta_{h_i}([\sigma_i^{-1} \vert \sigma])} \beta_{h_i,\eta}.
\]
The explicit expression for $\uptau_{\pi}^{\refl}(\hat{\alpha})$ shows that $\frac{\theta_{h_i}([\mu \vert \sigma]) }{\theta_{h_i}([\tilde{\sigma} \vert \eta])}$ is equal to $c_1 c_2$ above. We therefore arrive at the equality
\[
\xi_2(\mu) = \frac{\theta_{h_i}([\sigma_i^{-1} \vert \tilde{\sigma}])}{\theta_{h_i}([\sigma_i^{-1} \vert \sigma])} \xi_1(\mu).
\]
In other words, the diagram
\[
\begin{tikzpicture}[baseline= (a).base]
\node[scale=1] (a) at (0,0){
\begin{tikzcd}[row sep=large, column sep = huge]
\sigma \cdot \Tr_{\rho}(h_i) \arrow{r}{\xi_1(\mu)} \arrow{d}[left]{\uptau_{\pi}^{\refl}(\hat{\alpha})([\sigma_i^{-1} \vert \sigma] h_i)} & \tilde{\sigma} \cdot \Tr_{\rho}(h_i) \arrow{d}{\uptau_{\pi}^{\refl}(\hat{\alpha})([\sigma_i^{-1} \vert \tilde{\sigma}] h_i)} \\ \sigma_i^{-1} \sigma \cdot \Tr_{\rho}(h_i) \arrow{r}[below]{\xi_2(\mu)} &  \sigma_i^{-1} \tilde{\sigma} \cdot \Tr_{\rho}(h_i)
\end{tikzcd}
};
\end{tikzpicture}
\]
commutes. The vertical (scalar multiplication) maps of this diagram assemble to define the desired isomorphism $\Tr_{\RInd_{\hat{\mathcal{H}}}^{\hat{\mathcal{G}}} (\rho)}(g) \xrightarrow[]{\sim} \bigoplus_{i=1}^l \Ind_{\mathsf{Z}_{\hat{\mathsf{H}}}^{\varphi}(h_i)}^{\mathsf{Z}_{\hat{\mathsf{G}}}^{\varphi}(g)} \left( \Tr_{\rho}(h_i) \right)$ of twisted representations of $\mathsf{Z}_{\hat{\mathsf{G}}}^{\varphi}(g)$.
\end{proof}

Theorem \ref{thm:Real2RealIndFunct} follows at once from Lemma \ref{lem:summandIdent}.

\begin{Thm}
\label{thm:RInd2Char}
The Real $2$-character of $\RInd_{\hat{\mathcal{H}}}^{\hat{\mathcal{G}}}(\rho)$ is
\[
\chi_{\RInd_{\hat{\mathcal{H}}}^{\hat{\mathcal{G}}}(\rho)}(g, \omega) = \frac{1}{2 \vert \mathsf{H} \vert} \sum_{\substack{\sigma \in \hat{\mathsf{G}} \\ \sigma(g, \omega) \sigma^{-1} \in \hat{\mathsf{H}}^2}} \uptau \uptau_{\pi}^{\refl}(\hat{\alpha})([\sigma]g \xrightarrow[]{\omega} g)^{-1} \cdot \chi_{\rho}(\sigma g^{\pi(\sigma)} \sigma^{-1}, \sigma \omega \sigma^{-1}).
\]
\end{Thm}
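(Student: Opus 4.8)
The plan is to deduce the formula from Theorem \ref{thm:Real2RealIndFunct} and the groupoid character formula of Proposition \ref{prop:twistedGrpdCharInd}, following the template by which Corollary \ref{cor:ind2Char} is deduced from Theorem \ref{thm:twistedIndFunctProj}. First I would recast the Real $2$-character as a character on the unoriented loop groupoid. By Theorem \ref{thm:catCharLoopGrpd} the Real categorical character $\Tr(\rho)$ is a $\uptau_{\pi}^{\refl}(\hat{\alpha}_{\vert \hat{\mathsf{H}}})$-twisted representation of $\Lambda_{\pi}^{\refl} B \hat{\mathsf{H}}$, and by the holonomy computation underlying the proof of Theorem \ref{thm:conjInv2Char} (following \cite[\S 2.3.3]{willerton2008}) the Real $2$-character $\chi_{\rho}$ is precisely the character of this twisted representation, regarded as a function on the loops of $\Lambda_{\pi}^{\refl} B \hat{\mathsf{H}}$, that is, on graded commuting pairs in $\mathsf{H} \times \hat{\mathsf{H}}$. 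The same applies to $\RInd_{\hat{\mathcal{H}}}^{\hat{\mathcal{G}}}(\rho)$, so $\chi_{\RInd_{\hat{\mathcal{H}}}^{\hat{\mathcal{G}}}(\rho)}$ is the character, evaluated on loops of $\Lambda_{\pi}^{\refl} B \hat{\mathsf{G}}$, of the $\uptau_{\pi}^{\refl}(\hat{\alpha})$-twisted representation $\Tr(\RInd_{\hat{\mathcal{H}}}^{\hat{\mathcal{G}}}(\rho))$; by Theorem \ref{thm:Real2RealIndFunct} the latter is $\Ind_{\Lambda_{\pi}^{\refl} B \hat{\mathsf{H}}}^{\Lambda_{\pi}^{\refl} B \hat{\mathsf{G}}}(\Tr(\rho))$. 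Thus it remains to compute the character of an induced twisted groupoid representation on loops.

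This is exactly what Proposition \ref{prop:twistedGrpdCharInd} does, applied to the functor $f : \Lambda_{\pi}^{\refl} B \hat{\mathsf{H}} \to \Lambda_{\pi}^{\refl} B \hat{\mathsf{G}}$ induced by $\hat{\mathsf{H}} \hookrightarrow \hat{\mathsf{G}}$, the cocycle $\theta = \uptau_{\pi}^{\refl}(\hat{\alpha})$ (which restricts along $f$ to $\uptau_{\pi}^{\refl}(\hat{\alpha}_{\vert \hat{\mathsf{H}}})$), and the twisted representation $\Tr(\rho)$. The functor $f$ is faithful since morphisms of an unoriented loop groupoid are group elements composed by multiplication, so Proposition \ref{prop:twistedGrpdCharInd} applies. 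I would then substitute the group-theoretic dictionary: objects of $\Lambda_{\pi}^{\refl} B \hat{\mathsf{H}}$ are labelled by $h \in \mathsf{H}$ with $\Aut(h) = \mathsf{Z}_{\hat{\mathsf{H}}}^{\varphi}(h)$ and orbit the Real $\hat{\mathsf{H}}$-conjugacy class $\pser{h}_{\hat{\mathsf{H}}}$, so that $\vert \mathcal{O}_{\Lambda_{\pi}^{\refl} B \hat{\mathsf{H}}}(h) \vert \cdot \vert \Aut(h) \vert = \vert \hat{\mathsf{H}} \vert = 2 \vert \mathsf{H} \vert$; a morphism $g \to h$ in $\Lambda_{\pi}^{\refl} B \hat{\mathsf{G}}$ is an element $\sigma \in \hat{\mathsf{G}}$ with $\sigma g^{\pi(\sigma)} \sigma^{-1} = h$; the transgressed twist is $\uptau(\uptau_{\pi}^{\refl}(\hat{\alpha})) = \uptau \uptau_{\pi}^{\refl}(\hat{\alpha})$ and, on the relevant $1$-chain, equals $\uptau \uptau_{\pi}^{\refl}(\hat{\alpha})([\sigma] g \xrightarrow[]{\omega} g)$; and $\chi_{\Tr(\rho)} = \chi_{\rho}$.

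Substituting, the right-hand side of Proposition \ref{prop:twistedGrpdCharInd} becomes $\frac{1}{2 \vert \mathsf{H} \vert}$ times the sum, over $h \in \mathsf{H}$ and over $\sigma \in \hat{\mathsf{G}}$ with $\sigma g^{\pi(\sigma)} \sigma^{-1} = h$ and $\sigma \omega \sigma^{-1} \in \mathsf{Z}_{\hat{\mathsf{H}}}^{\varphi}(h)$, of $\uptau \uptau_{\pi}^{\refl}(\hat{\alpha})([\sigma] g \xrightarrow[]{\omega} g)^{-1} \chi_{\rho}(h, \sigma \omega \sigma^{-1})$. To match the statement I would now observe that the condition $\sigma \omega \sigma^{-1} \in \mathsf{Z}_{\hat{\mathsf{H}}}^{\varphi}(h)$ simplifies: since $\hat{\mathsf{G}}$ preserves $\hat{\mathsf{G}}^{(2)}$ and $(g,\omega)$ is graded commuting, the pair $(\sigma g^{\pi(\sigma)}\sigma^{-1}, \sigma\omega\sigma^{-1})$ is graded commuting too, so $\sigma\omega\sigma^{-1}$ automatically fixes $h = \sigma g^{\pi(\sigma)}\sigma^{-1}$ under Real $\hat{\mathsf{H}}$-conjugation and the condition reduces to $\sigma\omega\sigma^{-1} \in \hat{\mathsf{H}}$; similarly $\sigma g^{\pi(\sigma)}\sigma^{-1} \in \hat{\mathsf{H}}$ forces $h \in \mathsf{H}$ automatically, as $g \in \mathsf{G}$. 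Summing over all $h$ then just amounts to dropping $h$ and imposing $\sigma(g,\omega)\sigma^{-1} \in \hat{\mathsf{H}}^{2}$, and writing $h = \sigma g^{\pi(\sigma)}\sigma^{-1}$ inside $\chi_{\rho}$ yields the claimed formula. The only points needing care are the faithfulness of $f$ and this condition-and-orbit-size bookkeeping; all the genuine work has been done in Theorem \ref{thm:Real2RealIndFunct} and Proposition \ref{prop:twistedGrpdCharInd}, so, unlike Lemma \ref{lem:summandIdent}, this argument involves no further string-diagram computation.
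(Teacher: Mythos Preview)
Your proposal is correct and follows essentially the same approach as the paper: reduce via Theorem \ref{thm:Real2RealIndFunct} to computing the character of $\Ind_{\Lambda_{\pi}^{\refl} B \hat{\mathsf{H}}}^{\Lambda_{\pi}^{\refl} B \hat{\mathsf{G}}}(\Tr(\rho))$, note that the inclusion functor is faithful, and apply Proposition \ref{prop:twistedGrpdCharInd}. The paper's own proof is a two-line sketch of exactly this; your write-up simply makes the orbit--stabilizer bookkeeping and the simplification of the summation conditions explicit.
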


\begin{proof}
By Theorem \ref{thm:Real2RealIndFunct}, it suffices to compute the character of $\Ind_{\Lambda_{\pi}^{\refl} B \hat{\mathsf{H}}}^{\Lambda_{\pi}^{\refl} B \hat{\mathsf{G}}} \left( \Tr(\rho) \right)$. As the canonical functor $\Lambda_{\pi}^{\refl} B \hat{\mathsf{H}} \rightarrow \Lambda_{\pi}^{\refl} B \hat{\mathsf{G}}$ is faithful, this character can be computed using Proposition \ref{prop:twistedGrpdCharInd}.
\end{proof}

\subsection{Hyperbolic \texorpdfstring{$2$}{}-induction}
\label{sec:hyp2Ind}

We now turn to the categorification of hyperbolic induction. Let $\hat{\mathsf{G}}$ be a finite $\mathbb{Z}_2$-graded group. Fix $\hat{\alpha} \in Z^3(B \hat{\mathsf{G}}, k^{\times}_{\pi})$ and $\varsigma \in \hat{\mathsf{G}} \backslash \mathsf{G}$. Let $\rho$ be a linear representation of $\mathcal{G}=\mathcal{G}(\mathsf{G},\alpha)$ on $V$. The underlying category of $\HInd_{\mathcal{G}}^{\hat{\mathcal{G}}} (\rho)$ is $V \times V^{\op}$. The required $1$-morphisms are
\[
\HInd_{\mathcal{G}}^{\hat{\mathcal{G}}} (\rho)(g)= 
\left(
\begin{array}{cc}
\rho(g) & 0 \\
0& \rho(\varsigma^{-1} g \varsigma)^{\op}
\end{array}
\right), \qquad g \in \mathsf{G}
\]
and
\[
\HInd_{\mathcal{G}}^{\hat{\mathcal{G}}} (\rho)(\omega)= 
\left(
\begin{array}{cc}
0 & \rho(\omega \varsigma) \\
\rho(\varsigma^{-1} \omega)^{\op} & 0
\end{array}
\right), \qquad \omega \in \hat{\mathsf{G}} \backslash \mathsf{G}.
\]
The associativity $2$-isomorphisms are
\[
\HInd_{\mathcal{G}}^{\hat{\mathcal{G}}}(\psi)_{g_2,g_1} = 
\left(
\begin{array}{cc}
\psi_{g_2,g_1} & 0 \\
0 & \frac{\hat{\alpha}([g_2 \vert \varsigma \vert \varsigma^{-1} g_1 \varsigma])}{\hat{\alpha}([g_2 \vert g_1 \vert \varsigma]) \hat{\alpha}([\varsigma \vert \varsigma^{-1} g_2 \varsigma \vert \varsigma^{-1} g_1 \varsigma])} \psi_{\varsigma^{-1} g_2 \varsigma, \varsigma^{-1} g_1 \varsigma}^{- \op}
\end{array}
\right)
\]
and
\[
\HInd_{\mathcal{G}}^{\hat{\mathcal{G}}}(\psi)_{\omega_2,g_1} = 
\left(
\begin{array}{cc}
\frac{\hat{\alpha}([\omega_2 \vert \varsigma \vert \varsigma^{-1} g_1 \varsigma])}{\hat{\alpha}([\omega_2 \vert g_1 \vert \varsigma])}\psi_{ \omega_2 \varsigma, \varsigma^{-1} g_1 \varsigma} & 0 \\
0 & \hat{\alpha}([\varsigma \vert \varsigma^{-1} \omega_2 \vert g_1])^{-1} \psi_{\varsigma^{-1} \omega_2, g_1}^{- \op}
\end{array}
\right)
\]
and
\[
\HInd_{\mathcal{G}}^{\hat{\mathcal{G}}}(\psi)_{g_2,\omega_1} = 
\left(
\begin{array}{cc}
\hat{\alpha}([g_2 \vert \omega_1 \vert \varsigma])^{-1} \psi_{g_2, \omega_1 \varsigma} & 0 \\
0 & \frac{\hat{\alpha}([g_2 \vert \varsigma \vert \varsigma^{-1} \omega_1])}{\hat{\alpha}([\varsigma \vert \varsigma^{-1} g_2 \varsigma \vert \varsigma^{-1} \omega_1])}\psi_{\varsigma^{-1} g_2 \varsigma, \varsigma^{-1} \omega_1}^{- \op}
\end{array}
\right)
\]
and
\[
\HInd_{\mathcal{G}}^{\hat{\mathcal{G}}}(\psi)_{\omega_2,\omega_1} = 
\left(
\begin{array}{cc}
\hat{\alpha}([\omega_2 \vert \varsigma \vert \varsigma^{-1} \omega_1]) \psi_{\omega_2 \varsigma, \varsigma^{-1} \omega_1} & 0 \\
0 & \frac{1}{\hat{\alpha}([\omega_2 \vert \omega_1 \vert \varsigma]) \hat{\alpha}([\varsigma \vert \varsigma^{-1} \omega_2 \vert \omega_2 \varsigma])} \psi_{\varsigma^{-1} \omega_2 , \omega_1 \varsigma}^{- \op}
\end{array}
\right)
\]
where $g_1, g_2 \in \mathsf{G}$ and $\omega_1, \omega_2 \in \hat{\mathsf{G}} \backslash \mathsf{G}$. These expressions are most easily obtained by interpreting Real representations as homotopy fixed points of $\mathsf{Rep}_{\mathsf{Cat}_k,k}(\mathcal{G})$ and applying a categorified hyperbolic construction. In any case, it is straightforward to verify that this defines a linear Real representation of $\mathcal{G}$.

More generally, for a subgroup $\mathsf{H} \subset \mathsf{G}$, we define $\HInd_{\mathcal{H}}^{\hat{\mathcal{G}}} = \HInd_{\mathcal{G}}^{\hat{\mathcal{G}}} \circ \Ind_{\mathcal{H}}^{\mathcal{G}}$.

\subsection{Hyperbolically induced Real categorical and \texorpdfstring{$2$}{}-characters}
\label{sec:hyp2IndCar}

We compute the Real categorical and $2$-characters of $\HInd_{\mathcal{H}}^{\hat{\mathcal{G}}}(\rho)$. Since the method is similar to that of Section \ref{sec:Real2IndCar}, we will at points be brief.

\begin{Thm}
\label{thm:RealIndFunct}
There is a canonical isomorphism
\[
\Tr(\HInd_{\mathcal{H}}^{\hat{\mathcal{G}}}(\rho))
\simeq
\Ind_{\Lambda B \mathsf{H}}^{\Lambda_{\pi}^{\refl} B \hat{\mathsf{G}}} \left( \Tr(\rho) \right)
\]
of $\uptau_{\pi}^{\refl}(\hat{\alpha})$-twisted representations of $\Lambda_{\pi}^{\refl} B \hat{\mathsf{G}}$.
\end{Thm}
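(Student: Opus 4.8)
The plan is to reduce to the case $\mathsf{H} = \mathsf{G}$ and then argue exactly as in Section~\ref{sec:Real2IndCar}. Since $\HInd_{\mathcal{H}}^{\hat{\mathcal{G}}} = \HInd_{\mathcal{G}}^{\hat{\mathcal{G}}} \circ \Ind_{\mathcal{H}}^{\mathcal{G}}$ by definition, Theorem~\ref{thm:twistedIndFunctProj} supplies a canonical isomorphism $\Tr(\Ind_{\mathcal{H}}^{\mathcal{G}}(\rho)) \simeq \Ind_{\Lambda B \mathsf{H}}^{\Lambda B \mathsf{G}}(\Tr(\rho))$ of $\uptau(\alpha)$-twisted representations of $\Lambda B \mathsf{G}$. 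Granting the special case, which gives a natural isomorphism $\Tr(\HInd_{\mathcal{G}}^{\hat{\mathcal{G}}}(-)) \simeq \Ind_{\Lambda B \mathsf{G}}^{\Lambda_{\pi}^{\refl} B \hat{\mathsf{G}}}(\Tr(-))$, the general statement follows by applying $\Ind_{\Lambda B \mathsf{G}}^{\Lambda_{\pi}^{\refl} B \hat{\mathsf{G}}}$ to the previous isomorphism and invoking transitivity of twisted induction for finite groupoids, $\Ind_{\Lambda B \mathsf{H}}^{\Lambda_{\pi}^{\refl} B \hat{\mathsf{G}}} \simeq \Ind_{\Lambda B \mathsf{G}}^{\Lambda_{\pi}^{\refl} B \hat{\mathsf{G}}} \circ \Ind_{\Lambda B \mathsf{H}}^{\Lambda B \mathsf{G}}$, together with the fact that $\uptau_{\pi}^{\refl}(\hat{\alpha})$ restricts to $\uptau(\alpha)$ along the faithful functor $\Lambda B \mathsf{G} \hookrightarrow \Lambda_{\pi}^{\refl} B \hat{\mathsf{G}}$ (Section~\ref{sec:twistLoopTran}).

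\textbf{The case $\mathsf{H} = \mathsf{G}$.} Fix $g \in \mathsf{G}$. The underlying category of $\HInd_{\mathcal{G}}^{\hat{\mathcal{G}}}(\rho)$ is $V \times V^{\op}$ and $\HInd_{\mathcal{G}}^{\hat{\mathcal{G}}}(\rho)(g)$ is block diagonal, so there is a canonical identification
\[
\Tr_{\HInd_{\mathcal{G}}^{\hat{\mathcal{G}}}(\rho)}(g) \simeq \Tr_{\rho}(g) \oplus \Tr_{\rho^{\op}}(\varsigma^{-1} g \varsigma).
\]
Applying the cap isomorphism $F$ of Section~\ref{sec:Real2IndCar} (built from equation \eqref{diag:snakeRemoval}) to the second summand identifies it with $\Tr_{\rho}(\varsigma^{-1} g^{-1} \varsigma)$. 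On the other side, since $\mathsf{G}$ has index two in $\hat{\mathsf{G}}$ and $\Lambda B \mathsf{G} \hookrightarrow \Lambda_{\pi}^{\refl} B \hat{\mathsf{G}}$ is faithful and the identity on objects, the comma category computing the left Kan extension at $g$ has exactly two connected components, represented by the morphisms $e : g \to g$ and $\varsigma : \varsigma^{-1} g^{-1} \varsigma \to g$ in $\Lambda_{\pi}^{\refl} B \hat{\mathsf{G}}$; hence $\Ind_{\Lambda B \mathsf{G}}^{\Lambda_{\pi}^{\refl} B \hat{\mathsf{G}}}(\Tr(\rho))(g) \simeq \Tr_{\rho}(g) \oplus \Tr_{\rho}(\varsigma^{-1} g^{-1} \varsigma)$, matching the display above.

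\textbf{Matching the module structures.} It remains to verify that these identifications intertwine the two actions of $\Lambda_{\pi}^{\refl} B \hat{\mathsf{G}}$, that is, that for each morphism $\sigma : g_1 \to g_2$ the map by which $\sigma$ acts on $\Tr_{\HInd_{\mathcal{G}}^{\hat{\mathcal{G}}}(\rho)}(g_1)$ equals, up to the scalar prescribed by $\uptau_{\pi}^{\refl}(\hat{\alpha})$, the twisted-induction action on $\Ind_{\Lambda B \mathsf{G}}^{\Lambda_{\pi}^{\refl} B \hat{\mathsf{G}}}(\Tr(\rho))(g_1)$. As in the proof of Lemma~\ref{lem:summandIdent}, this is a string-diagram computation: one expands the $\HInd$-action on $\Tr$ via $\beta_{\bullet,\bullet}$ using the explicit associator $2$-isomorphisms $\HInd_{\mathcal{G}}^{\hat{\mathcal{G}}}(\psi)_{\bullet,\bullet}$ from Section~\ref{sec:hyp2Ind}, pushes the cap maps $F^{\pm 1}$ through with equations \eqref{diag:moveToVertex}, \eqref{diag:associativity} and \eqref{diag:snakeRemoval}, and collects the resulting cocycle factors, splitting into the four cases according to the degrees of $\sigma$ and of the relevant summands. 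In each case the accumulated scalar is the value of $\uptau_{\pi}^{\refl}(\hat{\alpha})$ dictated by Proposition~\ref{prop:twistedGrpdCharInd}, which is the required compatibility. The main obstacle is precisely this bookkeeping of cocycle factors through the cap maps and the off-diagonal $\HInd$-associators; organized as in Section~\ref{sec:Real2IndCar} it is routine but lengthy, so we suppress the details. Combined with Theorem~\ref{thm:twistedIndFunctProj} and transitivity of induction, this yields the claimed isomorphism.
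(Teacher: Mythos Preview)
Your proposal is correct and follows essentially the same route as the paper: reduce to $\mathsf{H}=\mathsf{G}$ via $\HInd_{\mathcal{H}}^{\hat{\mathcal{G}}}=\HInd_{\mathcal{G}}^{\hat{\mathcal{G}}}\circ\Ind_{\mathcal{H}}^{\mathcal{G}}$, Theorem~\ref{thm:twistedIndFunctProj} and transitivity of twisted groupoid induction, then identify the two summands of $\Tr_{\HInd_{\mathcal{G}}^{\hat{\mathcal{G}}}(\rho)}(g)$ via the cap maps and match the $\Lambda_{\pi}^{\refl}B\hat{\mathsf{G}}$-actions by the string-diagram computation of Section~\ref{sec:Real2IndCar}. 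The paper organizes this last step slightly differently, passing through the skeletal equivalence \eqref{eq:unoriLoopDecomp} and splitting into the Real ($l=1$) versus non-Real ($l=2$) cases of \eqref{eq:conjDecomp} to state and prove Lemma~\ref{lem:summandIdentHyp} with explicit cocycle factors $c_1,c_2$; one small slip in your write-up is that the scalar you need is the one appearing in the \emph{definition} of twisted groupoid induction (Section~\ref{sec:complexInd}), not the character formula of Proposition~\ref{prop:twistedGrpdCharInd}.
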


Standard properties of induction for representations of groupoids yield an isomorphism
\[
\Ind^{\Lambda_{\pi}^{\refl} B \mathsf{G}}_{\Lambda B \mathsf{H}} (\Tr_{\rho}(g)) \simeq \Ind^{\Lambda_{\pi}^{\refl} B \mathsf{G}}_{\Lambda B \mathsf{G}} \left( \Ind^{\Lambda B \mathsf{G}}_{\Lambda B \mathsf{H}} (\Tr_{\rho}(g)) \right)
\]
of $\uptau_{\pi}^{\refl}(\hat{\alpha})$-twisted representations. Since $\HInd_{\mathcal{H}}^{\hat{\mathcal{G}}} = \HInd_{\mathcal{G}}^{\hat{\mathcal{G}}} \circ \Ind_{\mathcal{H}}^{\mathcal{G}}$ and, by Theorem \ref{thm:Real2RealIndFunct}, we have
\[
\Tr(\Ind_{\mathcal{H}}^{\mathcal{G}} (\rho)) \simeq \Ind_{\Lambda B \mathsf{H}}^{\Lambda B \mathsf{G}} \left( \Tr(\rho) \right),
\]
it suffices to prove Theorem \ref{thm:RealIndFunct} under the assumption that $\mathsf{H}=\mathsf{G}$.

We proceed as in the proof of Theorem \ref{thm:Real2RealIndFunct}. Fix again an equivalence of the form \eqref{eq:unoriLoopDecomp}. Instead of \eqref{eq:RealConjDecomp} we consider a decomposition
\begin{equation}
\label{eq:conjDecomp}
\pser{g}_{\hat{\mathsf{G}}} = \bigsqcup_{i=1}^l [g_i]_{\mathsf{G}}
\end{equation}
with $[g_i]_{\mathsf{G}} \subset \mathsf{G}$ the conjugacy class of $g_i$. We have $l \in \{1,2\}$ according to whether or not the conjugacy class $[g_1]_{\mathsf{G}}$ is Real ($l=1$) or non-Real $(l=2)$. The Real and non-Real cases have $\mathsf{Z}_{\mathsf{G}}(g) \subsetneq \mathsf{Z}_{\hat{\mathsf{G}}}^{\varphi}(g)$ and $\mathsf{Z}_{\mathsf{G}}(g) = \mathsf{Z}_{\hat{\mathsf{G}}}^{\varphi}(g)$, respectively. There is an induced decomposition
\[
\mathcal{S} = \bigsqcup_{i=1}^l \mathcal{S}_i
\]
with $\mathcal{S}_i = \{ \sigma \in \mathcal{S} \mid \sigma^{-1} g^{\pi(\sigma)} \sigma \in [g_i]_{\mathsf{G}} \}$. We set $\sigma_1 =e$ and, in the non-Real case, $\sigma_2 = \varsigma$. Relabel the representatives of the conjugacy classes appearing in the decomposition \eqref{eq:conjDecomp} by $g_i = \sigma_i^{-1} g^{\pi(\sigma_i)} \sigma_i$.

The obvious analogue of Lemma \ref{lem:repChoice} holds by inspection. Explicitly, in the Real case we take $\varsigma$ to be any element of $\mathsf{Z}_{\hat{\mathsf{G}}}^{\varphi}(g) \backslash \mathsf{Z}_{\mathsf{G}}(g)$. The maps $F_i^{\pm}$ then yield an identification
\[
\Tr_{\HInd_{\mathcal{G}}^{\hat{\mathcal{G}}}(\rho)}(g) \simeq \Tr_{\rho}(g) \oplus \varsigma \cdot \Tr_{\rho}(g_2)
\]
where, by convention, $g_2=g$ in the Real case.

\begin{Lem}
\label{lem:summandIdentHyp}
There is an isomorphism
\[
\Tr_{\HInd_{\mathcal{G}}^{\hat{\mathcal{G}}} (\rho)}(g) \simeq \bigoplus_{i=1}^l \Ind_{\mathsf{Z}_{\mathsf{G}}(g_i)}^{\mathsf{Z}_{\hat{\mathsf{G}}}^{\varphi}(g)} \left( \Tr_{\rho}(g_i) \right)
\]
of $\uptau_{\pi}^{\refl}(\hat{\alpha})$-twisted representations of $\mathsf{Z}_{\hat{\mathsf{G}}}^{\varphi}(g)$, the induction being along the composition $\mathsf{Z}_{\mathsf{G}}(g_i) \hookrightarrow \mathsf{Z}_{\hat{\mathsf{G}}}^{\varphi}(g_i) \xrightarrow[]{l \mapsto \sigma_i l \sigma^{-1}_i} \mathsf{Z}_{\hat{\mathsf{G}}}^{\varphi}(g)$.
\end{Lem}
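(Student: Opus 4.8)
The statement is the hyperbolic analogue of Lemma \ref{lem:summandIdent}, and the proof will follow the same strategy, so I would keep the exposition brief and point to the earlier argument wherever possible. The goal is to identify the $\mathsf{Z}_{\hat{\mathsf{G}}}^{\varphi}(g)$-action on the two summands $\Tr_{\rho}(g)$ and $\varsigma \cdot \Tr_{\rho}(g_2)$ of $\Tr_{\HInd_{\mathcal{G}}^{\hat{\mathcal{G}}}(\rho)}(g)$ with the representation induced from $\mathsf{Z}_{\mathsf{G}}(g_i)$ along the twisted map $l \mapsto \sigma_i l \sigma_i^{-1}$, where the twisting cocycle is $\uptau_{\pi}^{\refl}(\hat{\alpha})$. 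As in the proof of Lemma \ref{lem:summandIdent}, the key is to compute, for $\mu \in \mathsf{Z}_{\hat{\mathsf{G}}}^{\varphi}(g)$ and $\sigma \in \mathcal{S}_i$, the linear map $\xi_1(\mu): \sigma \cdot \Tr_{\rho}(g_i) \to \tilde\sigma \cdot \Tr_{\rho}(g_i)$ by which $\mu$ acts (where $\mu\sigma = \tilde\sigma \eta$ with $\tilde\sigma \in \mathcal{S}_i$, $\eta \in \hat{\mathsf{H}}$), show it equals a scalar times $\beta_{g_i,\eta}$, and then compare with the map $\xi_2(\mu)$ coming from the groupoid-level induction of Section \ref{sec:complexInd}, which carries the normalization $\theta_{g_i}([\mu\vert\sigma])/\theta_{g_i}([\tilde\sigma\vert\eta])$ with $\theta_\gamma([\omega_2\vert\omega_1]) = \uptau_{\pi}^{\refl}(\hat\alpha)([\omega_2\vert\omega_1]\gamma)$.

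\textbf{Main steps.} First I would spell out the identification $\Tr_{\HInd_{\mathcal{G}}^{\hat{\mathcal{G}}}(\rho)}(g) \simeq \Tr_{\rho}(g) \oplus \varsigma\cdot\Tr_{\rho}(g_2)$, using the explicit formulae for $\HInd_{\mathcal{G}}^{\hat{\mathcal{G}}}(\rho)(g)$ and $\HInd_{\mathcal{G}}^{\hat{\mathcal{G}}}(\rho)(\omega)$ from Section \ref{sec:hyp2Ind}, and the maps $F_i^{\pm 1}$ from Section \ref{sec:Real2IndCar} in the degree $-1$ summand (here the role of $h_i^{-1}$ is played by $\varsigma^{-1}g\varsigma = g_2$ in the Real case, or $g_2$ directly in the non-Real case). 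Second, I would track the scalars: the action of $\mu$ is assembled from the appropriate components of $\HInd_{\mathcal{G}}^{\hat{\mathcal{G}}}(\psi)_{g,\mu}^{-1}$, $\HInd_{\mathcal{G}}^{\hat{\mathcal{G}}}(\psi)_{\mu,g^{\pi(\mu)}}$ and, when $\pi(\mu)=-1$, the dual $\HInd_{\mathcal{G}}^{\hat{\mathcal{G}}}(\psi)^{\op}_{g^{-1},g}$, together with the contributions of $F_i^{\pm 1}$; each of these is an explicit product of values of $\hat\alpha$. Third, I would use the string-diagram moves \eqref{diag:moveToVertex}, \eqref{diag:snakeRemoval}, \eqref{diag:associativity} and \eqref{diag:loopRemoval}, exactly as in the cases worked out after Lemma \ref{lem:summandIdent}, to reduce $\xi_1(\mu)$ to a scalar multiple of $\beta_{g_i,\eta}$, and recognize that scalar as $c_1 c_2$ with $c_1,c_2$ of the same shape as in Lemma \ref{lem:summandIdent} (with $h_i$ replaced by $g_i$). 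Fourth, invoking closedness of $\uptau_{\pi}^{\refl}(\hat\alpha)$ (Lemma \ref{lem:RealProjCocycle}, which ensures $\uptau_{\pi}^{\refl}$ of a $3$-cocycle is again a cocycle) one checks $\xi_2(\mu) = \frac{\uptau_{\pi}^{\refl}(\hat\alpha)([\sigma_i^{-1}\vert\tilde\sigma]g_i)}{\uptau_{\pi}^{\refl}(\hat\alpha)([\sigma_i^{-1}\vert\sigma]g_i)}\,\xi_1(\mu)$, so that scalar multiplication by $\uptau_{\pi}^{\refl}(\hat\alpha)([\sigma_i^{-1}\vert\sigma]g_i)$ on the $\sigma$-summand intertwines the two actions. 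Assembling these scalars over all $\sigma \in \mathcal{S}_i$ and all $i$ gives the claimed isomorphism of twisted $\mathsf{Z}_{\hat{\mathsf{G}}}^{\varphi}(g)$-representations.

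\textbf{Expected obstacle.} The only genuinely new bookkeeping relative to Lemma \ref{lem:summandIdent} is in the bottom-right (degree $-1$) entry of the four $\HInd_{\mathcal{G}}^{\hat{\mathcal{G}}}(\psi)$ matrices of Section \ref{sec:hyp2Ind}, where the $\varsigma$-conjugation $g \mapsto \varsigma^{-1}g\varsigma$ interacts with the $(-)^{\op}$ and the normalizing $\hat\alpha$-factors; in the Real case one must additionally commute $\varsigma$ past $g$ inside $\mathsf{Z}_{\hat{\mathsf{G}}}^{\varphi}(g)$, which is where the factor $\hat\alpha([g\vert g^{-1}\vert g])$-type corrections enter. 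I expect the main effort is to verify that, after all the $\hat\alpha$-factors from the $\HInd$-formulae, the $F_i^{\pm 1}$ snake moves, and the ${}\cdot{}^{\pi(\mu)}$ powers combine to exactly $c_1 c_2$, i.e.\ to $\uptau_{\pi}^{\refl}(\hat\alpha)$ evaluated on the relevant chains; this is a finite but delicate cocycle identity, entirely parallel to the computation displayed after Lemma \ref{lem:summandIdent}, and I would present only the representative degree-cases and indicate that the rest are analogous. Theorem \ref{thm:RealIndFunct} then follows immediately from Lemma \ref{lem:summandIdentHyp} together with the standard identification of a direct sum of inductions with a single induction along the inclusion into $\Lambda_{\pi}^{\refl}B\hat{\mathsf{G}}$, using the equivalence \eqref{eq:unoriLoopDecomp}.
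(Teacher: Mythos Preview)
Your proposal is correct and follows essentially the same approach as the paper's proof: compute the map $\xi_1(\mu)$ on each summand as a scalar multiple of $\beta_{g_i,\cdot}$ by tracking the $\hat\alpha$-factors from the explicit $\HInd(\psi)$-matrices and the $F_i^{\pm 1}$ corrections, compare with the induction-side map $\xi_2(\mu)$ via closedness of $\uptau_{\pi}^{\refl}(\hat\alpha)$, and read off the intertwining scalars. Two minor points worth aligning with the paper: (i) since here one is inducing from the \emph{ungraded} centralizer $\mathsf{Z}_{\mathsf{G}}(g_i)$, the element you call $\eta$ always lies in $\mathsf{G}$ (the paper writes $p \in \mathsf{Z}_{\mathsf{G}}(g_i)$), and there is no ambient $\hat{\mathsf{H}}$ in the hyperbolic setting; (ii) the paper organizes the argument by first splitting into the non-Real ($l=2$) and Real ($l=1$) cases, which is convenient because in the non-Real case $\mathsf{Z}_{\hat{\mathsf{G}}}^{\varphi}(g)=\mathsf{Z}_{\mathsf{G}}(g)$ forces $\sigma=\tilde\sigma$, so the intertwining ratio $\uptau_{\pi}^{\refl}(\hat\alpha)([\sigma_i^{-1}\vert\tilde\sigma]g_i)/\uptau_{\pi}^{\refl}(\hat\alpha)([\sigma_i^{-1}\vert\sigma]g_i)$ is trivially $1$ and $\xi_1=\xi_2$ on the nose.
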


\begin{proof}
Consider first the non-Real case. Let $\mu \in \mathsf{Z}_{\hat{\mathsf{G}}}^{\varphi}(g) = \mathsf{Z}_{\mathsf{G}}(g)$ and $\sigma \in \mathcal{S}_i$. Then $
\mu \sigma = \tilde{\sigma} p$ for some $\tilde{\sigma} \in \mathcal{S}_i$ and $p \in \mathsf{Z}_{\mathsf{G}}(g_i)$. For $i=1$ we have $\sigma = e = \tilde{\sigma}$ and $\mu$ corresponds to the map $\xi_1(\mu) : \Tr_{\rho}(g) \rightarrow \Tr_{\rho}(g)$ given by $\beta_{g, p}$. For $i=2$ we have $\sigma= \varsigma = \tilde{\sigma}$  and $\mu$ induces a map $\xi_1(\mu) : \varsigma \cdot \Tr_{\rho}(g_2) \rightarrow \varsigma \cdot \Tr_{\rho}(g_2)$ which is equal to $c_1 c_2 \cdot \beta_{g_2, p}$, where
\[
c_1 = 
\left( \frac{\hat{\alpha}([g_2^{-1} \vert p \vert \varsigma^{-1}]) \hat{\alpha}([\varsigma^{-1} \vert g \vert \mu])}{\hat{\alpha}([g_2^{-1} \vert \varsigma^{-1} \vert \mu])} \right)
\left(\frac{\hat{\alpha}([p \vert \varsigma^{-1} \vert g])}{\hat{\alpha}([p \vert g_2^{-1} \vert \varsigma^{-1}]) \hat{\alpha}([\varsigma^{-1} \vert \mu \vert g])} \right)
\]
and
\[
c_2 =
\frac{\hat{\alpha}([g_2^{-1} \vert g_2 \vert p]) \hat{\alpha}([p \vert g_2^{-1} \vert g_2])}{\hat{\alpha}([g_2^{-1} \vert p \vert g_2])}.
\]
The factor $c_1$ is due to the scalars relating $\HInd_{\mathcal{G}}^{\hat{\mathcal{G}}}(\psi)_{g,\mu}^{-1}$ and $\HInd_{\mathcal{G}}^{\hat{\mathcal{G}}}(\psi)_{\mu, g}$ to $\psi^{\op}_{g_2^{-1},p}$ and $\psi_{p, g_2^{-1}}^{-\op}$, respectively, while $c_2$ is due to the maps $F^{\pm 1}_i$, as in Lemma \ref{lem:summandIdent}.

On the other hand, as $(\sigma_i^{-1} \mu \sigma_i)(\sigma_i^{-1} \sigma) = (\sigma_i^{-1} \tilde{\sigma}) p$, the element $\sigma_i^{-1} \mu \sigma_i \in \mathsf{Z}_{\mathsf{G}}(g_i)$ acts on $\Ind_{\mathsf{Z}_{\mathsf{G}}(g_i)}^{\mathsf{Z}_{\hat{\mathsf{G}}}^{\varphi}(g_i)} \left( \Tr_{\rho}(g_i) \right) = \Tr_{\rho}(g_i)$ by the map
\[
\xi_2(\sigma_i^{-1} \mu \sigma_i) = \frac{\theta_{g_i}([\sigma_i^{-1} \mu \sigma_i \vert \sigma_i^{-1} \sigma])}{\theta_{g_i}([\sigma_i^{-1} \tilde{\sigma} \vert p])} \beta_{g_i,p}.
\]
Again, we have set $\theta_{\gamma}([\omega_2 \vert \omega_1]) = \uptau_{\pi}^{\refl}(\hat{\alpha})([\omega_2 \vert \omega_1]\gamma)$. As in Lemma \ref{lem:summandIdent}, this leads to the expression
\[
\xi_2(\mu) = \frac{\theta_{g_i}([\sigma_i^{-1} \vert \tilde{\sigma}])}{\theta_{g_i}([\sigma_i^{-1} \vert \sigma])} \frac{\theta_{g_i}([\mu \vert \sigma])}{\theta_{g_i}([\tilde{\sigma} \vert p]) } \beta_{g_i,p}.
\]
Since $\sigma=\tilde{\sigma}$, this becomes $\xi_2(\mu)= \beta_{g,p}$ for $i=1$ and $\xi_2(\mu) = c_1 c_2 \cdot  \beta_{g_2, p}$ for $i=2$, where in the latter case we have used the explicit expression for $\uptau_{\pi}^{\refl}(\hat{\alpha})$.

Consider now the Real case. Let $\mu \in \mathsf{Z}_{\hat{\mathsf{G}}}^{\varphi}(g)$ and $\sigma \in \mathcal{S} = \mathcal{S}_1$. Then $\mu \sigma = \tilde{\sigma} p$ for some $\tilde{\sigma} \in \mathcal{S}$ and $p \in \mathsf{Z}_{\mathsf{G}}(g)$. It follows that the action of $\mu$ on $\Tr_{\HInd_{\mathcal{G}}^{\hat{\mathcal{G}}}(\rho)}(g)$ induces a linear map $\xi_1(\mu): \sigma \cdot \Tr_{\rho}(g) \rightarrow \tilde{\sigma} \cdot \Tr_{\rho}(g)$ of the form $c_1 c_2 \cdot \beta_{g,p}$. The factors $c_1$ and $c_2$, whose explicit forms we omit, arise in the same way as above. Note that $c_1$ may now receive contributions from $\HInd_{\mathcal{G}}^{\hat{\mathcal{G}}}(\psi)^{\op}_{g^{-1}, g}$.

On the other hand, since $\mu \sigma = \tilde{\sigma} p$, the action of $\mu \in \mathsf{Z}_{\mathsf{G}}(g)$ on $\Tr_{\HInd_{\mathcal{G}}^{\hat{\mathcal{G}}}(\rho)}(g)$ induces a linear map $\xi_2(\mu): \sigma \cdot \Tr_{\rho}(g) \rightarrow \tilde{\sigma}\cdot \Tr_{\rho}(g)$ given by
\[
\xi_2(\mu) = \frac{\uptau_{\pi}^{\refl}(\hat{\alpha})([\mu \vert \sigma] g)}{\uptau_{\pi}^{\refl}(\hat{\alpha})([\tilde{\sigma} \vert p] g)} \beta_{g,p}.
\]
For example, when $\pi(\mu)=-1$, $\sigma= \varsigma$ and $\tilde{\sigma}=e$, the coefficient of $\beta_{g,p}$ reads
\[
\uptau_{\pi}^{\refl}(\hat{\alpha})([\mu \vert \varsigma]g) = \hat{\alpha}([g \vert g^{-1} \vert g]) 
\frac{\hat{\alpha}([g^{-1} \vert \varsigma \vert g^{-1}])}{\hat{\alpha}([g^{-1} \vert g \vert \varsigma]) \hat{\alpha}([\varsigma \vert g \vert g^{-1}])} \frac{\hat{\alpha}([\mu \vert \varsigma \vert g]) \hat{\alpha}([g \vert \mu \vert \varsigma])}{\hat{\alpha}([\mu \vert g^{-1} \vert \varsigma])}.
\]
The factor $\hat{\alpha}([g \vert g^{-1} \vert g])$ is equal to $c_2$ while the remaining terms multiply to $c_1$. Note that $\HInd_{\mathcal{G}}^{\hat{\mathcal{G}}}(\psi)^{\op}_{g^{-1}, g}$ contributes a factor of $1$ to $c_1$ in this case. The other cases are treated in the same way.
\end{proof}

\begin{Thm}
\label{thm:HInd2Char}
The Real $2$-character of $\HInd_{\mathcal{H}}^{\hat{\mathcal{G}}} (\rho)$ is
\[
\chi_{\HInd_{\mathcal{H}}^{\hat{\mathcal{G}}} (\rho)}(g, \omega) = \frac{1}{\vert \mathsf{H} \vert} \sum_{\substack{\sigma \in \hat{\mathsf{G}} \\ \sigma(g, \omega) \sigma^{-1} \in \mathsf{H}^2}} \uptau \uptau_{\pi}^{\refl}(\hat{\alpha})([\sigma] g \xrightarrow[]{\omega} g)^{-1} \cdot \chi_{\rho}(\sigma g^{\pi(\sigma)} \sigma^{-1}, \sigma \omega \sigma^{-1}).
\]
In particular, $\chi_{\HInd_{\mathcal{H}}^{\hat{\mathcal{G}}} (\rho)}$ is supported on the subset $\pi_0(\Lambda^2 B \mathsf{G}) \subset \pi_0(\Lambda \Lambda_{\pi}^{\refl} B \hat{\mathsf{G}})$.
\end{Thm}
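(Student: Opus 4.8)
The plan is to mimic the proof of Theorem \ref{thm:RInd2Char}, reducing the computation of the Real $2$-character to a character computation for an induced twisted groupoid representation. First I would invoke Theorem \ref{thm:RealIndFunct}, which identifies $\Tr(\HInd_{\mathcal{H}}^{\hat{\mathcal{G}}}(\rho))$ with the twisted induction $\Ind_{\Lambda B \mathsf{H}}^{\Lambda_{\pi}^{\refl} B \hat{\mathsf{G}}}(\Tr(\rho))$ as $\uptau_{\pi}^{\refl}(\hat{\alpha})$-twisted representations of $\Lambda_{\pi}^{\refl} B \hat{\mathsf{G}}$. By the definition of the Real $2$-character and of the functor $\Tr(\rho)$, the value $\chi_{\HInd_{\mathcal{H}}^{\hat{\mathcal{G}}}(\rho)}(g,\omega)$ is exactly the character of this twisted groupoid representation at the loop of $\Lambda_{\pi}^{\refl} B \hat{\mathsf{G}}$ given by the degree one loop $g \xrightarrow{\omega} g$; by construction of the unoriented loop groupoid such loops are precisely the graded commuting pairs $(g,\omega) \in \hat{\mathsf{G}}^{(2)}$. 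So it remains to compute this induced character.

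Next I would apply Proposition \ref{prop:twistedGrpdCharInd} to the canonical faithful functor $f \colon \Lambda B \mathsf{H} \to \Lambda_{\pi}^{\refl} B \hat{\mathsf{G}}$ induced by $\mathsf{H} \hookrightarrow \hat{\mathsf{G}}$, noting that $\uptau_{\pi}^{\refl}(\hat{\alpha})$ pulls back along $f$ to $\uptau(\alpha_{\vert \mathsf{H}})$ and that the character of $\Tr(\rho)$ as a twisted representation of $\Lambda B \mathsf{H}$ is, by definition, the $2$-character $\chi_{\rho}$ on commuting pairs in $\mathsf{H}$. The remaining work is bookkeeping: for an object $y \in \mathsf{H}$ of $\Lambda B \mathsf{H}$ one has $\vert \mathcal{O}_{\Lambda B \mathsf{H}}(y)\vert \cdot \vert \Aut_{\Lambda B \mathsf{H}}(y)\vert = \vert [y]_{\mathsf{H}}\vert \cdot \vert \mathsf{Z}_{\mathsf{H}}(y)\vert = \vert \mathsf{H}\vert$, producing the prefactor $\tfrac{1}{\vert \mathsf{H}\vert}$; a morphism $g \to y$ in $\Lambda_{\pi}^{\refl} B \hat{\mathsf{G}}$ is an element $\sigma \in \hat{\mathsf{G}}$ with $\sigma g^{\pi(\sigma)}\sigma^{-1} = y$; and, using that $(g,\omega)$ is graded commuting, one checks that $\sigma \omega \sigma^{-1}$ is automatically an automorphism of $y = \sigma g^{\pi(\sigma)}\sigma^{-1}$ in $\Lambda_{\pi}^{\refl} B \hat{\mathsf{G}}$, so the summation conditions of Proposition \ref{prop:twistedGrpdCharInd} (that $y$ lie in $\mathsf{H}$ and that $\sigma\omega\sigma^{-1}$ lie in $\Aut_{\Lambda B \mathsf{H}}(y)$) collapse to $\sigma(g,\omega)\sigma^{-1} \in \mathsf{H}^2$. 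Merging the sums over $y$ and over $\sigma$ into a single sum over $\sigma \in \hat{\mathsf{G}}$ and substituting $\theta = \uptau_{\pi}^{\refl}(\hat{\alpha})$, so that $\uptau(\theta) = \uptau\uptau_{\pi}^{\refl}(\hat{\alpha})$, then yields the displayed formula. I do not expect a genuine obstacle here; the only points of difference with Theorem \ref{thm:RInd2Char} are that the induction is now from the \emph{ungraded} loop groupoid $\Lambda B \mathsf{H}$ rather than from $\Lambda_{\pi}^{\refl} B \hat{\mathsf{H}}$, which is exactly why the coefficient is $\tfrac{1}{\vert \mathsf{H}\vert}$ rather than $\tfrac{1}{2\vert \mathsf{H}\vert}$, and the verification that $\sigma\omega\sigma^{-1}$ normalizes $\sigma g^{\pi(\sigma)}\sigma^{-1}$, which is where the bulk of the (short) computation lies.

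Finally, for the support statement I would read directly off the formula: if $\chi_{\HInd_{\mathcal{H}}^{\hat{\mathcal{G}}}(\rho)}(g,\omega) \neq 0$ then some Real conjugate $\sigma(g,\omega)\sigma^{-1}$ lies in $\mathsf{H}^2 \subseteq \mathsf{G}^2$, so in particular $\sigma\omega\sigma^{-1} \in \mathsf{G} = \ker\pi$, forcing $\pi(\omega) = 1$, i.e. $\omega \in \mathsf{G}$; since $g \in \mathsf{G}$ always and, when $\pi(\omega) = 1$, the graded commutation relation becomes $g\omega = \omega g$, the pair $(g,\omega)$ is then a commuting pair in $\mathsf{G}$, that is, an object of $\Lambda^2 B \mathsf{G}$. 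Hence $\chi_{\HInd_{\mathcal{H}}^{\hat{\mathcal{G}}}(\rho)}$ vanishes off $\Obj(\Lambda^2 B \mathsf{G})$, and as its support is a union of isomorphism classes by Theorem \ref{thm:conjInv2Char}, it descends to a subset of $\pi_0(\Lambda^2 B \mathsf{G}) \subset \pi_0(\Lambda \Lambda_{\pi}^{\refl} B \hat{\mathsf{G}})$.
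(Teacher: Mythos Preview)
Your proposal is correct and follows essentially the same approach as the paper: invoke Theorem \ref{thm:RealIndFunct} and then apply Proposition \ref{prop:twistedGrpdCharInd} to the faithful functor $\Lambda B \mathsf{H} \rightarrow \Lambda_{\pi}^{\refl} B \hat{\mathsf{G}}$, noting that the orbit-stabilizer computation now gives $\tfrac{1}{\vert \mathsf{H}\vert}$ rather than $\tfrac{1}{2\vert \mathsf{H}\vert}$. Your explicit justification of the support claim, which the paper leaves implicit, is also correct.
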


\begin{proof}
This is proved in the same way as Theorem \ref{thm:RInd2Char}, using Theorem \ref{thm:RealIndFunct} instead of Theorem \ref{thm:Real2RealIndFunct}. In the present case we apply Proposition \ref{prop:twistedGrpdCharInd} to the functor $\Lambda B \mathsf{H} \rightarrow \Lambda_{\pi}^{\refl} B \hat{\mathsf{G}}$. This explains the coefficient $\frac{1}{\vert \mathsf{H} \vert}$, as opposed to $\frac{1}{2 \vert \mathsf{H} \vert}$, in $\chi_{\HInd_{\mathcal{H}}^{\hat{\mathcal{G}}} (\rho)}$.
\end{proof}

\section{Conjectural applications to Real Hopkins--Kuhn--Ravenel characters}
\label{sec:RealHKRTransfer}

In this final section, motivated by the analogy between Borel equivariant Morava $E$-theory and the $2$-representation theory of finite groups \cite{ganter2008}, \cite{hopkins2000}, we speculate on a homotopy theoretic interpretation of our results.

Let $\mathsf{G}$ be a finite group. For each $n \geq 1$, denote by $\mathsf{G}^{(n)} \subset \mathsf{G}^n$ the subset of commuting $n$-tuples. The group $\mathsf{G}$ acts on $\mathsf{G}^{(n)}$ by simultaneous conjugation and the groupoid $\mathsf{G}^{(n)} \git \mathsf{G}$ is equivalent to the iterated loop groupoid $\Lambda^n B \mathsf{G}$. The space of locally constant functions on $\Lambda^n B \mathsf{G}$ valued in a ring $S$ is denoted by $\textnormal{Cl}_n(\mathsf{G},S)$ and is called the space of $S$-valued $n$-class functions on $\mathsf{G}$.

Fix a prime $p$ and let $E_n^{\bullet}$ be Morava $E$-theory at $p$. Let $\mathbf{B} \mathsf{G}$ be a classifying space of $\mathsf{G}$. Hopkins, Kuhn and Ravenel proved in \cite[Theorem C]{hopkins2000} that there is a generalized $n$-character map
\begin{equation}
\label{eq:HKRCharDescr}
E_n^{\bullet}(\mathbf{B} \mathsf{G}) \rightarrow \textnormal{Cl}_{n,p}(\mathsf{G},C_{\bullet})
\end{equation}
which, after tensoring the source with $C_{\bullet}$, is an isomorphism. Here $C_{\bullet}$ is a $p^{-1}E_n^{\bullet}$-algebra, constructed in \cite[\S 6.2]{hopkins2000}, which is an $E^{\bullet}_n$-theoretic analogue of the field obtained from $\mathbb{Q}$ by adjoining all roots of unity, the latter field being that which one considers in classical character theory. The subscript $p$ in $\textnormal{Cl}_{n,p}$ indicates that we restrict attention to functions defined on commuting $n$-tuples of $\mathsf{G}$ which have $p$\textsuperscript{th} power order. Given a subgroup $\mathsf{H} \hookrightarrow \mathsf{G}$, it was proved in \cite[Theorem D]{hopkins2000} that the induced transfer map is
\[
ind_{\mathsf{H}}^{\mathsf{G}}(\chi)(g_1, g_2, \dots, g_n) = \frac{1}{\vert \mathsf{H} \vert} \sum_{\substack{g \in \mathsf{G} \\ g (g_1, g_2, \dots, g_n) g^{-1} \in \mathsf{H}^n}} \chi(g g_1 g^{-1}, \dots, g g_n g^{-1}). 
\]
For $n \leq 3$ this is a $p$-completed version of the formula for induced $n$-characters; for $n=1$ this is classical, while for $n=2$ and $n=3$ this is proved in \cite[Corollary 7.6]{ganter2008} and \cite[Theorem 6.7]{wang2015}, respectively.

It is known from the work of Sati and Westerland \cite[Theorem 2]{sati2015} that a cohomology class $\alpha \in H^{n+2}(-, \mathbb{Z})$ defines a twisted cohomology group $E_n^{\bullet + \alpha}(-)$. In particular, for each $\alpha \in Z^3(B \mathsf{G}, \mathsf{U}(1))$ there is a twisted form of transfer
\[
E_n^{\bullet+ \alpha_{\vert \mathsf{H}}}(\mathbf{B} \mathsf{H}) \rightarrow E_n^{\bullet+\alpha}(\mathbf{B} \mathsf{G}).
\]
It is natural to expect that, after tensoring with $C_{\bullet}$, these cohomology groups are isomorphic to the $p$-completions of the twisted $n$-class function spaces $\Gamma_{\Lambda^n B \mathsf{H}}(\uptau^n(\alpha_{\vert \mathsf{H}})_{C_{\bullet}})$ and $\Gamma_{\Lambda^n B \mathsf{G}}(\uptau^n(\alpha)_{C_{\bullet}})$ and that under these identifications the transfer map is given by the obvious generalization of the formula appearing in Corollary \ref{cor:ind2Char}.

Let now $\hat{\mathsf{G}}$ be a finite $\mathbb{Z}_2$-graded group. For each $n \geq 1$, define
\[
\hat{\mathsf{G}}^{(n)} = \{(g, \omega_2, \dots, \omega_n) \in \mathsf{G} \times \hat{\mathsf{G}}^{n-1} \mid \omega_i g = g^{\pi(\omega_i)} \omega_i, \; \omega_i \omega_j = \omega_j \omega_i, \; 2 \leq i,j \leq n \}.
\]
The group $\hat{\mathsf{G}}$ acts on $\hat{\mathsf{G}}^{(n)}$ by Real conjugation on $\mathsf{G}$ and by ordinary conjugation on the remaining factors. The resulting groupoid $\hat{\mathsf{G}}^{(n)} \git_{\varphi} \hat{\mathsf{G}}$ is equivalent to $\Lambda^{n-1} \Lambda_{\pi}^{\refl} B \hat{\mathsf{G}}$. Denote by $\textnormal{Cl}\mathbb{R}_n(\mathsf{G}, S)$ the vector space of locally constant $S$-valued functions on $\Lambda^{n-1} \Lambda_{\pi}^{\refl} B \hat{\mathsf{G}}$, which we call $S$-valued Real $n$-class functions on $\mathsf{G}$

Assume now that $p=2$. In order to have a homotopy theoretic interpretation of our results, we would like to find a family of genuinely $\mathbb{Z}_2$-equivariant generalized cohomology theories $R_n^{\bullet}$, $n \geq 1$, which has the following properties:
\begin{enumerate}[label=(\roman*)]
\item For each $n \geq 1$, there are maps of cohomology theories
\[
c: R_n^{\bullet} \rightarrow E_n^{\bullet}, \qquad
r: E_n^{\bullet} \rightarrow R_n^{\bullet}.
\]

\item The assignment $\hat{\mathsf{G}} \mapsto R_n(\mathbf{B} \mathsf{G})$ extends to a $\mathbb{Z}_2$-graded generalization of a Mackey functor. In particular, associated to a $\mathbb{Z}_2$-graded subgroup $\hat{\mathsf{H}} \leq \hat{\mathsf{G}}$ are transfer and restriction maps,
\[
rind_{\hat{\mathsf{H}}}^{\hat{\mathsf{G}}}: R_n^{\bullet}(\mathbf{B} \mathsf{H}) \rightarrow R_n^{\bullet}(\mathbf{B} \mathsf{G}), \qquad res_{\hat{\mathsf{H}}}^{\hat{\mathsf{G}}}: R_n^{\bullet}(\mathbf{B} \mathsf{G}) \rightarrow R_n^{\bullet}(\mathbf{B} \mathsf{H})
\]
and associated to a subgroup $\mathsf{H} \leq \mathsf{G}$ is a map
\[
h_{\mathsf{H}}^{\hat{\mathsf{G}}}: E_n^{\bullet}(\mathbf{B} \mathsf{H}) \rightarrow R_n^{\bullet}(\mathbf{B} \mathsf{G}).
\]
The map $res_{\hat{\mathsf{H}}}^{\hat{\mathsf{G}}}$ is induced by the map $\mathbf{B}\hat{\mathsf{H}} \rightarrow \mathbf{B}\hat{\mathsf{G}}$ over $\mathbf{B}\mathbb{Z}_2$ while $h_{\mathsf{H}}^{\hat{\mathsf{G}}}$ is the composition of $c$ with the $E_n^{\bullet}$-theory transfer map $ind_{\mathsf{H}}^{\mathsf{G}}$. The above maps should be compatible in the obvious sense.

\item
There exists a $p^{-1} R_n^{\bullet}$-algebra $\widetilde{C}_{\bullet}$ such that for each finite $\mathbb{Z}_2$-graded group $\hat{\mathsf{G}}$ there is a Real $n$-character map
\begin{equation}
\label{eq:RealHKRCharDescr}
\chi: R_n^{\bullet}(\mathbf{B} \mathsf{G}) \rightarrow \textnormal{Cl}\mathbb{R}_{n,p}(\mathsf{G},\widetilde{C}_{\bullet})
\end{equation}
which induces an isomorphism $E\mathbb{R}_n^{\bullet}(\mathbf{B} \mathsf{G}) \otimes_{R^{\bullet}_n} \widetilde{C}_{\bullet} \xrightarrow[]{\sim} \textnormal{Cl}\mathbb{R}_{n,p}(\mathsf{G},\widetilde{C}_{\bullet})$. Moreover, under the identifications \eqref{eq:HKRCharDescr} and \eqref{eq:RealHKRCharDescr}, the maps $rind_{\hat{\mathsf{H}}}^{\hat{\mathsf{G}}}$ and $h_{\mathsf{H}}^{\hat{\mathsf{G}}}$ agree with the natural generalizations of the maps $\RInd_{\hat{\mathsf{H}}}^{\hat{\mathsf{G}}}$ and $\HInd_{\mathsf{H}}^{\hat{\mathsf{G}}}$, respectively, at the level of (Real) $n$-class functions.

\item There is an isomorphism $R_1^{\bullet} \simeq KR^{\bullet}_{(2)}$ under which $r$ and $c$ become the hyperbolic (realification) and forgetful (complexification) maps, respectively, of equivariant $KR$-theory \cite{atiyah1966}.

\item Let $\pi: X \rightarrow \hat{X}$ be a double cover. A class $\hat{\beta} \in H^{n+2}(\hat{X}, \mathbb{Z}_{\pi})$ defines a twisted theory $R_n^{\bullet +\hat{\beta}}(X)$ and the obvious analogues of properties (i)-(iv) hold for the twisted theory. For example, when twisting by a cocycle $\hat{\beta} \in Z^{n+1}(B \hat{\mathsf{G}}, \mathsf{U}(1)_{\pi})$ the group $\textnormal{Cl}\mathbb{R}_{n,p}(\mathsf{G},\widetilde{C}_{\bullet})$ is replaced by $\textnormal{Cl}\mathbb{R}^{\hat{\beta}}_{n,p}(\mathsf{G},\widetilde{C}_{\bullet})$, the space of flat sections of the line bundle $\uptau^{n-1} \uptau_{\pi}^{\refl}(\hat{\beta}) \rightarrow \Lambda^{n-1} \Lambda_{\pi}^{\refl} B \hat{\mathsf{G}}$ over $\widetilde{C}_{\bullet}$.
\end{enumerate}

We briefly comment on the second part of (iii). Using the results of Sections \ref{sec:RealInd}, \ref{sec:hypInd} and \ref{sec:Real2IndCar}, \ref{sec:hyp2IndCar}, which should be viewed as being of height $n=1$ and $n=2$, respectively, we can write down explicit descriptions of the maps $\RInd$ and $\HInd$ at the level of Real $n$-class functions. For example, given $\chi \in \textnormal{Cl}\mathbb{R}^{\hat{\beta}}_{n,p}(\mathsf{H},\widetilde{C}_{\bullet})$, the requirement is that the value of $rind_{\hat{\mathsf{H}}}^{\hat{\mathsf{G}}} (\chi)$ at $(g, \omega_2, \dots, \omega_n) \in \hat{\mathsf{G}}^{(n)}$ is
\begin{multline*}
\frac{1}{2 \vert \mathsf{H} \vert} \sum_{\substack{\sigma \in \hat{\mathsf{G}} \\ \sigma(g, \omega_2, \dots, \omega_n) \sigma^{-1} \in \hat{\mathsf{H}}^n}} \uptau^{n-1} \uptau_{\pi}^{\refl}(\hat{\beta})([\sigma](g, \omega_2, \dots, \omega_n))^{-1}
\\
\times \chi(\sigma g^{\pi(\sigma)} \sigma^{-1}, \sigma \omega_2 \sigma^{-1}, \dots, \sigma \omega_n \sigma^{-1}).
\end{multline*}

A first guess is that $R_n^{\bullet}$ is the theory $E\mathbb{R}^{\bullet}_n$ constructed by Hu and Kriz \cite{hu2001}. This theory satisfies properties (i), (ii), (iv) and likely (v). However, as the following example shows, when $n \geq 2$ the group $E\mathbb{R}_n^{\bullet}(\mathbf{B} \mathsf{G})$ is too small to satisfy (iii).

\begin{Ex}
Let $\mathsf{G} = \mathbb{Z}_2$ with Real structure $\hat{\mathsf{G}} = \mathbb{Z}_2 \times \mathbb{Z}_2$. Then $\hat{\mathsf{G}}$ acts trivially on $\hat{\mathsf{G}}^{(2)} = \mathsf{G} \times \hat{\mathsf{G}}$ and $\textnormal{Cl}\mathbb{R}_{2,p}(\mathsf{G},\widetilde{C}_{\bullet})$ is a free $\widetilde{C}_{\bullet}$-module of rank $2 \vert \mathsf{G} \vert^2 = 8$. On the other hand, \cite[Theorem 3.2]{kitchloo2008} gives an isomorphism
\[
E\mathbb{R}_2^{\bullet}(\mathbf{B} \mathbb{Z}_2) \simeq E\mathbb{R}_2^{\bullet} \pser{u} \slash ([2]_{\hat{F}}(u))
\]
where $[2]_{\hat{F}}(u)$ a modified $2$-series of the formal group law associated to $E_2^{\bullet}$,
\[
[2]_{\hat{F}}(u) = 2 u +_F \alpha u^2 +_F u^4.
\]
Here $u \in E\mathbb{R}_2^{-16}(\mathbf{B} \mathbb{Z}_2)$ and $\alpha \in E\mathbb{R}_2^{16}(\mathbf{B} \mathbb{Z}_2)$. It follows that $E\mathbb{R}_2^{\bullet}(\mathbf{B} \mathbb{Z}_2)$ is a free $E\mathbb{R}_2^{\bullet}$-module of rank $4$. 
\end{Ex}

When $\mathsf{G} = \mathbb{Z}_{2^q}$ with trivial Real structure, the map $E\mathbb{R}^{\bullet}_2(\mathbf{B} \mathsf{G}) \rightarrow E^{\bullet}_2(\mathbf{B} \mathsf{G})$ has certain injectivity properties. See, for example, \cite[Theorem 2.1]{kitchloo2008b} and \cite[Theorem 1.5(iii)]{kitchloo2018}. In contrast, the restriction $\textnormal{Cl}\mathbb{R}_2(\mathsf{G}, S) \rightarrow \textnormal{Cl}_2(\mathsf{G},S)$ always has a non-trivial kernel, namely those Real $2$-class functions which are supported on $\mathsf{G} \times (\hat{\mathsf{G}} \backslash \mathsf{G})$. In particular, $E \mathbb{R}_2^{\bullet}(\mathbf{B} \mathsf{G})$ seems to miss the `Klein bottle sector' of $\textnormal{Cl}\mathbb{R}_2(\mathsf{G}, S)$.

\setcounter{secnumdepth}{0}

\footnotesize

\bibliographystyle{plain}
\bibliography{mybib}


\end{document}